\newtheorem{thm}[subsection]{Theorem}
\newtheorem{subthm}[subsubsection]{Theorem}
\newtheorem{prop}[subsection]{Proposition}
\newtheorem{lemma}[subsection]{Lemma}
\newtheorem{sublemma}[subsubsection]{Lemma}
\newtheorem{cor}[subsection]{Corollary}
\newtheorem{claim}[subsubsection]{Claim}
\theoremstyle{definition}
\newtheorem{defn}[subsection]{Definition}
\newtheorem{subdefn}[subsubsection]{Definition}
\newtheorem{example}[subsection]{Example}
\newtheorem{subcase}[subsubsection]{Case}
\newtheorem{rem}[subsection]{Remark}
\newtheorem{subrem}[subsubsection]{Remark}
\newtheorem{summary}[subsection]{Summary}
\title[Compactification 
of the moduli space of abelian varieties]
{Compactification  by GIT-stability of the moduli space of abelian varieties}
\author[I. Nakamura]{Iku Nakamura}
\address{Department of Mathematics, 
Hokkaido University,Sapporo, 060-0810}
\email{nakamura@math.sci.hokudai.ac.jp}
\thanks{2000 {\it Mathematics Subject Classification}.
14J10, 14K10, 14K25.}
\thanks{{\it Key words and phrases}. Moduli,  Compactification,  
Abelian variety, Heisenberg group, Irreducible representation, 
Level structure, Theta function, Stability}
\newcommand\op{\operatorname}
\newcommand\s{\smallskip}
\newcommand\Aut{\op{Aut}}
\newcommand\CL{\op{CL}}
\newcommand\Del{\op{Del}}
\newcommand\depth{\op{depth}}
\newcommand\Div{\op{Div}}
\newcommand\dAut{\op{Aut}^{\dagger}}
\newcommand\ddAut{\op{Aut}^{\dagger 0}}
\newcommand\diag{\op{diag}}
\newcommand\End{\op{End}\,}
\newcommand\Ext{\op{Ext}}
\newcommand\formal{\op{for}}
\newcommand\Gal{\op{Gal}}
\newcommand\GL{\op{GL}}
\newcommand\Hilb{\op{Hilb}}
\newcommand\hilb{\op{Hilb}}
\newcommand\Hom{\op{Hom}}
\newcommand\id{\op{id}}
\newcommand\inv{\op{inv}}
\newcommand\main{\op{main}}
\newcommand\Mor{\op{Mor}}
\newcommand\NL{\op{NL}}
\newcommand\norm{\op{norm}}
\newcommand\Obj{\op{Obj}}
\newcommand\PGL{\op{PGL}}
\newcommand\Pic{\op{Pic}}
\newcommand\Proj{\op{Proj}\,}
\newcommand\rank{\op{rank}}
\newcommand\red{\op{red}}
\newcommand\Sing{\op{Sing}\,}
\newcommand\SL{\op{SL}}
\newcommand\Spec{\op{Spec}\,}
\newcommand\Spf{\op{Spf}\,}
\newcommand\Spl{\op{Sp}}
\newcommand\sq{\op{sq}}
\newcommand\sqap{\op{sqap}}
\newcommand\Sym{\op{Sym}}
\newcommand\toric{\op{toric}}
\newcommand\val{\op{val}}
\newcommand\suniv{\op{su}}
\newcommand\univ{\op{univ}}
\newcommand\Weil{\op{Weil}}
\newcommand{\wR}{{\widetilde R}}
\newcommand{\dTheta}{\Theta^{\dagger}}
\newcommand{\wOmega}{{\widetilde\Omega}}
\newcommand{\dOmega}{\Omega^{\dagger}}
\newcommand{\bA}{{\bold A}}
\newcommand{\bC}{{\bold C}}
\newcommand{\bF}{{\bold F}}
\newcommand{\bG}{{\bold G}}
\newcommand{\bH}{{\bold H}}
\newcommand{\bP}{{\bold P}}
\newcommand{\bQ}{{\bold Q}}
\newcommand{\bR}{{\bold R}}
\newcommand{\bV}{{\bold V}}
\newcommand{\bZ}{{\bold Z}}
\newcommand{\cA}{{\cal A}}
\newcommand{\cC}{{\cal C}}
\newcommand{\cD}{{\cal D}}
\newcommand{\cG}{{\cal G}}
\newcommand{\cH}{{\cal H}}
\newcommand{\cL}{{\cal L}}
\newcommand{\cM}{{\cal M}}
\newcommand{\bcM}{\overline{\cal M}}
\newcommand{\cO}{{\cal O}}
\newcommand{\cP}{{\cal P}}
\newcommand{\cQ}{{\cal Q}}
\newcommand{\cS}{{\cal S}}
\newcommand{\cV}{{\cal V}}
\newcommand{\cW}{{\cal W}}
\newcommand{\cX}{{\cal X}}
\newcommand{\cZ}{{\cal Z}}
\newcommand{\barx}{{\bar x}}
\newcommand{\barAP}{\overline{AP}}
\newcommand{\barcAP}{\overline{\cA\cP}}
\newcommand{\cal}{\mathcal}
\newcommand{\theoremtextend}{
\hfill\begin{picture}(4,8)
\put(0,0){\line(1,0){4}}
\put(0,0){\line(0,1){8}}
\put(4,0){\line(0,1){8}}
\put(0,8){\line(1,0){4}}
\end{picture}
}
\begin{document}
\hspace*{-2cm}
\thanks{Research was supported by the Grant-in-aid 
(No. 23224001 (S)) for Scientific Research, 
JSPS}

\begin{abstract}
 The moduli space $\cM_g$
of nonsingular projective curves of genus $g$ 
is compactified into the moduli 
$\bcM_g$ of Deligne-Mumford 
stable curves of genus $g$. 
We compactify in a similar way  
the moduli space of 
abelian varieties by adding some mildly degenerating limits of 
abelian varieties. 
A typical case is the moduli space of 
Hesse cubics. Any Hesse cubic is GIT-stable in the sense that 
its $\SL(3)$-orbit is closed in the semistable locus, and conversely
any GIT-stable planar cubic is one of Hesse cubics. 
Similarly in arbitrary dimension,  the moduli
space of abelian varieties is compactified by adding 
only GIT-stable limits of abelian varieties 
(\S~\ref{sec:related topics}). \par
Our moduli space 
is a projective ``fine'' 
moduli space of possibly degenerate abelian schemes 
{\it with non-classical non-commutative 
level structure} 
 over $\bZ[\zeta_{N},1/N]$  
for some $N\geq 3$.  
The objects at the boundary are singular schemes, called PSQASes, 
projectively stable quasi-abelian schemes.
\end{abstract}

\maketitle
\setcounter{tocdepth}{1}
\tableofcontents

\section{Introduction}
The moduli of stable curves, 
the so-called Deligne-Mumford compactification,  
compactifies the moduli of nonsingular curves :   
\begin{equation*}
\begin{aligned}
\op{the}&\ \text{moduli of smooth curves}\\
   &=\text{the set of all isomorphism classes of smooth curves}\\
   &\subset  \text{the set of all isomorphism classes of stable
curves}\\
   &
=\text{the Deligne-Mumford compactification $\bcM_g$}
\end{aligned}
\end{equation*}

The moduli of stable curves is known to be a projective scheme, 
while the moduli of nonsingular curves is a Zariski open subset of it.  
\par
Our problem is to do the same for moduli of smooth abelian varieties. 
We find certain natural limits of smooth abelian varieties 
similar to stable curves to compactify the moduli. In other words, 
we will construct a new compactification $SQ_{g,K}$, the moduli of 
some possibly degenerate abelian varieties with some extra structure, 
which contains the moduli of smooth abelian varieties with 
similar extra structure as a Zariski open subset. This will complete 
the following diagram : 
\begin{equation*}
\begin{aligned}
\text{the}&\ \text{moduli of smooth AVs ($=$ abelian varieties)}\\
   &=\{\text{smooth polarized AVs + extra structure}\}/\op{isom.}\\ 
   &\subset  \{\text{smooth polarized AVs or}\\
   &\hskip 18pt   \text{singular polarized degenerate AVs 
             + extra structure}\}/\op{isom.}\\
   &=\ \text{the new compactification $SQ_{g,K}$}
\end{aligned}
\end{equation*}


The compactification problem of the moduli space of abelian varieties 
has been studied by many people : 
\begin{enumerate}
\item[$\circ$] Satake compactification, Igusa monoidal transform of it
\item[$\circ$] Mumford toroidal compactification 
(\cite[(1975)]{AMRT10})
\item[$\circ$] Faltings-Chai arithmetic compactification 
(arithmetic version of Mumford compactification) \cite[(1990)]{FC90}
\end{enumerate}

These are the compactifications which had been known before 1995 
when the author restarted the research of compactifications. 
These are compactifications as spaces, 
not as the moduli of compact objects.  
In this article, we  are going to construct {\em a natural compactification,} 
in fact, {\em projective,} {\em as 
the ``fine/coarse'' moduli space of compact geometric objects}, where  
\begin{enumerate}
\item[$\circ$] the moduli space contains the moduli space 
of abelian varieties as a dense Zariski open subset, 
\item[$\circ$] it is compact, which amounts to collecting enough limits,
\item[$\circ$] it is separated, which amounts to choosing the minimum possible among the above.
\end{enumerate}

The following are the works closely related to the subject;
first of all,  the works of Mumford \cite{Mumford66}, 
\cite{Mumford67} and \cite{Mumford72} during 1966--1972, 
though they do not focus on compactifications directly. 
After 1975 there appeared  Nakamura \cite{Nakamura75} and Namikawa 
\cite{Namikawa76}, closely related to this article. \par
After 1999 there appeared several works on the subject: 
\cite{AN99},  \cite{Nakamura99}, 
\cite{Alexeev02}, \cite{Ols08} and \cite{Nakamura10}. 
By modifying \cite{Nakamura75}, Nakamura \cite{Nakamura99} and 
\cite{Nakamura10} study two kinds of compactifications of 
the moduli space of abelian varieties (with no zero section specified and 
with no semi-abelian scheme action assumed). 
Meanwhile, 
Alexeev \cite{Alexeev02} and Olsson \cite{Ols08} 
study the complete moduli spaces of certain 
schemes with semi-abelian scheme action.\par
Now we shall explain  
how we choose our compactification $SQ_{g,K}$, 
which will explain why 
the title of this article refers to GIT-stability. \par 
Let $H$ be 
a finite Abelian group, 
and $V:=V_H$ the unique irreducible 
representation of the Heisenberg group $\cG_H$ of weight one. 
Let $\bP(V)$ be the projective space of $V$, 
and $X:=\Hilb^{\chi}_{\bP(V)}$ 
the Hilbert scheme parameterizing closed subschemes of $\bP(V)$ 
with Hilbert polynomials $\chi(n)=n^g|H|$. 
According to GIT, our problem of compactifying the moduli space is, 
{\em very roughly speaking}, reduced to 
studying the quotient 
$X_{ss}/\!/\SL(V)$ where 
$X_{ss}$ denotes the semistable locus of $X$  
with respect to 
$\SL(V)$. 
GIT tells us, {\em set-theoretically},  
\begin{equation}\label{eq:closed orbits}
X_{ss}/\!/\SL(V)=\text{the set of all closed orbits in $X_{ss}$}.
\end{equation} 
See Section~\ref{sec:related topics}.
This scenario has to be modified a little. 
In an appropriately modified scenario,  
the LHS of (\ref{eq:closed orbits}) is 
the moduli space $SQ_{g,K}$, the compactification 
in the title of this article, 
while the RHS of (\ref{eq:closed orbits}) is 
just the set of isomorphism classes of 
our degenerate abelian schemes 
PSQASes $(Q_0,\cL_0)$ 
with $\cG_H$-action.  See Section~\ref{sec:PSQAS TSQAS}, 
Theorem~\ref{thm:fine moduli SQgK} and 
Theorem~\ref{thm:stability of PSQAS}. It should  be mentioned that 
$SQ_{g,K}$ is the fine moduli scheme 
for families of PSQASes over {\em reduced} base schemes, hence $SQ_{g,K}$ 
itself is also reduced.\par

This note is based on our lectures 
with the same title delivered 
at Kyoto university during  June 11--13, 2013. 
It overlaps the report 
\cite{Nakamura04} on the same topic in many respects, 
though the note includes also the recent progress of the topic. 
In this note, we give simple proofs for the major results of 
\cite{Nakamura99} and \cite{Nakamura10}, 
assuming known rather general results. 
We also tried to include (elementary or less elementary) 
proofs of the well-known related facts 
whose proofs  are hard to find in the literature. As a whole we tried to make 
our presentation more accessible than \cite{Nakamura99}, keeping the atmosphere of the lecture as much as possible. 
\footnote{This article will appear in the Proceedings  
of ASPM for Mukai 60 conference, Kyoto, 2013.}
\par
In what follows throughout this article, 
we always consider a finite abelian group 
$H=\bigoplus_{i=1}^g(\bZ/e_i\bZ)$, where 
$e_i|e_{i+1}$, and we write $N=|H|=\prod_{i=1}^ge_i$ 
and $K=K_H=H\oplus H^{\vee}$\ 
($H^{\vee}$ : the dual of $H$). 
We call such $H$ simply {\em a finite Abelian group.} 
We also call $K$ {\em a finite symplectic Abelian group.}
We also let $\cO=\cO_N=\bZ[1/N,\zeta_N]$ where 
$\zeta_N$ is a primitive $N$-th root of unity. \par
The article is organized as follows. \par
Section~\ref{sec:Hesse cubics} reviews 
the classical moduli theories of Hesse cubics with 
Neolithic level-3 structure or with classical level-3 structure.\par 
Section~\ref{sec:noncomm level} gives a new interpretation of  
the moduli theories in Section~\ref{sec:Hesse cubics} 
in a non-commutative way, and then  
explains a new moduli theory of Hesse cubics 
with level-$G(3)$ structure, where $G(3)$ 
is a non-commutative group, the Heisenberg group. 
This is the model theory for all the rest. 
The major purpose of this article is to 
explain its higher dimensional analogue. 
See Subsec.~\ref{subsec:for constructing separated moduli}. 
\par
In Section~\ref{sec:PSQAS TSQAS} 
we introduce two kinds $(P_0,\cL_0)$ and $(Q_0,\cL_0)$  of 
 nice degenerate abelian schemes in arbitrary dimension 
to compactify the moduli space of abelian varieties. 
Theorem~\ref{thm:refined stable reduction} 
gives an intrinsic description of 
those degenerate schemes 
$(P_0,\cL_0)$ and $(Q_0,\cL_0)$, where $P_0$ is always reduced, while 
$Q_0$ can be nonreduced. \par
A more direct definition of 
those  degenerate schemes will be given 
in Sections~\ref{sec:PSQAS dim one} and \ref{sec:PSQAS general case}. 
Especially we give a complete proof of the part 
$Q_{\eta}\simeq P_{\eta}\simeq G_{\eta}$ of 
Theorem~\ref{thm:refined stable reduction}. We will give 
two-dimensional and three-dimensional examples of PSQASes. 
We will also explain how 
a naive classical level-$n$ structure results in a 
nonseparated moduli. 
\par
 Section~\ref{sec:G action G linearization} 
reviews a rather general theory 
about $G$-action and $G$-linearization. 
We give various definitions and constructions and 
show their equivalence or compatibility. 
In Section~\ref{sec:moduli AgK SQgK}, 
we give a definition of level-$\cG_H$ structure and 
define a quasi-projective (resp. projective)
scheme $A_{g,K}$ (resp. $SQ_{g,K}$) when $e_1\geq 3$. 
We show that any geometric point of $A_{g,K}$ 
(resp. $SQ_{g,K}$) is a nonsingular level-$\cG_H$ 
PSQAS (resp. a level-$\cG_H$ PSQAS) and vice versa. 
In Section~\ref{sec:moduli psqas} we formulate the moduli functor of 
smooth (resp. flat) PSQASes over $\cO_N$-schemes 
(resp. reduced $\cO_N$-schemes). 
We will prove the representability of these functors 
by  $A_{g,K}$ (resp. $SQ_{g,K}$) in the respective category. \par
In Sections~\ref{sec:SQtoric} and \ref{sec:moduli of TSQAS} 
we see that there exists 
the coarse moduli algebraic space $SQ_{g,K}^{\toric}$ 
of level-$\cG_H$ TSQASes. 
This has been proved in \cite{Nakamura10} when $e_1\geq 3$.
We generalize it here to the case $e_1\leq 2$.  
There is a bijective morphism from 
$SQ_{g,K}^{\toric}$ onto $SQ_{g,K}$ if $e_1\geq 3$. 
In Sections~\ref{sec:SQtoric} and \ref{sec:moduli of TSQAS} many of 
the definitions, constructions and proofs are 
given in parallel to Sections~\ref{sec:moduli AgK SQgK} 
and \ref{sec:moduli psqas}, 
which we often omitted to avoid overlapping.
 \par 
In Section~\ref{sec:The morphisms to Alexeev's complete moduli space} 
we briefly report our recent results without proofs. 
We define a morphism $\sqap$ from $SQ_{g,K}^{\toric}\times U$ to 
Alexeev's complete moduli $\overline{AP}_{g,d}$ for a nonempty 
Zariski open  subset $U$ of $\bP^{N-1}=\bP(V_H)$. We see
that $\sqap$ restricted to 
$SQ_{g,K}^{\toric}\times \{u\}$ for any $u\in U$ is injective: 
in fact, it is almost a closed immersion.
We also see that  
$SQ_{g,1}^{\toric}$ is isomorphic to 
the main (reduced) component $\overline{AP}_{g,1}^{\main}$ 
of $\overline{AP}_{g,1}$, the closure in $\overline{AP}_{g,1}$ 
of the moduli of abelian torsors.  We emphasize 
that it is nontrivial to define {\em a well-defined morphism $\sqap$} because
singular TSQASes have a lot of continuous automorphisms.\par
In Section~\ref{sec:related topics} 
we explain the set of all closed orbits and 
GIT stability of PSQASes. We also mention a few related topics. 
\par

We tried to give complete proofs 
to Sections~\ref{sec:G action G linearization}-\ref{sec:moduli psqas} and
to Theorem~\ref{thm:fine/coarse moduli AgK} 
(especially for the case $e_1\leq 2$) 
in Section~\ref{sec:moduli of TSQAS},  
relying in part on \cite{Nakamura99} and \cite{Nakamura10}. 
In the other sections we only survey mainly
\cite{Nakamura99}, \cite{Nakamura10} and \cite{Nakamura14}.

\smallskip

{\bf Acknowledgements.} 
We are very grateful to 
Professors V.~Alexeev, J.-B.~Bost, A.~Fujiki,  
K.~Hulek, L.~Illusie, M.~Ishida, 
A.~King,   J.~McKay, Y.~Mieda, Y.~Odaka, T.~Shioda, and L.~Weng 
for their interest and advice on our works. 
Inspired by their advice, 
we have changed some of the presentations and 
especially the formulation of the functors 
$\cA_{g,K}$ and $\cS\cQ_{g,K}$, 
though we are not sure that it is the final form. 
We also thank K.~Sugawara for constant collaboration and support.

\section{Hesse cubics}
\label{sec:Hesse cubics}

Here we will start with a simple example.

\subsection{Hesse cubics}\label{subsec:Hesse cubics}
Let $k$ be  
any ring which contains $1/3$ and $\zeta_3$, the primitive cube root of unity.
A Hesse cubic curve is a curve in $\bP^2_k$ defined by 
\begin{equation}
C(\mu)\ :\ x^3_0+x^3_1+x^3_2-3\mu x_0x_1x_2=0 
\end{equation}
for some $\mu\in k$, or $\mu=\infty$ (in which case we understand that
$C(\infty)$ is the curve 
defined by $x_0x_1x_2=0$).  We see
\begin{enumerate}
\item[(i)] 
$C(\mu)$ is nonsingular elliptic 
for $\mu\neq\infty, 1, \zeta_3, \zeta_3^2$, 
\item[(ii)] 
$C(\mu)$ is a 3-gon for $\mu=\infty, 1, \zeta_3, \zeta_3^2$,
\item[(iii)] any $C(\mu)$ 
contains $K$, 
which is independent of $\mu$, 
$$K=\left\{[0,1,-\zeta_3^k], [-\zeta_3^k,0,1], 
[1,-\zeta_3^k,0]; k=0,1,2\right\},$$ 
\item[(iv)] $K$ is identified with 
the group of $3$-division points by choosing $[0,1,-1]$ as the zero, 
so $K\simeq (\bZ/3\bZ)^2$ as groups,
\item[(v)] if $k=\bC$, any Hesse cubic is 
the image of a complex torus $E(\omega):=\bC/\bZ+\bZ\omega$  
by (slightly modified) theta functions $\vartheta_k$ of level 3 
(see Subsec.~\ref{subsec:theta functions}), and then 
$K$ is the image of the $3$-division points 
$\langle \frac{1}{3},\frac{\omega}{3}\rangle$ of $E(\omega)$. 
\end{enumerate}

\subsection{Theta functions}
\label{subsec:theta functions}
We will explain 
Subsec.~\ref{subsec:Hesse cubics}~(v) in more detail. 
First let us recall standard (resp. modified) 
theta functions of level 3 
on $E(\omega)$ :
\begin{align*}\theta_{k}(\omega,z)&
=\sum_{m\in \bZ} q^{(3m+k)^2}w^{3m+k},\quad \text{resp.}\\
\vartheta_k(\omega,z)&=\theta_{k}(\omega,z+\frac{1-\omega}{2})
\end{align*}where $q=e^{2\pi i\omega/6}$, 
$w=e^{2\pi iz}$. They satisfy the transformation relation :
\begin{align*}
\theta_k(\omega,z+\frac{a+b\omega}{3})&=\zeta_3^{ak}(q^bw)^{-b}
\theta_{k+b}(\omega,z),\\
\vartheta_k(\omega,z+\frac{a+b\omega}{3})&=\zeta_3^{ak}(q^{b-3}(-w))^{-b}
\vartheta_{k+b}(\omega,z).
\end{align*}

We define a mapping $\vartheta:E(\omega)\to \bP^2$ by  
\begin{gather*}
\vartheta(\omega,z):=[\vartheta_0,\vartheta_1,\vartheta_2].
\end{gather*}

Let us check the second half 
of Subsec.~\ref{subsec:Hesse cubics}~(v). For it, 
we rewrite 
\begin{align*}
\vartheta_0(\omega,z)&=\sum_{m\in\bZ}q^{9m^2-9m}(-w)^{3m},\\
\vartheta_1(\omega,z)&=\sum_{m\in\bZ}q^{9m^2-3m-2}(-w)^{3m+1},\\
\vartheta_2(\omega,z)&=\sum_{m\in\bZ}q^{9m^2+3m-2}(-w)^{3m+2}.
\end{align*}

Then we check  $\vartheta(\omega,\frac{\ell}{3})=[0,1,-\zeta_3^{\ell}]$ and 
$\vartheta(\omega,\frac{\omega}{3})=[1,-1,0]$. 
First we prove $\vartheta_0(\omega,\frac{\ell}{3})=0$. In fact, we see 
\begin{align*}
\vartheta_0(\omega,\frac{\ell}{3})&
=\sum_{m\in\bZ}q^{9m^2-9m}(-1)^{3m}\\
&=\sum_{m\in\bZ}q^{9(-m+1)^2-9(-m+1)}(-1)^{3(-m+1)}\\
&=\sum_{m\in\bZ}q^{9m^2-9m}(-1)^{-3m+3}
=-\vartheta_0(\omega,\frac{\ell}{3}),
\end{align*}whence $\vartheta_0(\omega,\frac{\ell}{3})=0$. Moreover
\begin{align*}
\vartheta_1(\omega,\frac{\ell}{3})&=\zeta_3^{\ell}
\sum_{m\in\bZ}q^{9m^2-3m-2}(-1)^{3m+1},\\
\vartheta_2(\omega,\frac{\ell}{3})&=\zeta_3^{2\ell}
\sum_{m\in\bZ}q^{9m^2+3m-2}(-1)^{3m}\\
&=\zeta_3^{2\ell}
\sum_{m\in\bZ}q^{9m^2-3m-2}(-1)^{3m}=
-\zeta_3^{\ell}
\vartheta_1(\omega,\frac{\ell}{3}).
\end{align*} 

$\vartheta(\omega,\frac{\omega}{3})=[1,-1,0]$ is proved similarly.

\subsection{The moduli space of Hesse cubics ---  
the Stone-age (Neolithic) level structure}\label{subsec:Stone age level}
With the same notation as in Subsec.~\ref{subsec:Hesse cubics}, 
consider the moduli space $SQ^{\NL}_{1,3}$ of the pairs $(C(\mu),K)$
over any ring $k\ni 1/3$ and $\zeta_3$. 
\par
\begin{subdefn}Any pair $(C(\mu),K)$ is called 
{\em a Hesse cubic with Neolithic level-3 structure.} 
Let $(C(\mu),K)$ and $(C(\mu'),K)$ be two pairs of Hesse cubics with 
Neolithic level-3 structure. 
We define $(C(\mu),K)\simeq (C(\mu'),K)$ to be isomorphic  
if there exists an isomorphism 
$f:C(\mu)\to C(\mu')$ with $f_{|K}=\id_K$. 
\end{subdefn}

\begin{claim}Let $SQ^{\NL}_{1,3}$ be the set of 
isomorphism classes of \linebreak 
$(C(\mu),K)$, and $A^{\NL}_{1,3}$ 
the subset of $SQ^{\NL}_{1,3}$ consisting of smooth $C(\mu)$. Then 
\begin{enumerate}
\item[(i)] 
if $(C(\mu),K)\simeq (C(\mu'),K)$, then $\mu=\mu'$, 
\item[(ii)] $SQ^{\NL}_{1,3}$ has a natural scheme structure:
$$SQ^{\NL}_{1,3}\simeq \bP^1_k=\Proj k[\mu_0,\mu_1],$$
\item[(iii)] 
this compactifies the moduli $A^{\NL}_{1,3}$ of smooth Hesse cubics:
$$A^{\NL}_{1,3}\simeq \Spec k[\mu,\frac{1}{\mu^3-1}],\quad \mu=\mu_1/\mu_0,$$
where $A^{\NL}_{1,3}(k)=\{C(\mu); \text{smooth}, \mu\in k\}$ if $k$ is  
a closed field, 
\item[(iv)] the universal Hesse cubic over 
$SQ^{\NL}_{1,3}$ is given by
\begin{equation}\label{eq:universal Hesse cubic}
 \mu_0(x^3_0+x^3_1+x^3_2)-3\mu_1 x_0x_1x_2=0. 
\end{equation}
\end{enumerate}
\end{claim}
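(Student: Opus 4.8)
\emph{Proof strategy.} I would deduce all four parts from two facts about the Hesse pencil. \emph{Fact A:} the nine points of $K$ form the base locus of the pencil spanned by $x_0^3+x_1^3+x_2^3$ and $x_0x_1x_2$, and more precisely the $k$-module of plane cubics vanishing on $K$ is free of rank two on these two cubics; this holds over any $k\ni 1/3,\zeta_3$ by a Koszul computation, since $x_0^3+x_1^3+x_2^3$ and $x_0x_1x_2$ form a regular sequence in $k[x_0,x_1,x_2]$ and $K$ is their reduced complete intersection. \emph{Fact B:} the configuration of $K$ in $\bP^2$ is the Hesse configuration $(9_4,12_3)$, i.e.\ an affine plane over $\bF_3$; in particular it contains four points in general position, and its twelve collinear triples fall into four parallel classes, the class attached to a $3$-gon $C(\mu)$ being the partition of $K$ according to which of the three components of $C(\mu)$ each point lies on, and these four partitions are pairwise distinct.

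For (i), I would split into cases according to Subsection~\ref{subsec:Hesse cubics}(i)--(ii). If $C(\mu)$ and $C(\mu')$ are both smooth, then $f^*\cO_{C(\mu')}(1)\cong\cO_{C(\mu)}(1)$: indeed for every smooth Hesse cubic the hyperplane $x_0=0$ cuts out the same reduced divisor $[0,1,-1]+[0,1,-\zeta_3]+[0,1,-\zeta_3^2]$ supported on $K$, and $f$ fixes these three points. Since $H^0(\bP^2,\cO(1))\to H^0(C(\mu),\cO_{C(\mu)}(1))$ is an isomorphism (both have dimension $3$), $f$ is therefore the restriction of some $A\in\PGL(3)$; as $A$ fixes the nine points of $K$ and four of them are in general position, $A=\id$, so $C(\mu)=C(\mu')$ as subschemes of $\bP^2$ and $\mu=\mu'$. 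If exactly one curve is smooth there is no isomorphism. If both are $3$-gons, $f$ carries irreducible components to irreducible components; since each point of $K$ lies on a single component, $f_{|K}=\id$ forces the component partitions of $K$ given by $C(\mu)$ and by $C(\mu')$ to agree, whence $\mu=\mu'$ by Fact B (and then $f$ fixes each component, hence three points of each component $\cong\bP^1$, so $f=\id$).

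For (ii) and (iv), I would take over $\bP^1=\Proj k[\mu_0,\mu_1]$ the closed subscheme $\mathcal C\subset\bP^2\times\bP^1$ of (\ref{eq:universal Hesse cubic}): it is flat over $\bP^1$, has fibre $C(\mu_1/\mu_0)$ over $[\mu_0:\mu_1]$ (and $C(\infty)$ when $\mu_0=0$), and contains $K\times\bP^1$, so it is a family of Hesse cubics with Neolithic level-$3$ structure. It is universal: for a family $(\mathcal X\subset\bP^2_S,\ K_S\subset\mathcal X)$ flat over $S$ with Hesse-cubic fibres, Fact A gives $\mathcal X=V\!\left(a(x_0^3+x_1^3+x_2^3)+b\,x_0x_1x_2\right)$ for a nowhere-vanishing pair $(a,b)$ of sections of a line bundle on $S$, i.e.\ exactly the datum of a morphism $S\to\bP^1$ pulling $\mathcal C$ back to $\mathcal X$, and the no-automorphisms part of (i) makes it unique. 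Thus $\bP^1$ represents the moduli problem, giving the natural scheme structure and, with (i), the identification on points. For (iii): by Subsection~\ref{subsec:Hesse cubics}(i)--(ii) the fibre of $\mathcal C$ is nonsingular exactly off $\mu_0=0$ and $\mu_1^3=\mu_0^3$; removing $\{\mu_0=0\}$ from $\bP^1$ gives $\Spec k[\mu]$ with $\mu=\mu_1/\mu_0$, and removing $\mu\in\{1,\zeta_3,\zeta_3^2\}$ amounts to inverting $\mu^3-1=(\mu-1)(\mu-\zeta_3)(\mu-\zeta_3^2)$, so $A^{\NL}_{1,3}\simeq\Spec k[\mu,1/(\mu^3-1)]$, which over a field is the set of smooth $C(\mu)$; the universal Hesse cubic in (\ref{eq:universal Hesse cubic}) is just $\mathcal C$.

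The main obstacle, I expect, is the rigidity in (i) --- in particular the $3$-gon fibres, where the smooth-case line-bundle argument degenerates because $x_0=0$ may be a component of the curve, so one must instead recover $\mu$ combinatorially from how $K$ meets the components --- and the passage from the fibrewise bijection to the full representability underlying the ``natural scheme structure'' in (ii); checking Fact A over a general base ring rather than over $\bC$ also requires the (mild) Koszul input noted above.
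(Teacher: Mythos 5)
Your proposal is correct in outline, but it does considerably more than the paper, which proves only part (i) and explicitly declines to prove (ii)--(iv) (``there are complicated arguments to prove rigorously''). For (i) the paper argues uniformly in $\mu$: since the three points of any triple $\{x,y,z\}\subset K$ with $x+y+z=0$ form a hyperplane section of $C(\mu)$ fixed by $f$, the map $f$ preserves $\cO(1)$ and is induced by a matrix $A\in\GL(3)$; then $A$ fixes the three coordinate lines (each joining two points of $K$), hence is diagonal, and fixing $[0,1,-1]$ and $[-1,0,1]$ forces it to be scalar. Your smooth-case argument is essentially this (with ``four points of $K$ in general position'' replacing the diagonal computation), but your case division pays off for the $3$-gons: the paper's assertion $\ell_{x,y,z}\cap C(\mu)=\{x,y,z\}$ is delicate when $\ell_{x,y,z}$ is a component of $C(\mu)$, and your combinatorial recovery of $\mu$ from the partition of $K$ by components sidesteps that. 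For (ii)--(iv) your route --- the Hesse pencil as the degree-$3$ part of the complete-intersection ideal of $K$ (Koszul), hence representability of the family functor by $\Proj k[\mu_0,\mu_1]$ --- is the natural way to supply what the paper omits; the one thing you must still make explicit is the definition of the moduli functor over a general base (a flat family of Hesse cubics in $\bP^2_S$ containing $K\times S$), since without it ``natural scheme structure'' has no precise content, and the passage from ``the fibre equation vanishes on $K$'' to ``the relative equation lies in $(x_0^3+x_1^3+x_2^3,\ x_0x_1x_2)\otimes O_S$'' needs exactly the flat base change of the Koszul resolution that you flag as the mild extra input.
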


\begin{proof}[Proof of (i)]
We prove (i). Suppose we are given an
isomorphism 
$$f : (C(\mu),K)\simeq (C(\mu'),K).$$  
Since any 3 points $x,y$ and $z\in K$ 
with $x+y+z=0$ are on a line $\ell_{x,y,z}$ of $\bP^2$,  
we have $\ell_{x,y,z}\cap C(\mu)=\{x,y,z\}$ and 
$f^*\ell_{x,y,z}=\ell_{x,y,z}$ 
as divisors of $C(\mu)$. 
Hence $f$ is given by  a $3\times 3$ matrix $A$. \par
We shall prove that $A$ is a scalar and $f=\id$.
In fact, any line $\ell_{x,y}$ connecting two points 
$x, y\in K$ is fixed by $f$. Since 
the line $x_0=0$ connects  $[0,1,-1]$ and $[0,1,-\zeta_3]$, 
it is fixed by $f$. 
Similarly the lines $x_1=0$ and $x_2=0$ are fixed by $f$, 
whence $f^*(x_i)=a_ix_i$ $(i=0,1,2)$ for 
some $a_i\neq 0$. Thus $A$ is diagonal. 
Since $[0,1,-1]$ and $[-1,0,1]$ are fixed, 
we have $a_0=a_1=a_2$, hence $A$ is scalar and $f=\id$, $\mu=\mu'$. 
\par
We do not give proofs of (ii)-(iv) here because there are 
complicated arguments to prove rigorously.  
\end{proof}

\subsection{The moduli space of smooth cubics ---  
 classical level structure}\label{subsec:classical level}
Consider the (fine) moduli space 
of smooth cubics over an algebraically closed field $k\ni 1/3$. 
\begin{subdefn}
Let $K=(\bZ/3\bZ)^{\oplus 2}$, $e_i$ a standard basis of $K$. 
Let $e_K:K\times K\to\mu_3$ 
be a standard symplectic form of $K$: in other words, $e_K$ is 
(multiplicatively) alternating 
and bilinear  such that
$$e_K(e_1,e_2)=e_K(e_2,e_1)^{-1}=\zeta_3,\ e_K(e_i,e_i)=1.$$
\end{subdefn}

Let $C$ be a smooth cubic with zero $O$, 
$C[3]=\ker(3\id_C)$ the group of 3-division points and 
$e_C$ the Weil pairing of $C$ (see \cite[pp.~95--102]{Silverman86}), that is, 
$$e_C:C[3]\times C[3]\to \mu_3\quad\text{alternating nondegenerate bilinear,}$$
(see \ref{subsec:noncommutative level}~(v)). By \cite[pp.~294--295]{Mumford66},
there exists 
a symplectic (group) isomorphism 
$$\iota: (C[3],e_C)\to (K,e_K).$$  

In what follows, we identify $C(\mu)[3]$ with $K$ by
\begin{equation}\label{eq:3 divisions}
O=[0,1,-1],\ e_1=[0,1,-\zeta_3],\ e_2=[1,-1,0].
\end{equation}

\begin{subdefn}The triple $(C,C[3],\iota)$ is called {\em a (planar) cubic 
with classical level-3 structure.} We define
$(C,C[3],\iota)\simeq (C',C'[3],\iota')$ to be isomorphic iff 
there exists an isomorphism $f:C\to C'$ such that
 $f_{|C[3]}:C[3]\to C'[3]$ is a symplectic (group) isomorphism subject to 
 $\iota'\cdot f=\iota$.
\end{subdefn}

\begin{claim}
\label{claim:A13}
Let $A^{\CL}_{1,3}$ be the set of 
isomorphism classes of \linebreak $(C,C[3],\iota)$. Then 
\begin{enumerate}
\item[(i)] any $(C,C[3],\iota)$ is isomorphic to $(C(\mu),C(\mu)[3],\iota)$ 
for a unique $\mu$,
\item[(ii)]
$(C(\mu),K,\id_K)\in A^{\CL}_{1,3}$ via (\ref{eq:3 divisions}), and 
\begin{align*}
A^{\CL}_{1,3}
&=\{(C(\mu),K,\id_K); \text{a smooth Hesse cubic}\}\\
&\simeq\Spec k[\mu,\frac{1}{\mu^3-1}],
\end{align*}
\item[(iii)] we define   
$SQ^{\CL}_{1,3}$ to be the union of $A^{\CL}_{1,3}$ and 
3-gons in Subsec.~\ref{subsec:Hesse cubics}~(ii) :
\begin{align*}
SQ^{\CL}_{1,3}:&=\{(C,C[3],\iota);C\ 
\text{smooth elliptic or a 3-gon}\}/\text{isom.}\\
&=\{(C(\mu),K,\id_K); \text{a Hesse cubic}\}\\
&\simeq\Proj k[\mu_0,\mu_1],
\end{align*} 
\item[(v)]$A^{\CL}_{1,3}\simeq A^{\NL}_{1,3}$ and 
$SQ^{\CL}_{1,3}\simeq SQ^{\NL}_{1,3}$ over $k$.\theoremtextend
\end{enumerate}
\end{claim}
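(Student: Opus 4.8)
\textbf{Proof proposal for Claim~\ref{claim:A13}.}

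The plan is to treat the four parts in the natural order, building (i) from the Weil-pairing normal form, deducing (ii) as an immediate corollary together with Claim's part~(iii) on $A^{\NL}_{1,3}$, then assembling (iii) by adjoining the degenerate fibers, and finally reading off (v) as a comparison of the two level structures. The central task is (i): given $(C,C[3],\iota)$ I want to produce a Hesse cubic $C(\mu)$ and an isomorphism $f:C\to C(\mu)$ carrying $\iota$ to $\id_K$ via the fixed identification (\ref{eq:3 divisions}), and show $\mu$ is forced. First I would use the cited fact from \cite{Mumford66} that the complete linear system $|3O|$ on a smooth cubic $C$ with chosen origin $O$ embeds $C$ in $\bP^2$ as a plane cubic on which $O$ is an inflection point, and that after a projective change of coordinates the equation takes Hesse form $x_0^3+x_1^3+x_2^3-3\mu x_0x_1x_2=0$ for some $\mu\neq\infty,1,\zeta_3,\zeta_3^2$; the coordinates can be chosen so that $O=[0,1,-1]$. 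What remains is to pin down the embedding using the level structure: the symplectic isomorphism $\iota$ identifies $C[3]$ with $K$, and since the nine $3$-division points of a Hesse cubic are exactly the set $K$ of Subsec.~\ref{subsec:Hesse cubics}~(iii), demanding $\iota$-compatibility with (\ref{eq:3 divisions}) forces $e_1\mapsto[0,1,-\zeta_3]$ and $e_2\mapsto[1,-1,0]$. I would then argue that these constraints determine the projective coordinate change uniquely up to scalars, so $\mu$ is unique: concretely, any two Hesse embeddings compatible with the marking differ by a projective automorphism fixing all nine points of $K$, and by the argument already given in the proof of the Neolithic Claim~(i) (the lines $x_i=0$ are fixed, so the matrix is diagonal, and fixing $[0,1,-1]$ and $[-1,0,1]$ makes it scalar) such an automorphism is the identity; hence $\mu=\mu'$.

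For (ii), part~(i) shows every class in $A^{\CL}_{1,3}$ has a unique Hesse representative $(C(\mu),K,\id_K)$ with $\mu$ smooth, i.e. $\mu^3\neq 1$ and $\mu\neq\infty$; conversely every smooth $C(\mu)$ carries $(C(\mu)[3],\id_K)=(K,\id_K)$ as a legitimate classical level-$3$ structure because $\id_K$ is symplectic for $e_K$ — here I would invoke that the Weil pairing $e_{C(\mu)}$ on the $3$-division points transports to $e_K$ under (\ref{eq:3 divisions}), which is the standard compatibility between the Weil pairing and the commutator pairing of the theta group (referenced at \ref{subsec:noncommutative level}~(v)). Thus $A^{\CL}_{1,3}$ is in bijection with $\{\mu\in k:\mu^3\neq 1\}=\Spec k[\mu,1/(\mu^3-1)]$, which is exactly the description of $A^{\NL}_{1,3}$ from the Neolithic Claim~(iii). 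For (iii), I would define $SQ^{\CL}_{1,3}$ by adjoining the four $3$-gons $C(\mu)$, $\mu\in\{\infty,1,\zeta_3,\zeta_3^2\}$, together with the same marking $K\subset C(\mu)$; one checks $K$ still makes sense on a $3$-gon (the nine points of Subsec.~\ref{subsec:Hesse cubics}~(iii) lie on the smooth locus, six on each component-pair, and generate the group scheme of the degenerate fiber), and the uniqueness argument of~(i) extends verbatim since the three coordinate lines are still distinguished. This gives the bijection $SQ^{\CL}_{1,3}\simeq\{C(\mu):\mu\in\bP^1\}\simeq\Proj k[\mu_0,\mu_1]$, the scheme structure being inherited from the universal Hesse cubic (\ref{eq:universal Hesse cubic}).

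Finally (v) is formal: both $A^{\CL}_{1,3}$ and $A^{\NL}_{1,3}$ have been identified with $\Spec k[\mu,1/(\mu^3-1)]$ and both $SQ$'s with $\Proj k[\mu_0,\mu_1]$, compatibly with the $\mu$-coordinate, so the identity on the $\mu$-parameter is the required isomorphism over $k$; one should remark that the map sends the class of $(C(\mu),C(\mu)[3],\iota)$ to the class of $(C(\mu),K)$, which is well-defined and bijective precisely because in both moduli problems the Hesse representative and its parameter $\mu$ are unique. I expect the main obstacle to be the rigorous verification in~(i) that the \emph{symplectic} (as opposed to merely group-theoretic) compatibility of $\iota$ with (\ref{eq:3 divisions}) is automatically satisfied by the Hesse normalization — equivalently, that the Weil pairing on $C(\mu)[3]$ is the standard $e_K$ in the coordinates (\ref{eq:3 divisions}) — rather than imposing a further sign or scalar constraint; this is where one genuinely needs the theta-group description of Section~\ref{sec:noncomm level} or the explicit Weil-pairing computation, and it is the only step that is not a direct consequence of the earlier Neolithic arguments.
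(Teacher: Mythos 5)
Your treatment of the only step the paper actually proves — the uniqueness of $\mu$ in part (i) — coincides with the paper's: an isomorphism of classical level-$3$ structures between two Hesse cubics restricts to $\id_K$ on the nine marked points, hence is a Neolithic isomorphism, and the diagonal-then-scalar argument from the proof of the Claim in Subsec.~\ref{subsec:Stone age level} forces it to be the identity of $\bP^2$, so $\mu=\mu'$. On that part you and the paper agree completely.

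Everything else in your proposal goes beyond what the paper establishes: the paper proves only the uniqueness in (i) and explicitly declines to prove the remaining assertions (as it already did for the Neolithic claim, citing ``complicated arguments''). In that extra material there are two genuine gaps. First, for existence in (i) you assert that after putting $C$ in Hesse form one can further adjust coordinates so that the given symplectic $\iota$ becomes $\id_K$ via (\ref{eq:3 divisions}); this requires that every symplectic automorphism of $(K,e_K)$ be realized by a projective-linear automorphism of $\bP^2$ preserving the Hesse pencil — equivalently, that the Hessian group surjects onto $\mathrm{Sp}(2,\bF_3)\simeq\SL(2,\bF_3)$ — which you state as if it followed from the marking constraints but never verify. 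Second, you correctly identify, but do not close, the need to check that the Weil pairing $e_{C(\mu)}$ transported through (\ref{eq:3 divisions}) is exactly $e_K$ (and not, say, $e_K^{-1}$ or a twist); without this, $(C(\mu),K,\id_K)$ is not known to be an object of $A^{\CL}_{1,3}$ at all, and (ii) collapses. Both points are resolvable (the second by the commutator computation $[\sigma,\tau]=\zeta_3$ of Subsec.~\ref{subsec:noncommutative level}, which is the route the paper takes later when it compares level-$G(3)$ and classical structures), but as written they remain assertions rather than proofs.
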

\begin{proof}[Proof of (i)] 
We prove the uniqueness of $\mu$. 
Suppose that $$f:(C(\mu),K,\id_K)\to 
(C(\mu'),K,\id_K)$$ is an isomorphism. Then $f\in\GL(3)$. 
Since $\id_K\cdot f_{|K}=\id_K$ by $\iota'\cdot f=\iota$, 
we have $f_{|K}=\id_K$.  
Hence $f=\id\in\PGL(3)$, $\mu=\mu'$ by 
Subsec.~\ref{subsec:Stone age level}~(iv). 
See also Lemma~\ref{lemma:G(3) uniqueness} 
and Lemma~\ref{sublemma:uniqueness of rigid PSQAS}.
\end{proof}

\section{Non-commutative level structure} 
\label{sec:noncomm level}
\subsection{For constructing a separated moduli}
\label{subsec:for constructing separated moduli}
If we keep naively using the same definition 
of level structures as in Subsec.~\ref{subsec:classical level} 
in higher dimension, then the complete 
moduli will be roughly the moduli 
of the triples $(Z,\ker(\lambda(L)),\iota_Z)$ 
similar to $(C,C[3],\iota)$ 
$$\iota_Z:\ker(\lambda(L))\simeq K\ \text{for some $K$}.\quad$$
However then we will 
have nonseparated moduli spaces in general. 
The details will 
be explained in Subsec.~\ref{subsec:nonseparatedness}. \par
To construct a separated moduli, we need to find outside $C$ 
an alternative for $C[3]$ embedded in $C$. 
The group $C[3]$, hence 
$x\in K=(\bZ/3\bZ)^{\oplus 2}$ acts on $C$ by translation $T_x:C\to C$.
Though the action of $K$ on $C$ 
cannot be lifted to $L$ as an action of the group $K$, 
the action of any individual element $x$ of $K$ can be lifted to 
a line bundle automorphism $\tau_x$ of $L$.
In general $\tau_x$ and $\tau_y$ $(x,y\in K)$
do not commute so $T_x\mapsto \tau_x$ fails to be a group homomorphism.  
However it turns out that the non-commutative group 
generated by all individual liftings $\tau_x$ plays 
the role of an alternative for $C[3]$ embedded in $C$. 
This leads us to the notion of {\em a level-$G(3)$ structure}, say, 
{\em a non-commutative level structure,} where $G(3)$ is the Heisenberg group associated to $K$.
\begin{subrem} 
Since any elliptic curve with level-$G(3)$ structure 
has a section over $\bZ[\zeta_3,1/3]$ 
by \cite{NT01}, the level-$G(3)$ structure is 
a $\vartheta$-structure  of 
\cite[II, p.78]{Mumford67} and vice versa.
A level $\cG_H$ (or $G_H$-)structure is not always a $\vartheta$-structure 
by \cite{NT01} when $H=\bZ/n\bZ$ for $n$ even in 
Definition~\ref{defn:Heisenberg group}. 
\end{subrem}

\begin{defn}Let $k$ be an algebraically 
closed field $k\ni 1/3$. 
Then 
\begin{enumerate}
\item[(i)]
let $C$ be any smooth cubic with zero $O$, and 
$L:=O_C(1)$ the hyperplane bundle. 
Let $\lambda(L):C\to C^{\vee}:=\Pic^0(C)\simeq C$ be 
the map $x\to T_x^*L\otimes L^{-1}$, called the polarization morphism,  
where we see $\lambda(L)=3\id_C$, 
\item[(ii)]
let $K:=C[3]=\ker\lambda(L)\simeq (\bZ/3\bZ)^{\oplus 2}$, and
$e_K:K\times K\to \mu_3$ the Weil pairing of $C$. 
If $C=C(\mu)$ 
and $O=[0,1,-1]\in C(\mu)$. 
Then $K=\ker(\lambda(L))$ is 
the same as in Subsec. 2.1 (iii).
\end{enumerate}
\end{defn}

\subsection{Non-commutative interpretation of Hesse cubics}
\label{subsec:noncommutative level}
First we shall re-interpret the group $C[3]$ 
of 3-division points  of 
Hesse cubics 
in the non-commutative way as follows. \par
Any  translation $T_x$ by $x\in K$ is lifted 
to $\gamma_x\in \GL(V)$, 
so that $$e_K(x,y)=[\gamma_x,\gamma_y]\in\mu_3,$$
where $V=H^0(C,O_C(1))
=H^0(\bP^2,O_{\bP^2}(1))$. To be more precise, 
\begin{enumerate}
\item[(i)] we define $\sigma$ and $\tau$ by 
 $\sigma(x_k)=\zeta^k_3x_k$, $\tau(x_k)=x_{k+1}$\ $(k=0,1,2)$, where
their matrix forms are given by
\begin{align*}
\sigma&=\begin{pmatrix}
1&0&0\\
0&\zeta_3&0\\
0&0&\zeta_3^2
\end{pmatrix},\quad 
\tau=\begin{pmatrix}
0&0&1\\
1&0&0\\
0&1&0
\end{pmatrix},
\end{align*}
\item[(ii)] $\sigma$ is induced from the translation by $1/3$ 
because $x_k=\theta_k$ by Subsec.~2.1~(v) and 
$$\theta_k(z+1/3)=\zeta_3^k\theta_k(z),$$ 
\item[(iii)]
$\tau$ is induced from the translation by  $\omega/3$
because $$[\theta_0,\theta_1,\theta_2](z+\omega/3)
=[\theta_1,\theta_2,\theta_0](z),$$
\item[(iv)] $[\sigma,\tau]=\zeta_3$, that is, 
$\sigma$ and $\tau$ do not commute,
\begin{align*}
\sigma\tau&=\begin{pmatrix}
0&0&1\\
\zeta_3&0&0\\
0&\zeta_3^2&0
\end{pmatrix},\quad 
\tau\sigma=\begin{pmatrix}
0&0&\zeta_3^2\\
1&0&0\\
0&\zeta_3&0
\end{pmatrix}.
\end{align*}
\end{enumerate}

\begin{lemma}\label{lemma:G3 irreducible}
Let $G(3):=\langle \sigma,\tau\rangle$ be 
the group generated by $\sigma$ and $\tau$. 
Then it is a finite group of order 27. Let 
$V=H^0(\bP^2,O_{\bP^2}(1))=\{x_0,x_1,x_2\}$. Then 
$V$ is an irreducible $G(3)$-module of weight one, where 
''weight one'' means that $a\in \mu_3$ (center) acts by $a\id_V$.
\end{lemma}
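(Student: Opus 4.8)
The plan is to verify the order-$27$ claim and the irreducibility claim separately, both by elementary matrix computations organized around the relation $[\sigma,\tau]=\zeta_3$.

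First I would establish that $|G(3)|=27$. From the explicit matrices we have $\sigma^3=\tau^3=\id$, and the commutator $[\sigma,\tau]=\sigma\tau\sigma^{-1}\tau^{-1}=\zeta_3\cdot\id$ (this is essentially item (iv) above, restated as a scalar matrix identity). Since $\zeta_3\id$ is central and of order $3$, the commutator subgroup of $G(3)$ is $\langle\zeta_3\id\rangle$, and $G(3)/\langle\zeta_3\id\rangle$ is abelian, generated by the images of $\sigma$ and $\tau$, each of order dividing $3$; hence $|G(3)/\langle\zeta_3\id\rangle|\leq 9$ and $|G(3)|\leq 27$. For the reverse inequality I would exhibit the $27$ elements $\zeta_3^a\sigma^b\tau^c$ ($0\leq a,b,c\leq 2$) as distinct: if $\zeta_3^a\sigma^b\tau^c=\zeta_3^{a'}\sigma^{b'}\tau^{c'}$, comparing the pattern of which entries are nonzero forces $b\equiv b'$ and $c\equiv c'\pmod 3$ (the matrix $\sigma^b\tau^c$ is a ``generalized permutation matrix'' whose support is the permutation $k\mapsto k+c$), and then the scalar forces $a\equiv a'$. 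So $|G(3)|=27$, and every element has the normal form $\zeta_3^a\sigma^b\tau^c$.

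Next, irreducibility of $V$. The quickest route is Schur-type: suppose $W\subseteq V$ is a nonzero $G(3)$-submodule. Since $\sigma=\diag(1,\zeta_3,\zeta_3^2)$ has three distinct eigenvalues, $W$ is spanned by a subset of the coordinate eigenvectors $x_0,x_1,x_2$; but $\tau$ cyclically permutes $x_0\mapsto x_1\mapsto x_2\mapsto x_0$, so any such $\tau$-stable subset is empty or all of $\{x_0,x_1,x_2\}$. Hence $W=V$. The ``weight one'' assertion is immediate from the normal form: the center of $G(3)$ is exactly $\langle\zeta_3\id\rangle$ (anything commuting with both the diagonal $\sigma$ and the cyclic $\tau$ must be scalar, as in the Claim~(i) argument of Subsec.~\ref{subsec:Stone age level}), and $a=\zeta_3^j\in\mu_3$ acts by $a\,\id_V$ by construction.

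I do not expect a serious obstacle here; the only point requiring a little care is the bookkeeping that the $27$ products $\zeta_3^a\sigma^b\tau^c$ are genuinely distinct (equivalently, that $\langle\zeta_3\id\rangle\cap\langle\sigma,\tau\rangle_{\mathrm{image}}$ behaves as claimed), which is why I would phrase it via the support-permutation of $\sigma^b\tau^c$ rather than by multiplying out matrices. Alternatively, one may simply cite that $G(3)$ is the standard finite Heisenberg group (the theta group) of the level-$3$ line bundle on an elliptic curve and that its Schrödinger representation $V$ is its unique irreducible of weight one, of dimension $3=|(\bZ/3\bZ)|$ — but giving the two-line direct argument above keeps the exposition self-contained, in the spirit announced in the introduction.
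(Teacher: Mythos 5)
Your proof is correct, and it is genuinely more self-contained than what the paper does: the paper's own ``proof'' of this lemma consists of declaring the order-$27$ statement clear and citing Mumford (\emph{On the equations defining Abelian varieties} I, Proposition~3, p.~309) or \cite[Lemma~4.4]{Nakamura10} for irreducibility, i.e.\ it treats $G(3)$ as an instance of the general finite Heisenberg group $G_H$ whose Schr\"odinger representation is known to be the unique irreducible of weight one (this is the route of Lemma~\ref{lemma:irred repres UH} and Lemma~\ref{sublemma:weight 1 mod N module}). Your direct argument --- normal form $\zeta_3^a\sigma^b\tau^c$ from the central commutator relation $[\sigma,\tau]=\zeta_3\id$ for the order count, and the ``distinct eigenvalues of $\sigma$ plus cyclic $\tau$-permutation of the eigenlines'' argument for irreducibility --- is exactly the standard elementary proof and buys a two-line verification that does not require setting up the general representation theory of $\cG_H$; the paper's citation buys uniformity with the higher-dimensional case treated later. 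One small imprecision: the support of $\sigma^b\tau^c$ (which nonzero entries occur) determines only $c$, not $b$; to separate $b$ from $b'$ you must then compare the actual nonzero entries, whose consecutive ratios equal $\zeta_3^b$ (equivalently, note that $\sigma^{b-b'}$ is a scalar matrix only when $b\equiv b'\bmod 3$), after which the remaining scalar pins down $a$. This is precisely the bookkeeping you flagged, and it closes without difficulty, so there is no gap.
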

\begin{proof}The first assertion is clear. 
See \cite[Proposition~3, p.~309]{Mumford66} or
\cite[Lemma~4.4]{Nakamura10} for the second assertion.
\end{proof}

The action of $G(3)$ on $H^0(C,L)$ 
is a special case of more general 
Schr\"odinger representations defined below.

\begin{defn}\label{defn:Heisenberg group} We define
$G(K)=G_H$ (resp. $\cG(K)=\cG_H$) 
to be {\em the Heisenberg group 
(finite resp. infinite)} and 
$U_H$ {\em the Schr\"odinger representation of $G_H$} as follows:
\begin{align*}
H&=H(e):=\bigoplus_{i=1}^g(\bZ/e_i\bZ), e_i|e_{i+1}, 
N=|H|=\prod_{i=1}^ge_i,  \\
K&=H\oplus H^{\vee}, e_{\min}(K)=e_{\min}(H):=e_1,\\
G_H&=\{(a,z,\alpha);a\in \mu_N, z\in H, \alpha\in H^{\vee}\},\\
\cG_H&=\{(a,z,\alpha);a\in \bG_m, z\in H, \alpha\in H^{\vee}\},\\
(a,z,\, &\alpha)\cdot (b,w,\beta)=(ab\beta(z),z+w,\alpha+\beta),\\
V:&=V_H=\cO_N[H^{\vee}]=\bigoplus_{\mu\in H^{\vee}}\cO_N\, v(\mu),\\
U_H&(a,z,\alpha) v(\gamma)=a\gamma(z)v(\alpha+\gamma).
\end{align*}  Here $\cO=\cO_N=\bZ[\zeta_N,1/N]$, and 
$v(\mu)$ ($\mu\in H^{\vee})$ is 
a free $\cO_N$-basis of $V_H$.  The group homomorphism  
$U_H$, from $G_H$ or $\cG_H$ to $\End(V)$, 
is called {\em Schr\"odinger representation}.
We note 
\begin{align*}
1\to &\mu_N\to G_H\to K\to 0 \quad \text{(exact)}\\
1\to &\bG_m\to \cG_H\to K\to 0 \quad \text{(exact)}.
\end{align*}
\end{defn}

\begin{example}\label{example:Hesse P(VH)}
For Hesse cubics, $\cO:=\bZ[\zeta_3,1/3]$,
 $H=H^{\vee}=\bZ/3\bZ$,  we identify $G(3)$ with $G_H$; to be precise, 
$G(3)=U_H(G_H)$ and  
\begin{gather*}
\sigma=U_H(1,1,0),\quad \tau=U_H(1,0,1),\quad N=3.\\
V_H=\cO[H^{\vee}]=
 \bigoplus_{k=0}^2\cO\cdot v(k).
\end{gather*} 

Let $\bP^2=\bP(V_H)$. Then 
$V_H$ is identified with $H^0(C,O_C(1))=H^0(\bP^2,O_{\bP^2}(1))$ 
by the map $v(k)\mapsto x_k$ in Lemma~\ref{lemma:G3 irreducible}.  
\end{example}

\begin{lemma}
\label{lemma:irred repres UH} 
$V_H$ is an irreducible $\cG_H$-$\cO_N$-module 
(an irreducible $G_H$-$\cO_N$-module)
of weight one, unique up to equivalence. Any 
$\cG_H$-$\cO_N$-module $W$ 
(resp. any $G_H$-$\cO_N$-module) of finite rank is a direct sum of 
$V_H$ if $W$ is of weight one: that is, any element $a$ in
the center $\bG_m$ (resp. $\mu_N$) acts on $W$ by 
scalar multiplication $a\id_W$.
\end{lemma}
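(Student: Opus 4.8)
The plan is to prove this by the standard character-theoretic / commutation argument adapted to work over the ring $\cO_N$ rather than an algebraically closed field, since the statement asserts uniqueness as $\cO_N$-modules. First I would establish irreducibility of $V_H$ directly: let $W \subseteq V_H$ be a nonzero $\cG_H$-submodule (equivalently $G_H$-submodule, since the $\bG_m$- and $\mu_N$-actions differ only by scalars). The element $(1,z,0) \in G_H$ acts on the basis vector $v(\gamma)$ by the scalar $\gamma(z)$, so $V_H$ decomposes under the abelian subgroup $\{(1,z,0)\}$ into the rank-one eigenspaces $\cO_N \cdot v(\gamma)$, and these eigencharacters $\gamma \in H^{\vee}$ are pairwise distinct. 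Hence any submodule $W$ stable under this subgroup is a direct sum of a subset of the lines $\cO_N v(\gamma)$; pick one $v(\gamma_0) \in W$. Then applying $(1,0,\alpha)$ sends $v(\gamma_0)$ to $v(\alpha + \gamma_0)$, so $W$ contains every $v(\mu)$, i.e. $W = V_H$. This shows $V_H$ is irreducible, and the same computation shows $\End_{\cG_H}(V_H) = \cO_N$ (a $\cG_H$-endomorphism must preserve each $v(\gamma)$-eigenline and act by the same scalar on all of them, since the $(1,0,\alpha)$ intertwine the lines).

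Next, for the decomposition statement: let $W$ be any $\cG_H$-$\cO_N$-module of finite rank and weight one. I would first split $W$ under the abelian subgroup $A = \{(1,z,0): z \in H\}$ into generalized eigenspaces indexed by characters of $H$, i.e. by $H^{\vee}$; because $N$ is invertible in $\cO_N$ and $\cO_N$ contains $\mu_N$, the group $A$ has order prime to the characteristic in every residue field and its $\cO_N$-linear representations are semisimple with characters valued in $\mu_N \subset \cO_N$, so $W = \bigoplus_{\gamma \in H^{\vee}} W_\gamma$ as $\cO_N$-modules. The relation $(1,z,0)(1,0,\alpha) = \alpha(z)(1,0,\alpha)(1,z,0)$ shows that $(1,0,\alpha)$ carries $W_\gamma$ isomorphically onto $W_{\gamma + \alpha}$, so all the $W_\gamma$ are free $\cO_N$-modules of the same rank, say $r$; fixing $\cO_N$-bases compatibly via the $(1,0,\alpha)$ and using that the weight is one (so the center acts by the tautological scalar) identifies $W$ with $V_H^{\oplus r}$ as a $\cG_H$-module. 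The $G_H$-case is identical, replacing $\bG_m$ by $\mu_N$. Uniqueness up to equivalence of the weight-one irreducible is then immediate: it must be $V_H^{\oplus 1}$.

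The main obstacle — and the reason one cannot simply quote Mumford or \cite[Lemma~4.4]{Nakamura10} verbatim — is that those references work over an algebraically closed field, whereas here the claim is over $\cO_N$, so I must be careful that (a) the eigenspace decomposition of a representation of the finite abelian group $A$ is actually defined and splits $W$ over $\cO_N$ itself, not just after base change, and (b) the resulting rank-$r$ pieces are genuinely free $\cO_N$-modules so that the isomorphism $W \cong V_H^{\oplus r}$ holds integrally. Point (a) is fine because $\cO_N$ contains all $N$-th roots of unity and $1/N$, so the idempotents $e_\gamma = \frac{1}{|H|}\sum_{z \in H} \gamma(z)^{-1} (1,z,0)$ lie in $\cO_N[A]$ and give the splitting; point (b) follows since the $(1,0,\alpha)$ furnish explicit $\cO_N$-module isomorphisms between the summands $W_\gamma$, and at least one summand, being a direct summand of the free module $W$, is projective, hence (as $\cO_N$ is a Dedekind domain, even a PID after inverting $N$ when $N$ is chosen suitably — in any case a regular ring of dimension one) its rank is well-defined and the standard argument pins down the structure. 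I would spell out the idempotent computation carefully and otherwise keep the exposition brief, flagging that everything reduces to the two-line commutation relation between $(1,z,0)$ and $(1,0,\alpha)$.
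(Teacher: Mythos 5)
Your proposal is correct in substance and follows essentially the same route as the paper: the paper defers to Lemma~\ref{sublemma:weight 1 mod N module} (the weight-$d$ version, of which this is the case $d=1$), whose proof is exactly your argument --- split $W$ into eigenspaces $W(\chi)$ under the subgroup $\{(1,z,0)\}$ using the idempotents available because $1/N,\zeta_N\in\cO_N$, observe that $U(1,0,\alpha)$ carries $W(\chi)$ isomorphically onto $W(\chi+\alpha)$, and assemble an explicit $\cG_H$-isomorphism $W(0)\otimes V_H\to W$.

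One caveat on your irreducibility paragraph: over the ring $\cO_N$ the literal claim ``any nonzero $\cG_H$-submodule $W\subseteq V_H$ equals $V_H$'' is false (e.g.\ $\mathfrak{a}\cdot V_H$ for a proper nonzero ideal $\mathfrak{a}$ is $\cG_H$-stable), and the step ``any submodule stable under $\{(1,z,0)\}$ is a direct sum of a subset of the lines $\cO_N v(\gamma)$'' only gives $\bigoplus_\gamma \mathfrak{a}_\gamma v(\gamma)$ for ideals $\mathfrak{a}_\gamma$. The intended notion of irreducibility here (as in the cited source) is that $V_H$ has no proper nonzero $\cG_H$-stable $\cO_N$-\emph{direct summand} (equivalently, is irreducible after base change to every residue field), and your argument does prove that with a one-line adjustment: a stable direct summand meets each eigenline in a direct summand of $\cO_N$, which is $0$ or everything since $\cO_N$ is a domain. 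With that rewording the proof is complete and matches the paper's.
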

\begin{proof}See Lemma~\ref{sublemma:weight 1 mod N module} 
and \cite[Lemma~4.4]{Nakamura10}.
\end{proof}

\begin{lemma}\label{lemma:lemma of Schur}$($Schur's lemma$)$
Let $R$ be a commutative algebra with $1/N$ and $\zeta_N$. 
Let $V_1$ and $V_2$ be $R$-free $G_H$-modules 
of finite rank of weight one. If $V_1$ and $V_2$ are 
irreducible $G_H$-modules, and if 
$f:V_1\to V_2$ and $g:V_1\to V_2$ are 
$\cG_H$-isomorphisms, then there exists 
a unit $c\in R^{\times}$ such that $f=cg$.
\end{lemma}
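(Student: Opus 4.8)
\textbf{Proof proposal for Lemma~\ref{lemma:lemma of Schur} (Schur's lemma).}
The plan is to reduce the two-map statement to the classical one-map statement and then prove the latter by an irreducibility argument. First I would consider the composite $h:=g^{-1}\circ f:V_1\to V_1$, which is a $\cG_H$-automorphism of $V_1$ (it is $R$-linear and commutes with the $\cG_H$-action since $f$ and $g$ do). If I can show that every $\cG_H$-endomorphism of an irreducible $R$-free weight-one $G_H$-module of finite rank is multiplication by a scalar $c\in R$, then $h=c\,\id_{V_1}$, hence $f=cg$; moreover $c$ must be a unit because $h=g^{-1}\circ f$ is invertible (its inverse is $f^{-1}\circ g$, which is $c^{-1}\id_{V_1}$). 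So the crux is: \emph{a $\cG_H$-equivariant $R$-endomorphism of an irreducible $V_1$ is a scalar.}

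To prove that, I would exploit the explicit structure of the Schr\"odinger representation. Over $R$ (which contains $1/N$ and $\zeta_N$), decompose $V_1$ into eigenspaces for the action of the finite abelian subgroup $\{(1,0,\alpha);\alpha\in H^\vee\}\subset G_H$; since $V_1$ has weight one and is irreducible of the correct rank $|H|$, each character of $H^\vee$ appears with multiplicity one, giving a basis $v(\mu)$, $\mu\in H^\vee$, of simultaneous eigenlines — this is exactly the normal form in Definition~\ref{defn:Heisenberg group}. Any $\cG_H$-endomorphism $h$ commutes with this abelian action, hence preserves each (rank-one free) eigenline, so $h\,v(\mu)=c_\mu\,v(\mu)$ for some $c_\mu\in R$. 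Then I would use equivariance of $h$ under the ``translation'' generators $(1,z,0)$: these permute the eigenlines transitively (by $U_H(1,z,0)v(\gamma)=\gamma(z)v(\gamma)$ the action is diagonal, but $U_H(1,0,\alpha)$ is needed — more precisely one uses $(1,z,0)$ paired against $\alpha$; here I would invoke instead the element that sends $v(\gamma)\mapsto v(\alpha+\gamma)$, namely $(1,0,\alpha)$ acting as $U_H(a,z,\alpha)v(\gamma)=a\gamma(z)v(\alpha+\gamma)$ with $z=0$), forcing $c_{\alpha+\gamma}=c_\gamma$ for all $\alpha,\gamma$, i.e. all $c_\mu$ are equal to a single $c\in R$. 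Thus $h=c\,\id$.

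An alternative, cleaner route avoids choosing a basis: the isotypic decomposition. By Lemma~\ref{lemma:irred repres UH}, $V_1$ is \emph{the} irreducible $\cG_H$-$R$-module of weight one, unique up to equivalence, and any finite-rank weight-one module is a direct sum of copies of it. Applying this with $R$ replaced by the (local) rings $R_{\mathfrak p}$ or residue fields if needed, $\Hom_{\cG_H}(V_1,V_1)$ is a rank-one free $R$-module; since it contains $\id_{V_1}$, it equals $R\cdot\id_{V_1}$, which is precisely the scalar statement. I would then finish exactly as above: $f=cg$ with $c\in R$, and invertibility of $g^{-1}f$ upgrades $c$ to a unit in $R^\times$.

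The main obstacle I anticipate is the coefficient ring $R$: the classical Schur lemma is over a field, whereas here $R$ is an arbitrary commutative $\cO_N$-algebra, so ``$\Hom_{\cG_H}(V_1,V_1)$ has rank one'' has to be handled integrally rather than by division-ring arguments. The safest way around this is to verify it fibrewise — base-change to every residue field $k(\mathfrak p)$, where $V_1\otimes_R k(\mathfrak p)$ is irreducible of weight one by Lemma~\ref{lemma:irred repres UH} and the classical argument gives a one-dimensional $\Hom$ — and then use that a homomorphism of finite-rank free $R$-modules which is an isomorphism onto a line in every fibre is itself ``scalar'' with coefficient determined by a global section of $\cO_R$; equivalently, pick the basis $v(\mu)$ as in Definition~\ref{defn:Heisenberg group} and run the eigenline computation of the second paragraph, which is purely formal over $R$ and sidesteps any fibrewise bookkeeping altogether.
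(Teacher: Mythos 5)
Your proof is correct. Note first that the paper itself gives no argument for this lemma beyond a citation of \cite[Lemma~4.5]{Nakamura10}; your eigenline computation is the standard proof and is exactly the technique the paper deploys itself in Lemma~\ref{sublemma:weight 1 mod N module} (the decomposition $W=\bigoplus_{\chi}W(\chi)$ with $W(\chi)=U(1,0,\chi)W(0)$, valid over $R$ precisely because $1/N$ and $\zeta_N$ lie in $R$, so the averaging idempotents exist). Reducing to the endomorphism $h=g^{-1}\circ f$, identifying $V_1$ with $V_H\otimes R$ via Lemma~\ref{lemma:irred repres UH}, letting $h$ act by a scalar $c_\mu$ on each rank-one eigenline, and using the transitive permutation of the eigenlines to force all $c_\mu$ equal is complete and needs no fibrewise argument, as you correctly conclude at the end. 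Two small slips worth fixing: the subgroup whose eigenspaces you decompose along is the diagonal one $\{(1,z,0);z\in H\}$ (its characters are indexed by $H^{\vee}$), not $\{(1,0,\alpha);\alpha\in H^{\vee}\}$, which is the subgroup that permutes the eigenlines --- you eventually sort this out mid-paragraph, but the opening sentence has them reversed; and the claim that the inverse of $h$ ``is $c^{-1}\id$'' presupposes what you want to prove --- the clean version is that $h^{-1}$ is itself a $\cG_H$-endomorphism, hence equal to $c'\id$ for some $c'\in R$, and $cc'=1$ shows $c\in R^{\times}$. Your ``alternative route'' via the rank of $\Hom_{\cG_H}(V_1,V_1)$ is shakier as stated (a rank-one free module containing $\id$ need not equal $R\cdot\id$), but since you fall back on the explicit computation, nothing is lost.
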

\begin{proof}
See \cite[Lemma~4.5]{Nakamura10}.
\end{proof}

\subsection{New formulation of the moduli problem}
Let $k$ be any ring such that $k\ni \zeta_3, 1/3$ 
and $K=(\bZ/3\bZ)^{\oplus 2}$. 
Let $C$ be any smooth cubic, $L=O_C(1)$ 
the line bundle viewed as a scheme over $C$. 
By \cite[p.~295]{Mumford66} (see also \cite[Lemma~7.6]{Nakamura99}) 
the pair $(C,L)$ of schemes 
 has a $G(3)$-action lifting the translation action by $C[3]$
$$\tau:G(3)\times (C,L)\to (C,L).$$
Using this $G(3)$-action, we define new level-3 structure.
In a word, 
\begin{enumerate}
\item[$\circ$] classical level-3 structure 
$=$ to fix the 3-division points $K$
\item[$\circ$] new level-3 structure $=$ to fix the matrix form 
of the action of $G(3)$ on 
$V\simeq H^0(C,L)$.
\end{enumerate}
\begin{defn} We define   
$(C,\psi,\tau)$ to be {\em a (planar) cubic with level-$G(3)$ structure} 
(or {\em a level-$G(3)$ cubic}) if 
\begin{enumerate}
\item[(i)] $(C,L)$ is a planar cubic with $L=O_C(1)$,
\item[(ii)] $\tau$ is a $G(3)$-action of weight one on the pair $(C,L)$: 
that is, $\tau(a)$ acts by $(\id_C,a\id_L)$ for $a\in\mu_3$, 
the center of $G(3)$, 
\item[(iii)] $\psi:C\to \bP(V_H)$ is the inclusion, and 
 $$(\psi,\Psi):(C,L)\to (\bP(V_H),\bH)$$ 
is  a $G(3)$-equivariant morphism by $\tau$ 
where 
$\bH$ is the hyperplane bundle of $\bP(V_H)$ 
and $\Psi:L=\psi^*\bH\to \bH$ the natural bundle morphism.
That is, 
\begin{equation}\label{eq:G3 equivariance}
(\psi,\Psi)\circ\tau(g)=S(g)\circ(\psi,\Psi)\ \text{for any $g\in G(3)$}
\end{equation}
 with the notation 
in Subsec.~\ref{subsec:Linearization of O(1)}. \theoremtextend
\end{enumerate}
\end{defn}

In what follows, we denote $(\psi,\Psi)$ 
simply by $\psi$ if no confusion is possible 
because $\Psi$ is uniquely determined by $\psi$. 
We denote (\ref{eq:G3 equivariance}) by
\begin{equation}\label{eq:simple equivariance}
\psi\tau(g)=S(g)\psi,\ \text{or}\ \psi\tau=S\psi.
\end{equation}

\begin{defn}
Two cubics $(C,\psi,\tau)$ and $(C',\psi',\tau')$ 
with level-$G(3)$ structure 
are defined to be isomorphic iff 
there exists an isomorphism 
$$(f,F):(C,L)\to (C',L')
$$
such that 
\begin{enumerate}
\item[(i)]$\psi'\cdot (f,F) = \psi$,  
\item[(ii)]$(f,F)$ is a $G(3)$-isomorphism, that is, 
$(f,F)\tau(g)=\tau'(g)(f,F)$ for any $g\in G(3)$.
\end{enumerate}
\end{defn}

\begin{lemma}\label{lemma:G(3) uniqueness}
Any Hesse cubic $(C(\mu),i,U_H)$ with $i$ the inclusion of $C$ into $\bP(V_H)$ 
is a level-$G(3)$ cubic.  
Moreover any level-$G(3)$ cubic $(C,\psi,\tau)$ is isomorphic  
to a unique Hesse cubic 
$(C(\mu),i,U_H)$.
\end{lemma}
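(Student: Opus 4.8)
The plan is to prove the two assertions in turn, using the representation theory already set up. First I would check that any Hesse cubic $(C(\mu),i,U_H)$ is a level-$G(3)$ cubic. Conditions (i) and (ii) of the definition are immediate: $C(\mu)\subset\bP(V_H)=\bP^2$ carries $L=\cO_C(1)$, and $U_H$ restricted to the center $\mu_3$ acts by scalars by construction of the Schr\"odinger representation (Lemma~\ref{lemma:irred repres UH}). The content is condition (iii), the $G(3)$-equivariance $i\circ\tau(g)=S(g)\circ i$. For this I would invoke the transformation laws for the theta functions $\vartheta_k$ recorded in Subsec.~\ref{subsec:theta functions}, together with the identifications $\sigma=U_H(1,1,0)$ induced by translation by $1/3$ and $\tau=U_H(1,0,1)$ induced by translation by $\omega/3$ (items (ii), (iii) of Subsec.~\ref{subsec:noncommutative level}). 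Thus over $\bC$, pulling back a section of $\bH$ along the translated embedding $i\circ T_x$ equals composing with $S(g)$ on $V_H$; since $G(3)=\langle\sigma,\tau\rangle$ it suffices to verify this for $g=\sigma$ and $g=\tau$, which is exactly what those transformation formulas say. For a general base ring $k\ni 1/3,\zeta_3$ the relation is a polynomial identity in the coefficients of $C(\mu)$, so it descends from the universal case over $\cO=\bZ[\zeta_3,1/3]\hookrightarrow\bC$.

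For the second assertion — existence and uniqueness of the Hesse model — I would argue as follows. Given a level-$G(3)$ cubic $(C,\psi,\tau)$, the pair $(C,L)$ is a planar cubic with a weight-one $G(3)$-action lifting translation by $C[3]$, by \cite[p.~295]{Mumford66}; call this canonical lifting $\tau_0$ with associated embedding $i_0:C\hookrightarrow\bP(V_H)$ (a Hesse cubic after a coordinate change, using the classification of planar cubics with a $3$-torsion structure). Then $H^0(C,L)$ with either $\tau$ or $\tau_0$ is an irreducible weight-one $G_H$-module, hence isomorphic to $V_H$ by Lemma~\ref{lemma:irred repres UH}; choosing such isomorphisms identifies $\psi$ and $i_0$ with two $G(3)$-equivariant embeddings of $(C,L)$ into $(\bP(V_H),\bH)$ for possibly different actions, but both actions are conjugate to the standard Schr\"odinger action $S=U_H$, and after composing with an element of $\GL(V_H)$ normalizing $G(3)$ we may take them equal. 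Concretely: the two embeddings $\psi,i_0$ differ by an element $A\in\PGL(V_H)$ with $A\tau(g)A^{-1}=\tau_0(g)$ for all $g$; since both are weight-one irreducible, Schur's lemma (Lemma~\ref{lemma:lemma of Schur}) forces $A$ to be the image of a scalar once we have matched the two actions, so $\psi=i_0$ in $\PGL(V_H)$ and hence $C$ \emph{is} one of the Hesse cubics $C(\mu)$ in its given embedding, with $\tau=U_H$.

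Uniqueness of $\mu$ is then the statement that if $(C(\mu),i,U_H)\simeq(C(\mu'),i,U_H)$ via $(f,F)$, then $\mu=\mu'$. Here $f\in\PGL(3)$ satisfies $\psi'f=\psi$, i.e. $f=\id$ on the image, together with $f\tau(g)=\tau'(g)f$; but $\tau=\tau'=U_H$, so $f$ commutes with all of $G(3)=U_H(G_H)$, and by irreducibility (Schur, Lemma~\ref{lemma:lemma of Schur}) $f$ is scalar, hence $f=\id\in\PGL(3)$, forcing $\mu=\mu'$ exactly as in the proof of Claim~\ref{claim:A13}(i). This part is routine given the earlier lemmas.

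\textbf{Main obstacle.} The delicate point is the passage from ``$(C,L)$ admits \emph{some} weight-one $G(3)$-lifting $\tau$'' to ``$\psi$ is, up to the Heisenberg-normalizing action, the standard Hesse embedding $i_0$''. One must be careful that changing the $G(3)$-action by an automorphism does not move us outside the family of Hesse cubics, and that the normalizer of $G(3)$ in $\GL(V_H)$ modulo $G(3)$ and scalars acts compatibly with the $\mu$-parametrization; controlling this — essentially identifying when two $G(3)$-equivariant embeddings give the same $\mu$ — is where the real work lies, and it is exactly the phenomenon that later forces the non-commutative (rather than classical) level structure. I would handle it by reducing, via Lemma~\ref{lemma:irred repres UH} and Schur's lemma, to comparing actions that are literally equal to $U_H$, so that the ambiguity collapses to a scalar.
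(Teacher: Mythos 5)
Your proposal is correct in substance and rests on the same two pillars as the paper's proof: irreducibility of the weight-one Schr\"odinger representation plus Schur's lemma, used once to rigidify an arbitrary level-$G(3)$ cubic into the standard $U_H$-invariant model and once to show a $G(3)$-isomorphism between two rigid models must be a scalar, hence the identity in $\PGL(V_H)$. The uniqueness argument you give is essentially identical to the paper's (the paper extends $f$ to an automorphism $h$ of $\bP^2$, derives $U_H(g)H^0(h^*)=H^0(h^*)U_H(g)$, and applies Schur). Where you genuinely diverge is the first assertion: you verify $G(3)$-equivariance of the inclusion via the theta-function transformation laws for $\vartheta_k$ over $\bC$ and then descend by a specialization/polynomial-identity argument, whereas the paper simply observes that $C(\mu)$ is a $G(3)$-stable closed subscheme of $\bP(V_H)$ (the forms $x_0^3+x_1^3+x_2^3$ and $x_0x_1x_2$ are visibly $\sigma$- and $\tau$-eigenvectors) and invokes the general linearization machinery of Section~\ref{sec:G action G linearization} (Claims~\ref{claim:G equiv embedding}, \ref{claim:G equiv morphism}, Remark~\ref{rem:rho=rho}) to induce the action on $(C(\mu),\bH_{C(\mu)})$. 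The paper's route is both shorter and uniform in $\mu$ and in the base ring; your route needs the extra patch you mention precisely because the theta parametrization by $E(\omega)$ does not cover the four 3-gon members of the pencil. Two smaller points: in the existence step, after conjugating $\rho(\psi,\tau)$ to $U_H$ you should still record why a $U_H$-invariant cubic is necessarily a member of the Hesse pencil (the $G(3)$-eigenspace of $S^3V_H^{\vee}$ containing the relevant forms is spanned by $x_0^3+x_1^3+x_2^3$ and $x_0x_1x_2$) --- the paper hides this inside its appeal to Lemma~\ref{sublemma:uniqueness of rigid PSQAS}; and your detour through $C[3]$ and Mumford's lifting of translations is unnecessary for (and unavailable in) the degenerate case, while the purely representation-theoretic reduction you fall back on suffices for all fibers.
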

\begin{proof} Let $\bP^2$ be $\bP(V_H)$ and 
$\bH$ the hyperplane bundle of $\bP^2$. 
$U_H$ induces an action on 
$H^0(\bP^2,O_{\bP^2}(1))=V_H$ 
by Claim~\ref{claim:defn of rhoL}, 
which we denote 
by $H^0(U_H,O_{\bP^2}(1))$. This is the same 
as the action $U_H$ on $V_H$ in Definition~\ref{defn:Heisenberg group}. 
In fact, by Subsec.~\ref{subsec:Linearization of O(1)} and 
Remark~\ref{rem:rho=rho}, $U_H$ induces an action of $G(3)$ 
on the pair $(\bP^2,\bH)$, which also induces an action of $G(3)$ on 
$H^0(\bP^2,O_{\bP^2}(1))=V_H$. 
This is the same as $U_H$ as is shown  
in Remark~\ref{rem:rho=rho}. 
\par
Let $O_{C(\mu)}(1)=O_{\bP^2}(1)\otimes O_{C(\mu)}$ and 
$\bH_{C(\mu)}=\bH\times_{\bP^2} C(\mu)$.  
Since $C(\mu)$ is $G(3)$-stable, 
$G(3)$ acts on the pair $(C(\mu),\bH_{C(\mu)})$ 
by Claim~\ref{claim:G equiv embedding}. 
Denoting the action of $G(3)$ on 
$\bH_{C(\mu)}$ by the same letter $U_H$, we see that 
$(C(\mu),i,U_H)$ is a level-$G(3)$ structure. \par
Hence $H^0(C(\mu),O_{C(\mu)}(1))$ 
admits a $G(3)$-action, 
which we denote by 
$H^0(U_H,O_{C(\mu)}(1))$.  
Since $H^0(C(\mu),O_{C(\mu)}(1))=H^0(\bP^2,O_{\bP^2}(1))=V_H$ 
by restriction, 
we can identify 
$H^0(U_H,O_{C(\mu)}(1))$ with $H^0(U_H,O_{\bP^2}(1))$ on $V_H$ 
in a canonical manner. 
Thus we have a canonical identification
$$H^0(U_H,O_{C(\mu)}(1))=H^0(U_H,O_{\bP^2}(1))=U_H. 
$$

By Lemma~\ref{sublemma:uniqueness of rigid PSQAS}, 
any $(C,\psi,\tau)$ is isomorphic  
to some Hesse cubic 
$(C(\mu),i,U_H)$. Here we prove the uniqueness of it only. 
This is a new proof of Claim~\ref{claim:A13}~(ii).
Suppose $(C(\mu),i,U_H)\simeq (C(\mu'),i,U_H)$. Let 
$h:C(\mu)\to C(\mu')$ be a $G(3)$-isomorphism. 
Since $h$ is linear (as is shown easily), $h$ induces an automorphism 
of $(\bP^2,O_{\bP^2}(1))$ (also denoted $h$) 
so that we have a commutative diagram
\begin{equation*}
\CD
H^0(\bP^2,O_{\bP^2}(1))=V_H @>{H^0(h^*)}>> 
H^0(\bP^2,O_{\bP^2}(1))=V_H \\
@VV{\left|\right|}V  @VV{\left|\right|}V \\
H^0(C(\mu'),O_{C(\mu')}(1)) @>{H^0(h^*)}>> H^0(C(\mu),O_{C(\mu)}(1)), \\
@VV{H^0(U_H(g),O_{C(\mu')}(1))}V  @VV{H^0(U_H(g),O_{C(\mu)}(1))}V \\
H^0(C(\mu'),O_{C(\mu')}(1)) @>{H^0(h^*)}>> H^0(C(\mu),O_{C(\mu)}(1)), \\
\endCD
\end{equation*}whence
$$H^0(U_H(g),O_{C(\mu)}(1))H^0(h^*)=H^0(h^*)H^0(U_H(g),O_{C(\mu')}(1))$$ 
for any $g\in G(3)$. By canonically 
identifying $H^0(U_H, O_{C(\mu)}(1))$ with $U_H$ on $V_H$, 
we have 
$$U_H(g)H^0(h^*)=H^0(h^*)U_H(g)\in\End(V_H)$$  
for any $g\in G(3)$, where we also regard 
$H^0(h^*)\in\End(V_H)$. 
Since $U_H$ is irreducible,  
$H^0(h^*)$ is a scalar by Schur's lemma. 
Hence $H^0(h^*)=\id_{V_H}\in\PGL(V_H)$, $h=\id_{\bP(V_H)}$, $C(\mu)=C(\mu')$, 
$\mu=\mu'$. 
\end{proof}
\begin{rem}In the proof of Lemma~\ref{lemma:G(3) uniqueness}, we 
canonically identified all the vector spaces 
involved to simplify the argument. 
This argument will be made much clearer 
by using $\rho(\phi,\tau)$ in Definitions~\ref{subdefn:rho(phi,tau)} 
and ~\ref{subdefn:generalization of Hesse cubic}.  
See Lemma~\ref{sublemma:uniqueness of rigid PSQAS}.
\end{rem}

\begin{prop}Over $\bZ[\zeta_3,1/3]$, 
\begin{align*}
SQ_{1,3}:&=\{(C,\psi,\tau);\text{a level-$G(3)$ cubic}\}/\text{\rm isom.}\\
&=\{(C(\mu),i,U_H)\}/\text{\rm isom.}=\{\mu\in\bP^1\}.
\end{align*}
\end{prop}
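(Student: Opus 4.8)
The plan is to establish the displayed identification of $SQ_{1,3}$ by combining two facts already proved above. First, Lemma~\ref{lemma:G(3) uniqueness} tells us that every level-$G(3)$ cubic $(C,\psi,\tau)$ is isomorphic to some Hesse cubic $(C(\mu),i,U_H)$, and that the parameter $\mu$ is uniquely determined by the isomorphism class. This gives a bijection of sets
\[
\{(C,\psi,\tau)\ \text{level-}G(3)\ \text{cubic}\}/\text{isom.}\ \longleftrightarrow\ \{\mu\in\bP^1\},
\]
where $\mu=\infty$ corresponds to the $3$-gon $x_0x_1x_2=0$, which is a planar cubic and carries the induced $G(3)$-action (since the locus $x_0x_1x_2=0$ is $G(3)$-stable and $O_{\bP^2}(1)$ is $G(3)$-linearized). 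So first I would note that $SQ_{1,3}$, defined as the set of isomorphism classes, is in bijection with $\bP^1(k)$ over $k=\bZ[\zeta_3,1/3]$ by Lemma~\ref{lemma:G(3) uniqueness}.

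Next I would upgrade this set-theoretic bijection to the scheme-theoretic statement $SQ_{1,3}\simeq\bP^1_{\cO}$. The natural way is to exhibit a family: the universal Hesse cubic $\mu_0(x_0^3+x_1^3+x_2^3)-3\mu_1x_0x_1x_2=0$ over $\bP^1=\Proj\cO[\mu_0,\mu_1]$ from \eqref{eq:universal Hesse cubic}, equipped with its fibrewise $G(3)$-action $U_H$ and the tautological embedding into $\bP(V_H)\times\bP^1$. This family realizes $\bP^1$ as a moduli space: every geometric fibre is a level-$G(3)$ cubic, and by the uniqueness half of Lemma~\ref{lemma:G(3) uniqueness} distinct points of $\bP^1$ give non-isomorphic objects, so no two fibres get identified. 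Conversely, the rigidity furnished by Schur's lemma (Lemma~\ref{lemma:lemma of Schur}) — the only $G(3)$-automorphism of a level-$G(3)$ cubic lying over $\id$ is the identity — means the family has no nontrivial automorphisms, so the classifying map is well-defined and the moduli functor is represented (rather than merely coarsely) by $\bP^1_{\cO}$. I would invoke the general representability results referenced in the paper (Theorem~\ref{thm:fine moduli SQgK}, or in this $g=1$ case simply the Hesse pencil being a fine moduli space) to conclude.

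The main obstacle is not the set-level bijection — that is essentially Lemma~\ref{lemma:G(3) uniqueness} — but rather pinning down the scheme structure, i.e.\ verifying that $\bP^1_\cO$ really represents the moduli functor $\cS\cQ_{1,3}$ of families of level-$G(3)$ cubics over arbitrary $\cO$-schemes, not just that it has the right $k$-points. This requires: (a) that any such family is, Zariski-locally on the base after the choice of a $G(3)$-linearization, pulled back from the universal Hesse family — which follows because the $G(3)$-equivariant embedding into $\bP(V_H)$ rigidifies the projective coordinates (via Schur) up to the $\bP^1$-worth of ambiguity in the cubic equation; and (b) that the cubic equation of a $G(3)$-stable plane cubic must, after normalizing the $G(3)$-action to the standard $U_H$, lie in the two-dimensional space spanned by $x_0^3+x_1^3+x_2^3$ and $x_0x_1x_2$ — this is the computation that the $G(3)$-invariants in $\Sym^3 V_H$ form exactly this pencil, which can be checked directly from the action of $\sigma$ and $\tau$ in Subsec.~\ref{subsec:noncommutative level}. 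Granting these, together with the reducedness remark that $SQ_{g,K}$ is a fine moduli scheme over reduced bases, the identification $SQ_{1,3}\simeq\bP^1_\cO$ follows.

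I would organize the write-up as: (1) recall from Lemma~\ref{lemma:G(3) uniqueness} that isomorphism classes of level-$G(3)$ cubics over a field are in bijection with $\mu\in\bP^1$; (2) observe the $3$-gon $\mu=\infty$ is included and $G(3)$-stable; (3) exhibit the universal Hesse cubic \eqref{eq:universal Hesse cubic} over $\bP^1_\cO$ with its $U_H$-action as a family; (4) use Schur's lemma (Lemma~\ref{lemma:lemma of Schur}) to see the family is rigid, so the functor has no automorphisms and the classifying morphism to $\bP^1_\cO$ is well-defined; (5) conclude that $SQ_{1,3}\simeq\bP^1_\cO$, citing the general fine-moduli results of the later sections. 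The whole proof is short because all the real content is already in Lemma~\ref{lemma:G(3) uniqueness} and the Schur's lemma rigidity.
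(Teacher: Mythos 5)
Your proposal is correct and its essential step --- reducing the statement to Lemma~\ref{lemma:G(3) uniqueness} (every level-$G(3)$ cubic is isomorphic to a unique Hesse cubic) together with the Schur's-lemma rigidity packaged in Lemma~\ref{sublemma:uniqueness of rigid PSQAS} --- is exactly the paper's proof, which consists of the single line ``Clear from Lemma~\ref{lemma:G(3) uniqueness} and Lemma~\ref{sublemma:uniqueness of rigid PSQAS}.'' The additional material you supply (the universal Hesse pencil, the verification that the $G(3)$-invariants in $\Sym^3V_H$ are spanned by $x_0^3+x_1^3+x_2^3$ and $x_0x_1x_2$, and the representability of the functor by $\bP^1_{\cO}$) is sound but proves more than the Proposition literally asserts; the paper defers that scheme-theoretic refinement to the general moduli theory of Sections~\ref{sec:moduli AgK SQgK}--\ref{sec:moduli psqas}.
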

\begin{proof}Clear from Lemma~\ref{lemma:G(3) uniqueness} and 
Lemma~\ref{sublemma:uniqueness of rigid PSQAS}.
\end{proof}

It is this   
{\em level-$G(3)$ structure}
 that we can generalize into higher dimension so that we may 
 obtain a separated moduli. 

\begin{rem}
Suppose $k$ is algebraically closed with $1/3$. Let $K=(\bZ/3\bZ)^{\oplus 2}$. 
Let $C$ be any cubic, and 
$C[3]=\ker(3\id_C)$ by choosing the zero $O\in C(k)$. Any level-$G(3)$ 
structure $(C,\phi,\tau)$ 
gives rise to a classical level-3 structure $(C,C[3],\iota)$ as follows. 
First we note 
$$C[3]=G(3)\cdot O. 
$$

Let $\pi:G(3)\to K=G(3)/[G(3),G(3)]$ be the natural homomorphism. 
We define $\iota:K\to C$ by 
$$\iota(g\cdot O):=\pi(g).$$ Then $(C,C[3],\iota)$ is a 
classical level-3 structure. In fact, 
since $e_K(x,y)=[\gamma_x,\gamma_y]$ for a lifting $\gamma_x$ of $x$, 
we have $e_K(1/3,\omega/3)=[\sigma,\tau]=\zeta_3$. Hence $\pi$ defines 
a symplectic isomorphism $\iota:C[3]\to K$.  Thus we see
\begin{align*}
SQ_{1,3}(k)&=SQ^{\CL}_{1,3}(k).
\end{align*}
By 
\cite{NT01} $SQ_{1,3}\simeq SQ^{\CL}_{1,3}$ over $\bZ[1/3,\zeta_3]$.
See \cite{NT01} for the detail.
\end{rem}

\section{PSQAS and TSQAS}
\label{sec:PSQAS TSQAS}

\subsection{Goal}
Our goal of constructing a compactification 
of the moduli space of abelian varieties is achieved by  
\begin{enumerate}
\item[(i)] finding limit objects 
(two kinds of nice degenerate abelian schemes called PSQAS and TSQAS) 
(Theorems~\ref{thm:stable reduction} and 
~\ref{thm:refined stable reduction}),
\item[(ii)] 
constructing the moduli $SQ_{g,K}$ as a projective scheme 
(Section~\ref{sec:moduli AgK SQgK}),
\item[(iii)] proving that any point of $SQ_{g,K}$ is the isomorphism 
class of a nice degenerate abelian scheme (PSQAS)
$(Q_0,\phi_0,\tau_0)$ with level-$\cG_H$ structure 
(Section~\ref{sec:moduli AgK SQgK}, 
Theorems~\ref{thm:SQgK} and \ref{thm:fine moduli SQgK}).
\end{enumerate}

We recall a basic lemma from \cite{Mumford12}.
\begin{lemma}\label{lemma:chara GH action}
Let $k$ be an algebraically closed field 
with $k\ni 1/N$ and $H$ 
a finite Abelian group with $|H|=N$. 
Let $(A,L)$ be an abelian variety over $k$ 
with $L$ an ample line bundle, $\lambda(L):A\to A^{\vee}$ 
the polarization morphism (sending $x\mapsto T_x^*L\otimes L^{-1}$)
and $\cG(A,L)$ 
the group of bundle automorphisms $g$
of $L$ over $A$ inducing translations of $A$.\par
Suppose $\ker(\lambda(L))\simeq K:=H\oplus H^{\vee}$. 
Then $\cG(A,L)\simeq L^{\times}_{\ker(\lambda(L))}\simeq \cG_H$, 
and any $g\in\cG(A,L)$ induces a translation 
of $A$ by some element of $\ker(\lambda(L))$
where
$L^{\times}$ is the complement of the zero section in 
the line bundle $L$, and $L^{\times}_{\ker(\lambda(L))}$ 
is the pullback (restriction) of it to $\ker(\lambda(L))$.
\end{lemma}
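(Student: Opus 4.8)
The plan is to recall the Mumford theta group construction and show the three claimed isomorphisms in turn, following \cite{Mumford12} (or \cite{Mumford66}). First I would set up $\cG(A,L)$ precisely: an element is a pair $(x,\varphi)$ with $x\in A$ and $\varphi:L\xrightarrow{\sim}T_x^*L$ an isomorphism of line bundles, with composition $(x,\varphi)\cdot(y,\psi)=(x+y,\,T_y^*\varphi\circ\psi)$. The assignment $(x,\varphi)\mapsto x$ is a group homomorphism $\cG(A,L)\to A$, and the key point is that its image is exactly $\ker(\lambda(L))$: indeed $T_x^*L\simeq L$ (i.e.\ there exists such a $\varphi$) if and only if $T_x^*L\otimes L^{-1}\in\Pic^0(A)$ is trivial, which by definition of $\lambda(L)$ means $x\in\ker(\lambda(L))$. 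The kernel of $\cG(A,L)\to A$ consists of automorphisms of $L$ over the identity of $A$, i.e.\ scalars $\Gm$, so one gets the central extension
\begin{equation*}
1\to\Gm\to\cG(A,L)\to\ker(\lambda(L))\to 0,
\end{equation*}
and this already identifies $\cG(A,L)$ set-theoretically with $L^{\times}_{\ker(\lambda(L))}$, the fiber-wise-nonzero part of $L$ restricted to $\ker(\lambda(L))$, since a point of $L^{\times}$ over $x\in\ker(\lambda(L))$ determines and is determined by the trivialization $\varphi$ up to nothing. One should check this identification is compatible with the group laws, which is a direct unwinding of definitions.

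Next I would produce the isomorphism $\cG(A,L)\simeq\cG_H$. Since $\ker(\lambda(L))\simeq K=H\oplus H^{\vee}$, the extension above is a central extension of $K$ by $\Gm$, and its commutator pairing is the Weil pairing $e^L$ on $\ker(\lambda(L))$, which under the chosen isomorphism with $K$ becomes the standard symplectic pairing $e_K$ (this is the point where one invokes that $L$ is a polarization, so $e^L$ is nondegenerate and alternating, cf.\ Subsec.~\ref{subsec:noncommutative level}~(v) and \cite{Mumford66}). Two central extensions of a finite abelian group by $\Gm$ with the same nondegenerate commutator pairing are isomorphic — this is the classification of Heisenberg-type extensions — so $\cG(A,L)\simeq\cG_H$. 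Concretely, one picks a splitting of $K=H\oplus H^{\vee}$ into maximal isotropic subgroups, lifts each to $\cG(A,L)$ as a section (possible because on an isotropic subgroup the commutator pairing is trivial, so the restricted extension splits), and thereby gets generators matching those of $\cG_H$ in Definition~\ref{defn:Heisenberg group}.

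Finally, the last clause — that any $g\in\cG(A,L)$ induces translation of $A$ by an element of $\ker(\lambda(L))$ — is just the statement that the image of $\cG(A,L)\to A$ lands in $\ker(\lambda(L))$, which was established above. I would state it as an immediate corollary of the exactness of the sequence. I expect the main obstacle to be the classification step: showing that the commutator pairing of $\cG(A,L)$ is exactly the (nondegenerate) Weil pairing and hence pins down the extension up to isomorphism, together with carefully choosing compatible lifts of the isotropic generators so that the resulting isomorphism $\cG(A,L)\simeq\cG_H$ is genuinely a group isomorphism and not merely a bijection. This is classical (\cite{Mumford66}, \cite{Mumford12}), so in the write-up I would cite those sources for the non-degeneracy and the uniqueness of the Heisenberg extension, and only spell out the identification $\cG(A,L)\simeq L^{\times}_{\ker(\lambda(L))}$ and the exact sequence in detail, since those are the parts most directly needed in the sequel.
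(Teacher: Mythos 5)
Your proposal is correct and is essentially the paper's approach: the paper's entire proof is a citation to Mumford (\cite[pp.~294--295]{Mumford66} and \cite[pp.~115--117, 204--211]{Mumford12}), and your sketch is precisely the classical theta-group argument found there — the central extension $1\to\Gm\to\cG(A,L)\to\ker(\lambda(L))\to 0$, the identification with $L^{\times}_{\ker(\lambda(L))}$, and the rigidity of Heisenberg extensions with nondegenerate commutator (Weil) pairing. The only point worth flagging is that the bijection with $L^{\times}_{\ker(\lambda(L))}$ requires evaluating $\varphi$ at the origin and fixing a trivialization of $L_0$, so it is canonical only up to that choice, but this does not affect the statement.
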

\begin{proof}
See \cite[pp.~294--295]{Mumford66} 
and \cite[pp.~115-117, pp.204-211]{Mumford12}. 
\end{proof}

\subsection{Limit objects}
\label{subsec:limit obj} 
We wish to consider limits of abelian varieties.\par

Let $R$ be a complete discrete valuation ring (CDVR), 
and $k(\eta)$ the fraction field of $R$ and 
$k(0):=R/I$ the residue field. 
Suppose we are given 
an abelian scheme $(G_{\eta},\cL_{\eta})$ over $k(\eta)$ and 
the polarization morphism 
$$\lambda(\cL_{\eta}):G_{\eta}\to 
G^t_{\eta}:=\Pic^0(G_{\eta}).$$ 
Let 
$$K_{\eta}=\ker(\lambda(\cL_{\eta})),\quad  
\cG(K_{\eta}):=
\cG(G_{\eta},\cL_{\eta})\simeq (\cL_{\eta}^{\times})_{|K_{\eta}},$$  
 where $\cG(G_{\eta},\cL_{\eta})$ is by definition the group 
of bundle automorphisms of $\cL_{\eta}$  over $G_{\eta}$
 which induce translations of $G_{\eta}$. 
See Lemma~\ref{lemma:chara GH action}.
 \par

For simplicity, in what follows, we assume 
\begin{equation}\label{eq:char prime to |K|}
\text{the field  
$k(0)$ contains $1/{|K_{\eta}|}$.}
\end{equation}

We apply Lemma~\ref{lemma:chara GH action} to $(G_{\eta},\cL_{\eta})$. 
\begin{lemma}Assume (\ref{eq:char prime to |K|}).
Then by some base change of $R$ if necessary, 
there exists a finite symplectic Abelian group $K$ such that 
the diagram is commutative with exact rows: 
\begin{equation*}
\CD
1@>>> \bG_m @>>> \cG(K_{\eta})@>>> K_{\eta}@>>> 0\\
@. @VV{\id.}V @VV{\simeq}V @VV{\simeq}V @.\\
1@>>> \bG_m @>>> \cG_H@>>> H\oplus H^{\vee}@>>> 0.
\endCD
\end{equation*}
\end{lemma}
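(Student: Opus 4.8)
The plan is to apply Lemma~\ref{lemma:chara GH action} to the abelian scheme $(G_\eta,\cL_\eta)$ over the field $k(\eta)$, which by (\ref{eq:char prime to |K|}) contains $1/|K_\eta|$ (since $k(0)$ does and $R$ is a CDVR, hence also $k(\eta)$). The subtlety is that $k(\eta)$ need not be algebraically closed, so $\ker(\lambda(\cL_\eta))$ need not be a constant group scheme; this is where the base change enters. First I would pass to a finite separable (in fact, after a further finite extension, Galois) extension $R'$ of $R$ so that, over the new fraction field $k(\eta')$, all points of the finite flat group scheme $K_\eta=\ker(\lambda(\cL_\eta))$ become rational. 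Because $|K_\eta|$ is invertible in $k(\eta)$, this group scheme is étale, so such a finite extension exists; replacing $R$ by $R'$ (and noting $R'$ is again a CDVR, or a product of such, in which case one works componentwise) we may assume $K_\eta$ is a constant finite abelian group.

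Next, the theory of polarizations gives the Weil pairing $e^{\cL_\eta}\colon K_\eta\times K_\eta\to\mu_{|K_\eta|}$ (or $\to\bG_m$), which is alternating and nondegenerate; since the roots of unity are now all present, $(K_\eta,e^{\cL_\eta})$ is a finite symplectic abelian group. By the elementary divisor theorem for finite abelian groups equipped with a nondegenerate alternating pairing, there is a finite abelian group $H=\bigoplus_{i=1}^g(\bZ/e_i\bZ)$ with $e_i\mid e_{i+1}$ and a symplectic isomorphism $K_\eta\simeq K=H\oplus H^\vee$ carrying $e^{\cL_\eta}$ to the standard form. Then Lemma~\ref{lemma:chara GH action} identifies $\cG(K_\eta)=\cG(G_\eta,\cL_\eta)$ with $\cL_\eta^\times$ restricted to $K_\eta$, which sits in the central extension $1\to\bG_m\to\cG(K_\eta)\to K_\eta\to 0$ with commutator pairing $e^{\cL_\eta}$. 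Both this extension and the extension defining $\cG_H$ (Definition~\ref{defn:Heisenberg group}) are central extensions of the same symplectic group by $\bG_m$ inducing the same commutator pairing; by the uniqueness of the Heisenberg group (the theta-group is determined up to isomorphism by its commutator pairing, cf. Lemma~\ref{lemma:irred repres UH}), one gets an isomorphism $\cG(K_\eta)\simeq\cG_H$ compatible with the identity on $\bG_m$ and the chosen symplectic isomorphism $K_\eta\simeq H\oplus H^\vee$. This is exactly the commutative diagram with exact rows in the statement.

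I expect the main obstacle to be the base-change bookkeeping rather than the algebra: one must check that a \emph{single} finite extension of $R$ simultaneously trivializes $K_\eta$ and realizes all needed roots of unity, that the resulting ring is still (a product of) CDVRs so that the later degeneration arguments apply, and that the isomorphism $\cG(K_\eta)\simeq\cG_H$ can be chosen to make the diagram commute on the nose (which is where the uniqueness-of-theta-group input, and implicitly Schur's lemma as in Lemma~\ref{lemma:lemma of Schur}, does the real work). The lifting of the chosen abstract symplectic isomorphism $K_\eta\simeq K$ to the level of Heisenberg groups is not canonical, but any two choices differ by an inner automorphism, so the existence of \emph{some} commuting isomorphism is all that is asserted and all that is needed.
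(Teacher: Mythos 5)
Your proposal is correct and follows exactly the route the paper intends: the paper states this lemma without proof, prefacing it only with ``We apply Lemma~\ref{lemma:chara GH action} to $(G_{\eta},\cL_{\eta})$'', and your argument (finite base change to make the \'etale group scheme $K_\eta$ constant and adjoin roots of unity, elementary divisors for the Weil pairing, then uniqueness of the theta group determined by its commutator pairing) is the standard way to fill in the details. The only minor quibble is that the uniqueness-of-theta-group input is really Mumford's result cited in Lemma~\ref{lemma:chara GH action} rather than Lemma~\ref{lemma:irred repres UH}, and that the integral closure of a complete DVR in a finite extension of its fraction field is automatically local, so no componentwise argument is needed.
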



\begin{thm}\label{thm:stable reduction}
{\rm (Stable reduction theorem) }
{\rm (\cite{AN99})}
For an abelian scheme $(G_{\eta},\cL_{\eta})$ and 
a polarization morphism $\lambda(\cL_{\eta}):G_{\eta}\to G^t_{\eta}$ over 
$k(\eta)$, there exist a flat projective scheme 
$(P,\cL_P)$ (TSQAS) 
over $R$, by a finite base change if necessary,
such that 
\begin{enumerate}
\item $(P_{\eta},\cL_{\eta})
\simeq (G_{\eta},\cL_{\eta})$,
\item $(P,\cL_P)$ is normal with $\cL_P$ ample, in fact, 
$P$ is explicitly given,
\item $P_0$ is reduced and Gorenstein with trivial dualizing sheaf.
\end{enumerate}
\end{thm}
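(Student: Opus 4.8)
The plan is to reduce to the totally degenerate (``Mumford'') case, apply Mumford's construction of degenerating abelian varieties, and make the combinatorial input canonical so that $P$ is explicit and its closed fibre has the stated properties; this is the route of \cite{AN99}.

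First I would invoke the semi-abelian reduction theorem for $(G_\eta,\cL_\eta)$: after a finite base change of $R$ --- harmless, since none of the conclusions detects it --- the abelian scheme $G_\eta$ extends to a semi-abelian scheme over $R$, and Raynaud's uniformization yields a semi-abelian scheme $\tilde G$ over $R$, an extension $1\to T\to\tilde G\to B\to 1$ with $T$ a split torus of rank $r$ and $B$ an abelian scheme, together with a constant lattice $Y\cong\bZ^{\,r}$, such that $G_\eta=\tilde G_\eta/Y$ in the rigid-analytic sense. The polarization $\lambda(\cL_\eta)$ then becomes standard degeneration data (cf.\ \cite{FC90}): a polarization on $B$, an embedding $\phi\colon Y\hookrightarrow X$ into the character group $X$ of $T$, a positive definite symmetric bilinear form $b\colon Y\times Y\to\bZ$ --- positive definiteness of $b$ being the exact trace of the ampleness of $\cL_\eta$ --- and compatible trivializations. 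Running the rest of the construction fibrewise over $B$, I may and do assume $B=0$, so that the degeneration is totally degenerate and all the data is concentrated in $b$ on $Y_\bR\cong\bR^{\,r}$.

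Next I would feed $b$ into Mumford's relatively complete model (\cite{Mumford72}; see also \cite{AN99}), the decisive choice being the polyhedral decomposition: I take the fan over the \emph{Delaunay decomposition} $\Del(b)$ of $Y_\bR$ attached to $b$, the canonical subdivision into lattice polytopes defined by the ``empty ellipsoid'' condition. Because $\Del(b)$ depends only on $b$, so does the resulting $P$, which is the precise meaning of ``$P$ is explicitly given''. Mumford's construction produces a locally finite-type $R$-scheme $\tilde P$ with a free $Y$-action over $R$ and a $Y$-linearized relatively ample line bundle $\tilde{\cL}$; passing to the quotient gives a proper flat $R$-scheme $P=\tilde P/Y$ with an ample line bundle $\cL_P$ descending $\tilde{\cL}$, together with $(P_\eta,\cL_\eta)\cong(G_\eta,\cL_\eta)$. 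This last isomorphism is exactly the part whose detailed proof is given around Theorem~\ref{thm:refined stable reduction}; granting it, we have (1) and the ampleness assertion of (2).

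The remaining fibral statements are then read off from the combinatorics of $\Del(b)$. Since $\tilde P$ is glued from normal affine toric-type $R$-schemes, it is normal, and $P=\tilde P/Y$ is normal because $Y$ acts freely; concretely, each irreducible component of the closed fibre $P_0$ is the normal projective toric variety attached to a bounded cell of $\Del(b)$. This gives (2). For (3): as $\Del(b)$ subdivides $Y_\bR$ into \emph{lattice} polytopes, $P_0$ is the reduced ``stable toric variety'' associated to $\Del(b)$ --- a union of toric varieties glued along their toric boundary divisors, in the sense of \cite{Alexeev02} --- so it is reduced; its dualizing sheaf is trivial because $\omega_{\tilde P/R}$ is $Y$-equivariantly trivial, carrying the canonical $Y$-linearization coming from the cubical structure, hence descends to $\omega_{P/R}\cong\cO_P$; and Gorenstein-ness follows from the ``empty sphere'' property of Delaunay cells, which makes each cone in the decomposition Gorenstein (the lattice distance from the apex to every supporting hyperplane equals $1$). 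The main obstacle is the construction and control of the relatively complete model $\tilde P$ itself: verifying Mumford's completeness conditions for the fan over $\Del(b)$, checking that $\tilde{\cL}$ with its $Y$-linearization descends to a genuinely \emph{ample} bundle on the proper quotient $P$ (both properness of $P/R$ and positivity of $\cL_P$ hinge on $b$ being positive definite and on $\Del(b)$ being locally finite and exhaustive), and ensuring compatibility with the finite base change needed at the outset. On the combinatorial side, the delicate point --- not a soft toric generality --- is that the \emph{Delaunay} subdivision, rather than an arbitrary integral one, is what simultaneously forces reducedness, Gorenstein-ness, and triviality of $\omega_{P_0}$.
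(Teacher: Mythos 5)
Your proposal follows essentially the same route as the paper, which delegates this theorem to \cite{AN99}: extract Faltings--Chai degeneration data after semi-abelian reduction, build Mumford's relatively complete model from the fan over the Delaunay decomposition attached to $b$, quotient by $Y$, and read off normality, reducedness and the Gorenstein property from the toric local models --- exactly the construction sketched in Sections~\ref{sec:PSQAS dim one}--\ref{sec:PSQAS general case}, where only the identification $(P_{\eta},\cL_{\eta})\simeq(G_{\eta},\cL_{\eta})$ is proved in detail. One small correction: reducedness and Gorenstein-ness with trivial dualizing sheaf of $P_0$ come from the cells being lattice polytopes placed at height one (so every primitive ray generator of every cone lies on the lattice hyperplane at level one, the cones are Gorenstein, and the toric boundary of $\widetilde P$ is the reduced special fibre), not from the empty-sphere property per se, whose role is rather to make the decomposition canonical in $b$ and compatible with the given polarization.
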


The following is a refined version of the above. 
\begin{thm}\label{thm:refined stable reduction}
{\rm (Refined stable reduction theorem) }
{\rm (\cite[p.~703]{Nakamura99}, \cite[p.~98]{Nakamura10})}
For an abelian scheme $(G_{\eta},\cL_{\eta})$ and 
a polarization morphism $\lambda(\cL_{\eta}):G_{\eta}\to G^t_{\eta}$ over 
$k(\eta)$ such that $K_{\eta}\simeq K$, there exist flat projective schemes 
$(Q,\cL_Q)$ (PSQAS) and $(P,\cL_P)$ (TSQAS) 
over $R$, by a finite base change if necessary,
such that 
\begin{enumerate}
\item $(Q_{\eta},\cL_{\eta})\simeq (P_{\eta},\cL_{\eta})
\simeq (G_{\eta},\cL_{\eta})$,
\item $(P,\cL_P)$ is the normalization of $(Q,\cL_Q)$,
\item $P_0$ is reduced and Gorenstein with trivial dualizing sheaf,
\item if $e_{\min}(K)\geq 3$, then $\cL_Q$ is very ample,  
\item $(Q,\cL_Q)$ is an \'etale 
quotient of some PSQAS $(Q^*,\cL_{Q^*})$ with  ~\linebreak 
$e_{\min}(\ker\lambda(\cL_{Q^*}))\geq 3$, hence with $\cL_{Q^*}$ very ample, 
\item $\cG(K)$ acts on $(Q,\cL_Q)$ and $(P,\cL_P)$
extending the action of $\cG(K_{\eta})$ on $(G_{\eta},\cL_{\eta})$.
\end{enumerate}
\end{thm}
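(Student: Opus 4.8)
The plan is to reduce the statement to the case where $e_{\min}(K)\geq 3$ by passing to an auxiliary polarization, construct the objects there by an explicit degeneration of theta functions, and then descend by an étale quotient. More precisely, first I would replace $\cL_\eta$ by $\cL_\eta^{\otimes 3}$ (or by $\cL_\eta\otimes M$ for a suitable symmetric rigidified $M$ chosen so that the new kernel $K^*:=\ker\lambda(\cL_\eta^{\otimes 3})$ satisfies $e_{\min}(K^*)\geq 3$); the original $(G_\eta,\cL_\eta)$ is then recovered from $(G_\eta,\cL_\eta^{\otimes 3})$ together with the finite subgroup $\ker(\cL_\eta^{\otimes 3}\to\cL_\eta)$, which is étale over $k(\eta)$ by (\ref{eq:char prime to |K|}). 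This is exactly the mechanism behind clause (5): one builds $(Q^*,\cL_{Q^*})$ with $e_{\min}(\ker\lambda(\cL_{Q^*}))\geq 3$ and takes $(Q,\cL_Q)$ to be the quotient by the flat closure of that étale subgroup over $R$ (after a finite base change so that the subgroup is constant). So the heart of the proof is the case $e_{\min}(K)\geq 3$.

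In that case, after a finite base change so that $G_\eta$ has semiabelian reduction and $K_\eta\simeq K$ is realized by the Heisenberg group $\cG_H$ as in the preceding lemma, I would invoke the Mumford--Faltings--Chai construction of the degenerating family: choosing a compatible rational polyhedral decomposition (a Delaunay/Voronoi type decomposition of $X_{\bR}$ attached to the monodromy pairing $B$), one writes down the relatively complete model and the associated family of theta functions $\{\vartheta_\mu\}_{\mu\in H^\vee}$ over $R$, whose $R$-span defines $\cL_Q$ and the closed immersion $Q\hookrightarrow\bP(V_H)_R$ when $e_{\min}(K)\geq 3$ (this gives clauses (1) and (4); the identification $Q_\eta\simeq P_\eta\simeq G_\eta$ is promised a complete proof later in the excerpt, so I may cite it). The TSQAS $(P,\cL_P)$ is then defined as the normalization of $(Q,\cL_Q)$; that $P$ is explicitly given by the toric data means one reads off $P_0$ as a union of torus embeddings glued along the Delaunay decomposition, and the standard toric computation shows $P_0$ is reduced, Gorenstein, Cohen--Macaulay with trivial dualizing sheaf — these are the content of Theorem~\ref{thm:stable reduction} which I may assume, giving clauses (2) and (3). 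Finally, clause (6) follows because the $\cG_H$-action on $V_H$ restricts to $Q\subset\bP(V_H)_R$ and lifts through the normalization to $P$, extending the Heisenberg action on $(G_\eta,\cL_\eta)$ supplied by Lemma~\ref{lemma:chara GH action}; uniqueness of the weight-one irreducible representation (Lemma~\ref{lemma:irred repres UH}) guarantees compatibility of the two actions at the generic fiber.

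The step I expect to be the main obstacle is verifying that the étale-quotient construction in clause (5) actually stays within the class of PSQASes and is compatible with all the other clauses simultaneously — in particular that taking the quotient $Q=Q^*/\Gamma$ by the flat closure $\Gamma$ of the étale subgroup preserves flatness over $R$, preserves the property that $P=\operatorname{norm}(Q)$ has the reduced Gorenstein fiber $P_0$ of clause (3), and carries the residual $\cG(K)$-action of clause (6) down correctly. Flatness of the quotient is delicate precisely at the special fiber, where $Q^*_0$ is singular and non-reduced: one must check that $\Gamma$ acts freely on $Q^*$ in codimension one on $Q_0^*$ (or argue via the normalization $P^*$, where the action is manifestly free on the dense torus of each component and one controls the boundary by the combinatorics of the Delaunay decomposition modulo the sublattice corresponding to $\Gamma$). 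Once freeness and flatness are secured, descent of ampleness, of the Gorenstein/trivial-dualizing-sheaf property (quotient by a free action of an étale group scheme), and of the Heisenberg action is formal. I would therefore organize the write-up as: (a) the $e_{\min}\geq 3$ construction and its properties; (b) the étale descent lemma; (c) assembling clauses (1)--(6) in the general case.
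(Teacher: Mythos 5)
Your second paragraph correctly identifies the engine of the construction (the Faltings--Chai degeneration data $a(x),b(x,y)$, the graded ring $R[a(x)w^x\vartheta;x\in X]$, the formal quotient by $Y$ and Grothendieck algebraization, plus Mumford's quadric equations for part (1), which is indeed what Subsec.~\ref{subsec:proof of isom} does). But two of your structural choices would fail. First, you \emph{define} $(P,\cL_P)$ as the normalization of $(Q,\cL_Q)$ and then claim to ``read off'' clause (3) from toric data; this is circular, and it is exactly the trap flagged in the Remark following the theorem. In the actual argument $P$ is never defined as the normalization of $Q$: it is constructed independently (as the union of torus embeddings indexed by the Delaunay cells, glued over $R$), one proves directly that $P_0$ is reduced and Gorenstein with trivial dualizing sheaf, and only then does normality of $P$ follow from Claim~\ref{claim:normality} (a flat scheme over a CDVR with reduced special fiber and nonsingular generic fiber is normal), whence $P$ is \emph{a posteriori} the normalization of $Q$. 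If you start instead from the normalization of $Q$ you have no handle on its special fiber --- the special fiber of a normalization need not be reduced --- so clause (3) would be out of reach.

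Second, your reduction to $e_{\min}(K)\geq 3$ via $\cL_\eta^{\otimes 3}$ does not work and is not what clause (5) asserts. Replacing $\cL_\eta$ by $\cL_\eta^{\otimes 3}$ changes the polarization on the \emph{same} abelian variety; there is no isogeny, no subgroup ``$\ker(\cL_\eta^{\otimes 3}\to\cL_\eta)$'' (no such nonzero homomorphism of line bundles exists), and hence no group by which to take an \'etale quotient recovering $(G_\eta,\cL_\eta)$. The correct mechanism, used in Claim~\ref{claim:etale Z/nZ covering}, is to pass to an \'etale isogeny $\varpi:G^{*}_{\eta}\to G_\eta$ of degree $n^g$ (with $n\geq 3$ prime to $N$ and to the residue characteristic) and pull back $\cL_\eta$; then the new kernel has $e_{\min}\geq 3$ and $Q=Q^{*}/(\bZ/n\bZ)^{g}$. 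In any case the construction of $(Q,\cL_Q)$ in Subsec.~\ref{subsec:construction} is carried out directly for arbitrary $e_{\min}$ --- clause (5) is a supplementary statement, not the route to the general case --- and likewise the $\cG(K)$-action of clause (6) is given by the explicit operators $S_z$, $T_\beta$ of Definition~\ref{subdefn:Sz Tbeta} acting on $\Proj R[a(x)w^x\vartheta]$ and descending to the quotient by $Y$, not by restricting an action on $\bP(V_H)$, which is unavailable precisely when $\cL_Q$ fails to be very ample, i.e.\ when $e_{\min}(K)\leq 2$.
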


See Definition~\ref{defn:Heisenberg group} for $e_{\min}$. 
Theorem~\ref{thm:refined stable reduction}~(1) 
is proved in Subsec.~\ref{subsec:proof of isom}. 

We call $(Q_0,\cL_0)$ and $(P_0,\cL_0)$ as follows: 
\begin{enumerate}
\item[$\circ$] $(Q_0,\cL_0)$: PSQAS --- {\em a projectively stable 
quasi-abelian scheme}, which can be nonreduced,
\item[$\circ$] $(P_0,\cL_0)$:  TSQAS --- {\em a torically stable 
quasi-abelian scheme}
($=$ variety), which is always reduced.
\end{enumerate}

\begin{rem}
Theorem~\ref{thm:refined stable reduction}~(2) is rather misleading. 
In the proof of it, 
we never define $P$ to be the normalization of $Q$. 
We only construct $P$ with $P_0$ reduced 
and $P_{\eta}\simeq G_{\eta}$.
The normality of $P$ is a consequence 
of the reducedness of $P_0$ 
by the following well-known Claim.
\end{rem}

\begin{claim}\label{claim:normality}
Let $R$ be a complete discrete valuation ring, $S:=\Spec R$, and
$\eta$ the generic point of $S$. 
Assume that $\pi:Z\to S$ is flat with
$Z_0$ reduced and $Z_{\eta}$  nonsingular. 
Then $Z$ is normal.
\end{claim}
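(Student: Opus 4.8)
The plan is to verify Serre's criterion for normality, namely conditions $(R_1)$ and $(S_2)$ for the scheme $Z$, using the flatness of $\pi:Z\to S$ together with the hypotheses on the fibers. Since $Z$ is excellent (being of finite type over a complete DVR), normality is equivalent to $(R_1)+(S_2)$.

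First I would establish $(S_2)$. Because $\pi$ is flat and $S$ is regular of dimension $1$, for any point $z\in Z$ lying over the closed point $0\in S$ we have $\depth \cO_{Z,z} = \depth \cO_{Z_0,z} + 1$, using that a uniformizer $t$ of $R$ is a nonzerodivisor on $\cO_{Z,z}$ and that $\cO_{Z,z}/t\cO_{Z,z} = \cO_{Z_0,z}$. So it suffices to know that $Z_0$ satisfies $(S_1)$, i.e.\ has no embedded points; this is exactly the reducedness of $Z_0$, which gives $(S_1)$ (indeed a reduced Noetherian scheme is $(S_1)$ since its local rings have no embedded primes). For points $z$ lying over the generic point $\eta$, the local ring $\cO_{Z,z}$ is a localization of $\cO_{Z_\eta,z}$, and $Z_\eta$ is nonsingular, hence $(S_2)$ there is automatic. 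Combining, $Z$ is $(S_2)$.

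Next I would establish $(R_1)$. Let $z\in Z$ be a point with $\dim\cO_{Z,z}\le 1$; we must show $\cO_{Z,z}$ is regular. If $z$ lies over $\eta$, then $\cO_{Z,z}$ is a localization of the regular ring $\cO_{Z_\eta,z}$, so it is regular. If $z$ lies over $0$, then since $t$ is a nonzerodivisor, $\dim\cO_{Z_0,z} = \dim\cO_{Z,z} - 1 \le 0$, so $z$ is a generic point of $Z_0$; as $Z_0$ is reduced, $\cO_{Z_0,z}$ is a field. Then $\cO_{Z,z}$ is a local ring in which the maximal ideal is generated by $t$ alone (since modulo $t$ the ring is a field), so $\cO_{Z,z}$ is regular of dimension $1$. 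Hence $Z$ is $(R_1)$, and Serre's criterion gives that $Z$ is normal.

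The main obstacle, such as it is, lies in being careful about which points of $Z$ need checking and in correctly invoking the depth/dimension formulas for a flat morphism over a DVR; the key input is simply that a uniformizer $t$ of $R$ pulls back to a nonzerodivisor on $\cO_Z$ (flatness over the DVR) whose quotient is the reduced fiber $Z_0$. One should also note that $Z_0$ reduced already forces $Z$ to have no components supported in $Z_0$ (again by flatness and the fact that associated primes of $\cO_Z$ restrict compatibly), so that $Z$ is itself reduced; this, while not strictly needed for Serre's criterion as organized above, is the conceptual reason the argument works, and it makes the identification of codimension-$\le 1$ points over $0$ with generic points of $Z_0$ transparent.
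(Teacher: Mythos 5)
Your proof is correct and complete. The paper itself does not give an argument for this claim but only cites \cite[Lemma~10.3]{Nakamura10}; your verification of $(R_1)$ and $(S_2)$ via the uniformizer $t$ being a nonzerodivisor with reduced quotient $Z_0$ is exactly the standard route (and the one the paper's own Remark on Serre's criterion suggests), so there is nothing to object to. Two cosmetic points: excellence is not needed, since Serre's criterion holds for arbitrary Noetherian schemes, and one should note that $Z$ is implicitly assumed Noetherian (in the paper's applications it is projective over $S$), which is all your depth and dimension formulas for the flat local homomorphisms $R\to\cO_{Z,z}$ require.
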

\begin{proof}
See \cite[Lemma~10.3]{Nakamura10}.\end{proof}

\begin{rem}In dimension one, any PSQAS is 
a TSQAS and vice versa, which is either a smooth elliptic 
or an $N$-gon (of rational curves).
Once the moduli of PSQASes (resp. TSQASes) is constructed,  
Theorem~\ref{thm:separatedness} will 
prove that {\em the moduli is separated}, 
and then Theorem~\ref{thm:refined stable reduction} 
will prove that {\em the moduli is proper}.
\end{rem}

\begin{thm}\label{thm:separatedness}
{\rm (Uniqueness \cite{Nakamura99},\cite{Nakamura10})} 
In Theorem~\ref{thm:refined stable reduction},  
$(Q,\cL)$ resp. $(P,\cL)$ is
 uniquely determined by $(G_{\eta},\cL_{\eta})$ if $e_{\min}(K)\geq 3$ 
(resp. in any case).
\end{thm}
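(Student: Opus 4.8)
The plan is to reduce the uniqueness to the separatedness of a Hilbert scheme, exploiting the fact that the embedding of a PSQAS into projective space by a very ample power of its polarization is canonical, being forced by the irreducibility of the Schr\"odinger representation. Throughout, after a finite base change of $R$, we may assume $\cO_N\hookrightarrow R$ and that $\cG(K_\eta)\simeq\cG_H$ (Lemma~\ref{lemma:chara GH action}).

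First I would settle the PSQAS $(Q,\cL_Q)$ assuming $e_{\min}(K)\ge 3$. Then $\cL_Q$ is very ample by Theorem~\ref{thm:refined stable reduction}~(4), and $\cG(K)\simeq\cG_H$ acts on $(Q,\cL_Q)$ by Theorem~\ref{thm:refined stable reduction}~(6), so $W:=H^0(Q,\cL_Q)$ is a free $R$-module of rank $N=|H|$ which is a $\cG_H$-module of weight one. By Lemma~\ref{lemma:irred repres UH} and a rank count $W\simeq V_H\otimes_{\cO_N}R$, and by Lemma~\ref{lemma:lemma of Schur} such a $\cG_H$-isomorphism is unique up to a unit of $R$; hence $|\cL_Q|$ gives a $\cG_H$-equivariant closed immersion $\iota_Q\colon Q\hookrightarrow\bP(V_H)\times\Spec R$ whose image is independent of every choice. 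The same produces $\iota_{Q'}\colon Q'\hookrightarrow\bP(V_H)\times\Spec R$ for a second model $(Q',\cL_{Q'})$. The isomorphisms $(Q_\eta,\cL_\eta)\simeq(G_\eta,\cL_\eta)\simeq(Q'_\eta,\cL_\eta)$ of Theorem~\ref{thm:refined stable reduction}~(1) are equivariant for $\cG(K_\eta)\simeq\cG_H$, so $\iota_Q(Q)_\eta=\iota_{Q'}(Q')_\eta$, both equal to the $\cG_H$-equivariant image of $G_\eta$ in $\bP(V_H)_\eta$, a subscheme intrinsic to $(G_\eta,\cL_\eta)$ with its $\cG(K_\eta)$-action. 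Thus $\iota_Q(Q)$ and $\iota_{Q'}(Q')$ are $R$-flat closed subschemes of $\bP(V_H)\times\Spec R$ with the same Hilbert polynomial $\chi(n)=n^gN$ and the same fibre over $\eta$; since $\Hilb^{\chi}_{\bP(V_H)}$ is projective over $\cO_N$, hence separated, their flat limits over the closed point agree, so $\iota_Q(Q)=\iota_{Q'}(Q')$. Pulling back $\cO(1)$ and applying Lemma~\ref{lemma:lemma of Schur} once more to match linearizations yields $(Q,\cL_Q)\simeq(Q',\cL_{Q'})$ over $R$. The TSQAS case with $e_{\min}(K)\ge 3$ follows at once, since $P$ (resp.\ $P'$) is the normalization of $Q$ (resp.\ $Q'$) by Theorem~\ref{thm:refined stable reduction}~(2).

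It remains to prove uniqueness of the TSQAS $(P,\cL_P)$ when $e_{\min}(K)\le 2$, where $Q$ itself need not be unique. Here I would pass to higher level: choose a polarized isogeny producing $(G^*_\eta,\cL_{G^*_\eta})$ with $e_{\min}(\ker\lambda(\cL_{G^*_\eta}))\ge 3$ together with a finite \'etale group $\Gamma$ such that $G_\eta=G^*_\eta/\Gamma$ compatibly with polarizations (Theorem~\ref{thm:refined stable reduction}~(5) being the analogue for $Q$). The TSQAS $P^*$ of $(G^*_\eta,\cL_{G^*_\eta})$ is unique by the previous step and carries a $\cG(K^*)$-action, from which one recovers $P$ as the quotient of $P^*$ by the induced action of the finite group attached to $\Gamma$. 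Since $P^*$ and this action are canonical, so is $P$, once one checks independence of the auxiliary cover, e.g.\ by factoring any two covers through a common one.

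The step I expect to be the main obstacle is precisely this last one: organising the $e_{\min}(K)\le 2$ case so that the reduction to level $\ge 3$ and the ensuing \'etale quotient are genuinely well defined; the failure of this canonicity at the level of $Q$ is exactly what confines the PSQAS statement to $e_{\min}(K)\ge 3$. A subsidiary nuisance is bookkeeping over finite base changes: one must check that any two models become isomorphic after passing to a common extension of $R$, which is again separatedness of the Hilbert scheme.
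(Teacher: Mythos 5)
Your argument for the $e_{\min}(K)\geq 3$ case is essentially the intended one: rigidify the embedding via the irreducibility of the Schr\"odinger representation (Lemmas~\ref{lemma:irred repres UH} and \ref{lemma:lemma of Schur}), observe that both models then sit inside $\bP(V_H)_R$ as $R$-flat closed subschemes with the same generic fibre, and conclude by uniqueness of the schematic closure over a DVR; this is the content of the proofs cited from \cite{Nakamura99} and \cite{Nakamura10}, and the deduction of the TSQAS statement from Theorem~\ref{thm:refined stable reduction}~(2) is fine for $e_{\min}(K)\geq 3$.

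The gap is exactly where you said it would be, in the $e_{\min}(K)\leq 2$ case, and it is not just bookkeeping. If the level-$\geq 3$ cover is produced only on the generic fibre, as an isogeny $G^*_\eta\to G_\eta$ with kernel $\Gamma$, then the unique TSQAS $P^*$ of $G^*_\eta$ exists, but nothing yet relates $P^*/\Gamma$ to the \emph{given} model $P$: you would have to show that the $\Gamma$-cover of $G_\eta$ extends to a finite \'etale cover of all of $P$ with total space $P^*$ (for instance that the integral closure of $P$ in $k(G^*_\eta)$ is \'etale over $P$ and is again a TSQAS), and that is the whole difficulty. The paper runs the construction in the opposite direction (Subsec.~\ref{subsec:case emin leq 2}): starting from the given $(P,\cL)$, it builds an \'etale $(\bZ/n\bZ)^g$-cover of the \emph{special fibre} $(P_0,\cL_0)$ with controlled Heisenberg data (Claim~\ref{claim:etale Z/nZ covering}), lifts it uniquely to the formal completion $P_{\formal}$ by the SGA~1 equivalence of \'etale covers, algebraizes to obtain $(P^{\dagger},\cL^{\dagger})\to(P,\cL)$ \'etale with $P^{\dagger}$ a TSQAS at level $\geq 3$, identifies $P^{\dagger}$ with the normalization of a modified Mumford family by the $e_{\min}\geq 3$ uniqueness, and only then descends: $P=P^{\dagger}/(\bZ/n\bZ)^g$ holds \emph{tautologically} because the cover was built over $P$ itself (Claim~\ref{claim:final separatedness for e1 leq 2}). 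Since the modified Mumford family is determined by the degeneration data of $(G_{\eta},\cL_{\eta})$, uniqueness of $P$ follows. Without this reversal, or an equivalent argument identifying the normalization of $P$ in $k(G^*_\eta)$ with the canonical cover, your reduction does not close.
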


See \cite[Theorem~10.4]{Nakamura99} and 
\cite[Theorem~10.4; Claim~2, p.~124]{Nakamura10} 
for the detail when $e_{\min}(H)\geq 3$. 
See Subsec~\ref{subsec:case emin leq 2} for $e_{\min}(H)\leq 2$.

\section{PSQASes in low dimension}\label{sec:PSQAS dim one}
The purpose of this section is to show motivating 
examples in dimension one and two.

\subsection{Hesse cubics and theta functions}
\label{subsec:Hesse cubics and thetas}
Let $R$ be a complete discrete valuation ring (CDVR), 
$I$ the maximal ideal of $R$
and $q$ a generator (uniformizer) of $I$, so $I=qR$. 
For instance, if $R=\bZ_3$, then we can choose 
 $q=3$, and if $R=k[[t]]$, $k$ a field, then $q=t$.  
Let $\theta_k$ 
be the same as in Subsec.~2.1~(iv)
\begin{align*}\theta_k(\omega,z)&=\sum_{m\in \bZ} q^{(3m+k)^2}w^{3m+k}
\end{align*}
Then the power series $\theta_k$ converge $I$-adically.

Now we calculate the limit of $[\theta_0,\theta_1,\theta_2]$ 
as $q$ tends to $0$. \par
First we shall show a computation, which once puzzled us so much.
\begin{align*}
\theta_{0}(q,w)&=\sum_{m\in \bZ} 
q^{9m^2}w^{3m}\\ &=1+q^9w^3+q^{9}w^{-3}+q^{36}w^{6}+\cdots,\\
\theta_{1}(q,w)&=\sum_{m\in \bZ} 
q^{(3m+1)^2}w^{3m+1}\\ &=qw+q^4w^{-2}+q^{16}w^4+\cdots,\\
\theta_{2}(q,w)&=\sum_{m\in \bZ} 
q^{(3m+2)^2}w^{3m+2}\\
&=qw^{-1}+q^4w^2+q^{16}w^{-4}+q^{25}w^{5}+\cdots.  
\end{align*}

Hence in $\bP^2$
\begin{align*}
\lim_{q\to 0}\ [\theta_0,\theta_1,\theta_2](q,w)]=[1,0,0] 
\end{align*}

The elliptic curves converge to one point?  This looks strange. 
The reason why we got the above is that we treated $w$ as a constant.
There is N\'eron model behind this strange phenomenon. 
We cannot explain it in detail here. Instead we show 
how to modify the above computation. 

Let $w=q^{-1}u$ 
for $u\in R\setminus I$ and $\overline{u}=u\,\mod I$. 
Then we have 
\begin{align*}
\theta_{0}(q,q^{-1}u)&=\sum_{m\in \bZ} 
q^{9m^2-3m}u^{3m}\\ &=1+q^6u^3+q^{12}u^{-3}+q^{30}u^{6}+\cdots,\\
\theta_{1}(q,q^{-1}u)&=\sum_{m\in \bZ} 
q^{(3m+1)^2-3m-1}u^{3m+1}\\ &=u+q^6u^{-2}+q^{12}u^4+\cdots,\\
\theta_{2}(q,q^{-1}u)&=\sum_{m\in \bZ} 
q^{(3m+2)^2-3m-2}u^{3m+2}\\
&=q^2u^2+q^2u^{-1}+q^{20}u^{5}+q^{20}u^{-4}+\cdots.  
\end{align*}

Hence in $\bP^2$
\begin{align*}
\lim_{q\to 0}\  [\theta_0,\theta_1,\theta_2]&(q,q^{-1}u)=[1,\overline{u},0] 
\end{align*}

Similarly 
\begin{align*}
\theta_{0}(q,q^{-2}u)&=1+q^3u^3+q^{15}u^{-3}+q^{24}u^{6}+\cdots,\\
\theta_{1}(q,q^{-2}u)&=q^{-1}u+q^{12}u^{-2}+q^{8}u^4+\cdots,\\
\theta_{2}(q,q^{-2}u)&=u^2+q^3u^{-1}+q^{15}u^{5}+q^{24}u^{-4}+\cdots,\\ 
\lim_{q\to 0}\ [\theta_0,\theta_1,\theta_2]&(q,q^{-2}u)
=\lim_{q\to 0}\ [1,q^{-1}u,u^2]=[0,1,0]\quad 
\text{in $\bP^2$.}
\end{align*}

Similarly 
\begin{align*}
\theta_{0}(q,q^{-3}u)&=1+u^3+q^{18}u^{-3}+q^{18}u^{6}+\cdots,\\
\theta_{1}(q,q^{-3}u)&=q^{-2}u+q^{10}u^{-2}+q^{4}u^4+\cdots,\\
\theta_{2}(q,q^{-3}u)&=q^{-2}u^2+q^4u^{-1}+q^{10}u^{5}+q^{28}u^{-4}+\cdots,\\ 
\lim_{q\to 0}\ [\theta_0,\theta_1,\theta_2]&(q,q^{-3}u)
=\lim_{q\to 0}\ [1,q^{-2}u,u^2]=[0,1,\overline{u}]
\quad \text{in $\bP^2$.}
\end{align*}

Let
$w=q^{-2\lambda}u$ (a section over a finite extension of 
$k(\eta)$ for $\lambda\in\bQ$) and $u\in R\setminus I$.
\begin{equation}
\lim_{q\to 0}\ [\theta_0,\theta_1,\theta_2](q,q^{-2\lambda}u) =
\Biggl\{\begin{tabular}{ll}
$[1,0,0]$ & \quad $(\op{if}\ -1/2<\lambda <1/2)$,\\
$[1,\overline{u},0]$& \quad $(\op{if}\ \lambda=1/2)$,\\
$[0,1,0]$ & \quad $(\op{if}\ 1/2<\lambda <3/2)$,\\
$[0,1,\overline{u}]$ &\quad $(\op{if}\ \lambda=3/2)$,\\
$[0,0,1]$ & \quad $(\op{if}\ 3/2<\lambda <5/2).$\\
$[\overline{u},0,1]$ &\quad $(\op{if}\ \lambda=5/2)$,\\
\end{tabular}
\end{equation}

When $\lambda$ ranges in $\bR$, the same calculation shows
that the same limits repeat mod $Y=3\bZ$ because  
\begin{align*}
\lim_{q\to 0}\  [\theta_0,\theta_1,\theta_2](q,q^{6n-a}u)&
=\lim_{q\to 0}\  [\theta_0,\theta_1,\theta_2](q,q^{-a}u).
\end{align*}

Thus we see that 
$\lim_{\tau\to\infty}C(\mu(\tau))$ is the 3-gon $x_0x_1x_2=0$.

\begin{defn}For $\lambda\in X\otimes_{\bZ}\bR$ fixed, 
let $$F_{\lambda}:=a^2 - 2\lambda a\quad (a\in X=\bZ).$$
We define a Delaunay cell 
$$D(\lambda):=\begin{matrix}
\text{the convex closure of all
$a\in X$}\\ 
\text{that attain the minimum of $F_{\lambda}$}
\end{matrix}$$
\end{defn}

By computations we see
\begin{align*}
D(j+\frac{1}{2})&=[j,j+1]:=\{x\in \bR;\ j\leq x\leq j+1\},\\ 
D(\lambda)&=\{j\}\quad(\op{if}\ j-\frac{1}{2}<\lambda<j+\frac{1}{2} ), \\
[\bar\theta_k]_{k=0,1,2}:&= 
\lim_{q\to 0}[\theta_{k}(q,q^{-2\lambda}u))]_{k=0,1,2}\\
\bar\theta_k&=
\begin{cases}
\bar u^{j} &(\op{if}\ j\in D(\lambda)\cap (k+3\bZ))\\
0 &(\op{if}\ D(\lambda)\cap (k+3\bZ)=\emptyset). 
\end{cases}
\end{align*}

For instance 
 $D(\frac{1}{2})\cap (0+3\bZ)=\{0\}$,  
$D(\frac{1}{2})\cap (1+3\bZ)=\{1\}$ and
\begin{equation*}
\lim_{q\to 0}[\theta_{k}(q,q^{-1}u))]
=[\bar\theta_0,\bar\theta_1,\bar\theta_2]
=[\bar u^0,\bar u,0]=[1,\bar u,0].
\end{equation*}

Similarly for any $\lambda=j+(1/2)$, we have an algebraic torus as a limit
\begin{equation*}
\{[\bar u^{j},\bar u^{j+1}]\in\bP^1;\bar u\in\bG_m\}
\simeq \bG_m\ (=\bC^*).
\end{equation*}


\vspace*{-1.2cm}
\vspace*{1.7cm}
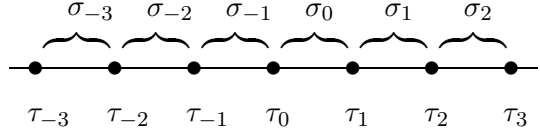
\begin{figure}[ht]
\hspace*{-14.1cm}
\begin{picture}(30,20)(-110,0)
   \put(15,0){\line(-1,0){10}}
   \put(15,0){\circle*{5}}
   \put(15,0){\line(1,0){30}}
   \put(45,0){\circle*{5}}
   \put(45,0){\line(1,0){30}}
   \put(75,0){\circle*{5}}
   \put(75,0){\line(1,0){30}}
   \put(105,0){\circle*{5}}
   \put(105,0){\line(1,0){30}}
   \put(135,0){\circle*{5}}
   \put(135,0){\line(1,0){30}}
   \put(165,0){\circle*{5}}
   \put(165,0){\line(1,0){30}}
   \put(195,0){\circle*{5}}
   \put(195,0){\line(1,0){10}}
\put(27,20){$\sigma_{-3}$}
\put(18,7){$\overbrace{\phantom{\line(1,0){25}}}$}
\put(57,20){$\sigma_{-2}$}
\put(48,7){$\overbrace{\phantom{\line(1,0){25}}}$}
\put(87,20){$\sigma_{-1}$}
\put(78,7){$\overbrace{\phantom{\line(1,0){25}}}$}
\put(117,20){$\sigma_{0}$}
\put(108,7){$\overbrace{\phantom{\line(1,0){25}}}$}
\put(147,20){$\sigma_{1}$}
\put(138,7){$\overbrace{\phantom{\line(1,0){25}}}$}
\put(177,20){$\sigma_{2}$}
\put(168,7){$\overbrace{\phantom{\line(1,0){25}}}$}
\put(12,-20){$\tau_{-3}$}
\put(42,-20){$\tau_{-2}$}
\put(72,-20){$\tau_{-1}$}
\put(102,-20){$\tau_{0}$}
\put(132,-20){$\tau_{1}$}
\put(162,-20){$\tau_{2}$}
\put(192,-20){$\tau_{3}$}
\end{picture}
\vspace*{0.8cm}
\caption{Delaunay decomposition}
\label{fig:Delaunay 3}
\end{figure}

Let $\lambda\in X\otimes\bR$, and 
$\sigma=D(\lambda)$ be a Delaunay cell, and $O(\sigma)$ 
the stratum of $C(\infty)$ consisting of limits of $(q,q^{-{2\lambda}}u)$. If $\sigma$ is one-dimensional, then 
$O(\sigma)=\bC^*$, 
while $O(\sigma)$ is one point if $\sigma$ is zero-dimensional.
Thus we see that
$C(\mu(\infty))$ is a disjoint union of $O(\sigma)$, $\sigma$ 
being Delaunay cells mod $Y$, in other words, it is stratified
in terms of the Delaunay decomposition mod $Y$.\par
Let $\sigma_j=[j,j+1]$ and $\tau_j=\{j\}$.  
Then the Delaunay decomposition 
(resp. the stratification of $C(\infty)$) 
is given in Fig.~1 
(resp. Fig.~\ref{fig:3gon}).


\vspace*{2.2cm}

\begin{figure}[ht]
\hspace*{-13.6cm}
\begin{picture}(40,30)(-163,0)
   \put(15,0){\line(1,0){78}}
   \put(15,0){\circle*{5}}
   \put(15,0){\line(3,5){38}}
   \put(52,63){\circle*{5}}
   \put(90,0){\circle*{5}}
   \put(90,0){\line(-3,5){38}}
\put(-5,-20){$O(\tau_0)$}
\put(66,63){$O(\tau_2)$}
\put(86,-20){$O(\tau_1)$}
\put(40,-20){$O(\sigma_0)$}
\put(78,30){$O(\sigma_1)$}
\put(0,30){$O(\sigma_2)$}
\end{picture}
\vspace*{8mm}
\caption{A 3-gon}\label{fig:3gon}
\end{figure}

\vspace*{0cm}

\subsection{The complex case}
To apply the computation in the last section 
to the moduli problem, we need to 
know the scheme-theoretic limit of the image of $E(\omega)$. \par
Now let us write 
\begin{align*}
\theta_k(q,w)&=\sum_{m\in\bZ}q^{(3m+k)^2}w^{3m+k}=\sum_{m\in\bZ}a(3m+k)w^{3m+k}
\end{align*}where $a(x)=q^{x^2}$ for $x\in X:=\bZ$. Let $Y=3\bZ$. 
Then $\theta_k$ is $Y$-invariant : 
$$\theta_k=\sum_{y\in Y}a(y+k)w^{y+k}.$$

Since the curve $E(\tau)$ is embedded into $\bP_{\bC}^2$ 
by $\theta_k$, we see
\begin{equation}
\label{eq:proj model of E(tau)}
\begin{aligned}
E(\omega)
&=\Proj\bC[x_k, k=0,1,2]/(x_0^3+x_1^3+x_2^3-3\mu(\omega)x_0x_1x_2)\\
&\simeq \Proj\bC[\theta_k\vartheta, k=0,1,2]\\
&=\Proj(\bC[[a(x)w^x\vartheta, x\in X]])^{Y-\op{inv}}
\end{aligned}
\end{equation}where $\vartheta$ is 
a transcendental element of degree one, 
$\deg(x_k)=1$, and 
$\deg(\theta_k)=0$ and $\deg(a(x)w^x)=0$.
Recall that if $U=\Spec A$ is affine, $G$ a finite group acting on
$U$, then 
$$U/G=\Spec A^{\text{$G$-inv}}.$$ 

So we wish to regard $E(\omega)$ as  
\begin{align*}\label{eq:isom 2}
E(\omega)=(\Proj(\bC[[a(x)w^x\vartheta, x\in X]]))/Y.
\end{align*}

Is this really true?  Over $\bC$, $a(x)\in \bC^{\times}$, and  
\begin{equation*}
\bG_m=\Proj \bC[a(x)w^x\vartheta, x\in X],
\end{equation*}In fact,  
the rhs is covered with infinitely many affine $U_k$
\begin{equation*}
U_k=\Spec \bC[a(x)w^x\vartheta/a(k)w^k\vartheta; x\in X]=\Spec \bC[w,w^{-1}]
=\bG_m,
\end{equation*}which is independent of $k$. 
Hence over $\bC$
\begin{equation}\label{eq:isom 2}
\begin{aligned}
E(\omega)
&\simeq \bG_m/w\mapsto q^6w\\
& \simeq \bG_m/\{w\mapsto q^{2y}w; y\in 3\bZ\}\\
& \simeq (\Proj \bC[a(x)w^x\vartheta, x\in X])/Y.
\end{aligned} 
\end{equation}

Thus we see by combining (\ref{eq:proj model of E(tau)}) 
and (\ref{eq:isom 2}) 
\begin{equation}\label{eq:isom 3}
\begin{aligned}
E(\omega)&\simeq \Proj(\bC[[a(x)w^x\vartheta, x\in X]])^{Y-\op{inv}}\\
& \simeq (\Proj \bC[a(x)w^x\vartheta, x\in X])/Y,
\end{aligned}
\end{equation} 
though we should make the convergence of infinite sum 
precise. In fact, this is easily justified 
when $R$ is a CDVR.

\subsection{The scheme-theoretic limit}
\label{subsec:scheme limit}
We define the subring $\wR$ 
of $k(\eta)[w, w^{-1}][\vartheta]$ by
\begin{equation*}
\wR=R[a(x)w^x\vartheta; x\in X]
\end{equation*}
where $a(x)=q^{x^2}$ for $x\in X$, $X=\bZ$,  and 
$\vartheta$ is an indeterminate of degree one, 
where $q$ is the uniformizer of $R$. 
We define 
the action of $Y$ on $\wR$ by
the ring homomorphism
\begin{equation}\label{eq:action of Y}
S_y^*(a(x)w^x\vartheta)=a(x+y)w^{x+y}\vartheta.
\end{equation}where $Y=3\bZ\subset X$. 
Then what does $Z$ look like?
\begin{equation*}
Z=\Proj R[a(x)w^x\vartheta, x\in X]/Y.
\end{equation*}

Let $\cX$ and $U_n$ be  
\begin{align*}
\cX&=\Proj R[a(x)w^x\vartheta, x\in X],\\
U_n&=\Spec R[a(x)w^x/a(n)w^n, x\in X]\\
&=\Spec R[(a(n+1)/a(n))w, (a(n-1)/a(n))w^{-1}]\\
&=\Spec R[q^{2n+1}w, q^{-2n+1}w^{-1}]\\
&\simeq \Spec R[x_n,y_n]/(x_ny_n-q^2),
\end{align*}where $U_n$ and $U_{n+1}$ are glued 
together by
$$x_{n+1}=x_n^2y_n,\ y_{n+1}=x_n^{-1},\ x_n=q^{2n+1}w,\ y_n=q^{-2n+1}w^{-1}.
$$


\begin{figure}[h]
\begin{picture}(30,10)(88,0)
   \put(15,0){\circle*{5}}
   \qbezier(0,8)(15,0)(20,-5)
   \put(45,0){\circle*{5}}
   \qbezier(10,-5)(30,20)(50,-5)
   \put(75,0){\circle*{5}}
   \qbezier(40,-5)(60,20)(80,-5)
   \put(105,0){\circle*{5}}
   \qbezier(70,-5)(90,20)(110,-5)
   \put(135,0){\circle*{5}}
   \qbezier(100,-5)(120,20)(140,-5)
   \put(165,0){\circle*{5}}
   \qbezier(130,-5)(150,20)(170,-5)
   \put(195,0){\circle*{5}}
   \qbezier(160,-5)(180,20)(200,-5)
   \qbezier(190,-5)(195,0)(210,8)
   \end{picture}
\vspace*{0.5cm}
\caption{An infinite chain}
\label{Fig:infinite lines}
\end{figure}
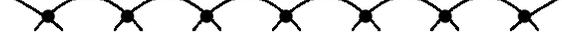


Let $\cX_0:=\cX\otimes_R(R/qR)$ and $V_n=\cX_0\cap U_n$. 
Then $\cX_0$ is an infinite chain of $\bP^1$, 
as in Fig.~3.

The action of the sublattice $Y=3\bZ$ on $\cX_0$ is transfer by 3 components. 
In fact, $S_{-3}$ 
sends $$V_n\overset{S_{-3}}{\rightarrow} V_{n+3}\overset{S_{-3}}
{\rightarrow} V_{n+6}\rightarrow \cdot\cdot\cdot,$$
$$(x_n,y_n)
\overset{S_{-3}}\mapsto (x_{n+3},y_{n+3})=(x_n,y_n)$$ 
so that we have a cycle of 3 rational curves as the quotient $\cX_0/Y$. 
Thus we have the same consequence as 
in Subsec.~\ref{subsec:Hesse cubics and thetas}  
by using theta functions.

\subsection{The partially degenerate case in dimension two}
\label{subsec:PSQAS in dim two partially deg}
We wish to describe any PSQAS in the partially degenerate case 
in dimension two. For simplicity, we shall give it directly by using 
theta functions.   See Subsec.~\ref{subsec:totally degenerate dim two} for 
the totally degenerate case.

\begin{subcase}First we consider the complex case. 
Let 
$$\delta=\diag(\ell,m):=\begin{pmatrix}\ell&0\\
0&m\end{pmatrix},\quad \tau=\begin{pmatrix}\tau_{11}&\tau_{12}\\
\tau_{12}&\tau_{22}\end{pmatrix},\quad \tau_{12}=\tau_{21}.$$

Let $\Lambda$ be the lattice spanned 
by column vectors of $I_2$ and $\tau\delta$, and 
$G_{\eta}$ the abelian variety $\bC^2/\Lambda$. 
We consider the degeneration of 
$G_{\eta}$ as $q:=e^{\pi i\tau_{22}}$ tends to $0$. 
Assume $\ell$ and $m\geq 3$.
Following \cite[Chap.~VII, pp.~77--79]{Siegel62} 
we define for $k=(k_1,k_2)$ $(0\leq k_1\leq \ell-1, 0\leq k_2\leq m-1)$,  
\begin{align*}
\theta_k&=
\sum_{n\in\bZ^2}e^{\pi i{}^t(\delta n+k)\tau(\delta n+k)
+2\pi i{}^t(\delta n+k)z}\\
&=\sum_{n_2\in\bZ}q^{(mn_2+k_2)^2}w^{mn_2+k_2}\vartheta_{k_1}(z_1+(mn_2+k_2)
\tau_{12}),
\end{align*} where $T=\tau\delta$, $W=\delta T$ 
with the notation of \cite{Siegel62}, 
$q=e^{\pi i\tau_{22}}$ and $\vartheta_{k_1}$ 
is a theta function of level $\ell$ of one variable.  Hence
\begin{align}\label{eq:theta partially deg case}
\theta_k&=\sum_{n_2\in\bZ}T^*_{(mn_2+k_2)\tau_{12}}(\vartheta_{k_1})
q^{(mn_2+k_2)^2}w^{mn_2+k_2}.
\end{align}where (\ref{eq:theta partially deg case}) 
is a general form of algebraic theta 
functions in \cite[Theorem~4.10~(3)]{Nakamura99}. 
\end{subcase}

\begin{subcase}
Now we consider the general case. In any algebraic case, we can 
 start with the last form (\ref{eq:theta partially deg case}) 
of theta functions 
by \cite[Theorem~4.10]{Nakamura99}, where $q$ is 
a uniformizer of a CDVR $R$.
In this case, $X=\bZ$, $Y=m\bZ$ and 
the Delaunay decomposition associated 
with this degeneration of abelian surfaces is 
the union of the unit intervals $[j,j+1]$ $(j\in\bZ)$ mod $Y$. \par
Let $H=(\bZ/\ell\bZ)\oplus (X/Y)\simeq (\bZ/\ell\bZ)\oplus(\bZ/m\bZ)$. 
By the theta functions $\theta_k$ we have a 
closed immersion of an abelian variety $G_{\eta}$ to $\bP(V_H)$. 
We compute the limit of the image of $G_{\eta}$ as $q$ tends to $0$. 

By the assumption $\ell\geq 3$, 
$\vartheta_{k_1}$ $(0\leq k_1\leq \ell-1)$ embeds an elliptic curve 
into the projective space $\bP^{\ell-1}$. \par 

Let $w=q^{-2a-1}v$ $(a\in\bZ)$,  $v\in R\setminus I$ and $I=qR$. 
Let $\overline{v}=v\,\mod I$. 
Then we have 
\begin{align*}
\theta_{k_1,a}(q,u,q^{-2a-1}v)&
=q^{-a^2-a}T^*_{a\tau_{12}}\vartheta_{k_1}+\cdots,\\
\theta_{k_1,a+1}(q,u,q^{-2a-1}v)&
=q^{-a^2-a}T^*_{(a+1)\tau_{12}}\vartheta_{k_1}+\cdots,\\
\theta_{k_1,k_2}(q,u,q^{-2a-1}v)&
\equiv 0\ \text{mod}\ q^{-a^2-a+1},\ (k_2\neq a,a+1),
\end{align*}whence 
\begin{align*}
\lim_{q\to 0}[\theta_{k_1,k_2}(q,u,q^{-2a-1}v)]_{(k_1,k_2)\in H}
&=[\theta_{k_1,a}u^a,\theta_{k_1,a+1}u^{a+1}]_{k_1}\\
&=[\underset{k_2=a}{\underbrace{T^*_{a\tau_{12}}\vartheta_{k_1}}},
\underset{k_2=a+1}{\underbrace{(T^*_{(a+1)\tau_{12}}\vartheta_{k_1})v}}]_{k_1}
\end{align*}with zero terms ignored.
In particular, for $w=q^{-1}v$, we have 
\begin{align}\label{eq:Gm bundle}
\lim_{q\to 0}[\theta_{k_1,k_2}(q,u,q^{-1}v)]_{(k_1,k_2)\in H}&
=[\theta_{k_1,0},\theta_{k_1,1}]
=[\underset{k_2=0}{\underbrace{\vartheta_{k_1}}},
\underset{k_2=1}{\underbrace{(T^*_{\tau_{12}}\vartheta_{k_1})v}}]. 
\end{align}

For $a=m$, we have 
\begin{align}\label{eq:infty identification}
\lim_{q\to 0}[\theta_{k_1,k_2}(q,u,q^{-2m-1}v)]_{(k_1,k_2)\in H}&
=[\underset{k_2=0}{\underbrace{T^*_{m\tau_{12}}\vartheta_{k_1}}},
\underset{k_2=1}{\underbrace{(T^*_{(m+1)\tau_{12}}\vartheta_{k_1})v}}] 
\end{align}

Thus the limit of the abelian surface $(G_{\eta},\cL_{\eta})$ as $q\to 0$ 
is the union of $m$ copies of one and the same  
$\bP^1$-bundle over an elliptic curve. By (\ref{eq:Gm bundle}), 
any of the  $\bP^1$-bundle is the 
same compactification of  
the same $\bG_m$-bundle whose extension class is given 
by $\tau_{12}$ through the isomorphism 
$$\Ext(E,\bG_m)\simeq E^{\vee}\simeq E\ni \tau_{12}. 
$$

By (\ref{eq:Gm bundle}) and (\ref{eq:infty identification}), 
the zero section of the first $\bP^1$-bundle 
is identified with the $\infty$-section of the $m$-th $\bP^1$-bundle 
by shifting by $\tau_{12}$. 
\end{subcase}

\section{PSQASes in the general case}
\label{sec:PSQAS general case}

\subsection{The degeneration data of Faltings-Chai}
\label{subsec:deg data of Faltings-Chai}
 Now we consider the general case. 
Let $R$ be a complete discrete valuation ring (CDVR), 
$k(\eta)$ the fraction field of $R$, $I$ the maximal ideal of $R$, 
$q$ a generator (uniformizer) of $I$ and $S=\Spec R$.  Then 
we can construct similar degenerations of abelian varieties 
if we are given a lattice $X$, a sublattice $Y$ of $X$ of finite index and 
$$a(x)\in k(\eta)^{\times},\quad (x\in X)
$$
such that the following conditions are satisfied
\begin{enumerate}
\item[(i)] $a(0)=1$,
\item[(ii)] $b(x,y):=a(x+y)a(x)^{-1}a(y)^{-1}$ is a symmetric 
bilinear form on $X\times X$,
\item[(iii)] $B(x,y):=\val_qb(x,y)$ is positive definite, 
\item[(iv)$^*$] $B$ is even and $\val_qa(x)=B(x,x)/2$. 
\end{enumerate}

We assume here a stronger condition (iv)$^*$ for simplicity. 

These data do exist for any abelian scheme $G_{\eta}$ if 
$G_0$ is a split torus.
This is proved  by Faltings-Chai \cite{FC90}. \par
Suppose that we are given an abelian scheme 
$(G_{\eta},\cL_{\eta})$ and a polarization morphism 
\begin{equation}\label{eq:pol morphism of Geta}
\lambda(\cL_{\eta}):G_{\eta}\to G^t_{\eta}:=\Pic^0(G_{\eta}).
\end{equation}
Then there exists the connected N\'eron model 
of $G_{\eta}$ (resp. $G^t_{\eta}$), which we denote by $G$ (resp. $G^t$).
Then by finite base change if necessary we may assume $G$ 
is semi-abelian, that is, an extension of an abelian scheme 
by an algebraic torus.\par
 For simplicity, we assume 
\begin{equation}\label{eq:assumption}
\text{
$G_0$ are $G^t_0$ are split tori over $k(0):=R/qR$.} 
\end{equation}  

Let 
\begin{equation}\label{eq:lattices X Y}
X=\Hom_{\text{gp.sch.}}(G_0,\bG_m),\quad 
Y=\Hom_{\text{gp.sch.}}(G^t_0,\bG_m).
\end{equation} Then both $X$ and $Y$ 
are lattices of rank $g$, and $Y$ is 
a sublattice of $X$ of finite index because 
$G_0\to G^t_0$ is surjective.  
This case is called a totally degenerate case, 
that is, the case when $\rank_{\bZ} X=\dim G_{\eta}$, which  
is what we mainly discuss here. 
\par
If $G_0$ is neither a torus nor an abelian variety, then the case 
 is called a partially degenerate case. 
Also in the partially degenerate case we have  
degeneration data similar to the above $a(x)$ and $b(x,y)$, 
though a bit more complicated.  This 
enables us to similarly construct a degenerating family of 
abelian varieties.
 
\par

In what follows we consider the case where $G_0$ is a (split) 
torus $\bG_{m,k(0)}^g$ over $k(0)$. 
\begin{sublemma}Let $R$ be  a CDVR, 
$G$ a flat $S$-group scheme, 
and $G_0$ 
the closed fiber of $G$. 
Suppose that $G_0$ is a (split) 
torus $\bG_{m,k(0)}^g$ over $k(0)$ for some $g$. Then 
the formal completion $G_{\formal}$ of $G$ along $G_0$ is isomorphic to
 a formal $R$-torus:
\begin{equation}\label{eq:isom of Gfor}
G_{\formal}\simeq \bG_{m,R,\formal}^g=\Spf R[[w^x;x\in X]]^{\text{$I$-adic}}
\end{equation}where $X$ is a lattice of rank $g$. 
\end{sublemma}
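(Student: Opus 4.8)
The plan is to deduce the statement from the rigidity of split tori: every infinitesimal deformation of a split torus over a complete local base is again a split torus, recovered canonically from its character lattice. Concretely, put $X:=\Hom_{\text{gp.sch.}}(G_0,\bG_{m,k(0)})$; since $G_0\simeq\bG_{m,k(0)}^g$ this is the free $\bZ$-module $\bZ^g$ of rank $g$, and a basis $e_1,\dots,e_g$ gives $\cO(G_0)=k(0)[w^{\pm e_1},\dots,w^{\pm e_g}]$ with each $w^{e_i}$ a group-like unit ($\Delta w^{e_i}=w^{e_i}\otimes w^{e_i}$, $\epsilon(w^{e_i})=1$). Writing $G_n:=G\times_S\Spec R/I^{n+1}$, flatness of $G$ over $S$ and affineness of $G_0$ give that $\cO(G_n)$ is $R/I^{n+1}$-flat with $\cO(G_n)\otimes_R k(0)=\cO(G_0)$, and $\cO(G_{\formal})=\varprojlim_n\cO(G_n)$ is a topological Hopf algebra reducing modulo $I$ to $\cO(G_0)$.

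The heart of the argument is to lift the characters. I claim each $e_i$, viewed as a homomorphism $G_0\to\bG_{m,k(0)}$, lifts compatibly to a homomorphism $G_n\to\bG_{m,R/I^{n+1}}$ for every $n$. Since $R/I^{n+2}\to R/I^{n+1}$ is a square-zero extension with kernel $J_n:=I^{n+1}/I^{n+2}$, the standard deformation theory of homomorphisms of affine group schemes places the obstruction to lifting one step in $H^2(G_0,\bG_a)\otimes_{k(0)}J_n$ and the indeterminacy of a lift in $H^1(G_0,\bG_a)\otimes_{k(0)}J_n$ (equivalently: lifting the group-like unit $w^{e_i}$ is obstructed by a Hochschild $2$-cocycle class with $\bG_a$-coefficients, and the ambiguity is an $H^1$-class). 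Because $G_0$ is a split torus it is diagonalizable, hence linearly reductive, so $H^i(G_0,\bG_a)=0$ for all $i\geq1$; thus the lift exists and is unique at every stage. Passing to the inverse limit, each $e_i$ yields a group-like unit $w^{e_i}\in\cO(G_{\formal})^\times$, i.e. a formal $R$-homomorphism $G_{\formal}\to\bG_{m,R,\formal}$.

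Finally, I assemble $f:=(w^{e_1},\dots,w^{e_g})\colon G_{\formal}\to\bG_{m,R,\formal}^g$, a morphism of formal $R$-group schemes whose reduction modulo $I$ is the given isomorphism $G_0\xrightarrow{\sim}\bG_{m,k(0)}^g$. On coordinate rings, $f^{\#}\colon R[[w^{\pm e_1},\dots,w^{\pm e_g}]]^{I\text{-adic}}\to\cO(G_{\formal})$ is a morphism of $I$-adically complete, $R$-flat algebras that is an isomorphism modulo $I$; by the usual complete-Nakayama / inverse-limit argument (both sides being $I$-adically complete and $R$-flat, an isomorphism mod $I$ is an isomorphism mod $I^n$ for all $n$, hence an isomorphism) it is an isomorphism, giving $G_{\formal}\simeq\bG_{m,R,\formal}^g=\Spf R[[w^x;x\in X]]^{I\text{-adic}}$.

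The step I expect to be the main obstacle is the second one: organizing the obstruction theory for a group-like (character) lift and verifying $H^1(G_0,\bG_a)=H^2(G_0,\bG_a)=0$. The vanishing itself is easy once set up correctly — $\cO(\bG_{m,k(0)}^g)$ is a direct sum of one-dimensional characters, so the category of rational $G_0$-modules is semisimple and all higher rational cohomology vanishes — but making the deformation-theoretic bookkeeping precise is where essentially all the content sits. Alternatively one may bypass the explicit deformation theory altogether by invoking the rigidity of groups of multiplicative type over a Henselian (a fortiori complete) local base (SGA~3, Exp.~IX--X): each $G_n$ is then diagonalizable of type $X$, and passing to the limit produces the characters of the second step directly. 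The remaining steps are formal.
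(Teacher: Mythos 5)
Your proof is correct, but it runs in the opposite direction from the paper's. The paper starts from the standard formal torus $H:=\bG_{m,R,\formal}^{g}$, takes the identity $f_0\colon H_0\to G_0$, and lifts it step by step through the square-zero extensions $S_n\subset S_{n+1}$ using SGA\,3, Exp.~III, Cor.~2.8; the obstruction there is a degree-two cohomology group on the \emph{source} $H_n$ with coefficients in the pullback of the Lie algebra of $G_n$, which vanishes since $H_n$ is affine (and a torus), and each lift $f_{n+1}$ is automatically an isomorphism of group schemes because $f_n$ is, so no final comparison of coordinate rings is needed. You instead lift the $g$ characters of $G_0$ one at a time, with obstruction and ambiguity in the Hochschild cohomology $H^{2}(G_0,\bG_a)$ and $H^{1}(G_0,\bG_a)$, killed by linear reductivity of the diagonalizable group $G_0$, and then you must close the argument with the ``isomorphism mod $I$ of $I$-adically complete flat algebras implies isomorphism'' step. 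Both arguments rest on the same SGA\,3 infinitesimal lifting machinery and a reductivity-flavored vanishing; the paper's version is slightly more economical (no final Nakayama step), while yours produces the character lattice $X=\Hom(G_0,\bG_m)$ and the group-like units $w^{x}$ explicitly, which is exactly the form in which the isomorphism is used in Lemma~\ref{sublemma:theta Fourier expansion} and Theorem~\ref{subthm:degeneration data}. Your closing remark that one could instead quote the rigidity of groups of multiplicative type over a complete (Henselian) local base from SGA\,3, Exp.~IX--X, is also a legitimate shortcut that the paper does not take.
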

\begin{proof}Let $k=k(0)$.
Let $n$ be any nonnegative integer, $R_n=R/I^{n+1}$, $S_n=\Spec R_n$ 
and $G_n:=G\times_SS_n$. By the assumption, $G_0=\bG^g_{m,k}$ for some $g$. 
Let $H:=\bG_{m,R,\formal}^g$ 
(the formal torus over $R$) and 
$H_n=H\times_SS_n$. Hence $G_0=H_0=\bG_{m,k}^g$.
Let $f_0:H_0\to G_0$ be the identity $\id_{\bG^g_{m,k}}$ of $\bG^g_{m,k}$. 
Since $H_0=\bG^g_{m,R_0}$ is affine, 
the cohomology group $H^2(H_0,f_0^*{\cal Lie}(G_0/k))$ vanishes, where 
${\cal Lie}(G_0/k)$ is the tangent sheaf of $G_0$, hence isomorphic to 
$O_{G_0}^g$, hence $f_0^*{\cal Lie}(G_0/k))\simeq O_{H_0}^g$. 
By applying \cite[I, Expos\'e III, Corollaire~2.8, p.~118]{SGA3} 
to $H_1$, $G_1$ and $f_0$, we see that $f_0$ can be uniquely 
lifted to an $S_1$-(homo)morphism $f_1:H_1\to G_1$ 
as $S_1$-group schemes. 
This lifting $f_1$ is an isomorphism  
because $f_0$ is an isomorphism. 
Similarly any isomorphism $f_n:H_n\to G_n$ as $S_{n}$-group schemes  
can be lifted again by \cite[I, Expos\'e III, Corollaire~2.8, p.~118]{SGA3} 
to an $S_{n+1}$-isomorphism $f_{n+1}:H_{n+1}\to G_{n+1}$ as 
$S_{n+1}$-group schemes because  $H_n$ is affine,  and the cohomology group 
$H^2(H_n,f_n^*{\cal Lie}(G_n/k))$ vanishes 
by the same argument as the $n=0$ case. Hence 
$H_{\formal}\simeq G_{\formal}$ as $S$-group schemes. 
\end{proof}

\begin{sublemma}
\label{sublemma:theta Fourier expansion}
We have
\begin{enumerate}
\item 
any line bundle on $\bG^g_{m,R,\formal}$ is trivial. 
\item any global section $\theta\in\Gamma(G,\cL^n)$ is 
a formal power series of $w^x$, and we can write $\theta$ as 
\begin{equation}\label{eq:theta expansion}
\theta=\sum_{x\in X}\sigma_x(\theta)w^x
\end{equation} for some $\sigma_x(\theta)\in R$. 
\end{enumerate}
\end{sublemma}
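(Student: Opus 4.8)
The plan is to derive (2) from (1), and to prove (1) by reducing modulo the powers of $I$ and lifting a trivializing section one infinitesimal step at a time.

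First, for (1): by the previous Sublemma we may replace $G_{\formal}$ by $\bG_{m,R,\formal}^g=\Spf R[[w^x;x\in X]]^{\text{$I$-adic}}$, whose underlying topological space is that of $H_0:=\bG_{m,k}^g=\Spec k[w^x;x\in X]$ and whose structure sheaf is $\varprojlim_n\cO_{H_n}$, with $H_n:=\bG_{m,R_n}^g$ and $R_n=R/I^{n+1}$. Since $k[w^x;x\in X]$ is a localization of a polynomial ring over a field, it is a UFD, so $\Pic(H_0)=0$; hence the restriction $\cL_0$ of a given invertible sheaf $\cL$ to $H_0$ admits a nowhere-vanishing global section $s_0$. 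I would then lift $s_0$ inductively: given a section $s_n$ of $\cL_n:=\cL|_{H_n}$ restricting to $s_0$, the obstruction to lifting $s_n$ to a section of $\cL_{n+1}$ over $H_{n+1}$ lies in $H^1(H_0,\cL_0\otimes_{\cO_{H_0}}I^{n+1}/I^{n+2})$, which vanishes because $H_0$ is affine; so a lift exists at every stage. The resulting section $s:=\varprojlim_n s_n\in\Gamma(\bG_{m,R,\formal}^g,\cL)$ reduces modulo $I$ to $s_0$, which generates $\cL_0$ at every point, and since $\bG_{m,R,\formal}^g$ and $H_0$ have the same underlying space, $s$ generates $\cL$ everywhere; thus $\cL\simeq\cO_{\bG_{m,R,\formal}^g}$.

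Next, for (2): apply (1) to $\cL^n|_{G_{\formal}}$ to obtain a trivialization $\cL^n|_{G_{\formal}}\simeq\cO_{G_{\formal}}$. By the previous Sublemma, $\Gamma(G_{\formal},\cO_{G_{\formal}})=R[[w^x;x\in X]]^{\text{$I$-adic}}$, which — being by definition the $I$-adic completion of the Laurent polynomial ring $R[w^x;x\in X]$ — consists exactly of the formal sums $\sum_{x\in X}a_xw^x$ with $a_x\in R$ such that, for every $m$, $a_x\in I^m$ for all but finitely many $x$. A section $\theta\in\Gamma(G,\cL^n)$ restricts to $G_{\formal}$ (the restriction being injective, e.g.\ by $I$-adic separatedness of $\Gamma(G,\cL^n)$), and under the trivialization it becomes such a sum; this is the asserted expansion $\theta=\sum_{x\in X}\sigma_x(\theta)w^x$ with $\sigma_x(\theta)\in R$.

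The part that needs care rather than new ideas is the formal bookkeeping: that ``nowhere-vanishing on the formal torus'' is tested on the common underlying space $H_0$; that the inductive lifts assemble to a genuine section over the $I$-adically complete structure ring; and the precise meaning of $R[[w^x;x\in X]]^{\text{$I$-adic}}$ when $\rank X>1$, where it is the $I$-adic completion of the Laurent ring and not a several-variable power-series ring in a naive sense. The one substantive ingredient is the affineness of the torus $H_0$ over the field $k$, which yields at once $\Pic(H_0)=0$ and the vanishing of the $H^1$'s that drive the inductive lifting.
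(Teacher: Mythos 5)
Your proof is correct and follows essentially the same route as the paper's: triviality on the closed fiber $G_0=\bG_{m,k}^g$ via the UFD property of the Laurent polynomial ring, followed by an inductive lifting through the infinitesimal neighborhoods $G_n$ driven by the vanishing of $H^1$ of a quasi-coherent sheaf on the affine torus, and then (2) read off from the trivialization. The only cosmetic difference is that the paper lifts the trivialization by showing the defining one-cocycle becomes a coboundary at each stage (using $H^1(O_{G_0})=0$), whereas you lift a nowhere-vanishing global section directly against the obstruction in $H^1(H_0,\cL_0\otimes I^{n+1}/I^{n+2})$; these are the same computation.
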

\begin{proof}
Let $R_n=R/I^{n+1}$, $A_n:=R_n[w_i^{\pm 1}; i=1,\cdots,g]$ and 
$$G_n:=\bG^g_m\otimes R_n=\Spec A_n.$$ 
 To prove the first assertion, it suffices to prove 
\begin{enumerate}
\item[(i)] any line bundle $L_0$ on $G_0$ is trivial,
\item[(ii)] if a line bundle $L$ on $G_{n}$ is trivial on $G_{n-1}$, it is trivial on $G_{n}$.
\end{enumerate} 
 
Any line bundle $L_0$ on $G_0$ is linearly equivalent to $D-D'$ for some effective divisors $D$ and $D'$ on $G_0$. For proving (i) it suffices to prove that 
the line bundle $L'=[D]$ associated to any irreducible divisor $D$ on $G_0$ 
is trivial. Since $G_0$ is affine, $D$ is defined by a prime ideal $\frak p$ of $A_0$ of height one. Since $A_0$ is a UFD,  $\frak p$  is generated by a single generator \cite[Theorem~47, p.~141]{Matsumura70}, hence 
it defines a trivial line bundle globally on $G_0$. This proves (i). \par
Next we prove (ii). Since $G_n$ is an $R_n$-scheme, we 
can find an affine covering 
$U_j=\Spec B_j$ of $G_n$ for some $R_n$-algebras $B_j$, and one cocycle 
$f_{jk}\in\Gamma(O_{U_{jk}})^{\times}$ (the units of $\Gamma(O_{U_{jk}})$) 
associated to the line bundle $L$ on $G_n$ such that
\begin{equation}\label{eq:one cocycle of L}f_{ij}f_{jk}=f_{ik}. 
\end{equation}By the assumption that $L$ is trivial on $G_{n-1}$ 
there exist $g_j\in B_j^{\times}$ such that 
$f_{ij}=g_i^{-1}g_j\mod I^n. $  Let $g_{ij}=g_ig_j^{-1}f_{ij}$. Then $g_{ij}$ is the one cocycle defining $L$ on $G_n$ such that 
$g_{ij}=1+a_{ij}q^n$ 
for some $a_{ij}\in B_{ij}$.  By (\ref{eq:one cocycle of L}), 
we have $g_{ij}g_{jk}=g_{ik}$, hence 
\begin{equation*}a_{ij}+a_{jk}=a_{ik}\quad \text{in $B_{ijk}\otimes R/I$,}
\end{equation*}
where $B_{ijk}=\Gamma(O_{U_i\cap U_j\cap U_k})$. Since $H^1(O_{G_0})=0$, 
we have $b_j\in B_{i}\otimes R_0$ such that $a_{ij}=-b_i+b_j$. Hence 
\begin{equation*}g_{ij}=(1+b_iq^n)^{-1}(1+b_jq^n),
\end{equation*}which defines the trivial line bundle on $G_n$. 
This proves (ii). Hence this completes the proof of the first assertion 
of Lemma~\ref{sublemma:theta Fourier expansion}. The second assertion 
of Lemma~\ref{sublemma:theta Fourier expansion} follows easily from it. 
\end{proof}

\begin{subthm}
\label{subthm:degeneration data}
If $G$ is totally degenerate, then by a suitable finite base change, 
there exist data 
$\{a(x);x\in X\}$ satisfying (i)-$(iv)^*$. In terms of these data, 
we have using the expression (\ref{eq:theta expansion})
\begin{enumerate}
\item[(v)] for any $n\geq 1$,
$\Gamma(G_{\eta},\cL_{\eta}^n)$ is 
the $k(\eta)$ vector space of  
$\theta$ such that $$\sigma_{x+y}(\theta)
=a(y)^nb(y,x)\sigma_x(\theta)$$ and $\sigma_x(\theta)\in k(\eta)$ 
$\text{for any $x\in X, y\in Y$}$. 
\end{enumerate}
\end{subthm}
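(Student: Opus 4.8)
The existence of data $\{a(x)\}$ satisfying (i)--(iv)$^*$ after a finite base change is the totally degenerate case of the construction of \cite{FC90}, so I would simply invoke it; by (iv)$^*$ the form $B=\val_q b$ is even and positive definite, and this positivity is what will make every infinite sum below converge $I$--adically. It remains to prove (v), and my plan is to reconstruct $G$ explicitly from these data in the style of Mumford and then identify sections by $Y$--descent.

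First I would build, over $R$ (after the base change), a relatively complete model $\wP$ whose formal completion along its closed fibre is the formal torus $\Spf R[[w^x;x\in X]]^{\text{$I$-adic}}$ of Sublemma \ref{sublemma:theta Fourier expansion}, carrying a free action of the period lattice $Y$ such that $G$ is the quotient $\wP/Y$ (the one--dimensional picture is Subsec.~\ref{subsec:scheme limit}, where $S_y^*(a(x)w^x\vartheta)=a(x+y)w^{x+y}\vartheta$). The ample sheaf $\cL$, hence $\cL^n$, lifts to a $Y$--linearized invertible sheaf on $\wP$; since invertible sheaves on the formal torus are trivial by Sublemma \ref{sublemma:theta Fourier expansion}(1), the linearization is recorded by an automorphy factor, and unwinding the construction — this is the content of \cite[Theorem~4.10]{Nakamura99} — shows that, in the standard trivialization, each $S_y$ ($y\in Y$) sends the natural degree--$n$ section indexed by $x\in X$ to $a(y)^n b(y,x)$ times the one indexed by $x+y$. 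Granting this, $\Gamma(G_\eta,\cL_\eta^n)$ is the space of $Y$--invariant formal sections; writing such a section as $\theta=\sum_{x\in X}\sigma_x(\theta)w^x$ with $\sigma_x(\theta)\in k(\eta)$ as in (\ref{eq:theta expansion}), the equation $S_y^*\theta=\theta$ becomes, termwise, exactly $\sigma_{x+y}(\theta)=a(y)^n b(y,x)\sigma_x(\theta)$. That this prescription defines an honest $Y$--action — i.e. that the relation is compatible when iterated over $y,y'\in Y$ — I would check directly from $a(0)=1$ and the symmetric bilinearity of $b$ in (ii).

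For the converse inclusion I would start from any family $(\sigma_x)_{x\in X}$ in $k(\eta)$ satisfying the relation and show it arises from a genuine section. Using $\val_q a(y)=B(y,y)/2$ and $\val_q b(y,x)=B(y,x)$, along each coset $x_0+Y$ one gets $\val_q\sigma_{x_0+y}(\theta)=\val_q\sigma_{x_0}(\theta)+\frac{n}{2}B(y,y)+B(y,x_0)$, which tends to $+\infty$ because $B$ is positive definite; hence $\theta=\sum_x\sigma_x(\theta)w^x$ converges $I$--adically, is manifestly $Y$--invariant, and so descends to a section of $\cL_\eta^n$ over $G_\eta$ (algebraicity on the generic fibre is automatic, or one descends an algebraic section from $\wP$). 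Since a solution is pinned down by its values on a set of representatives for the relevant finite quotient of $X$, the solution space is finite dimensional of the expected dimension, matching $h^0(\cL_\eta^n)$ and closing the loop.

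The hard part will be the step I am attributing to \cite{Nakamura99}: producing the relatively complete model $\wP$ and verifying that, with the Faltings--Chai normalizations, the $Y$--automorphy factor of $\cL^n$ is precisely $a(y)^n b(y,x)$. Once that cocycle is in hand, the passage to the relation on Fourier coefficients, the $I$--adic convergence via positive definiteness, and the dimension count are all routine.
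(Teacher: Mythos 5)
The paper offers no proof of this theorem at all --- it is quoted as the totally degenerate case of the Faltings--Chai theory, with only the two sublemmas of Subsec.~\ref{subsec:deg data of Faltings-Chai} as preparation --- so the real question is whether your argument is sound and whether it fits the paper's logic. On the latter point there is a structural problem: you derive (v) by building the relatively complete model $\wP$, identifying $G_\eta$ with the generic fibre of the quotient $\wP_{\formal}/Y$, and reading off sections by $Y$-descent. But the paper uses (v) precisely to \emph{prove} that identification: Subsec.~\ref{subsec:proof of isom} computes $\Gamma(P_{\eta},\cL_{\eta}^n)$ from the Mumford side, invokes Theorem~\ref{subthm:degeneration data} to say this equals $\Gamma(G_{\eta},\cL_{\eta}^n)$, and only then concludes $P_{\eta}\simeq G_{\eta}$ (Theorem~\ref{thm:refined stable reduction}~(1)). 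If (v) is obtained by first declaring $G_\eta$ to be the quotient, that argument becomes circular. The Faltings--Chai route runs the other way: the data $a(x)$, $b(x,y)$ and the relation on Fourier coefficients are extracted directly from $(G,\cL)$ --- sections over the N\'eron model expand in the $w^x$ by Sublemma~\ref{sublemma:theta Fourier expansion}, and the period lattice $Y\subset G^t_{\eta}(k(\eta))$ acts on these coefficients through the biextension structure of the Poincar\'e bundle --- with no relatively complete model needed.

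Second, and more concretely, the two verifications you defer as routine actually fail for the relation in the form you use. If $\sigma_{x+y}=a(y)^nb(y,x)\sigma_x$ for all $y\in Y$, then iterating over $y$ and then $y'$ gives $a(y)^na(y')^nb(y,x)b(y',x)b(y',y)\sigma_x$, whereas the single step $y+y'$ gives $a(y)^na(y')^nb(y,y')^nb(y,x)b(y',x)\sigma_x$; the discrepancy $b(y,y')^{n-1}$ is $1$ only for $n=1$. Moreover a solution of that recursion is determined by its values on $X/Y$, so the solution space has dimension $|H|$ independently of $n$ and cannot match $h^0(\cL_{\eta}^n)=n^g|H|$ --- your closing dimension count does not close the loop. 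The relation the paper actually uses in Subsec.~\ref{subsec:proof of isom} is $c(x+ny)=b(y,x)a(y)^nc(x)$, with basis indexed by $X/nY$: the Fourier index shifts by $ny$, not $y$, because the polarization of $\cL^n$ is $n\lambda(\cL)$. Your $I$-adic convergence estimate survives this correction, but the termwise identification of $S_y^*\theta=\theta$ with the relation, the cocycle compatibility, and the dimension count all require the $ny$ shift; carrying out the checks you postponed would have exposed this.
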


The condition (v) enables us to prove 
the part (1) of Theorem~\ref{thm:refined stable reduction}.

\subsection{Construction}
\label{subsec:construction}
So we may assume we are given the data $a(x)$ as above. 
Then we define 
$\cX$, $U_n$ $(n\in X)$,   by 
\begin{align*}
\cX&=\Proj \wR,\quad \wR:=R[a(x)w^x\vartheta; x\in X],\\
U_n&=\Spec R[a(x)w^x/a(n)w^n; x\in X]\\
&=\Spec R[(a(x)/a(n))w^{x-n}],
\end{align*}where $\wR$ is  a subring of 
$k(\eta)[w^x; x\in X][\vartheta]$ 
as in Subsec.~\ref{subsec:scheme limit}, 
and $\cX$ is a scheme locally of finite type, covered with 
open affine schemes $U_n$ $(n\in X)$. 
Let $\cX_{\formal}$ be the formal completion of $\cX$ 
along the special fiber.\par  
We define 
$\cL_{\cX}$ to be the line bundle of $\cX$ given by 
the homogeneous ideal of $\wR$  
generated by the degree one generator $\vartheta$. 
We identify $X\times_{\bZ}\bG_{m,R}\ (\simeq \bG_{m,R}^g)$ 
with $\Hom_{\bZ}(X,\bG_{m,R})$.
Then we have the actions $S_z$  and $T_{\beta}$ on $\cX$ as follows: 
\begin{align*}
S_z^*(a(x)w^x\vartheta)&=a(x+z)w^{x+z}\vartheta,\\
T_{\beta}^*(a(x)w^x\vartheta)&=\beta(x)a(x)w^x\vartheta,\ \text{hence}\\
T_{\beta}^*S_z^*(a(x)w^x\vartheta)&=\beta(x+z)a(x+z)w^{x+z}\vartheta,\\
S_z^*T_{\beta}^*(a(x)w^x\vartheta)&=\beta(x)a(x+z)w^{x+z}\vartheta,
\end{align*}where $z\in X$ and $\beta\in\Hom(X,\bG_{m,R})
\ (\simeq \bG_{m,R}^g)$. 
It follows that on $\cL_{\cX}$
\begin{equation}\label{eq:action of cGH}
S_zT_{\beta}=\beta(z)T_{\beta}S_z, \quad \text{or}\quad 
[S_z, T_{\beta}]=\beta(z)\id_{\cL_{\cX}}.
\end{equation} 

Let $Q_{\formal}:=\cX_{\formal}/Y:=\cX_{\formal}/\{S_y; y\in Y\}$ :
\begin{align*}
\cX_{\formal}/Y&=(\Proj R[a(x)w^x\vartheta, x\in X])_{\formal}/Y.
\end{align*}
Then $\cL_{\cX}$ descends to the formal quotient $Q_{\formal}$ 
as an ample sheaf. Hence by Grothendieck's algebraization theorem 
\cite[III, {\bf 11}, 5.4.5]{EGA} there exists a scheme $(Q,\cL)$ 
such that the formal completion of $(Q,\cL)_{\formal}$ is 
isomorphic to $(Q_{\formal},\cL_{\formal})$.
This is $(Q,\cL_Q)$ in Theorem~\ref{thm:refined stable reduction}.

\begin{subrem}
For any connected $R$-scheme $T$, and for any $T$-valued points  
$x\in X(T)=X$ and $\beta\in\Hom(X,\bG_{m,R})(T)$, 
we have 
$\beta(x)\in\bG_{m,R}(T)=\Gamma(O_T)^{\times}$. 
Any $\beta\in\Hom(X,\bG_{m,R})$ 
acts on $\cX$\ by $T_{\beta}$.
It follows that the $R$-split torus 
$\Hom(X,\bG_{m,R})$ 
acts on $\cX$ by $T_{\beta}$. 
\end{subrem}

\begin{subdefn}
\label{subdefn:Sz Tbeta}
Let $H=X/Y$, $H^{\vee}:=\Hom(H,\bG_m)$. 
We define $\cG(Q,\cL)=\cG(P,\cL)$ to be the group generated by 
$S_z$ and $T_{\beta}$ $(z\in H=X/Y, \beta\in H^{\vee})$. 
Since $H^{\vee}$ is a subgroup of $\Hom(X,\bG_m)$, we infer 
from (\ref{eq:action of cGH}) that 
\begin{equation}\label{eq:action of cGH on (Q,L)}
S_zT_{\beta}=\beta(z)T_{\beta}S_z.
\end{equation}

This is isomorphic to $\cG_H$ in 
Definition~\ref{defn:Heisenberg group} by mapping 
$S_z$ (resp. $T_{\beta}$) to $(1,z,0)$ (resp. $(1,0,\beta)$). \theoremtextend
\end{subdefn}

In what follows, we 
wish to prove Theorem~\ref{thm:refined stable reduction}~(1) 
\begin{equation}
\label{eq:isom}
(P_{\eta},\cL_{\eta})\simeq (Q_{\eta},\cL_{\eta})\simeq (G_{\eta},\cL_{\eta}).
\end{equation} 

For doing so, we essentially need only the following. 
\begin{lemma}\label{lemma:cohomology}
With the notation in 
Subsec.~\ref{subsec:limit obj} and 
Theorem~\ref{thm:refined stable reduction}, 
suppose 
$(K_{\eta},e_{\Weil})\simeq (K,e_K)$ as symplectic groups. 
Let $Z=P$ or $Q$. 
Then 
there exists $n_0$ such that for any $n\geq n_0$ we have 
\begin{enumerate}
\item 
$H^q(Z_0,\cL_0^n)=H^q(Z,\cL^n)=0$ for $q\geq 1$, 
\item $\Gamma(Z_0,\cL_0^n)=\Gamma(Z,\cL^n)\otimes k(0)$ is 
a $k(0)$-vector space rank $n^g\sqrt{|K|}$, 
\item
 $\Gamma(P_{\eta},\cL_{\eta}^n)
=\Gamma(P,\cL^n)\otimes k(\eta)$,
\item\label{item:level one theta} $\Gamma(P,\cL)=\Gamma(Q,\cL)$, 
which is a free $R$-module of rank $\sqrt{|K|}$, 
\item if $e_{\min}(K)\geq 3$, then $\Gamma(Q,\cL)$ is very ample on $Q$.
\end{enumerate}
\end{lemma}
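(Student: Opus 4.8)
The plan is to reduce everything to cohomology of line bundles on the explicit formal/algebraized model $(Q,\cL_Q)$ and its normalization $(P,\cL_P)$ constructed in Subsec.~\ref{subsec:construction}, and then to exploit flatness over $R$ together with the torus-combinatorics of the fibers. First I would treat the closed fiber $Z_0$, where $Z=Q$ or $Z=P$. Since $\cX_0$ is the explicit (infinite) union of toric components glued along the Delaunay decomposition of $(X\otimes\bR, B)$, and $Z_0=\cX_0/Y$, the sheaf $\cL_0^n$ restricts to an ample invertible sheaf on each (projective toric) component. For $n\gg 0$ the higher cohomology of an ample line bundle on a projective toric variety vanishes, and the gluing is along lower-dimensional toric strata, so a \v{C}ech/Mayer--Vietoris argument over the finitely many components of $Z_0$ gives $H^q(Z_0,\cL_0^n)=0$ for $q\geq 1$ and $n\geq n_0$; this is part (1) for the closed fiber. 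The count in part (2) is then combinatorial: $\Gamma(Z_0,\cL_0^n)$ has a basis indexed by lattice points attached to the Delaunay cells mod $Y$, weighted by $n$, which is exactly the Ehrhart-type count giving $n^g\sqrt{|K|}$ (using $[X:Y]=|H|$ and $|K|=|H|^2$, so $\sqrt{|K|}=|H|$).

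Next I would pass from $Z_0$ to $Z$ and to the generic fiber. Since $(Q,\cL_Q)$ and $(P,\cL_P)$ are flat and projective over $R$ with $R$ a CDVR, the vanishing $H^q(Z_0,\cL_0^n)=0$ for $q\geq 1$ together with the theorem on cohomology and base change (or simply the long exact sequence for multiplication by the uniformizer $q$ and $I$-adic completeness) propagates to $H^q(Z,\cL^n)=0$ for $q\geq 1$, and then to $H^q(Z_\eta,\cL_\eta^n)=0$ as well; moreover $\Gamma(Z,\cL^n)$ is $R$-free and $\Gamma(Z,\cL^n)\otimes k(0)=\Gamma(Z_0,\cL_0^n)$, $\Gamma(Z,\cL^n)\otimes k(\eta)=\Gamma(Z_\eta,\cL_\eta^n)$. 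This is parts (1), (2) (the rank over $k(\eta)$ equals the rank over $k(0)$ by $R$-freeness), and (3). For part (4), the equality $\Gamma(P,\cL)=\Gamma(Q,\cL)$ follows because $P$ is the normalization of $Q$ (Theorem~\ref{thm:refined stable reduction}~(2)) and the normalization map is an isomorphism away from a closed subset of codimension $\geq 1$ in each fiber, while both sides are reflexive $R$-modules agreeing generically; concretely, both are computed from the same Fourier expansion data $a(x)$ via Subsec.~\ref{subthm:degeneration data}~(v) with $n=1$, giving the $R$-module of $\theta=\sum_x\sigma_x(\theta)w^x$ with $\sigma_{x+y}(\theta)=a(y)b(y,x)\sigma_x(\theta)$, whose rank is $[X:Y]=\sqrt{|K|}$. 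Part (5), very ampleness of $\Gamma(Q,\cL)$ on $Q$ when $e_{\min}(K)\geq 3$, I would deduce from Theorem~\ref{thm:refined stable reduction}~(4), or reprove it by the standard argument that level $\geq 3$ theta functions separate points and tangent vectors on each toric component and on the abelian part, hence on the whole PSQAS.

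The main obstacle will be the cohomology vanishing and the global-sections count on the possibly nonreduced scheme $Q_0$ (part (1)--(2) for $Z=Q$): on the normalization $P_0$ one has an honest reduced union of toric varieties where toric vanishing theorems apply directly, but on $Q_0$ one must control the conductor/gluing along the non-normal locus. I would handle this by the exact sequence $0\to\cO_{Q_0}\to\nu_*\cO_{P_0}\to\cQ\to 0$ with $\cQ$ supported on the non-normal locus (a union of lower-dimensional toric pieces), twisting by $\cL_0^n$ and using that $H^q$ of both $\nu_*\cL_0^n$ and $\cL_0^n\otimes\cQ$ vanish for $q\geq 1$ and $n\gg 0$ (again toric), so the long exact sequence forces $H^q(Q_0,\cL_0^n)=0$ and pins down $h^0(Q_0,\cL_0^n)=h^0(P_0,\nu^*\cL_0^n)-h^0(\cQ\otimes\cL_0^n)$, which the Delaunay bookkeeping evaluates to $n^g\sqrt{|K|}$ — the same number as for $P_0$, consistent with part (4). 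All remaining steps are the routine flatness and base-change formalism indicated above.
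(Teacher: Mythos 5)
Your proposal takes the opposite route from the paper and, along the way, runs into real trouble. The paper's own proof of (1)--(3) is essentially one line: $\cL$ is ample on the projective $R$-scheme $Z$, so relative Serre vanishing gives $R^q\pi_*(\cL^n)=0$ for $q\geq 1$ and $n\gg 0$; cohomology and base change over the DVR $R$ then yields $H^q(Z_0,\cL_0^n)=0$, the freeness of $\Gamma(Z,\cL^n)$, and the base-change identities in (2) and (3) simultaneously; finally the rank $n^g\sqrt{|K|}$ is read off from the \emph{generic} fiber, where $Z_\eta$ is an abelian variety with polarization of degree $|H|=\sqrt{|K|}$ and Riemann--Roch gives $\chi(\cL_\eta^n)=n^g|H|$, this Euler characteristic being constant in the flat family. ((4) and (5) are quoted from \cite{Nakamura99}.) You instead work bottom-up from the closed fiber by explicit toric computations and then propagate upward; this inverts the logic and forces you to prove the hardest statements (vanishing and section counts on the possibly very singular $Q_0$) by hand, when they are exactly the ones that Serre vanishing plus base change hands you for free.

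Two steps in your fiberwise argument would actually fail. First, your conductor sequence $0\to\cO_{Q_0}\to\nu_*\cO_{P_0}\to\cQ\to 0$ is not left exact when $Q_0$ is nonreduced --- and the paper stresses that PSQASes $Q_0$ can be nonreduced, so the kernel of $\cO_{Q_0}\to\nu_*\cO_{P_0}$ contains the nilradical; relatedly, the irreducible components of $Q_0$ need not be (normal) toric varieties, so Demazure-type vanishing does not apply to them directly --- it applies to the components of $P_0$, which is why the paper computes $\Gamma(P_0,\cL_0)$ rather than $\Gamma(Q_0,\cL_0)$ in Theorem~\ref{thm:embedding of (Q0,L0)}. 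Second, for (4) the claim that two $R$-free (``reflexive'') submodules of $\Gamma(P_\eta,\cL_\eta)$ agreeing generically must coincide is false over a DVR (compare $R\subsetneq q^{-1}R$); the paper's actual argument is the two honest inclusions: $\Gamma(Q,\cL)\subset\Gamma(P,\cL)$ via $\cO_Q\hookrightarrow\nu_*\cO_P$, and conversely the explicit theta generators $\sum_{y\in Y}a(x+y)w^{x+y}$ of $\Gamma(P,\cL)$ already live on $Q=\Proj\wR/Y$ by the construction of Subsec.~\ref{subsec:construction}. Your fallback of citing Theorem~\ref{thm:refined stable reduction}~(4) for part (5) is fine and matches what the paper does (it cites \cite{Nakamura99}).
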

\begin{proof}This is a corollary to Serre's vanishing theorem except 
(\ref{item:level one theta}). 
See \cite[Lemma~5.12]{Nakamura99} for (4). 
See \cite[Lemma~6.3]{Nakamura99} for (5).
\end{proof}

\subsection{
Proof of $(P_{\eta},\cL_{\eta})\simeq 
(Q_{\eta},\cL_{\eta})\simeq (G_{\eta},\cL_{\eta})$}
\label{subsec:proof of isom}

By \cite[Remark~3.10, p.~673]{Nakamura99} 
(see also \cite[Remark~4.11, p.~679]{Nakamura99}), 
$\Gamma(P_{\eta},\cL_{\eta}^n)$ is  
a $k(\eta)$-submodule of 
$\Gamma(G_{\formal},\cL_{\formal}^n)\otimes k(\eta)$ 
given by
\begin{equation*}
\left\{
\theta=\sum_{x\in X}\ c(x)w^x;\begin{matrix} 
c(x+ny)=b(y,x)a(y)^nc(x)\\
c(x)\in k(\eta),\text{any $x\in X, y\in Y$}
\end{matrix}
\right\}
\end{equation*}
where the $I$-adic convergence of $\theta$ is automatic by the condition 
$$c(x+ny)=b(y,x)a(y)^nc(x).$$ 
This is the same as $\Gamma(G_{\eta},\cL_{\eta}^n)$ by 
Theorem~\ref{subthm:degeneration data}. 
A $k(\eta)$-basis of $\Gamma(G_{\eta},\cL_{\eta}^n)$ is given for instance as 
$\theta^{[n]}_{\bar x}$ $(x\in X/nY)$ 
$$\theta^{[n]}_{\bar x}:=\sum_{y\in Y}b(y,x)a(y)^na(x)w^{x+ny}=
\sum_{y\in Y}a(y)^{n-1}a(x+y)w^{x+ny}.
$$

We choose $n\geq 4$ large enough so that 
$\cL^n_{\eta}$ is very ample. 
Then the abelian variety $G_{\eta}$ embedded by the linear system 
$\Gamma(G_{\eta},\cL_{\eta}^n)$ is given as the intersection of 
certain quadrics of $\theta^{[n]}_{\bar x}$ by 
\cite[Theorem~10, p.80]{Mumford70} 
(see also \cite[Theorem~2.1, p.~717]{Sekiguchi78}).  The 
coefficients of the defining equations are given by 
the Fourier coefficients of $\theta^{[n]}_{\bar x}$. 
This proves 
$$(Q_{\eta},\cL_{\eta})\simeq (P_{\eta},\cL_{\eta})\simeq (G_{\eta},\cL_{\eta}).$$where 
$(Q_{\eta},\cL_{\eta})\simeq 
(P_{\eta},\cL_{\eta})$ is clear. 
 \theoremtextend

\subsection{The Delaunay decompositions}
Let $X$ be a lattice of rank $g$ and $B$ a positive symmetric 
integral bilinear form on $X$ associated with the degeneration data 
for $(\cZ,\cL)$. 

\begin{subdefn}
\label{subdefn:Delaunay decomp}
For a fixed $\lambda\in X\otimes_{\bZ}\bR$ fixed, we define 
a Delaunay cell $\sigma$ to be 
the convex closure of all the integral 
vectors (which we call Delaunay vectors) attaining the minimum  
of the function
\begin{equation*}B(x,x)-2B(\lambda,x)\ \ (x\in X).
\end{equation*}
\end{subdefn}

%
\vspace*{1.7cm}
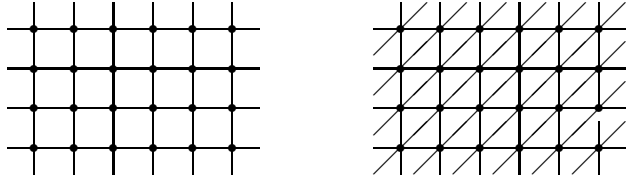
\begin{figure}[ht]
\hspace*{-13.5cm}
\begin{picture}(30,40)(-100,-20)
   \put(15,0){\line(-1,0){10}}
   \put(15,0){\circle*{3}}
   \put(15,0){\line(1,0){15}}
   \put(30,0){\circle*{3}}
   \put(30,0){\line(1,0){15}}
   \put(45,0){\circle*{3}}
   \put(45,0){\line(1,0){15}}
   \put(60,0){\circle*{3}}
   \put(60,0){\line(1,0){15}}
   \put(75,0){\circle*{3}}
   \put(75,0){\line(1,0){15}}
   \put(90,0){\circle*{3}}
   \put(90,0){\line(1,0){10}}
   \put(15,15){\line(-1,0){10}}
   \put(15,15){\circle*{3}}
   \put(15,15){\line(1,0){15}}
   \put(30,15){\circle*{3}}
   \put(30,15){\line(1,0){15}}
   \put(45,15){\circle*{3}}
   \put(45,15){\line(1,0){15}}
   \put(60,15){\circle*{3}}
   \put(60,15){\line(1,0){15}}
   \put(75,15){\circle*{3}}
   \put(75,15){\line(1,0){15}}
   \put(90,15){\circle*{3}}
   \put(90,15){\line(1,0){10}}
   \put(15,30){\line(-1,0){10}}
   \put(15,30){\circle*{3}}
   \put(15,30){\line(1,0){15}}
   \put(30,30){\circle*{3}}
   \put(30,30){\line(1,0){15}}
   \put(45,30){\circle*{3}}
   \put(45,30){\line(1,0){15}}
   \put(60,30){\circle*{3}}
   \put(60,30){\line(1,0){15}}
   \put(75,30){\circle*{3}}
   \put(75,30){\line(1,0){15}}
   \put(90,30){\circle*{3}}
   \put(90,30){\line(1,0){10}}
   \put(15,45){\line(-1,0){10}}
   \put(15,45){\circle*{3}}
   \put(15,45){\line(1,0){15}}
   \put(30,45){\circle*{3}}
   \put(30,45){\line(1,0){15}}
   \put(45,45){\circle*{3}}
   \put(45,45){\line(1,0){15}}
   \put(60,45){\circle*{3}}
   \put(60,45){\line(1,0){15}}
   \put(75,45){\circle*{3}}
   \put(75,45){\line(1,0){15}}
   \put(90,45){\circle*{3}}
   \put(90,45){\line(1,0){10}}
   \put(30,0){\line(0,-1){10}}
   \put(45,0){\line(0,-1){10}}
   \put(60,0){\line(0,-1){10}}
   \put(75,0){\line(0,-1){10}}
   \put(90,0){\line(0,-1){10}}
   \put(15,0){\line(0,-1){10}}
   \put(15,0){\line(0,1){15}}
   \put(30,0){\line(0,1){15}}
   \put(45,0){\line(0,1){15}}
   \put(60,0){\line(0,1){15}}
   \put(75,0){\line(0,1){15}}
   \put(90,0){\line(0,1){15}}
   \put(15,15){\line(0,-1){10}}
   \put(15,15){\line(0,1){15}}
   \put(30,15){\line(0,1){15}}
   \put(45,15){\line(0,1){15}}
   \put(60,15){\line(0,1){15}}
   \put(75,15){\line(0,1){15}}
   \put(90,15){\line(0,1){15}}
   \put(15,30){\line(0,-1){10}}
   \put(15,30){\line(0,1){15}}
   \put(30,30){\line(0,1){15}}
   \put(45,30){\line(0,1){15}}
   \put(60,30){\line(0,1){15}}
   \put(75,30){\line(0,1){15}}
   \put(90,30){\line(0,1){15}}
   \put(15,45){\line(0,-1){10}}
   \put(15,45){\line(0,1){10}}
   \put(30,45){\line(0,1){10}}
   \put(45,45){\line(0,1){10}}
   \put(60,45){\line(0,1){10}}
   \put(75,45){\line(0,1){10}}
   \put(90,45){\line(0,1){10}}
   \end{picture}
\begin{picture}(30,40)(-205,-20)
   \put(15,0){\line(-1,0){10}}
   \put(15,0){\circle*{3}}
   \put(15,0){\line(1,0){15}}
   \put(30,0){\circle*{3}}
   \put(30,0){\line(1,0){15}}
   \put(45,0){\circle*{3}}
   \put(45,0){\line(1,0){15}}
   \put(60,0){\circle*{3}}
   \put(60,0){\line(1,0){15}}
   \put(75,0){\circle*{3}}
   \put(75,0){\line(1,0){15}}
   \put(90,0){\circle*{3}}
   \put(90,0){\line(1,0){10}}
   \put(15,15){\line(-1,0){10}}
   \put(15,15){\circle*{3}}
   \put(15,15){\line(1,0){15}}
   \put(30,15){\circle*{3}}
   \put(30,15){\line(1,0){15}}
   \put(45,15){\circle*{3}}
   \put(45,15){\line(1,0){15}}
   \put(60,15){\circle*{3}}
   \put(60,15){\line(1,0){15}}
   \put(75,15){\circle*{3}}
   \put(75,15){\line(1,0){15}}
   \put(90,15){\circle*{3}}
   \put(90,15){\line(1,0){10}}
   \put(15,30){\line(-1,0){10}}
   \put(15,30){\circle*{3}}
   \put(15,30){\line(1,0){15}}
   \put(30,30){\circle*{3}}
   \put(30,30){\line(1,0){15}}
   \put(45,30){\circle*{3}}
   \put(45,30){\line(1,0){15}}
   \put(60,30){\circle*{3}}
   \put(60,30){\line(1,0){15}}
   \put(75,30){\circle*{3}}
   \put(75,30){\line(1,0){15}}
   \put(90,30){\circle*{3}}
   \put(90,30){\line(1,0){10}}
   \put(15,45){\line(-1,0){10}}
   \put(15,45){\circle*{3}}
   \put(15,45){\line(1,0){15}}
   \put(30,45){\circle*{3}}
   \put(30,45){\line(1,0){15}}
   \put(45,45){\circle*{3}}
   \put(45,45){\line(1,0){15}}
   \put(60,45){\circle*{3}}
   \put(60,45){\line(1,0){15}}
   \put(75,45){\circle*{3}}
   \put(75,45){\line(1,0){15}}
   \put(90,45){\circle*{3}}
   \put(90,45){\line(1,0){10}}
   \put(30,0){\line(0,-1){10}}
   \put(45,0){\line(0,-1){10}}
   \put(60,0){\line(0,-1){10}}
   \put(75,0){\line(0,-1){10}}
   \put(90,0){\line(0,-1){10}}
   \put(15,0){\line(0,-1){10}}
   \put(15,0){\line(0,1){15}}
   \put(30,0){\line(0,1){15}}
   \put(45,0){\line(0,1){15}}
   \put(60,0){\line(0,1){15}}
   \put(75,0){\line(0,1){15}}
   \put(90,0){\line(0,1){10}}
   \put(15,15){\line(0,-1){10}}
   \put(15,15){\line(0,1){15}}
   \put(30,15){\line(0,1){15}}
   \put(45,15){\line(0,1){15}}
   \put(60,15){\line(0,1){15}}
   \put(75,15){\line(0,1){15}}
   \put(90,15){\line(0,1){15}}
   \put(15,30){\line(0,-1){10}}
   \put(15,30){\line(0,1){15}}
   \put(30,30){\line(0,1){15}}
   \put(45,30){\line(0,1){15}}
   \put(60,30){\line(0,1){15}}
   \put(75,30){\line(0,1){15}}
   \put(90,30){\line(0,1){15}}
   \put(15,45){\line(0,-1){10}}
   \put(15,45){\line(0,1){10}}
   \put(30,45){\line(0,1){10}}
   \put(45,45){\line(0,1){10}}
   \put(60,45){\line(0,1){10}}
   \put(75,45){\line(0,1){10}}
   \put(90,45){\line(0,1){10}}
   \put(15,0){\line(-1,-1){10}}
   \put(15,0){\line(1,1){15}}
   \put(30,0){\line(1,1){15}}
   \put(30,0){\line(-1,-1){10}}
   \put(45,0){\line(1,1){15}}
   \put(45,0){\line(-1,-1){10}}
   \put(60,0){\line(1,1){15}}
   \put(60,0){\line(-1,-1){10}}
   \put(75,0){\line(1,1){15}}
   \put(75,0){\line(-1,-1){10}}
   \put(90,0){\line(1,1){10}}
   \put(90,0){\line(-1,-1){10}}
   \put(15,15){\line(-1,-1){10}}
   \put(15,15){\line(1,1){15}}
   \put(30,15){\line(1,1){15}}
   \put(45,15){\line(1,1){15}}
   \put(60,15){\line(1,1){15}}
   \put(75,15){\line(1,1){15}}
   \put(90,15){\line(1,1){10}}
   \put(15,30){\line(-1,-1){10}}
   \put(15,30){\line(1,1){15}}
   \put(30,30){\line(1,1){15}}
   \put(45,30){\line(1,1){15}}
   \put(60,30){\line(1,1){15}}
   \put(75,30){\line(1,1){15}}
   \put(90,30){\line(1,1){10}}
   \put(15,45){\line(-1,-1){10}}
   \put(15,45){\line(1,1){10}}
   \put(30,45){\line(1,1){10}}
   \put(45,45){\line(1,1){10}}
   \put(60,45){\line(1,1){10}}
   \put(75,45){\line(1,1){10}}
   \put(90,45){\line(1,1){10}}
   \end{picture}
\caption{Delaunay decompositions}
\label{fig:2dim Delaunay}
\end{figure}

When $\lambda$ ranges in $X\otimes_{\bZ}\bR$, 
we will have various Delaunay cells. Together, they constitute 
a locally finite polyhedral decomposition of $X\otimes_{\bZ}\bR$, 
invariant under the translation by $X$. 
We call this the Delaunay decomposition of $X\otimes_{\bZ}\bR$, which we
denote by $\Del_B$. \par
There are two types of Delaunay decomposition of $\bZ^2\otimes\bR=\bR^2$ 
inequivalent under the action of $\SL(2,\bZ)$.
See Figure~\ref{fig:2dim Delaunay}.

The Delaunay decomposition
describes a PSQAS as follows.
\begin{thm}\label{thm:stratification of Z} 
Let $(Z,L):=(Q_0,\cL_0)$ be a totally degenerate PSQAS, $X$ the integral 
lattice, $Y$ the sublattice of $X$ of finite index and $B$ 
the positive integral bilinear form on $X$ 
all of which were defined in Subsec.~\ref{subsec:deg data of Faltings-Chai}. 
Let $\sigma$,$\tau$ be Delaunay cells in $\Del_B$. Then
\begin{enumerate} 
\item for each $\sigma$ there exists 
a subscheme $O(\sigma)$ of $Z_{\red}$,  
which is a torus of dimension $\dim\sigma$ 
invariant under the action of the torus $G_0$,  
\item  $\sigma\subset\tau$ iff $O(\sigma)
\subset\overline{O(\tau)}$, where $\overline{O(\tau)}$ is 
the closure of $O(\tau)$ in $Z$,
\item $\overline{O(\tau)}$ is 
the disjoint union of $O(\sigma)$ for all $\sigma\subset\tau$,
\item  $Z_{\red}=\bigcup_{\sigma\in\Del_B\op{mod}\ Y}\ O(\sigma)$, 
\item  the local scheme structure of $Z$ is completely described by $B$,
\item  $L$ is ample, and 
it is very ample if $e_{\min}(X/Y)\geq 3$. 
\end{enumerate}
\end{thm}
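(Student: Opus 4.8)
The plan is to read the whole stratification off the explicit construction of
$(Q,\cL_Q)$ in Subsection~\ref{subsec:construction}: first on the scheme
$\cX=\Proj\wR$ over $R$, before dividing by $Y$, and then on the quotient.
Recall that $\cX$ is flat over $R$, covered by the affine charts
$U_n=\Spec R[(a(x)/a(n))w^{x-n};x\in X]$ $(n\in X)$, carries the ample
tautological sheaf $\cL_{\cX}$, and that $Q$ is the algebraization of
$\cX_{\formal}/Y$ with $\cL_Q$ descended from $\cL_{\cX}$. So the first task is
to describe the special fibre $\cX_0:=\cX\otimes_R k(0)$ together with the
action on it of the torus $G_0=\Hom(X,\bG_m)\otimes k(0)$ coming from the
operators $T_\beta$.

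Using hypothesis $(\mathrm{iv})^*$ of
Subsection~\ref{subsec:deg data of Faltings-Chai} --- $B$ even and positive
definite with $\val_q a(x)=B(x,x)/2$ --- one finds
$\val_q\bigl(a(n+m)/a(n)\bigr)=B(m,m)/2+B(n,m)$, so that reducing $U_n$ modulo
$q$ produces the affine toric variety attached to the cone in
$X\otimes_{\bZ}\bR$ spanned by the Delaunay cells of $\Del_B$ containing $n$.
Concretely $V_n:=U_n\otimes_R k(0)=\bigsqcup_{\sigma\ni n}O_n(\sigma)$, where
$O_n(\sigma)$ is the $G_0$-orbit labelled by the Delaunay cell $\sigma$; since
$U_n\cap U_{n'}$ is the localization of $U_n$ at the one monomial
$(a(n')/a(n))w^{n'-n}$, the orbits $O_n(\sigma)$ and $O_{n'}(\sigma)$ are
identified whenever $n,n'\in\sigma$, giving well-defined locally closed
subschemes $O(\sigma)\subset(\cX_0)_{\red}$, each isomorphic to
$\Hom(\bZ\langle\sigma-\sigma\rangle,\bG_m)$, i.e.\ a $G_0$-invariant torus of
dimension $\dim\sigma$. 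That is part~(1) at the level of $\cX_0$, and
parts~(2)--(3) for $\cX_0$ are the orbit--closure correspondence for toric
varieties combined with the fact that in a polyhedral decomposition a cell
$\sigma$ is a face of $\tau$ exactly when $\sigma\subseteq\tau$.

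To descend to $Z=Q_0$, note that $Y$ acts on $\cX_0$ by the translations
$S_y$, which carry $V_n$ isomorphically onto $V_{n+y}$ and move the labelling
lattice $X$ with quotient $X/Y=H$; as this action is free and $Q_0$ is the
quotient $\cX_0/Y$, the statements of the previous paragraph descend to
(1)--(4), with $Z_{\red}=\bigcup_{\sigma\in\Del_B\bmod Y}O(\sigma)$. For
part~(5), the local scheme structure of $Z$ at a point of $O(\sigma)$ is that
of $V_n$ for any vertex $n$ of $\sigma$, which by the computation above is
prescribed by $\Del_B$, hence by $B$ alone. For part~(6), $\cL_{\cX}$ is
relatively ample over $R$, so it descends to an ample sheaf on
$\cX_{\formal}/Y$ and algebraizes to an ample $\cL_Q$ on $Q$; restricting to
the closed subscheme $Q_0$ keeps $L=\cL_0$ ample, and when
$e_{\min}(X/Y)=e_{\min}(K)\geq 3$ Lemma~\ref{lemma:cohomology}(5) says
$\Gamma(Q,\cL_Q)$ is very ample on $Q$, hence on $Q_0$.

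The one step that is not a formal consequence of the construction --- the main
obstacle --- is the combinatorial heart of the second paragraph: one must show
that modulo $q$ the monoid carved out in $U_n$ by the valuations
$m\mapsto B(m,m)/2+B(n,m)$ is precisely the monoid of integral points of the
cone on the star of $n$ in $\Del_B$, equivalently that the lower convex
envelope of the lifted configuration $\{(x,B(x,x)/2):x\in X\}$ recovers the
Delaunay decomposition of Definition~\ref{subdefn:Delaunay decomp}, and that
the resulting toric charts glue up and stay flat over $R$ with the asserted
special fibre. This convexity-and-flatness analysis is exactly what is done in
\cite{Mumford72} and \cite{Nakamura99}; granting it, everything above is
bookkeeping.
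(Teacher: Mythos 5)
The paper does not actually prove Theorem~\ref{thm:stratification of Z}: it is stated in survey mode, with the reader referred to \cite{AN99} and \cite{Nakamura99} (cf.\ the sentence immediately following the theorem and the remark in the introduction that several sections only survey those papers). Your sketch reconstructs precisely the argument of those references: read the stratification off the charts $U_n=\Spec R[(a(x)/a(n))w^{x-n}]$ of $\cX=\Proj\wR$ from Subsec.~\ref{subsec:construction}, identify the reduced special fibre of $U_n$ with the union of toric pieces indexed by the Delaunay cells having $n$ as a vertex via the valuations $\val_q(a(n+m)/a(n))=B(m,m)/2+B(n,m)$, check the gluing on $U_n\cap U_{n'}$, and descend by the free $Y$-action; this is consistent with the worked one- and two-dimensional cases in Subsections~\ref{subsec:scheme limit} and \ref{subsec:totally degenerate dim two}, and you correctly isolate the genuine content (that the lower convex hull of $\{(x,B(x,x)/2)\}$ reproduces $\Del_B$, plus flatness) and attribute it to \cite{Mumford72} and \cite{Nakamura99}. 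Two small cautions: the displayed identity $V_n=\bigsqcup_{\sigma\ni n}O_n(\sigma)$ should be asserted only for $(V_n)_{\red}$, since $Q_0$ can be nonreduced in the totally degenerate case (the theorem itself is careful to put the orbits inside $Z_{\red}$ and to separate off the scheme structure in part (5)); and for part (6) the very-ampleness claim rests on Lemma~\ref{lemma:cohomology}(5), i.e.\ on \cite[Lemma~6.3]{Nakamura99}, which is an independent input rather than a consequence of the chart computation. With those caveats the sketch is sound and is the intended proof.
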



We have similar descriptions of the partially degenerate PSQASes  
and of TSQASes $(P_0,\cL_0)$
(see \cite[p.~410]{AN99} and \cite[p.~678]{Nakamura99}).

\subsection{The totally degenerate case in dimension two}
\label{subsec:totally degenerate dim two}
We note that we learned  more or less the same computation 
as this subsection
in a letter of K. Ueno to Namikwa in 1972. 
We shall explain here what 
 Figure~\ref{fig:2dim Delaunay} means geometrically. \par
We follow 
the construction in Subsec.~\ref{subsec:construction}.
Let $R$ be a CDVR with uniformizer $q$, $k(0)=R/qR$ and 
$X=\bZ f_1\oplus\bZ f_2$ a lattice of rank two. 
Let $\ell$ and $m$ be any positive integers, and 
set $Y=\bZ\ell f_1\oplus \bZ mf_2$.

\begin{subcase}\label{subcase:first}\quad
Let $B(x)=x_1^2+x_2^2$,  
\begin{gather*}
a(x)=q^{x_1^2+x_2^2}a^{2x_1x_2},\
b(x,y)=q^{2x_1y_1+ 2x_2y_2}a^{2x_1y_2+2y_1x_2}
\end{gather*}where $a\in R^{\times}$, $x=x_1f_1+x_2f_2$, $y=y_1f_1+y_2f_2$.
Then we define 
\begin{align*}
\cX&=\Proj R[a(x)w^x\vartheta, x\in X],\\
U_n&=\Spec R[a(x)w^x/a(n)w^n, x\in X]\quad (n\in X)\\
&=\Spec R[(a(x)/a(n))w^{x-n}],\\
\cX_{\formal}/Y&=(\Proj R[a(x)w^x\vartheta, x\in X])_{\formal}/Y.
\end{align*}

Let $Q'_{\formal}:=\cX_{\formal}/Y$. Let $n=0$ for simplicity. Then we have 
\begin{align*}
U_0&=\Spec R[a(f_1)w_1, a(f_2)w_2, a(-f_1)w_1^{-1}, a(-f_2)w_2^{-1}],\\
(U_0)_0&=\Spec R[qw_1, qw_2, qw_1^{-1}, qw_2^{-1}]\otimes k(0)\\
&\simeq\Spec k(0)[u_1, u_2, v_1, v_2]/(u_1v_1, u_2v_2),
\end{align*}where $(U_0)_0=U_0\otimes k(0)$. 
Hence $U_n\simeq U_0$ and 
\begin{align*}
(U_n)_0:&=\Spec k(0)[u^{(n)}_1, u^{(n)}_2, v^{(n)}_1, v^{(n)}_2]/(u^{(n)}_1v^{(n)}_1, u^{(n)}_2v^{(n)}_2)
\end{align*}where $n=n_1f_1+n_2f_2$, and 
\begin{gather*}
u^{(n)}_1=q^{2n_1+1}w_1, 
u^{(n)}_2=q^{(2n_2+1)}w_2,\\ 
v^{(n)}_1=q^{(-2n_1+1)}w_1^{-1}, v^{(n)}_2=q^{(-2n_2+1)}w_2^{-1}.
\end{gather*}

 These charts will be patched together to 
yield $(Q'_{\formal})_0$. \par
 This PSQAS $(Q'_{\formal})_0$ 
is a union of 
$\ell m$ copies of $\bP^1\times \bP^1$, whose configuration is just the same as the Delaunay decomposition on the left hand side in Fig.~4. The first 
horizontal chain of $\ell$ rational curves 
is identified with the $m$-th horizontal chain of $\ell$ rational curves by 
shifting by multiplication by $a^{2m}$ on each rational curve, while 
the first 
vertical chain of $m$ rational curves 
is identified with the $\ell$-th vertical chain of $m$ rational curves by 
shifting by multiplication by $a^{2\ell}$ on each rational curve
because 
\begin{align*}
S_{mf_2}^*(w_1)&=b(f_1,mf_2)w_1=a^{2m}w_1,\\
S_{\ell f_1}^*(w_2)&=b(\ell f_1,f_2)w_2=a^{2\ell}w_2.
\end{align*}

The PSQAS $(Q'_{\formal})_0$ is a level-$\cG_H$ PSQAS.
where $H=(\bZ/\ell\bZ)\oplus(\bZ/m\bZ)\simeq 
(\bZ/e_1\bZ)\oplus(\bZ/e_2\bZ)$, 
with $e_1=\text{GCD}(\ell,m)$ and $e_2=\ell m/e_1$.  
\end{subcase}
\begin{subcase}\label{subcase:second}\quad
Let $B(x)=x_1^2-x_1x_2+x_2^2$, 
\begin{gather*}
a(x)=q^{x_1^2-x_1x_2+ x_2^2},\ 
b(x,y)=q^{2x_1y_1-x_1y_2-x_2y_1+2x_2y_2}
\end{gather*}where $x=x_1f_1+x_2e_2$, $y=y_1f_1+y_2e_2$.
Then we define 
\begin{align*}
\cX&=\Proj R[a(x)w^x\vartheta, x\in X],\\
U_n&=\Spec R[a(x)w^x/a(n)w^n, x\in X]\quad (n\in X)\\
&=\Spec R[(a(x)/a(n))w^{x-n}],\\
\cX_{\formal}/Y&=(\Proj R[a(x)w^x\vartheta, x\in X])_{\formal}/Y.
\end{align*}

Let $Q''_{\formal}:=\cX_{\formal}/Y$. Let $n=0$ for simplicity. Then we have 
\begin{align*}
U_0
&=\Spec R[qw_1, qw_1w_2, qw_2, qw_1^{-1}, qw_1^{-1}w_2^{-1}, 
qw_2^{-1}]\\
&\simeq\Spec k(0)[u_i; 0\leq i\leq 5]/(u_{i-1}u_{i+1}-qu_i,u_iu_{i+3}-q^2)\\
(U_0)_0&\simeq\Spec k(0)[u_i; 0\leq i\leq 5]/(u_iu_j; |i-j\,\text{(mod}\ 6)|\geq 2),
\end{align*}where $(U_0)_0=U_0\otimes k(0)$. 

We have a PSQAS $(Q''_{\formal})_0$. This PSQAS $(Q''_{\formal})_0$ 
is a union of 
$\ell m$ copies of $\bP^2$, whose configuration 
is just the same as the Delaunay decomposition 
on the right hand side in Fig.~4. 
The first 
horizontal chain of $\ell$ rational curves 
is identified with the $m$-th horizontal chain of $\ell$ rational curves 
without shifting on each rational curve, while 
the first 
vertical chain of $m$ rational curves 
is identified with the $\ell$-th vertical chain of $m$ rational curves without shifting on each rational curve. The PSQAS $(Q''_{\formal})_0$ is 
a level-$\cG_H$ PSQAS for $H=(\bZ/\ell\bZ)\oplus(\bZ/m\bZ)$.
\end{subcase}

\begin{subrem}
Gunji \cite{Gunji06} studied the defining equations of 
the universal abelian surface 
with level three structure. 
His universal abelian surface is the same as our universal PSQAS
over the moduli space $SQ_{2,K}$ 
when $K=H\oplus H^{\vee}$, $H=(\bZ/3\bZ)^{\oplus 2}$ 
and the base field is $\bC$.
 He proved that the level three 
universal abelian surface is 
the intersection of 9 quadrics and 4 cubics
of $\bP^8\times_{\cO_3}{SQ_{2,K}}\times_{\cO_3}{\bC}$ 
\cite[Theorem~8.3]{Gunji06}. 
In his article Gunji determines 
the fibers only partially \cite[pp.~95-96]{Gunji06}.
\par
By our study \cite[Theorem~11.4]{Nakamura99} (Theorem~\ref{thm:SQgK}),
any fiber of the universal PSQAS over $SQ_{2,K}$ 
is a smooth abelian surface, or
a cycle of 3 rational elliptic surfaces 
in Subsec.~\ref{subsec:PSQAS in dim two partially deg},  
with $\ell=m=3$, or else 
one of the singular surfaces in Cases~\ref{subcase:first} 
or \ref{subcase:second} with $\ell=m=3$. 
\end{subrem}

\begin{subrem}Here we explain only a little 
about the local structure of $SQ_{g,K}$ 
for $g=2$. It turns out that the local structure of 
$SQ_{g,K}$ is the same as that of a toroidal compactification,   
the second Voronoi compactification. \par
Let $X$ be a lattice of rank two, 
$B(x)$ the bilinear form on $X$ given in Case~\ref{subcase:second}
$$B(x)=x_1^2-x_1x_2+x_2^2.
$$ 

The Voronoi cone $V_B$ with center $B$ is defined to be
\begin{align*}
V_B&:=\{\beta : \text{positive definite bilinear form on $X$ with 
$\Del_{\beta}=\Del_B$}\}\\
&=\left\{\beta(x):=(\beta_{12}+\beta_{13})x_1^2-2\beta_{12}x_1x_2
+(\beta_{12}+\beta_{23})x_2^2; \beta_{ij}>0\right\}.
\end{align*}

We define a chart $T$ and 
a semi-universal covering $\cX$ over $T$ to be 
\begin{gather*}T:=T(V_B):=\Spf W(k)[[q_{ij};i<j]],\\
\cX=\Proj W(k)[[q_{ij};i<j]][a(x)w^x\vartheta; x\in X]
\end{gather*}
where $W(k)$ is the Witt ring of $k$ 
(see Subsec.~\ref{subsec:pro-representability}),
$q_{ij}=q^{\beta_{ij}}$ and
$$a(x):=q^{\beta^*(x)}:
=q_{13}^{(x_1^2+x_1)/2}q_{23}^{(x_2^2+x_2)/2}
q_{12}^{(x_1^2-2x_1x_2+x_2^2+x_1-x_2)/2},
$$where 
$\beta^*(x)=\frac{1}{2}\beta(x)+\frac{1}{2}r(x)$ and 
$r(x)=(\beta_{13}+\beta_{12})x_1+(-\beta_{12}+\beta_{23})x_2$.

Let $\cL_{\cX}$ be the invertible sheaf $O_{\cX}(1)$ 
on $\cX$. We define the action 
of the lattice $X$ on $\cX$ by 
$$S_z^*(a(x)w^x\vartheta)=a(x+z)w^{x+z}\vartheta.$$  

Let $(\cX_{\formal},\cL_{\formal})$ be the formal 
completion of $(\cX,\cL_{\cX})$ 
along the closed subscheme $\cX_0$ of $\cX$ given by $q_{ij}=0$. 
Let $Y$ be a sublattice of $X$ of finite index.  
We take the formal quotient of $\cX_{\formal}$ by $Y$ 
$$(Q_{\formal},\cL_{\formal}):=(\cX_{\formal},\cL_{\formal})/Y,$$ 
 where
$Q_{\formal}\otimes k(0)\simeq Q_0''$ 
if $Y$ is the same as in Case~\ref{subcase:second}. 
Moreover $(Q_{\formal},\cL_{\formal})$ is a semi-universal PSQAS over $T$. 
In other words, the deformation functor of 
$(Q_{\formal},\cL_{\formal})\otimes k(0)$ is 
pro-represented by $W(k)[[q_{ij};i<j]]$.
Compare \cite{Nakamura75} and Subsec~\ref{subsec:deform of AV}.

Let $\tau=(\tau_{ij})$ be a $2\times 2$ complex symmetric 
matrix with positive imaginary part, and set 
$$q_{12}=e^{-2\pi i\tau_{12}}, q_{13}=e^{2\pi i(\tau_{11}+\tau_{12})}, 
q_{23}=e^{2\pi i(\tau_{12}+\tau_{22})}.$$

These are regular parameters 
of $SQ_{2,K}$  at $(Q_{\formal},\cL_{\formal})$ 
for any $K$ with $e_{\min}(K)\geq 3$. 
This is also an infinitesimally local chart 
of the Mumford toroidal compactification, which is in this case 
the so-called Voronoi compactification, or to be a little more precise, 
the Mumford toroidal compactification associated to the second
Voronoi decomposition and some 
arithmetic subgroup of $\Spl(4,\bZ)$.  See \cite{Namikawa80}.
\end{subrem} 
\subsection{Nonseparatedness of a naive moduli}
\label{subsec:nonseparatedness}
We shall explain here 
how a naive generalization of classical level-$n$ structure 
results in a nonseparated compactification of 
the moduli of abelian varieties. See \cite{Nakamura75}.
\par
In three dimensional case, let $X$ be a lattice of rank 3. 
We choose 
$$B=\begin{pmatrix}
2&-1&0\\
-1&2&-1\\
0&-1&2
\end{pmatrix}.
$$ 

The level-1 PSQASes $(P_0,\cL_0)$ associated to $B$ are parameterized by  
3 nontrivial parameters \cite[p.~197]{Nakamura75}. 

Let $\Del_B/X$ be the quotient of 
the Delaunay decomposition $\Del_B$ by the translation action of $X$.
Then $\Del_B/X$ consists of 
three three-dimensional cells (two tetrahedra and an octahedron), 
eight two-dimen-\linebreak sional cells 
and six one-dimensional cells and a 0-dimensional cell 
\cite[pp.~195-196]{Nakamura75}.  
Each level-1 PSQAS $(P_0,\cL_0)$ has three irreducible components,  
two (say, $T_1, T_2$) of which are $\bP^3$ (modulo $X$ action) and 
the third (say, $O$) of which
is a rational variety distinct from $\bP^3$. Each of the three irreducible   
components is a compactification of $\bG_m^3$. \par
It follows that there are two different 
types (modulo $\Aut(P_0)$) of embedding  
of $\bG_m^3$ into $(P_0,\cL_0)$, that is, 
$\bG_m^3\subset T_k$ and $\bG_m^3\subset O$. 
Therefore there is a pair of 
$R$-PSQASes $(P',\cL')$ and $(P'',\cL'')$ 
such that 
$$(P'_{\eta},\cL'_{\eta})\simeq (P''_{\eta},\cL''_{\eta}),\ 
(\bG_m^3\subset P'_0)\not\simeq (\bG_m^3\subset P''_0).
$$
  
This also implies that there are two inequivalent classes of classical 
level-$n$ structures on the \'etale $(\bZ/n\bZ)^3$-covering 
$(P'_0,\cL'_0)$ of $(P_0,\cL_0)$ as the limits of the same (isomorphic) 
generic fiber. 
This shows that a naive generalization of classical level-$n$ structure 
will lead us to a nonseparated moduli.

\section{The $G$-action and the $G$-linearization}
\label{sec:G action G linearization}

Let $G$ be a group (scheme). 
The purpose of this section is to prove compatibility of various definitions 
about $G$-linearization.  

\subsection{The $G$-linearization}
\label{subsec:G-linearization}
\begin{subdefn}\label{subdefn:action and linearization}
{\em A $G$-linearization on $(Z,L)$}  is by definition the data
$\{(T_g,\phi_g);g\in G\}$ satisfying the conditions 
\begin{enumerate}
\item[(i)] $T_g$ is an automorphism of $Z$, 
such that $T_{gh}=T_gT_h$, $T_1=\id_Z$, 
\item[(ii)] $\phi_g:\cL\to T_{g}^*(\cL)$ 
is a bundle isomorphism with $\phi_1=\id_L$, 
\item[(iii)] $\phi_{gh}=(T_{h}^*\phi_g)\phi_h$ for any 
$g, h\in G((T)$.
\end{enumerate}

We say that $(Z,L)$ is {\em $G$-linearized} 
if the above conditions are true. 
\end{subdefn}

\begin{subrem}\label{subrem:linearization of tensor}
If $L$ and $L'$ are $G$-linearized, then $L\otimes L'$ is also 
$G$-linearized. 
\end{subrem}
\begin{subdefn}
If $(Z,L)$ is $G$-linearized, then we define a $G$-action $\tau$ 
on the pair $(Z,L)$.  
Via the isomorphism $L \overset{\phi_h}{\longrightarrow} T^*_{h}(L)$, 
for $x\in Z$, $\zeta\in L_x$, we define 
\begin{equation}\label{eq:action tau}
\tau(h)(z, \zeta):=(T_{h}(z), \phi_h(z)\zeta).
\end{equation}
\end{subdefn}

\begin{claim}
$\tau$ is an action of $G$ on $(Z,L)$.
\end{claim}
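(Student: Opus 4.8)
The plan is to verify directly, from Definition~\ref{subdefn:action and linearization}, the two axioms that make $\tau$ a (left) $G$-action on the pair $(Z,L)$: that $\tau(1)$ is the identity and that $\tau(gh)=\tau(g)\circ\tau(h)$ for all $g,h\in G$. Along the way I would also record that each $\tau(g)$ is an isomorphism of $(Z,L)$ covering $T_g$ on the base; this is immediate, since $\phi_g$ is a bundle isomorphism and $T_g$ is an automorphism by (i)--(ii).

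First I would dispose of the unit: by (i) and (ii) we have $T_1=\id_Z$ and $\phi_1=\id_L$, so \eqref{eq:action tau} gives $\tau(1)(z,\zeta)=(T_1(z),\phi_1(z)\zeta)=(z,\zeta)$ for every $z\in Z$ and $\zeta\in L_z$; hence $\tau(1)=\id$.

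Next, the composition law. Fix $g,h\in G$ and a point $(z,\zeta)$ with $\zeta\in L_z$. Applying \eqref{eq:action tau} twice, first for $h$ and then for $g$, yields
\[
\tau(g)\bigl(\tau(h)(z,\zeta)\bigr)=\tau(g)\bigl(T_h(z),\phi_h(z)\zeta\bigr)=\bigl(T_g T_h(z),\ \phi_g(T_h(z))\,\phi_h(z)\,\zeta\bigr).
\]
By (i) the base coordinate is $T_g T_h(z)=T_{gh}(z)$. For the fibre coordinate I would unwind the cocycle identity (iii): reading $\phi_h$ as a map $L\to T_h^*L$ and $T_h^*\phi_g$ as a map $T_h^*L\to T_h^*T_g^*L=(T_gT_h)^*L=T_{gh}^*L$, and using the canonical identification of the fibre of $T_h^*L$ over $z$ with $L_{T_h(z)}$, the map $(T_h^*\phi_g)$ evaluated at $z$ is exactly $\phi_g(T_h(z))\colon L_{T_h(z)}\to L_{T_{gh}(z)}$. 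Therefore (iii) at the point $z$ reads $\phi_{gh}(z)=\phi_g(T_h(z))\circ\phi_h(z)$, and substituting this into the display above gives $\tau(g)(\tau(h)(z,\zeta))=(T_{gh}(z),\phi_{gh}(z)\zeta)=\tau(gh)(z,\zeta)$, as required.

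The whole argument is a bookkeeping exercise, and the one step that needs genuine care --- hence the main, if modest, obstacle --- is precisely this fibrewise reinterpretation of (iii): one must be consistent about the identification $(T_h^*L)_z\cong L_{T_h(z)}$ and about reading $T_h^*\phi_g$ as ``evaluate $\phi_g$ at the translated point $T_h(z)$''. Once that convention is fixed, both axioms drop out of (i)--(iii) by direct substitution, with no further hypotheses needed.
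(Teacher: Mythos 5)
Your proposal is correct and follows essentially the same route as the paper: the composition law is verified by direct substitution into \eqref{eq:action tau}, with the key step being the identification of $(T_h^*\phi_g)(z)$ with $\phi_g(T_h(z))$ so that the cocycle condition (iii) yields $\phi_{gh}(z)=\phi_g(T_h(z))\circ\phi_h(z)$. Your explicit check of the unit axiom is a harmless addition the paper omits as immediate.
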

\begin{proof}
Via the isomorphisms
\begin{equation*}
L \overset{\phi_h}{\longrightarrow} T^*_{h}(L)
\overset{T^*_{h}\phi_g}{\longrightarrow}
T^*_{h}(T^*_{g}(L))=T^*_{gh}(L), 
\end{equation*} we see 
\begin{align*}
\tau(g)\left(\tau(h)(z, \zeta)\right)&
= \tau(g)\cdot (T_{h}(z), \phi_h(z)\zeta)\\
&= (T_{g}(T_{h}(z)), \phi_g(T_{h}(z))\phi_h(z)\zeta)\\
&=(T_{gh}(z), (T_{h}^*\phi_g\cdot\phi_h)(z)\zeta)\\
&=(T_{gh}(z), \phi_{gh}(z)\cdot\zeta)=\tau(gh)(z, \zeta).
\end{align*}

Hence $\tau$ is an action of $G$.
\end{proof}

Finally we note that if we are given an action 
$\tau$ of $G$ on the pair $(Z,L)$ of 
a scheme $Z$ and a line bundle $L$ on $Z$,
then we have a $G$-linearization of $L$. 
In fact, $\tau$ is an action of $G$ iff $T_{gh}=T_{g}T_{h}$ and 
$\phi_{gh}=T_{h}^*\phi_g\cdot\phi_h$.

\begin{claim}
\label{claim:defn of rhoL}\text{$($\cite[p.~295]{Mumford66}$)$}
Associated to a given $G$-action $\tau$ on $(Z,L)$, 
 we define a map $\rho_{\tau,L}(g)$ of $H^0(Z,L)$ to be
\begin{equation}\label{eq:definition of rho_L}
\rho_{\tau,L}(g)(\theta):=T_{g^{-1}}^*(\phi_g(\theta))\ 
\text{for any $g\in G$ and any $\theta\in H^0(Z,L)$.}
\end{equation}
Then $\rho_{\tau,L}$ is a homomorphism.
\end{claim}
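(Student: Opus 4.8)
The plan is to verify the two defining properties of a group homomorphism: $\rho_{\tau,L}(1)=\id$ on $H^0(Z,L)$, and $\rho_{\tau,L}(gh)=\rho_{\tau,L}(g)\,\rho_{\tau,L}(h)$ for all $g,h\in G$. First I would check that each $\rho_{\tau,L}(g)$ is a well-defined linear map $H^0(Z,L)\to H^0(Z,L)$: regarding a section $\theta$ as a morphism $O_Z\to L$, the composite $\phi_g(\theta)=\phi_g\circ\theta$ is a section of $T_g^*L$, and applying $T_{g^{-1}}^*$ yields a section of $T_{g^{-1}}^*T_g^*L$, which is canonically $L$ since $T_gT_{g^{-1}}=T_1=\id_Z$ by Definition~\ref{subdefn:action and linearization}~(i). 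Because $\phi_1=\id_L$ and $T_1=\id_Z$, this immediately gives $\rho_{\tau,L}(1)=\id$.

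For multiplicativity I would compute directly. Starting from $\rho_{\tau,L}(gh)(\theta)=T_{(gh)^{-1}}^*(\phi_{gh}\circ\theta)$, substitute the cocycle relation Definition~\ref{subdefn:action and linearization}~(iii), $\phi_{gh}=(T_h^*\phi_g)\circ\phi_h$, and use $(gh)^{-1}=h^{-1}g^{-1}$ together with (i) to get $T_{(gh)^{-1}}=T_{h^{-1}}\circ T_{g^{-1}}$, hence $T_{(gh)^{-1}}^*=T_{g^{-1}}^*\circ T_{h^{-1}}^*$ by contravariance of pullback. Since $T_{h^{-1}}^*$ carries composition of sheaf maps to composition and $T_{h^{-1}}^*(T_h^*\phi_g)=(T_hT_{h^{-1}})^*\phi_g=\phi_g$, the expression collapses:
\[
\rho_{\tau,L}(gh)(\theta)=T_{g^{-1}}^*\bigl(\phi_g\circ T_{h^{-1}}^*(\phi_h\circ\theta)\bigr)
=T_{g^{-1}}^*\bigl(\phi_g\circ\rho_{\tau,L}(h)(\theta)\bigr)
=\rho_{\tau,L}(g)\bigl(\rho_{\tau,L}(h)(\theta)\bigr).
\]
This is $\rho_{\tau,L}(gh)=\rho_{\tau,L}(g)\rho_{\tau,L}(h)$; in particular each $\rho_{\tau,L}(g)$ is invertible with inverse $\rho_{\tau,L}(g^{-1})$, so $\rho_{\tau,L}$ is a homomorphism into $\GL(H^0(Z,L))$.

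The argument is a formal manipulation, so the one thing to be careful about is keeping the pullback variances straight: every identification $T_{h^{-1}}^*T_h^*L\cong L$, $T_{h^{-1}}^*T_{gh}^*L\cong T_g^*L$, $\id_Z^*L\cong L$ used above must be the canonical one coming from functoriality of $(-)^*$ together with $T_gT_h=T_{gh}$ and $T_1=\id_Z$, and the symbols $\theta$, $\phi_h\circ\theta$, $\rho_{\tau,L}(h)(\theta)$ must all be read consistently as morphisms out of $O_Z$. If $G$ is a group scheme and $(Z,L)$ is taken relatively over an arbitrary base, the same chain of equalities holds verbatim on $T$-points for every test scheme $T$, using the compatibilities recorded earlier in this section, so no extra input is required.
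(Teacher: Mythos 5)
Your computation of $\rho_{\tau,L}(gh)(\theta)=T_{g^{-1}}^*\bigl(\phi_g\circ T_{h^{-1}}^*(\phi_h\circ\theta)\bigr)$ via the cocycle relation $\phi_{gh}=(T_h^*\phi_g)\phi_h$, the factorization $T_{(gh)^{-1}}^*=T_{g^{-1}}^*T_{h^{-1}}^*$, and the cancellation $T_{h^{-1}}^*(T_h^*\phi_g)=\phi_g$ is exactly the paper's proof. The extra remarks on well-definedness, $\rho_{\tau,L}(1)=\id$, and the canonical identifications of pullbacks are correct and harmless additions; no issues.
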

\begin{proof}We see
\begin{align*}
\rho_{\tau,L}(gh)(\theta)&=T_{h^{-1}g^{-1}}^*(\phi_{gh}\theta)
=T_{g^{-1}}^*\{T_{h^{-1}}^*(T_h^*\phi_g\cdot\phi_h\theta)\}\\
&=T_{g^{-1}}^*\{T_{h^{-1}}^*(T_h^*\phi_g)\cdot (T_{h^{-1}}^*\phi_h\theta)\}\\
&=T_{g^{-1}}^*\{\phi_g\cdot (T_{h^{-1}}^*\phi_h\theta)\}
=\rho_{\tau,L}(g)\rho_{\tau,L}(h)(\theta).
\end{align*}
\end{proof}

\subsection{The $G$-linearization of $O_{\bP(V)}(1)$}
\label{subsec:Linearization of O(1)}
Let $R$ be any ring. 
Suppose we are given an action of a group $G$ on an $R$-free module 
$V$ of finite rank, in other words, 
a homomorphism $\rho:G\to \End(V)$. 
Let $V^{\vee}:=\Hom(V,R)$ be the dual of $V$, 
$\bP(V)$ the projective space with 
$V=H^0(\bP(V),O_{\bP(V)}(1))$,
$\bH=O_{\bP(V)}(1)$ the hyperplane bundle of $\bP(V)$. Then $V^{\vee}$ 
admits a natural affine $R$-scheme structure $\bV^{\vee}$ defined by 
$$\bV^{\vee}=\Spec \Sym V:=\Spec \bigoplus_{n=0}^\infty S^nV.
$$

The action $\rho$ of $G$ on $V$ induces an action of $G$ 
on $S^nV$, hence on $\Sym V$, hence on $\bV^{\vee}$, hence 
on the pair $(\bP(V),\bV^{\vee}-\{0\})$ of schemes. 
We note that $\bV^{\vee}-\{0\}$ is 
a $\bG_m$-bundle over $\bP(V)$ associated with the dual of 
the hyperplane bundle $\bH$ of $\bP(V)$. Hence 
the action $\rho$ of $G$ on $V$ induces 
the action on the pair $(\bP(V),\bH)$ of schemes. 
\par
Let $S$ be any $R$-scheme 
and $P\in \bP(V)(S)$ any $S$-valued point. 
By choosing affine coverings $U_i:=\Spec A_i$ 
of $S$ if necessary, 
$P$ is a collection of $P_i\in \bP(V)(U_i)$ of 
(the equivalence class of) 
the points given by 
$$\gamma_{P_i}\in\Hom(V,A_i)$$
such that the ideal of $A_i$ generated by $\gamma_P(V)$ is $A_i$, 
where $\gamma_{P_i}\sim\gamma_{Q_i}$ iff 
$\gamma_{Q_i}=c\gamma_{P_i}$ for some $c\in A_i^{\times}$. 
Hence there are $c_{ij}\in A^{\times}_{ij}
:=\Gamma(O_{U_i\cap U_j})^{\times}$ such that 
$\gamma_{P_i}=c_{ij}\gamma_{P_j}$. 
In what follows, we suppose $S=U_i$ 
for simplicity and we identify $P$ with $\gamma_P$. \par
We define an action of $G$ on $(\bP(V),\bV^{\vee}\setminus\{0\})$ by
\begin{equation}\label{defn:Svee}
S^{\vee}(g)([\gamma_P],\gamma_P):=
([\gamma_P\circ\rho(g^{-1})], \gamma_P\circ\rho(g^{-1})).
\end{equation}Then we see, 
\begin{align*}
S^{\vee}(gh)(\gamma_P)&=\gamma_P\circ \rho((gh)^{-1})
=\gamma_P\circ \rho(h^{-1})\rho(g^{-1})\\
&=S^{\vee}(h)(\gamma_P)\rho(g^{-1})=
S^{\vee}(g)S^{\vee}(h)(\gamma_P).
\end{align*}
Thus we have an action of $G$
on the pair $(\bP(V),\bV^{\vee}\setminus\{0\})$ 
by $\bG_m$-bundle automorphisms. 

\begin{subdefn}\label{subdefn:G action on H}
The action $S^{\vee}(g)$ 
of $g\in G$ on $(\bP(V),\bV^{\vee}\setminus\{0\})$ 
induces an action on $(\bP(V), \bH)$, which we denote by $S(g)$. 
\end{subdefn}

\begin{rem}\label{rem:rho=rho}
Let $R$ be any ring, $V$ an $R$-free module of finite rank, and 
$\rho:G\to\End(V)$ an action of $G$ on $V$. Let $V^{\vee}:=\Hom(V,R)$ and 
$\langle \phantom{\gamma},\phantom{f}\rangle:V^{\vee}\times V\to R$ 
the dual pairing. Using this pairing we have a dual action ${}^t\rho$ 
of $G$ on $V^{\vee}$ 
such that 
$$\langle {}^t\rho(g)\gamma, F\rangle:=\langle\gamma,\rho(g)F\rangle,$$
where $\gamma\in V^{\vee}$, and $F\in V$. 
Then ${}^t\rho(gh)={}^t\rho(h){}^t\rho(g)$. Thus 
this is made into a left action of $G$ on $\bP(V)$ 
by taking $T_g(\gamma):={}^t\rho(g^{-1})(\gamma)$. This $T_g$ is the same as $S^{\vee}(g)$ in Subsec.~\ref{subsec:Linearization of O(1)} because 
\begin{align*}
T_g(\gamma)(F)&=\langle{}^t\rho(g^{-1})(\gamma),F\rangle
=\langle\gamma,\rho(g)^{-1}F\rangle\\
&=\gamma(\rho(g^{-1})F)=S^{\vee}(g)(\gamma)(F).\end{align*}

Since we have the action $T_g$ on $\bP(V)$, 
Claim~\ref{claim:defn of rhoL} 
defines a homomorphism $\rho_{T,\bH}$ (well known as 
the contragredient representation of $T_g$). Then we have 
\begin{align*}
(\rho_{T,\bH}(g)F)(\gamma):&=F(T_{g^{-1}}\gamma)=F({}^t\rho(g)\gamma)\\
&=\langle{}^t\rho(g)\gamma,F\rangle=\langle\gamma,\rho(g)F\rangle
=(\rho(g)F)(\gamma),\end{align*}
where $x\in V^{\vee}\setminus\{0\}$, $F\in V$. 
Hence $\rho_{T,\bH}=\rho$. \par
This justifies our notation $(C,i,U_H)$ (resp. $(Z,i,U_H)$) 
in Lemma~\ref{lemma:G(3) uniqueness} (resp.   
in Theorem~\ref{thm:SQgK}) where we 
indicate the action on $(C,L)$ or $(Z,L)$ 
induced from $U_H$ simply by $U_H$. 
\end{rem}

\subsection{$G$-invariant closed subschemes}
\label{subsec:G equiv closed immersion}
Let $R$ be any ring, $V$ an $R$-free module of finite rank, and 
$G$ any subgroup of $\PGL(V)$. 
If $Z$ be a $G$-invariant closed subscheme of $\bP(V)$ with $L=O_Z(1)$, 
then the $G$-action of $(\bP(V), \bH)$ keeps $(Z,L)$ stable, hence 
we have an action of $G$ on the pair $(Z,L)$. This gives rise to a 
$G$-linearization of $(Z,L)$. \par
Conversely 
\begin{claim}\label{claim:G equiv embedding}
Let $(Z,L)$ be an $R$-scheme
with $L$ a $G$-linearized line bundle on $Z$, and 
$V$ a $G$-submodule of $H^0(Z,L)$. 
{\em Suppose that 
$V$ is $R$-free of finite rank and very ample.} Then  
the natural morphism $(\psi,\Psi):(Z,L)\to(\bP(V),\bH)$ is 
a $G$-equivariant closed immersion. 
\end{claim}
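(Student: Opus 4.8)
The plan is to treat separately the two claims hidden in the statement: that $(\psi,\Psi)$ is a closed immersion, and that it is $G$-equivariant. The first is simply the meaning of very ampleness: the hypothesis that $V$ is very ample says that the evaluation map $\mathrm{ev}_Z\colon V\otimes_R\cO_Z\to L$ is surjective and that the classifying morphism $\psi\colon Z\to\bP(V)$ it determines --- characterized by a canonical identification $\psi^*\bH\simeq L$ under which $\psi^*(\mathrm{ev}_{\bP(V)})=\mathrm{ev}_Z$, where $\mathrm{ev}_{\bP(V)}\colon V\otimes\cO_{\bP(V)}\to\bH$ is the tautological quotient --- is a closed immersion; one then takes $\Psi\colon L\simeq\psi^*\bH\to\bH$ to be the tautological bundle map over $\psi$. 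So the entire substance is the equivariance $(\psi,\Psi)\circ\tau(g)=S(g)\circ(\psi,\Psi)$, which reduces to the equality of morphisms $\psi\circ T_g=S(g)\circ\psi$ together with the matching of the two trivializations of the pulled-back bundle. Here $S(g)$ is the automorphism of $(\bP(V),\bH)$ attached in Subsec.~\ref{subsec:Linearization of O(1)} and Definition~\ref{subdefn:G action on H} to the representation $\rho:=\rho_{\tau,L}|_V\colon G\to\End(V)$ of Claim~\ref{claim:defn of rhoL}; note that ``$V$ is a $G$-submodule of $H^0(Z,L)$'' means precisely that $V$ is stable under $\rho_{\tau,L}$ and carries the restricted action.

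The key step is to check that $\psi\circ T_g$ and $S(g)\circ\psi$ classify one and the same rank-one quotient of $V\otimes\cO_Z$, hence coincide. For $S(g)\circ\psi$: by the construction of $S(g)$ via $\rho(g)$ acting on $\Sym V$, equivalently by (\ref{defn:Svee}), one has $S(g)^*\bH\simeq\bH$ canonically and $S(g)^*(\mathrm{ev}_{\bP(V)})=\mathrm{ev}_{\bP(V)}\circ(\rho(g^{-1})\otimes\id_{\cO_{\bP(V)}})$; pulling back along $\psi$ and using $\psi^*(\mathrm{ev}_{\bP(V)})=\mathrm{ev}_Z$ gives that $S(g)\circ\psi$ is classified by $\mathrm{ev}_Z\circ(\rho(g^{-1})\otimes\id)\colon V\otimes\cO_Z\to L$. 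For $\psi\circ T_g$: it is classified by $T_g^*(\mathrm{ev}_Z)\colon V\otimes\cO_Z\to T_g^*L$, transported to $L$ through the isomorphism $\phi_g^{-1}\colon T_g^*L\xrightarrow{\ \sim\ }L$ supplied by the linearization (note $\phi_g^{-1}=T_g^*(\phi_{g^{-1}})$ by the instance $\phi_1=(T_g^*\phi_{g^{-1}})\phi_g$ of the cocycle condition). Thus $\psi\circ T_g$ is classified by $\phi_g^{-1}\circ T_g^*(\mathrm{ev}_Z)$. Now unwinding Claim~\ref{claim:defn of rhoL}, $\rho(g^{-1})(\theta)=\rho_{\tau,L}(g^{-1})(\theta)=T_g^*(\phi_{g^{-1}}(\theta))=\phi_g^{-1}(T_g^*\theta)$ for every $\theta\in V$, which is exactly the statement that $\phi_g^{-1}\circ T_g^*(\mathrm{ev}_Z)=\mathrm{ev}_Z\circ(\rho(g^{-1})\otimes\id)$. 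Hence the two quotients agree, so $\psi\circ T_g=S(g)\circ\psi$, and since the canonical trivializations $\psi^*\bH\simeq L$ on both sides get identified in the process, this lifts to the asserted identity of pairs, giving the $G$-equivariance of $(\psi,\Psi)$.

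I expect the main obstacle to be purely bookkeeping: keeping all the left-action conventions and the canonical identifications consistent --- $T_g^*T_{g^{-1}}^*=(T_{g^{-1}}T_g)^*=\id$, the behaviour of the tautological quotient of $\bP(V)=\Proj\Sym V$ under the $\Sym V$-automorphism induced by $\rho(g)$, and the two descriptions $\phi_g^{-1}$ and $T_g^*(\phi_{g^{-1}})$ of the comparison $T_g^*L\simeq L$. Once these are pinned down, the identity collapses, as above, to the very formula defining $\rho_{\tau,L}$, so no genuine difficulty remains. As a sanity check one can verify everything pointwise: for $z\in Z$ and a local trivialization $t$ of $L$ at $z$, $\psi(z)$ is the functional $\theta\mapsto(\theta/t)(z)$ on $V$; then $S(g)(\psi(z))$ is $\theta\mapsto\bigl(\rho(g^{-1})\theta/t\bigr)(z)$ by (\ref{defn:Svee}), while $\psi(T_g(z))$ is $\theta\mapsto(\theta/t')(T_g z)$ for a trivialization $t'$ at $T_g z$, and the two agree because $\phi_{g^{-1}}$ identifies $L_{T_g z}$ with $L_z$ compatibly with the comparison of $t$ with $t'$ --- the pointwise shadow of the sheaf identity above.
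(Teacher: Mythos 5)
Your proposal is correct and follows essentially the same route as the paper: the closed immersion is extracted from very ampleness, and the equivariance is reduced, via the cocycle identity $T_g^*\phi_{g^{-1}}=\phi_g^{-1}$, to the defining formula of $\rho_{\tau,L}$ — your concluding pointwise ``sanity check'' with the functionals $\theta\mapsto(\theta/t)(z)$ is precisely the computation the paper performs with $\gamma_{\psi(z)}$ in the proof of Claim~\ref{claim:G equiv morphism}. The only difference is presentational (classifying maps of rank-one quotients versus point functionals), not mathematical.
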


This is a corollary to the following
\begin{claim}\label{claim:G equiv morphism}
Let $(Z,L)$ be an $R$-scheme
with $L$ a $G$-linearized line bundle on $Z$, and 
$V$ a $G$-submodule of $H^0(Z,L)$. 
{\em Suppose that 
$V$ is $R$-free of finite rank and base point free.} Then  
\begin{enumerate}
\item there is a $G$-action $S$ on $(\bP(V),\bH)$ 
in Subsec.~\ref{subsec:Linearization of O(1)},
\item
the natural morphism $(\psi,\Psi):(Z,L)\to(\bP(V),\bH)$ is 
$G$-equivariant. 
\end{enumerate}
\end{claim}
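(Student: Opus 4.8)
The plan is to reduce everything to the homomorphism $\rho_{\tau,L}$ attached to a $G$-linearization in Claim~\ref{claim:defn of rhoL} and to the functorial description of morphisms into $\bP(V)$. First, for part (1): the given $G$-linearization $\{(T_g,\phi_g)\}$ of $(Z,L)$ produces, via Claim~\ref{claim:defn of rhoL}, a group homomorphism $\rho_{\tau,L}\colon G\to\GL(H^0(Z,L))$. Because $V$ is a $G$-submodule of $H^0(Z,L)$, this restricts to $\rho:=\rho_{\tau,L}|_V\colon G\to\GL(V)\subset\End(V)$, and feeding $\rho$ into the construction of Subsec.~\ref{subsec:Linearization of O(1)} yields the $G$-action $S^\vee$ on $(\bP(V),\bV^\vee\setminus\{0\})$ by $\bG_m$-bundle automorphisms, hence (Definition~\ref{subdefn:G action on H}) the $G$-action $S$ on $(\bP(V),\bH)$ asserted in (1).

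For part (2), I would first recall that base point freeness of $V$ turns the inclusion $V\hookrightarrow H^0(Z,L)$ into a surjection $q\colon V\otimes_R\cO_Z\twoheadrightarrow L$, which by the universal property of $\bP(V)$ is exactly the datum of the morphism $\psi\colon Z\to\bP(V)$ with $\psi^*\bH\cong L$ together with the tautological bundle map $\Psi\colon L=\psi^*\bH\to\bH$. I would then check the equivariance identity $(\psi,\Psi)\circ\tau(g)=S(g)\circ(\psi,\Psi)$ on $T$-valued points for $g\in G(T)$ (equivalently, Zariski-locally after trivializing $L$). On one side, $\psi\circ T_g$ is classified by the pulled-back surjection $T_g^*q\colon V\otimes\cO_Z\twoheadrightarrow T_g^*L$; composing with $\phi_g^{-1}\colon T_g^*L\to L$ and using the cocycle relation $\id_L=\phi_{g^{-1}g}=(T_g^*\phi_{g^{-1}})\cdot\phi_g$ from Definition~\ref{subdefn:action and linearization}(iii) one obtains $\phi_g^{-1}(T_g^*\theta)=\rho_{\tau,L}(g^{-1})\theta$ for $\theta\in V$, so that $\psi\circ T_g$ is classified by $(L,\,q\circ(\rho(g^{-1})\otimes\id_{\cO_Z}))$. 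On the other side, by the defining formula (\ref{defn:Svee}) the action $S(g)$ on $\bP(V)$ is precomposition with $\rho(g^{-1})$, so $S(g)\circ\psi$ is classified by the very same datum; hence $\psi\circ T_g=S(g)\circ\psi$, and the identification of tautological quotients simultaneously records that $\Psi$ intertwines $\phi_g$ with the $\bG_m$-bundle part of $S(g)$ acting on $\bH$. This proves (2); Claim~\ref{claim:G equiv embedding} then follows immediately, since in the very ample case the $G$-equivariant morphism $(\psi,\Psi)$ is in addition a closed immersion.

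The step requiring genuine care, rather than formal pullback bookkeeping, is the passage from ``equal up to a unit'' in $\bP(V)$ to an honest equality of $\bH$-twisted data: one must verify that condition (iii) of Definition~\ref{subdefn:action and linearization} is precisely what forces the two $\bG_m$-torsor structures — the one transported along $\phi_g$ through $\Psi$, and the one coming from the $S^\vee$-action of Subsec.~\ref{subsec:Linearization of O(1)} on $\bV^\vee\setminus\{0\}$ — to coincide, so that $\Psi$, and not merely the underlying map $Z\to\bP(V)$, is $G$-equivariant. Everything else reduces to tracking the isomorphisms $\phi_g$, $\Psi$ and $\psi^*\bH\cong L$ through the cocycle identities and the definition of $\rho_{\tau,L}$.
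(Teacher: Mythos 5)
Your proposal is correct and follows essentially the same route as the paper: part (1) is the same restriction of $\rho_{\tau,L}$ to $V$ fed into Subsec.~\ref{subsec:Linearization of O(1)}, and your key identity $\phi_g^{-1}(T_g^*\theta)=\rho_{\tau,L}(g^{-1})\theta$ (from the cocycle relation) is exactly the computation the paper performs with the evaluation functionals $\gamma_{\psi(z)}$, your universal-property packaging of $\psi$ via the surjection $V\otimes_R\cO_Z\twoheadrightarrow L$ being just a reformulation of the paper's $\gamma_{\psi(z)}(\theta)=\theta(z)$. The point you flag as needing care — that the $\bG_m$-torsor structures agree so that $\Psi$ itself, not just $\psi$, is equivariant — is likewise handled in the paper by tracking the fiber coordinate $\zeta^{-1}$ through $S^{\vee}(g)$.
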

\begin{proof}
By Claim~\ref{claim:defn of rhoL}, $H^0(X,L)$ is a $G$-module. 
By the assumption $V$ is a $G$-submodule of $H^0(X,L)$.  Then 
by Subsec.~\ref{subsec:Linearization of O(1)} we have a $G$-action 
$S$ on $(\bP(V),\bH)$.  With the notation 
in Subsec.~\ref{subsec:Linearization of O(1)}, 
we define the map $\psi$ by 
$\gamma_{\psi(z)}(\theta)=\theta(z)$ 
for $\theta\in V=H^0(Z,L)$. This defines a natural map 
$(\psi,\Psi):(Z,L)\to (\bP(V),\bH)$ because $L=\psi^*\bH$. 
We prove that with respect to the $G$-actions 
$\tau$ on $(Z,L)$ and $S$ on $(\bP(V),\bH)$,  
$(\psi,\Psi)$ is $G$-equivariant. 
Let $(z,\zeta)\in (Z,L)$ and $P=\psi(z)$. 
Then we have 
\begin{equation}\label{eq:action tau and S}
\tau(g)(z,\zeta)=(T_g(z),\phi_g(z)\zeta),\ 
(\psi,\Psi)(z,\zeta)=(\psi(z),\zeta).
\end{equation}
Since $(T_g^*\phi_{g^{-1}})\phi_g=\phi_1=\id_L$ 
by Definition~\ref{subdefn:action and linearization}~(iii), we see
\begin{align*}
\gamma_{\psi(z)}\circ \rho_L(g^{-1})(\theta)
&=\gamma_{\psi(z)}(T_g^*(\phi_{g^{-1}}\theta))\\
&=(T_g^*\phi_{g^{-1}}(z)T_g^*(\theta)(z)
=\phi_{g}^{-1}(z)T_g^*(\theta)(z)\\
&=\phi_{g}(z)^{-1}\theta(T_gz)
=\phi_{g}(z)^{-1}\gamma_{\psi(T_gz)}(\theta),
\end{align*}whence $[\gamma_{\psi(z)}\circ \rho_L(g^{-1})]
=[\gamma_{\psi(T_gz)}]=\psi(T_gz)$. 
By (\ref{defn:Svee}), regarding  $\zeta^{-1}$  
as  the (rational) fiber coordinate of $L^{\vee}$, we have 
\begin{align*}
S^{\vee}(g)(\psi,\Psi)(z,\zeta^{-1})&=([\gamma_{\psi(z)}\circ \rho_L(g^{-1})], 
\gamma_{\psi(z)}\circ \rho_L(g^{-1})\zeta^{-1})\\
&=([\gamma_{\psi(T_gz)}], \gamma_{\psi(T_gz)}\phi_g(z)^{-1}\zeta^{-1}), 
\end{align*}whence the fiber coordinate $\zeta^{-1}$ is transformed into 
$\phi_g(z)^{-1}\zeta^{-1}$ because $\psi(z)$ (resp. $\psi(T_gz)$) 
is a generator of the fiber of $\bH$. Hence $S^{\vee}(g)$ induces 
the transformation $\zeta\mapsto \phi_g(z)\zeta$ on $L$. Thus 
with the notation of (\ref{eq:action tau and S})
\begin{align*}
S(g)(\psi,\Psi)(z,\zeta)&
=([\gamma_{\psi(z)}\circ \rho_L(g^{-1})], \phi_g(z)\zeta)
=(\psi(T_g(z)),\phi_g(z)\zeta)\\
&=(\psi,\Psi)(T_g(z),\phi_g(z)\zeta)=(\psi,\Psi)\tau(g)(z,\zeta).
\end{align*} 

This proves that $(\psi,\Psi)$ is $G$-equivariant. 
\end{proof}


\subsection{The $G$-linearization in down-to-earth terms}
We quote this part from \cite[p.94]{Nakamura10}. 
The following enables us to understand $G_H$-linearization in 
down-to-earth terms.
\begin{claim}\label{claim:G-inv affine trivializing L}
Let $T=\Spec R$, and 
$G$ a finite group. 
Let $Z$ be a positive-dimensional $R$-flat projective scheme. 
$L$ an ample $G$-linearized line bundle on $Z$. 
Then for any point $z\in Z$, 
there exists a $G$-invariant open affine $R$-subscheme\/ $U$ 
of $Z$ such that $z\in U$ and 
$L$ is trivial on $U$.
\end{claim}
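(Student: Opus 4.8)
The plan is to exploit the ampleness of $\cL$ to write down, near the finite orbit $O:=G\cdot z$, sections of suitable powers of $\cL$ that simultaneously carve out an affine neighbourhood and trivialize $\cL$, and then to force $G$-invariance by multiplying a section over its $G$-orbit. Since a nonempty open subscheme of $Z$ is stable under generization, and since $\overline{\{z\}}$ (being a nonempty quasi-compact scheme) contains a point $z_0$ that is closed in $Z$ and specializes $z$, it is enough to produce a $G$-invariant affine open $U$ with $z_0\in U$ and $\cL_{|U}$ trivial; such a $U$ automatically contains $z$. So I may assume $z$ is a closed point, and then $O=\{z_1=z,z_2,\dots,z_r\}$ is a finite set of closed points, which I equip with its reduced structure.

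First I would choose the sections. As $\cL$ is ample on $Z$, which is projective over the affine $\Spec R$, a sufficiently high power of $\cL$ separates any finite set of points: for $n\gg 0$ the restriction map $H^0(Z,\cL^n)\to\bigoplus_{i=1}^r\cL^n\otimes k(z_i)$ is surjective (Serre vanishing applied to the ideal sheaf of $O$). Fix $n$ so large that this holds for $\cL^n$ and for $\cL^{n+1}$, and pick $s\in H^0(Z,\cL^n)$ and $s'\in H^0(Z,\cL^{n+1})$ whose images generate each of the rank-one modules $\cL^n\otimes k(z_i)$ and $\cL^{n+1}\otimes k(z_i)$; thus $s$ and $s'$ are nowhere zero on $O$. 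By Remark~\ref{subrem:linearization of tensor} the bundles $\cL^n$ and $\cL^{n+1}$ carry $G$-linearizations inherited from that of $\cL$, hence $G$-actions, so by Claim~\ref{claim:defn of rhoL} the group $G$ acts linearly on $H^0(Z,\cL^n)$ and $H^0(Z,\cL^{n+1})$. Write $\rho_n$ and $\rho_{n+1}$ for these actions, and write $Z_f:=\{x\in Z:f(x)\neq 0\}$ for a section $f$.

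Now I would set $t:=\prod_{g\in G}\rho_n(g)(s)\in H^0(Z,\cL^{n|G|})$ and $t':=\prod_{g\in G}\rho_{n+1}(g)(s')\in H^0(Z,\cL^{(n+1)|G|})$, and take $U:=Z_t\cap Z_{t'}=Z_{tt'}$. The point is that since every $\phi_g$ in the linearization (Definition~\ref{subdefn:action and linearization}) is an \emph{isomorphism}, the section $\rho_n(g)(s)=T_{g^{-1}}^*(\phi_g s)$ vanishes precisely on $T_g$ of the zero locus of $s$, so $Z_{\rho_n(g)s}=T_g(Z_s)$ and hence $Z_t=\bigcap_{g\in G}T_g(Z_s)$; this is $G$-invariant because the $T_h$ permute the sets $T_g(Z_s)$, it lies in $Z_s$ (taking $g=e$), and it contains each $z_i$ since $T_{g^{-1}}(z_i)$ stays in the $G$-stable set $O$, off which $s$ does not vanish. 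The same applies to $t'$ and $s'$. Consequently $U$ is $G$-invariant, contains $z$, and lies in $Z_s\cap Z_{s'}$. It is affine: ampleness gives a power of $\cL$ that is very ample, embedding $Z$ as a closed subscheme of some $\bP^N_R$ with that power equal to $\cO(1)$, so (after raising $tt'$ to a suitable power if its degree is not divisible by that one) $U$ is the intersection of $Z$ with the complement of a hypersurface of $\bP^N_R$, hence a closed subscheme of an affine scheme. Finally, on $U\subset Z_s\cap Z_{s'}$ the sections $s$ and $s'$ generate $\cL^n$ and $\cL^{n+1}$ respectively, so $s'\otimes s^{-1}$ is a nowhere-vanishing section of $\cL^{n+1}\otimes(\cL^n)^{-1}=\cL$, i.e.\ $\cL_{|U}$ is trivial.

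The delicate point — the reason for the slightly indirect route rather than simply averaging a single distinguished open set — is the tension between the two demands of the conclusion: making the neighbourhood $G$-invariant essentially forces one to pass to the $|G|$-th power $\cL^{n|G|}$ (a section together with all its $G$-translates), whereas the statement asks for triviality of $\cL$ itself, not just of a power. Using the two consecutive powers $\cL^n,\cL^{n+1}$ and forming the ratio $s'/s$ is exactly the device that recovers $\cL$. The subsidiary facts used — that a high power of an ample bundle separates finitely many points, and that the distinguished opens of sections of powers of an ample bundle on a projective-over-affine scheme are affine — are standard.
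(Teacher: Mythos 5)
Your proof is correct, and it is the standard argument; the paper itself gives no proof of this claim but only cites \cite[Lemma~4.9]{Nakamura10}, and your route --- a $G$-invariant distinguished open obtained from the product $\prod_{g\in G}\rho(g)(s)$ of the $G$-translates of a section of a high power nonvanishing on the finite orbit, combined with the two consecutive powers $L^n$ and $L^{n+1}$ so that $s'\otimes s^{-1}$ trivializes $L$ itself --- is exactly the expected one. The only step stated a little loosely is the affineness of $Z_{tt'}$: a section of $L^{km}$ (with $L^m$ very ample) need not be the restriction of a degree-$k$ form on $\bP^N_R$, so you should either invoke directly the standard fact that $X_u$ is affine for \emph{every} global section $u$ of a positive power of an ample sheaf on a scheme proper over an affine base, or raise $tt'$ to a power large enough that, besides the divisibility, the restriction map $H^0(\bP^N_R,\cO(d))\to H^0(Z,L^{md})$ is surjective. (The Serre-type vanishing and surjectivity statements used here tacitly assume $R$ noetherian, which holds throughout the paper.)
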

\begin{proof}
See \cite[Lemma~4.9]{Nakamura10}.
\end{proof}

Let $T=\Spec R$ be any affine scheme, and 
$G$ a finite group. 
Let $Z$ be a positive-dimensional $T$-flat projective scheme. 
Let $m:G\times_{R} G\to G$ be the multiplication of $G$, and
$\sigma:G\times_{R} Z\to Z$ an action of $G$ on $Z$. Let 
$L$ be an ample $G$-linearized line bundle on $Z$. 
The action $\sigma$ satisfies the condition:
\begin{equation}
\label{eq:commutativity of Glinearization}
\sigma(m\times\id_Z)=\sigma(\id_G\times\sigma).
\end{equation}

Now we shall give a concrete description 
of the $G$-linearization of $(Z,L)$ 
by using a nice open affine covering of $Z$. 
By Claim~\ref{claim:G-inv affine trivializing L}, we can choose 
an affine open covering $U_j:=\Spec(R_j)$\ $(j\in J)$ of $Z$ 
such that each $U_j$ is $G$-invariant and 
the restriction of $L$ is trivial on each $U_j$.\par
The induced bundles $\sigma^*L$, (resp. 
$(\id_G\times\sigma)^*\sigma^*(L)$, 
$(m\times\id_Z)^*\sigma^*(L)$) are
all trivial on $G\times_{R} U_j$ 
(resp. $G\times_{R} G\times_{R} U_j$ 
or  $G\times_{R} G\times U_j$) with 
the same fiber-coordinate as $L_{U_j}$. 
Let $\zeta_j$ be a fiber-coordinate of $L_{U_j}$.\par
Now we assume that $G$ is a constant finite group (scheme over $T$). 
Since $G$ is affine, let $A_G:=\Gamma(G,O_G)$ be the Hopf algebra of $G$. 
See \cite{W79}.
Then the isomorphism $\Psi:p_2^*L\to \sigma^*(L)$ over $U_j$ is 
multiplication by a unit 
$\psi_j(g,x)\in (A_G\otimes_{R} R_j)^{\times}$ 
at $(g,x)\in G\times_{R} U_j$. 
Let $A_{jk}(x)$ be the one-cocycle defining $L$. Then 
$\sigma^*(L)$ is defined by the one-cocycle $\sigma^*A_{jk}(x)$. 
Hence $\Psi:p_2^*L\to \sigma^*(L)$ over $U_j$ and $U_k$ are related by
$$\psi_j(g,x)=\frac{A_{jk}(gx)}{A_{jk}(x)}\psi_k(g,x).
$$

This is the condition (ii) of 
Definition~\ref{subdefn:action and linearization}. 
The condition (iii) of Definition~\ref{subdefn:action and linearization} 
is expressed as 
$$\psi_j(gh,x)=\psi_j(g,hx)\psi_j(h,x).
$$

\section{The moduli schemes $A_{g,K}$ and $SQ_{g,K}$}
\label{sec:moduli AgK SQgK}
Let $H=\bigoplus_{i=1}^g(\bZ/e_i\bZ)$ be a finite Abelian group 
with $e_i|e_{i+1}$, $e_{\min}(H):=e_1$, $K=H\oplus H^{\vee}$, 
$N=|H|=\prod_{i=1}^ge_i$ and $\cO_N=\bZ[\zeta_N,1/N]$.  
The purpose of this section is to construct two schemes, 
projective (resp. quasi-projective)
$SQ_{g,K}$ (resp. $A_{g,K}$). We will see later that 
$A_{g,K}$ is the fine moduli scheme of abelian varieties, which is a 
Zariski open subset of the projective scheme $SQ_{g,K}$.
As a (geometric) point set, $SQ_{g,K}$ is the set of all 
GIT-stable degenerate abelian schemes (Theorem~\ref{thm:stability of PSQAS}).

\begin{thm}
\label{thm:embedding of (Q0,L0)}
Let\/ $V_H:=\bigoplus_{\mu\in H^{\vee}}
\cO_Nv(\mu)$. Let $(Z,L)$ be a  PSQAS over $k(0)$, 
$(Q,\cL)$ a PSQAS over a CDVR $R$ with $\ker\lambda(\cL)\simeq K$ 
such that 
$(Z,L)\simeq (Q,\cL)\otimes k(0)$ and 
the generic fiber $(Q_{\eta},\cL_{\eta})$ is an abelian variety.
Let $\cV_0:=\Gamma(Q,\cL)\otimes k(0)$. Then 
\begin{enumerate}
\item $\dim_{k(0)}\cV_0=|H|$, and 
$\cV_0\simeq V_H\otimes k(0)$ as $\cG_H$-modules, 
\item $\cV_0$ is uniquely determined by $(Z,L)$, and 
independent of the choice of $(Q,\cL)$,
\item if $e_{\min}(H)\geq 3$, then both $\Gamma(Q,\cL)$ and $\cV_0$ 
are very ample,  
\item if $e_{\min}(H)\geq 3$, then $(Z,L)$ is 
embedded $\cG_H$-equivariantly into  $(\bP(V_H),\bH)$ by 
the linear subspace $\cV_0$ via the isomorphism  
$\cV_0\simeq V_H\otimes k(0)$ as $\cG_H$-modules.
\end{enumerate}
\end{thm}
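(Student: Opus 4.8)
The plan is to deduce all four parts from the degeneration-theoretic results already assembled, principally Lemma~\ref{lemma:cohomology}, Lemma~\ref{lemma:chara GH action}, Lemma~\ref{lemma:irred repres UH}, and the $\cG_H$-action on $(Q,\cL)$ furnished by Theorem~\ref{thm:refined stable reduction}~(6) together with Claim~\ref{claim:G equiv embedding}. First I would treat (1). By Lemma~\ref{lemma:cohomology}~(\ref{item:level one theta}), $\Gamma(Q,\cL)$ is a free $R$-module of rank $\sqrt{|K|}=|H|$; since $\cL$ is $\cG_H$-linearized (Subsec.~\ref{subsec:construction}, Definition~\ref{subdefn:Sz Tbeta}), Claim~\ref{claim:defn of rhoL} makes $\Gamma(Q,\cL)$ a $\cG_H$-$R$-module, and reducing mod the maximal ideal makes $\cV_0:=\Gamma(Q,\cL)\otimes k(0)$ a $\cG_H$-$k(0)$-module of dimension $|H|$. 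The action of $\cG_H$ on $\cL$ is of weight one because $a\in\bG_m$ acts on $a(x)w^x\vartheta$ by $a$, i.e.\ by scalar multiplication on sections; hence $\cV_0$ is a weight-one $\cG_H$-$k(0)$-module of dimension $|H|=\dim V_H$, so by Lemma~\ref{lemma:irred repres UH} it is isomorphic to $V_H\otimes k(0)$ as a $\cG_H$-module. (Here one should note the mod-$N$ version of the uniqueness statement cited in Lemma~\ref{lemma:irred repres UH} applies over the field $k(0)$, which contains $1/|K|$ hence $1/N$ and a primitive $N$-th root of unity after the base change already made.)

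Next, for (2), I would argue that $\cV_0$ is intrinsic to $(Z,L)$: it equals $\Gamma(Z,L)$ as a $\cG_H$-$k(0)$-module. Indeed Lemma~\ref{lemma:cohomology}~(2) with $n=1$ gives $\Gamma(Z,L)=\Gamma(Q,\cL)\otimes k(0)=\cV_0$ as $k(0)$-vector spaces, and since the $\cG_H$-linearization on $\cL$ restricts to the $\cG_H$-linearization on $L=\cL\otimes k(0)$, this identification is $\cG_H$-equivariant. Thus $\cV_0\simeq\Gamma(Z,L)$ depends only on $(Z,L)$ (and on the $\cG_H$-action on it, which by Lemma~\ref{lemma:chara GH action} is canonically determined since $\cG(Z,L)\simeq\cG_H$), independently of which degenerating family $(Q,\cL)$ with $(Q,\cL)\otimes k(0)\simeq(Z,L)$ was chosen.

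For (3) and (4), assume $e_{\min}(H)\ge 3$. Then Lemma~\ref{lemma:cohomology}~(5) says $\Gamma(Q,\cL)$ is very ample on $Q$; restricting to the closed fiber, $\cV_0=\Gamma(Z,L)$ is very ample on $Z$ (very ampleness of the complete linear system $\Gamma(Q,\cL)$ over the CDVR $R$ passes to every fiber, or one invokes Theorem~\ref{thm:refined stable reduction}~(4) directly for the closed fiber). This proves (3). For (4): $L$ is $\cG_H$-linearized and $\cV_0=\Gamma(Z,L)$ is a very ample $\cG_H$-submodule (in fact all of $H^0$), so Claim~\ref{claim:G equiv embedding} gives a $\cG_H$-equivariant closed immersion $(Z,L)\hookrightarrow(\bP(\cV_0),\bH)$; composing with the $\cG_H$-isomorphism $\cV_0\simeq V_H\otimes k(0)$ from (1), which induces a $\cG_H$-equivariant isomorphism $\bP(\cV_0)\simeq\bP(V_H)$ intertwining the two hyperplane bundles, yields the asserted $\cG_H$-equivariant embedding $(Z,L)\hookrightarrow(\bP(V_H),\bH)$.

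The main obstacle I anticipate is (2): pinning down that $\cV_0$ is genuinely independent of the family and not merely of an isomorphism class requires care about the canonicity of the $\cG_H$-action on $(Z,L)$ and about Schur's-lemma ambiguities (Lemma~\ref{lemma:lemma of Schur}), i.e.\ that the equivariant isomorphism $\cV_0\simeq V_H\otimes k(0)$ is unique up to the scalar that does not affect $\bP(V_H)$. Everything else is a restriction-to-the-closed-fiber argument built on the cohomological input of Lemma~\ref{lemma:cohomology} and the general linearization machinery of Section~\ref{sec:G action G linearization}.
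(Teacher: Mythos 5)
Your parts (1), (3) and (4) follow the paper's route closely and are fine, but your proof of (2) has a genuine gap. You claim that Lemma~\ref{lemma:cohomology}~(2) ``with $n=1$'' gives $\Gamma(Z,L)=\Gamma(Q,\cL)\otimes k(0)=\cV_0$. That item of the lemma is only asserted for $n\geq n_0$ with $n_0$ unspecified (it is a corollary of Serre vanishing), so it cannot be invoked at $n=1$. For $n=1$ the base-change map $\Gamma(Q,\cL)\otimes k(0)\to\Gamma(Q_0,\cL_0)$ is injective (its cokernel sits in the $q$-torsion of $H^1(Q,\cL)$) but is not known to be surjective; indeed the equality $V(Z,L)=\Gamma(Z,L)$ is precisely the open conjecture of \cite[p.~697]{Nakamura99} recorded in the remark following Definition~\ref{subdefn:chara subspace}. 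So you cannot establish that $\cV_0$ is intrinsic to $(Z,L)$ by identifying it with the full space of sections of $(Z,L)$.

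The paper's argument for (2) goes a different way: using the explicit theta-function bases of $\Gamma(P,\cL)$ and $\Gamma(P_0,\cL_0)$ from \cite[Theorems~3.9 and 4.10]{Nakamura99}, together with Lemma~\ref{lemma:cohomology}~(4) (namely $\Gamma(Q,\cL)=\Gamma(P,\cL)$), one obtains $\cV_0=\Gamma(P_0,\cL_0)$, the space of sections of the associated TSQAS $(P_0,\cL_0)$ (the normalization, with $P_0$ reduced). Since $(P_0,\cL_0)$ is uniquely determined by $(Z,L)=(Q_0,\cL_0)$ by \cite[Corollary~3.9]{Nakamura10}, the space $\cV_0$ is independent of the chosen family $(Q,\cL)$. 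You would need to supply this identification (or an equivalent substitute) to repair (2); the Schur-lemma ambiguity you flag at the end is not the real difficulty, and the remaining parts of your argument go through once (2) is fixed, since Claim~\ref{claim:G equiv embedding} only needs $\cV_0$ to be a very ample $\cG_H$-submodule of $\Gamma(Z,L)$, not all of it.
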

\begin{proof}
By Theorem~\ref{thm:refined stable reduction}, 
there exists a CDVR $R$ and a 
projective flat morphism $\pi:(Q,\cL)\to \Spec R$ 
(resp. $\pi:(P,\cL)\to \Spec R$) such that 
$(Q_0,\cL_0)\simeq (Q,\cL)\otimes k(0)$, and 
$P$ is the normalization of $Q$ with $P_0$ reduced. 
Then by \cite[Theorems~3.9 and 4.10]{Nakamura99}, for instance,  
here in the totally degenerate case, we have 
\begin{align*}
\Gamma(P_0,\cL_0)&=
\left\{\sum_{\barx\in X/Y}c(\barx)\sum_{y\in Y}\ a(x+y)w^{x+y}\otimes k(0);
c(\barx)\in k(0)
\right\},\\
\Gamma(P,\cL)&=\left
\{\sum_{\barx\in X/Y}c(\barx)\sum_{y\in Y}\ a(x+y)w^{x+y};
c(\barx)\in R
\right\}, 
\end{align*}where $\barx$ is the class of $x \mod Y$.
 Hence $\Gamma(Q,\cL)=\Gamma(P,\cL)$ because 
$\Gamma(Q,\cL)$ is an $R$-submodule of $\Gamma(P,\cL)$, 
and any of the generators of $\Gamma(P,\cL)$ 
belongs to $\Gamma(Q,\cL)$ by the construction 
in Subsec.~\ref{subsec:construction}. 
Hence 
$$\cV_0:=\Gamma(Q,\cL)\otimes k(0)=\Gamma(P,\cL)\otimes k(0)
=\Gamma(P_0,\cL_0),$$ 
By \cite[Corollary~3.9]{Nakamura10} $(P_0,\cL_0)$ 
is uniquely determined by $(Q_0,\cL_0)$, 
whence $\cV_0$ is independent of the choice of $(Q,\cL)$. 
This proves (2).
\par

This $\cV_0$ 
is very ample  and of rank $|H|$ by Lemma~\ref{lemma:cohomology}~(5)
if $e_{\min}(H)\geq 3$. Hence so is  $\Gamma(Q,\cL)$.
Since $(Z,L)$, hence $(Q_0,\cL_0)$, hence $(P_0,\cL_0)$ admit 
a $\cG_H$-action, $\cV_0=\Gamma(P_0,\cL_0)$ is a $\cG_H$-module.
Hence by Claim~\ref{claim:G equiv embedding},
$(Z,L)$ is {\it 
embedded $\cG_H$-equivariantly into $(\bP(V_H),\bH)$}.
\end{proof}
\begin{subdefn}\label{subdefn:chara subspace} 
Let $(Z,L)=(Q_0,\cL_0)$ be a $k(0)$-PSQAS. 
We call $\cV_0$ {\em a characteristic subspace} 
of $\Gamma(Z,L)$, and denote $\cV_0$ by $V(Z,L)$. 
This $\cV_0$ is uniquely determined by $(Z,L)$ because 
$\cV_0=\Gamma(P_0,\cL_0)$ and $(P_0,\cL_0)$ is 
uniquely determined by $(Z,L)=(Q_0,\cL_0)$. 
\end{subdefn}
\begin{subrem}In connection with the GIT-stability of $(Z,L)$, 
it is more important to know whether $V(Z,L)$ is
very ample than to know whether $L$ (that is, $\Gamma(Z,L)$) is very ample.
See \cite[Theorem~11.6]{Nakamura99} 
and Theorem~\ref{thm:stability of PSQAS}.
However \cite[p.~697]{Nakamura99} 
conjectures $V(Z,L)=\Gamma(Z,L)$. 
\end{subrem}

\begin{subdefn}\label{subdefn:G(A,L)}
Let $k$ be an algebraically closed field
 with $k\ni 1/N$ and $H$ 
a finite Abelian group with $|H|=N$. 
Let $(A,L)$ be an abelian variety over $k$. 
Then we define $\cG(A,L)$ to be the bundle automorphism group which 
induces translations of $A$ by $\ker(\lambda(L))$. 
If $\ker(\lambda(L))\simeq K:=H\oplus H^{\vee}$, then 
$\cG(A,L)\simeq \cG_H$ by Lemma~\ref{lemma:chara GH action}. \par
Let $K(A,L):=\ker(\lambda(L))=\cG(A,L)/\bG_m$.
\end{subdefn}

\begin{subrem}\label{subrem:chara GH action}
Let $k$ be an algebraically closed field
 with $k\ni 1/N$ and  
$(Z,L)$ any PSQAS over $k$. 
Hence there exists 
a PSQAS $(Q,\cL)$ over a CDVR $R$ 
such that $(Z,L)\simeq (Q_0,\cL_0)$  
 and the generic fiber $(Q_{\eta},\cL_{\eta})$ 
is an abelian variety with 
$\ker\lambda(\cL_{\eta}))\simeq K=H\oplus H^{\vee}$.
Then the natural $\cG_H$-action ($=\cG(Q_{\eta},\cL_{\eta})$) 
on $(Q_{\eta},\cL_{\eta})$ 
extends to that on $(Q,\cL)$, whose restriction to $(Q_0,\cL_0)$
is the $\cG_H$-action on $(Z,L)$. 
We denote by $\cG(Z,L)$ the $\cG_H$-action on $(Z,L)$.  
This is determined by $(Z,L)$ 
uniquely up to an automorphism of $\cG_H$. 
Let $K(Z,L):=\cG(Z,L)/\bG_m$. 
\end{subrem}

\begin{subdefn}
\label{subdefn:chara GH action}
Let $(Z,L)$ be a PSQAS over $k$. 
We call the action 
$\tau:\cG_H\times (Z,L)\to (Z,L)$ of $\cG_H$  
{\em a characteristic $\cG_H$-action,} 
or simply {\em characteristic}, if $\tau$ 
induces the natural isomorphism in Remark~\ref{subrem:chara GH action}
$$\cG_H\overset{\cong}{\to} \cG(Z,L)\subset \Aut(L/Z),$$
where $\Aut(L/Z)$ is the bundle automorphism group of $L$ over $Z$. 
\end{subdefn}

\begin{subrem}Let $C$ be a planar cubic defined by 
$$x_0^3+\zeta_3x_1^3+\zeta_3^2x_2^3=0.$$
This cubic $C$ is $\cG(3)$-invariant, 
hence $\sigma$ and $\tau$ 
in Subsec.~\ref{subsec:noncommutative level} act on $C$. 
However $\tau$ is not a translation of $C$. See \cite[p.~712]{Nakamura99}. 
Therefore $\cG(3)$ on $C$ is not a characteristic $\cG(3)$-action of $C$.
\end{subrem}

\subsection{The level-$\cG_H$ structure}
\begin{subdefn}
\label{subdefn:level GH PSQAS}
Let $k$ be an algebraically closed 
field with $k\ni 1/N$. 
A 6-tuple 
$(Z,L,V(Z,L),\phi,\cG_H,\tau)$ or the triple $(Z,\phi,\tau)$ over $k$ is 
{\em a PSQAS with level-$\cG_H$ 
structure} or  {\em a level-$\cG_H$ PSQAS} if 
\begin{enumerate}
\item[(i)] 
$(Z,L)$ is a PSQAS $(Q_0,\cL_0)$ over $k$ with $L$ very ample,
\item[(ii)] $\tau:\cG_H\times (Z,L)\to (Z,L)$ 
is a characteristic $\cG_H$-action, 
\item[(iii)] $\phi:Z\to \bP(V_H)$ is a $\cG_H$-equivariant closed 
immersion (with respect to $\tau$) such that 
$V(Z,L)=\phi^*(V_H\otimes k)\subset\Gamma(Z,L)$. 
\end{enumerate}
\end{subdefn}

\begin{subdefn}
\label{subdefn:rho(phi,tau)}
For a level-$\cG_H$ PSQAS $(Z,\phi,\tau)$ over $k$, let 
\begin{equation}\label{eq:rho(phi,tau)}
\rho(\phi,\tau)(g)(v):=(\phi^*)^{-1}\rho_{\tau,L}(g)\phi^*(v)
\end{equation}for $v\in V_H$. 
\end{subdefn}

\begin{subrem}
\label{subrem:phi Phi}By Claim~\ref{claim:G equiv morphism}, 
the following condition (iv) is automatically 
satisfied by $(Z,L)$ in Definition~\ref{subdefn:level GH PSQAS} :  
\begin{enumerate}
\item[(iv)] $(\phi,\Phi):(Z,L)\to (\bP(V_H),\bH)$ 
 is a $\cG_H$-equivariant morphism (with respect to $\tau$)
where 
$\bH$ is the hyperplane bundle of $\bP(V_H)$ 
and $\Phi:L=\phi^*\bH\to \bH$ the natural bundle morphism.
That is, 
\begin{equation}\label{eq:equivariance}
(\phi,\Phi)\circ\tau(g)=
S(\rho(\phi,\tau)g)\circ(\phi,\Phi)\ \text{for any $g\in \cG_H$}
\end{equation}
 with the notation 
of Definition~\ref{subdefn:G action on H}. \theoremtextend
\end{enumerate}

We added (iv) here for notational convenience. We denote 
(iii) and (iv) together by $\phi\tau=S\phi$ or $\phi\tau(g)=S(g)\phi$ for any $g\in \cG_H$. 
\end{subrem}

\begin{subdefn}\label{subdefn:k-isom}
Two PSQASes $(Z,\phi,\tau)$ and $(Z',\phi',\tau')$ 
with level-$\cG_H$ structure 
are defined to be {\em isomorphic} iff 
there exists a $\cG_H$-isomorphism 
$f:(Z,L)\to (Z',L')$
such that $\phi'f = \phi$. \end{subdefn}

\begin{subrem}\label{subrem:k-isom}
In Definition~\ref{subdefn:k-isom}~(i), $V(Z,L)=f^*V(Z',L')$. 
Hence $f^*L'=L$ so that 
there always exists a $\cG_H$-isomorphism of bundles 
$(f,F(f)):(Z,L)\to (Z',L')$, 
that is, 
$$(f,F(f))\tau(g)=\tau'(g)(f,F(f))\quad\text{for any $g\in \cG_H$.}$$

The line bundle $L$ is a scheme over $Z$. The $\cG_H$-isomorphism 
$F(f):L\to L'$ is a $\cG_H$-isomorphism 
as a (line) bundle, which induces a 
$\cG_H$-isomorphism $f:Z\to Z'$. 
In what follows, we say this simply 
that $(f,F(f))$ {\em or 
$f:(Z,L)\to (Z',L')$ is a $\cG_H$-isomorphism of bundles}.
\end{subrem}

\begin{subdefn}\label{subdefn:generalization of Hesse cubic}
$(Z,\phi,\tau)$
is defined to be a rigid level-$\cG_H$ PSQAS, 
or a PSQAS with rigid level-$\cG_H$ structure if 
\begin{enumerate}
\item[(i)] $(Z,\phi,\tau)$ is a level-$\cG_H$ PSQAS, 
\item[(ii)] $\rho(\phi,\tau)=U_H$ : 
the Schr\"odinger representation of $\cG_H$.
\end{enumerate}
\end{subdefn}

\begin{subrem}
A rigid object in Definition~\ref{subdefn:generalization of Hesse cubic} 
is a natural generalization of a Hesse cubic. 
Lemma~\ref{sublemma:uniqueness of rigid PSQAS} 
shows that any PSQAS  $(Z,\phi,\tau)$ 
can be moved into a rigid one inside the same projective space. 
\end{subrem}

\begin{sublemma}
\label{sublemma:uniqueness of rigid PSQAS} Assume $e_{\min}(K)\geq 3$.
Then for a level-$\cG_H$ PSQAS  $(Z,\phi,\tau)$ over $k$, 
\begin{enumerate}
\item 
there exists a unique rigid level-$\cG_H$ PSQAS 
$(Z,\psi,\tau)$ isomorphic to $(Z,\phi,\tau)$,
\item there exists a 
unique $U_H$-invariant subscheme $(W,L)$ of $(\bP(V_H),\bH)$ 
such that $(W,i,U_H)\simeq (Z,\psi,\tau)$.
\end{enumerate}
\end{sublemma}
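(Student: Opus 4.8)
The plan is to realize the rigid representative as the $\cG_H$-equivariant embedding of $(Z,L)$ into $\bP(V_H)$ attached to an \emph{equivariant} trivialization of the characteristic subspace, and to derive both uniqueness statements from Schur's lemma.

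First I would check that $\rho:=\rho(\phi,\tau)$ of Definition~\ref{subdefn:rho(phi,tau)} makes $V_H$ a $\cG_H$-module of weight one: by Claim~\ref{claim:defn of rhoL} the map $\rho_{\tau,L}$ is a homomorphism $\cG_H\to\GL(\Gamma(Z,L))$, the characteristic subspace $V(Z,L)=\phi^*(V_H\otimes k)$ is canonically attached to $(Z,L)$ and hence preserved by every $\tau(g)$, and transporting $\rho_{\tau,L}|_{V(Z,L)}$ along $\phi^*\colon V_H\xrightarrow{\ \sim\ }V(Z,L)$ yields $\rho$; since $\tau$ is characteristic the central $\bG_m$ acts on $L$ by scalars, so $\rho(a)=a\,\id_{V_H}$ for $a\in\bG_m$. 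By Theorem~\ref{thm:embedding of (Q0,L0)} and Lemma~\ref{lemma:irred repres UH}, $(V(Z,L),\rho_{\tau,L})$ is an irreducible weight-one $\cG_H$-module isomorphic to $(V_H,U_H)$, and by Lemma~\ref{lemma:lemma of Schur} a $\cG_H$-isomorphism $j\colon(V(Z,L),\rho_{\tau,L})\to(V_H,U_H)$ is unique up to a scalar in $k^\times$.

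Now I would take for $\psi\colon Z\to\bP(V_H)$ the closed immersion supplied by Claim~\ref{claim:G equiv embedding} from the $\cG_H$-submodule $V(Z,L)\subseteq\Gamma(Z,L)$ and the identification $j$ (here the hypothesis $e_{\min}(K)=e_{\min}(H)\geq3$ guarantees, via Theorem~\ref{thm:embedding of (Q0,L0)}, that $V(Z,L)$ is very ample). Because $j$ intertwines $\rho_{\tau,L}$ with $U_H$, the triple $(Z,\psi,\tau)$ satisfies $\rho(\psi,\tau)=U_H$, so it is a rigid level-$\cG_H$ PSQAS in the sense of Definitions~\ref{subdefn:level GH PSQAS} and~\ref{subdefn:generalization of Hesse cubic}; and since $\psi^*$ and $\phi^*$ have the same image $V(Z,L)$, they differ by a linear automorphism $\bar h\in\PGL(V_H)$, whose induced automorphism $\tilde h$ of $(\bP(V_H),\bH)$ exhibits the isomorphism $(Z,\phi,\tau)\simeq(Z,\psi,\tau)$ of Definition~\ref{subdefn:k-isom}. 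For uniqueness in~(1): if $(Z,\psi',\tau)$ is another rigid representative, an isomorphism to $(Z,\psi,\tau)$ induces on $V(Z,L)$ a $\cG_H$-automorphism intertwining the two copies of $U_H$ pulled back through $\psi^*$ and $(\psi')^*$; by Lemma~\ref{lemma:lemma of Schur} it is a scalar, hence $\psi=\psi'$ as maps to $\bP(V_H)$ (the scalar ambiguity in $j$ is killed in $\PGL(V_H)$).

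For part~(2), set $W:=\psi(Z)\subseteq\bP(V_H)$. As $\psi$ is equivariant for $\tau$ and the standard action $S\circ U_H$ of Subsec.~\ref{subsec:Linearization of O(1)}, the image $W$ is stable under $S(U_H(\cG_H))$, i.e.\ $U_H$-invariant, with $O_W(1)\simeq L$, and $\psi\colon Z\xrightarrow{\ \sim\ }W$ is a $\cG_H$-isomorphism carrying $V(Z,L)$ onto the characteristic subspace of $(W,O_W(1))$; hence $(W,i,U_H)\simeq(Z,\psi,\tau)$. If $(W',i,U_H)$ is a second $U_H$-invariant subscheme with $(W',i,U_H)\simeq(Z,\psi,\tau)$, it is then a rigid level-$\cG_H$ PSQAS, and the uniqueness in~(1), applied through that isomorphism, forces its embedding into $\bP(V_H)$ to agree with $\psi$, so $W'=\psi(Z)=W$. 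The step I expect to be hardest is the verification that $\psi$ fulfils every clause of Definition~\ref{subdefn:level GH PSQAS} and that $\tilde h$ is genuinely an isomorphism of level-$\cG_H$ PSQASes: this requires chasing the $\cG_H$-linearization data $\{(T_g,\phi_g)\}$ through the constructions of Section~\ref{sec:G action G linearization} (Claims~\ref{claim:defn of rhoL}, \ref{claim:G equiv morphism}, \ref{claim:G equiv embedding} and Remark~\ref{rem:rho=rho}) while keeping the left/right-action and contragredient conventions consistent; the conceptual content, namely that the equivariant trivialization $j$ and hence the rigidification is determined up to one scalar, is just irreducibility plus Schur and is short.
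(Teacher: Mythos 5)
Your proposal is correct and follows essentially the same route as the paper: conjugate $\rho(\phi,\tau)$ to $U_H$ by an intertwiner $A\in\GL(V_H\otimes k)$ (existence from the uniqueness of the weight-one irreducible $\cG_H$-module, ambiguity a scalar by Schur's lemma, Lemma~\ref{lemma:lemma of Schur}), set $\psi^*=\phi^*A$, and deduce both uniqueness assertions from the fact that any intertwiner of $U_H$ with itself is a nonzero scalar, hence trivial in $\PGL(V_H)$. Part~(2) is likewise obtained in the paper by taking $W=\psi(Z)$ and reducing the uniqueness of $W$ to the scalar statement from part~(1).
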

\begin{proof}
By Claim~\ref{claim:defn of rhoL}, we have
$$\rho(\phi,\tau)(gh)=\rho(\phi,\tau)(g)\rho(\phi,\tau)(h).$$
Hence $V_H$ is an irreducible $\cG_H$-module 
of weight one through $\rho(\phi,\tau)$.
By Schur's lemma, there exists $A\in\GL(V_H\otimes k)$ such that 
\begin{equation*}\label{eq:rho(phi,tau)}
U_H=A^{-1}\rho(\phi,\tau)A
=(\phi^*A)^{-1}\rho_{\tau,L}(g)(\theta)(\phi^*A).
\end{equation*} 
Hence it suffices to choose a closed immersion $\psi$ by 
$\psi^*=\phi^*A$. Then 
\begin{equation}\label{eq:rho(phi,tau)}
U_H=\rho(\psi,\tau)\ \text{and}\ (Z,\phi,\tau)\simeq (Z,\psi,\tau).
\end{equation} 

The uniqueness of $\psi$ follows from Schur's lemma
(Lemma~\ref{lemma:lemma of Schur}). In fact, 
suppose $U_H=\rho(\psi,\tau)=\rho(\phi,\tau)$. 
Let $\gamma:=(\phi^*)^{-1}(\psi^*)$. Then 
$$U_H=\rho(\phi,\tau)=\gamma\rho(\psi,\tau)\gamma^{-1}=\gamma U_H\gamma^{-1},$$
whence by Schur's lemma, $\gamma$ is a nonzero scalar. Hence $\psi=\phi$. \par
Finally we prove the second assertion. An example of $(W,i,U_H)$ is given 
by $(\psi(Z),i,U_H)$ by the first assertion. 
If we have another $U_H$-invariant PSQAS $(W',j,U_H)$ 
such that 
$(W,i,U_H)\simeq (W',j,U_H)$, there is 
an isomorphism 
$$f:(W,i,U_H)\to (W',j,U_H).$$ Hence $i=jf$. 
By the proof of the first assertion, $f^*$ is a nonzero scalar, hence 
$j=i$. Hence the closed subscheme $W$ is unique.  
 \end{proof}

\begin{sublemma}
\label{sublemma:autom trivial}Let $k$ 
be an algebraically closed field with $k\ni 1/N$. 
If $e_{\min}(H)\geq 3$, then 
any level-$\cG_H$ PSQAS  $(Z,\phi,\tau)$ has 
trivial automorphism group. 
\end{sublemma}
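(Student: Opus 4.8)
The plan is to reduce the claim to Schur's lemma via the rigidification in Lemma~\ref{sublemma:uniqueness of rigid PSQAS}. Suppose $(f,F(f)):(Z,L)\to (Z,L)$ is an automorphism of the level-$\cG_H$ PSQAS $(Z,\phi,\tau)$; by Definition~\ref{subdefn:k-isom} this means $f$ is a $\cG_H$-isomorphism of bundles satisfying $\phi f=\phi$. I want to show $f=\id_Z$ and $F(f)=\id_L$.

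First I would pass to the rigid model. By Lemma~\ref{sublemma:uniqueness of rigid PSQAS}~(1), since $e_{\min}(H)=e_{\min}(K)\geq 3$, there is a unique rigid level-$\cG_H$ PSQAS $(Z,\psi,\tau)$ isomorphic to $(Z,\phi,\tau)$, with $\rho(\psi,\tau)=U_H$; and by part~(2) there is a unique $U_H$-invariant closed subscheme $(W,i,U_H)$ of $(\bP(V_H),\bH)$ with $(W,i,U_H)\simeq (Z,\psi,\tau)$. An automorphism of $(Z,\phi,\tau)$ transports to an automorphism of $(W,i,U_H)$, so it suffices to treat the case where $Z=W\subset \bP(V_H)$, $\phi=i$ is the inclusion, and $\tau$ is induced by $U_H$, i.e.\ $\rho(\phi,\tau)=U_H$.

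Next, an automorphism $f$ with $\phi f=\phi$ where $\phi=i$ is a closed immersion forces $f=\id_Z$ already on the level of schemes: the condition $i\circ f=i$ means $f$ is the identity on the closed subscheme $W$ of $\bP(V_H)$, since $i$ is a monomorphism. It remains to pin down $F(f):L\to L$. Because $F(f)$ is a $\cG_H$-isomorphism of bundles covering $f=\id_Z$, it is multiplication by a global unit on $Z$; since $Z$ is projective and geometrically connected (being a PSQAS, it is connected), the only global units are constants, so $F(f)=c\cdot\id_L$ for some $c\in k^\times$. Then $F(f)$ induces the scalar $c$ on $H^0(Z,L)$, hence on the characteristic subspace $V(Z,L)\simeq V_H\otimes k$, i.e.\ on $V_H$ the map $\rho$ attached to $f$ is $c\cdot\id_{V_H}$. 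Now the $\cG_H$-equivariance of $(f,F(f))$ with respect to $\tau$, combined with $\rho(\phi,\tau)=U_H$ and the irreducibility of $U_H$ (Lemma~\ref{lemma:irred repres UH}), is exactly the hypothesis of Schur's lemma (Lemma~\ref{lemma:lemma of Schur}): the scalar $c$ intertwines $U_H$ with itself, which is automatic, but the additional normalization is that $F(f)$ must be compatible with the fixed trivialization data coming from $\phi^*=i^*$ being the identity. Tracking through $\rho(\phi,\tau)(g)(v)=(\phi^*)^{-1}\rho_{\tau,L}(g)\phi^*(v)$ and the fact that $f^*$ acts as $c^{-1}$ on $\phi^*(V_H)$ while fixing $\phi=i$, one gets $c=1$, so $F(f)=\id_L$ and $f=\id$.

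The step I expect to be the main obstacle is the last one: showing the constant $c$ is forced to equal $1$ rather than merely lying in $k^\times$. The subtlety is that an automorphism of the \emph{pair} $(Z,L)$ covering $\id_Z$ is a priori a nontrivial scalar on $L$, and one must use that the level structure records not just the abstract $\cG_H$-module $V(Z,L)$ but its identification with $V_H$ via $\phi^*$ together with the equivariance constraint; the scalar $c$ then cannot be absorbed because it would change $\rho(\phi,\tau)$ by $c$, contradicting rigidity $\rho(\phi,\tau)=U_H$ unless $c=1$. I would make this precise by writing out $\rho$ for the composite, noting $f^*$ multiplies $\phi^*(v)$ by $c$ while $\phi f=\phi$ gives $\phi^*=f^*\phi^*$, forcing $c=1$ directly; this is the real content and deserves a careful line or two in the full proof.
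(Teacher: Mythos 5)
Your proof is correct, but it reaches the conclusion by a different (and in one respect shorter) route than the paper. The paper's own proof never needs the condition $\phi f=\phi$ as a statement about scheme maps: it uses only the $\cG_H$-equivariance $f\tau(g)=\tau(g)f$ to get $f^*\rho_{\tau,L}(g)=\rho_{\tau,L}(g)f^*$ on the characteristic subspace $V(Z,L)$, applies Schur's lemma (Lemma~\ref{lemma:lemma of Schur}) to conclude that $f^*$ is a scalar, and then uses very-ampleness of $V(Z,L)$ (this is where $e_{\min}(H)\geq 3$ enters, via Theorem~\ref{thm:embedding of (Q0,L0)}) to see that the induced projective transformation $\phi\circ f\circ\phi^{-1}$ of $\bP(V_H)$ is the identity, whence $f=\id_Z$. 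You instead observe that $\phi$ is by Definition~\ref{subdefn:level GH PSQAS}~(iii) a closed immersion, hence a monomorphism, so $\phi f=\phi$ already forces $f=\id_Z$; this is shorter (and the detour through the rigid model of Lemma~\ref{sublemma:uniqueness of rigid PSQAS} is then superfluous), but it proves strictly less: the Schur argument shows that \emph{every} $\cG_H$-equivariant automorphism of $(Z,L)$ acts as the identity on $Z$, whether or not it fixes $\phi$, and that stronger form is what is reused in the rigidity and uniqueness arguments (e.g.\ in Lemma~\ref{sublemma:uniqueness of rigid PSQAS} and Theorem~\ref{thm:SQgK}).

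One caution about your final paragraph. By the last sentence of Definition~\ref{subdefn:morp of T-PSQAS} and by Remark~\ref{subrem:k-isom}, an automorphism of a level-$\cG_H$ PSQAS is determined by the underlying scheme map; the bundle isomorphism $F(f)$ exists but is only canonical up to a unit, so ``trivial automorphism group'' means $f=\id_Z$ and you do not need to prove $c=1$. Moreover the step by which you try to force $c=1$ is not quite sound: the equality $\phi f=\phi$ of morphisms to $\bP(V_H)$ determines the action of $f^*$ on $\phi^*(V_H\otimes k)$ only up to a scalar (precisely because $F(f)$ is only defined up to a scalar), so it does not yield $f^*\phi^*=\phi^*$ on the nose. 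Dropping that paragraph leaves a complete proof of the statement as intended.
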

\begin{proof}Let $f$ be any 
isomorphism $f:(Z,\phi,\tau)\to (Z,\phi,\tau)$. Hence 
$f\tau(g)=\tau(g)f$ for any $g\in \cG_H$. Hence 
we have 
$$f^*\rho_{\tau,L}(g)=\rho_{\tau,L}(g)f^*\quad\text{on $V(Z,L)$ 
for any $g\in \cG_H$}.$$
Since $\rho_{\tau,L}$ is 
an irreducible representation of $\cG_H$ on $V(Z,L)$,
by Schur's lemma (Lemma~\ref{lemma:lemma of Schur}), 
$f^*$ is a scalar.  
Since $e_{\min}(H)\geq 3$, we have 
$\phi^{-1}:\phi(Z)\overset{\cong}{\to} Z$ is an isomorphism 
by Theorem~\ref{thm:embedding of (Q0,L0)}~(5).  
Since $f^*$ on $V(Z,L)$ is a nonzero scalar, 
$(\phi^*)^{-1}\circ f^*\circ(\phi^*)$ 
is a scalar isomorphism of $V_H\otimes k$, hence 
$\phi\circ f\circ \phi^{-1}$ 
is the identity of $\bP(V_H)$, hence it is 
the identity of $\phi(Z)$. 
Hence $f$ is the identity of $Z$.
\end{proof}

\begin{lemma}
\label{lemma:exists (A,L) with G(A,L)=G(K)}
Let $k$ be an algebraically closed field, 
let $H$ be a finite Abelian group, 
$H^{\vee}$ the Cartier dual of $H$, 
$K=H\oplus H^{\vee}$ the symplectic Abelian group and $N=|H|$.
If $k\ni 1/N$, then there exists  
a polarized abelian variety $(A,L)$ over $k$ 
such that the Heisenberg group $\cG(A,L)$ of $(A,L)$ 
is isomorphic to $\cG_H\otimes k$.
\end{lemma}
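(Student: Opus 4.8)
The plan is to deduce the statement from Lemma~\ref{lemma:chara GH action}: that lemma guarantees that as soon as we have a polarized abelian variety $(A,L)$ over $k$ with $\ker(\lambda(L))\simeq K$, its theta group satisfies $\cG(A,L)\simeq\cG_H\otimes k$. So the whole task is to exhibit, over an arbitrary algebraically closed field $k$ with $1/N\in k$, an abelian variety carrying a polarization of type $(e_1,\dots,e_g)$, i.e. with $\ker(\lambda(L))\simeq\bigoplus_{i=1}^g(\bZ/e_i\bZ)^{2}$.

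First I would construct $(A,L)$ as a product of elliptic curves. For each $i$ choose an elliptic curve $E_i$ over $k$ with origin $O_i$ (elliptic curves exist over every field) and put $L_i:=O_{E_i}(e_iO_i)$, an ample line bundle. The principal polarization $\lambda_0:=\lambda(O_{E_i}(O_i)):E_i\to E_i^{\vee}$ is an isomorphism and $\lambda(L_i)=e_i\lambda_0$, so $\ker(\lambda(L_i))=E_i[e_i]$. Now set $A:=\prod_{i=1}^gE_i$ with projections $p_i:A\to E_i$ and $L:=\bigotimes_{i=1}^gp_i^{*}L_i$; then $L$ is ample, and under $A^{\vee}=\prod_iE_i^{\vee}$ one has $\lambda(L)=\prod_i\lambda(L_i)$, whence $\ker(\lambda(L))\simeq\bigoplus_{i=1}^gE_i[e_i]$.

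Next I would match this with $K$. Each $e_i$ divides $N$ and $1/N\in k$, so the $E_i[e_i]$ are \'etale, hence $\simeq(\bZ/e_i\bZ)^{2}$ over the algebraically closed field $k$; likewise $\mu_{e_i}\simeq\bZ/e_i\bZ$ over $k$, so $H^{\vee}\simeq H$ and $K\simeq\bigoplus_{i=1}^g(\bZ/e_i\bZ)^{2}$. Thus $\ker(\lambda(L))\simeq K$ as finite group schemes over $k$. Finally I would upgrade this to an isomorphism of symplectic groups: the commutator (Weil) pairing $e^{L}$ on $\ker(\lambda(L))$ is a non-degenerate alternating form, and since any two non-degenerate alternating forms on $\bigoplus_i(\bZ/e_i\bZ)^{2}$ (with $e_i\mid e_{i+1}$) differ by a group automorphism, $(\ker(\lambda(L)),e^{L})\simeq(K,e_K)$. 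Lemma~\ref{lemma:chara GH action} then gives $\cG(A,L)\simeq\cG_H\otimes k$.

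I expect the last point — that the identification $\ker(\lambda(L))\simeq K$ is compatible with the pairings, which is what actually forces the theta groups to be isomorphic — to be the only one needing care. It rests on Mumford's description of the theta group through its commutator pairing together with the uniqueness up to automorphism of a non-degenerate alternating form on a finite abelian group of this type (\cite{Mumford66}, \cite{Mumford12}), both classical and in effect already used in Lemma~\ref{lemma:chara GH action}. As a check, over $k=\bC$ one may instead take $A=\bC^g/(\diag(e_1,\dots,e_g)\bZ^g+\tau\bZ^g)$ with $\tau$ in the Siegel upper half space and $L$ the standard Riemann form of that type; the product construction has the advantage of working in every characteristic prime to $N$.
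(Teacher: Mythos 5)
Your proposal is correct, and it is essentially the argument the paper delegates to the citation \cite[Lemma~4.2]{Nakamura10}: take a product of elliptic curves $E_i$ with the line bundles $O_{E_i}(e_iO_i)$, identify $\ker(\lambda(L))\simeq\bigoplus_i E_i[e_i]\simeq K$ using $1/N\in k$, and conclude via Lemma~\ref{lemma:chara GH action}. Your extra care about compatibility of the Weil pairing with $e_K$ is the right point to flag, since that is what the proof of Lemma~\ref{lemma:chara GH action} actually uses.
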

\begin{proof}
See \cite[Lemma~4.2]{Nakamura10}.
\end{proof}

\subsection{The Hilbert scheme $\hilb^{\chi(n)}$}
\label{subsec:Hilb}
Let $H$, $V_H$ and $\cG_H$ be the same as 
in Subsec.~\ref{defn:Heisenberg group}. 
Let $\hilb^{\chi(n)}$ be the Hilbert scheme parameterizing 
all the closed subscheme $(Z,L)$ of $\bP(V_H)$ with 
$\chi(Z,L^n)=n^g|H|=:\chi(n)$. 
Since $V_H$ is a $\cG_H$-module via $U_H$, 
$\cG_H$ acts on $(\bP(V_H),\bH)$, hence on $\hilb^{\chi(n)}$.
Let  
$$(\hilb^{\chi(n)})^{\cG_H\text{-inv}}$$ 
be the fixed point set of $\cG_H$ (the scheme-theoretic fixed points). 
This is a closed $\cO_N$-subscheme of $\hilb^{\chi(n)}$. 
Let $(Z_{\univ},L_{\univ})$ be 
the pull back to $(\hilb^{\chi(n)})^{\cG_H\text{-inv}}$
of the universal subscheme of 
$\bP(V_H)$ over $\hilb^{\chi(n)}$.  
Then there is an open $\cO_N$-subscheme $U_3$ 
of $(\hilb^{\chi(n)})^{\cG_H\text{-inv}}$ 
such that any geometric fiber 
of $(Z_{\univ},L_{\univ})$ is an abelian variety 
(with zero unspecified). 
It is clear that $\cG_H$ keeps $U_3$ stable. 
See \cite[Subsec.~11.1]{Nakamura99}.  
\par
Let $\Aut_{U_3}(Z_{\univ})$ be the relative automorphism group scheme of 
$(Z_{\univ})_{U_3}$ (see \cite[Subsec.~11.1]{Nakamura99}). 
We define a subset $U_4$  of $U_3$ to be  
\begin{align*}
U_4&=\left\{s\in U_3; 
\begin{matrix}
\text{the action of $\cG_H$ on $(Z_{\univ,s},L_{\univ,s})$ is\phantom{abc}}\\
\text{a translation of the abelian variety $Z_{\univ,s}$}
\end{matrix}
\right\}.
\end{align*}Since the subgroup of $\Aut_{U_3}(Z_{\univ})$ 
consisting of fiberwise translations 
is an (open and) closed 
subgroup $\bZ$-scheme of $\Aut_{U_3}(Z_{\univ})$, $U_4$ is 
a closed $\cO_N$-subscheme of $U_3$, which  is not empty
by Lemma~\ref{lemma:exists (A,L) with G(A,L)=G(K)}.

We denote $U_4$ by $A_{g,K}$ and we define $SQ_{g,K}$ to be 
the closure of $A_{g,K}$
(the minimal closed $\cO_N$-subscheme containing $A_{g,K}$)
\begin{equation}
\label{eq:SQgK}
SQ_{g,K}:=\overline{A_{g,K}}\subset (\hilb^{\chi(n)})^{G(K)\text{-inv}}.
\end{equation}

\begin{thm}
\label{thm:SQgK}
Let $H=\bigoplus_{i=1}^g(\bZ/e_i\bZ)$ with $e_i|e_{i+1}$ for any $i$ 
and $N=\prod_{i=1}^ge_i$. 
If $e_{\min}(H):=e_1\geq 3$, then 
for any algebraically 
closed field $k$ with $k\ni 1/N$, we have 
\begin{align*}
SQ_{g,K}(k)&=\left\{(Q_0,i,U_H);
\begin{matrix}
 Q_0:\text{a level-$\cG_H$ PSQAS\phantom{abcd}}\\
i:Q_0\subset\bP(V_H)\ \text{the inclusion}
\end{matrix}
\right\}
\end{align*}
\end{thm}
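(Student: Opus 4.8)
The plan is to establish the two inclusions between $SQ_{g,K}(k)$ and the displayed set separately, in each case trading PSQASes over $k$ for families over a complete discrete valuation ring (CDVR), and exploiting that by $(\ref{eq:SQgK})$ the scheme $SQ_{g,K}$ is the closure of $A_{g,K}$ inside the projective $\cO_N$-scheme $(\hilb^{\chi(n)})^{\cG_H\text{-inv}}$.

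For the inclusion $\supseteq$, let $(Z,i,U_H)$ be a level-$\cG_H$ PSQAS whose structure morphism is the inclusion of a $U_H$-invariant closed subscheme $Z\subset\bP(V_H)$, with $L=\cO_Z(1)$; by Remark~\ref{rem:rho=rho} such an object is automatically rigid. By the definition of a level-$\cG_H$ PSQAS together with Remark~\ref{subrem:chara GH action} (which rests on Theorem~\ref{thm:refined stable reduction}), there are a CDVR $R$ and a PSQAS $(Q,\cL)$ over $R$ whose generic fibre $(Q_\eta,\cL_\eta)$ is an abelian variety, carrying a $\cG_H$-action whose restriction to $(Q_0,\cL_0)\simeq(Z,L)$ is the given one. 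Since $e_{\min}(K)\geq 3$, Lemma~\ref{lemma:cohomology} shows $\Gamma(Q,\cL)$ is $R$-free of rank $|H|$ and very ample, while Lemma~\ref{lemma:irred repres UH} identifies it $\cG_H$-equivariantly with $V_H\otimes R$; hence Claim~\ref{claim:G equiv embedding} produces a $\cG_H$-equivariant closed immersion $(Q,\cL)\hookrightarrow(\bP(V_H),\bH)$ realizing the $\cG_H$-action as $U_H$, and therefore a morphism $\Spec R\to(\hilb^{\chi(n)})^{\cG_H\text{-inv}}$. Its generic point lies in $A_{g,K}=U_4$ because $Q_\eta$ is an abelian variety on which $\cG_H$ acts by translations, so the closed point lies in $\overline{A_{g,K}}=SQ_{g,K}$, and by the uniqueness clause of Lemma~\ref{sublemma:uniqueness of rigid PSQAS} that closed point is exactly $(Z,i,U_H)$. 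This gives $\supseteq$.

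For the inclusion $\subseteq$, take $s\in SQ_{g,K}(k)$, that is, a $\cG_H$-invariant closed subscheme $(Z,L)\subset\bP(V_H)$ with $\chi(Z,L^n)=n^g|H|$. Since $SQ_{g,K}$ is projective and $A_{g,K}$ is dense in it, $s$ is a specialization of some $\eta\in A_{g,K}$; I would choose a CDVR $R$ with a morphism $\Spec R\to SQ_{g,K}$ sending the generic point to $\eta$ and the closed point to $s$, noting that, $k$ being algebraically closed, $R$ may be replaced at the outset by any finite base change without moving $s$. Pulling back the universal family yields a flat $\cG_H$-equivariant family $(Q,\cL)\subset\bP(V_H)\times\Spec R$ with $(Q_0,\cL_0)\simeq(Z,L)$, whose generic fibre $(Q_\eta,\cL_\eta)$ is an abelian variety on which $\cG_H$ acts by translations; as the Hilbert polynomial forces $h^0(\cL_\eta)=|H|$ and the Schrödinger representation is faithful modulo its centre, the induced map $\cG_H\to\cG(Q_\eta,\cL_\eta)$ descends to an isomorphism $K\simeq\ker\lambda(\cL_\eta)$ (Lemma~\ref{lemma:chara GH action}). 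Applying Theorem~\ref{thm:refined stable reduction} and the uniqueness Theorem~\ref{thm:separatedness} (valid since $e_{\min}(K)\geq 3$) to $(Q_\eta,\cL_\eta)$, and enlarging $R$ if necessary, I obtain the unique PSQAS $(Q^\sharp,\cL^\sharp)$ over $R$ with $(Q^\sharp_\eta,\cL^\sharp_\eta)\simeq(Q_\eta,\cL_\eta)$, which I embed rigidly into $\bP(V_H)$ exactly as in the previous paragraph. On the generic fibre both $Q_\eta$ and $Q^\sharp_\eta$ are the same abelian variety embedded by the complete linear system $|\cL_\eta|$ with a $\cG_H$-equivariant identification of $H^0(\cL_\eta)$ with $V_H$ through $U_H$; by Schur's lemma (Lemma~\ref{lemma:lemma of Schur}) the two embeddings differ only by a scalar and hence cut out the same subscheme of $\bP(V_H)$. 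A closed subscheme of $\bP(V_H)\times\Spec R$ flat over $R$ is the scheme-theoretic closure of its generic fibre, hence determined by it; therefore $Q^\sharp=Q$ over $R$, and $Z=Q_0=Q^\sharp_0$ is a PSQAS. Its $\cG_H$-action is the restriction of $U_H$ and is characteristic by Remark~\ref{subrem:chara GH action}, $L=\cO_Z(1)$ is very ample, and $V(Z,L)=\Gamma(Q^\sharp,\cL^\sharp)\otimes k=i^*(V_H\otimes k)$ by Theorem~\ref{thm:embedding of (Q0,L0)}. Thus $(Z,i,U_H)$ is a level-$\cG_H$ PSQAS, which gives $\subseteq$.

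The main obstacle is the inclusion $\subseteq$: the Hilbert point $s$ is a priori only some $\cG_H$-invariant subscheme with the correct Hilbert polynomial, and to recognize it as a PSQAS one must build the correct degeneration independently from stable reduction and then identify it with the family coming off the Hilbert scheme. What makes the identification go through is the combination of three ingredients — the uniqueness of the canonical PSQAS attached to an abelian scheme (Theorem~\ref{thm:separatedness}, which is precisely where the hypothesis $e_{\min}(K)\geq 3$ enters), the fact that a flat family over a CDVR is recovered as the scheme-theoretic closure of its generic fibre, and Schur's lemma, which pins down the $\cG_H$-equivariant projective embedding of the common abelian generic fibre up to a scalar. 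Since only geometric points are being described, the question of the scheme structure of $SQ_{g,K}$ plays no role here.
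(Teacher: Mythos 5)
Your proof is correct and follows essentially the same route as the paper: the key inclusion $\subseteq$ is obtained, exactly as in the paper's argument, by realizing a point of $SQ_{g,K}(k)$ as the closed fibre of a flat family over a CDVR whose generic fibre lies in $A_{g,K}$, invoking Theorem~\ref{thm:refined stable reduction} to produce a rigid PSQAS with the same generic fibre, matching the two generic fibres as subschemes of $\bP(V_H)$ via Schur's lemma (the paper packages this as Lemma~\ref{sublemma:uniqueness of rigid PSQAS}~(2)), and concluding by taking flat closures over $R$. Your explicit treatment of the $\supseteq$ direction, which the paper leaves largely implicit in the definition of $SQ_{g,K}$ as $\overline{A_{g,K}}$, uses the same ingredients (Lemma~\ref{lemma:cohomology}, Claim~\ref{claim:G equiv embedding}) and is sound.
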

\begin{proof}
Let $x_0$ be any $k$-point of $SQ_{g,K}$. Then 
for a suitable CDVR $R$, 
there exists a morphism $j:\Spec R\to SQ_{g,K}$ 
such that 
\begin{enumerate}
\item[(i)] $j(0)=x_0\in SQ_{g,K}$, and
\item[(ii)] 
$j(\Spec k(\eta))\subset A_{g,K}\subset \hilb^{\chi(n)}$. 
\end{enumerate}

In other words, there exists a projective $R$-flat subscheme 
$(Z,\cL)$ of $(\bP(V_H),\bH)_R$ such that 
\begin{enumerate}
\item[(i${}^*$)] $x_0=(Z_0,\cL_0):=(Z,\cL)\otimes k(0)\in SQ_{g,K}$, 
\item[(ii${}^*$)] $(Z_{\eta},\cL_{\eta})$ 
is an $U_H$-invariant 
abelian variety  (to more precise, invariant under the action of $U_H\cG_H$ on $(\bP(V_H),\bH)$) such that
$\ker\lambda(\cL_{\eta})\simeq K:=H\oplus H^{\vee}$ and 
the actions of $\cG_H$ on $Z_{\eta}$ are translations of $Z_{\eta}$.
\end{enumerate}where 
$\eta$ is the generic point of $S$ and 
 $k(\eta)$ is the fraction field of $R$.\par

In this case, $(Z,\cL)$ 
is the pull back of $(Z_{\univ},L_{\univ})$ by $j$. 
Conversely, $j:\Spec R\to SQ_{g,K}$ is induced from the subscheme $(Z,\cL)$ of 
$(\bP(V_H),\bH)_R$ by the universality of $(Z_{\univ},L_{\univ})$.\par

Let $i:(Z,\cL)\to (\bP(V_H),\bH)_R$ be the natural inclusion, and 
$\cV_Z:=i^*\Gamma(\bP(V_H),\bH)=i^*V_H\otimes R$. 
Clearly $\cV_Z$ is very ample on $Z$.  
Since $j(\Spec k(\eta))\subset A_{g,K}$, the $\cG_H$-action 
on $(Z,\cL)$ induces a rigid level-$\cG_H$ structure on 
$(Z_{\eta},\cL_{\eta})$. That is,  
$(Z,\cV_Z,\cL,i,U_H)\otimes_Rk(\eta)$ 
is a rigid level-$\cG_H$ PSQAS over $k(\eta)$. In other words, 
$Z_{\eta}=i(Z_{\eta})$ is also a
$U_H$-invariant subscheme of $\bP(V_H)$. 
\par 
Meanwhile, by Theorem~\ref{thm:refined stable reduction}, 
by a finite base change 
if necessary, there exists 
a rigid level-$\cG_H$ PSQAS $(Q,\cL_Q,\phi,\tau)$ 
over $R$
 such that 
$$(Q_{\eta},\cL_{Q,\eta},\phi_{\eta},\tau_{\eta})
\simeq (Z_{\eta},\cL_{\eta},i_{\eta},U_H).$$
 By definition, 
$\rho(\phi,\tau)=U_H.$  Hence $\phi(Q)$ is a
$U_H$-invariant subscheme of $\bP(V_H)_R$.
Since $Z_{\eta}=i(Z_{\eta})$ is also a
$U_H$-invariant subscheme of $\bP(V_H)_{k(\eta)}$,  
by Lemma~\ref{sublemma:uniqueness of rigid PSQAS}~(2) (over $k(\eta)$)  
$$Z_{\eta}=i(Z_{\eta})=\phi(Q_{\eta}).
$$  

Hence their closures in $\bP(V_H)_{R}$ are the same. 
It follows $Z=\phi(Q)$, 
hence $(Z_0,\cL_0)=(\phi(Q_0),\cL_0)$ 
as a subscheme of $\bP(V_H)$.  Since 
$\Gamma(Q,\cL)=\phi^*\Gamma(\bP(V_H)_R,\bH_R)$ is very ample 
by Lemma~\ref{lemma:cohomology}
if $e_{\min}(H)\geq 3$, we have $Z_0=\phi(Q_0)\simeq Q_0$. It follows that 
$$x_0=(Z_0,\cL_0,i_0,U_H)\simeq (Q_0,\cL_0,\phi_0,\tau_0),$$ 
which is a rigid level-$\cG_H$ PSQAS. 
\end{proof} 

\begin{cor}Let $|H|=N$. Under the same assumption 
as in Theorem~\ref{thm:SQgK}, 
for any algebraically 
closed field $k$ with $k\ni 1/N$, we have 
\begin{align*}
A_{g,K}(k)&=\left\{(Q_0,i,U_H);
\begin{matrix}
 Q_0:\text{a level-$\cG_H$ abelian variety}\\
i:Q_0\subset\bP(V_H)\ \text{the inclusion}
\end{matrix}
\right\}.
\end{align*}
\end{cor}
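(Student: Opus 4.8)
The plan is to deduce the Corollary directly from Theorem~\ref{thm:SQgK}, since by construction $A_{g,K}$ is precisely the open-then-closed locus $U_4\subseteq U_3\subseteq (\hilb^{\chi(n)})^{\cG_H\text{-inv}}$ over which the universal subscheme $(Z_{\univ},L_{\univ})$ has abelian-variety fibers and $\cG_H$ acts fiberwise by translations, and $SQ_{g,K}=\overline{A_{g,K}}$. So the Corollary is ``Theorem~\ref{thm:SQgK} restricted to the smooth locus''.

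First I would prove $A_{g,K}(k)\subseteq\{(Q_0,i,U_H)\}$. Let $x_0\in A_{g,K}(k)$. Since $A_{g,K}\subseteq SQ_{g,K}$, Theorem~\ref{thm:SQgK} identifies $x_0$ with the isomorphism class of a level-$\cG_H$ PSQAS $(Q_0,i,U_H)$, with $i:Q_0\subset\bP(V_H)$ the inclusion and $\rho(\phi,\tau)=U_H$. Because $x_0\in U_3$, the underlying scheme $Q_0=(Z_{\univ})_{x_0}$ is an abelian variety; because $x_0\in U_4$, the characteristic $\cG_H$-action realized by $U_H$ on $Q_0$ is a translation action. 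Hence $(Q_0,i,U_H)$ is a level-$\cG_H$ abelian variety, which gives the inclusion.

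For the reverse inclusion, let $(Q_0,i,U_H)$ be a level-$\cG_H$ abelian variety over $k$, i.e.\ a level-$\cG_H$ PSQAS whose underlying scheme is smooth, with $i$ the inclusion into $\bP(V_H)$ and $\rho(\phi,\tau)=U_H$. Its image $i(Q_0)$ is a closed subscheme of $\bP(V_H)$ which is $\cG_H$-invariant (as $i$ is $\cG_H$-equivariant) and has Hilbert polynomial $\chi(Q_0,L^n)=n^g|H|=\chi(n)$ by Lemma~\ref{lemma:cohomology}; hence it defines a $k$-point $x_0$ of $(\hilb^{\chi(n)})^{\cG_H\text{-inv}}$ with $(Z_{\univ},L_{\univ})_{x_0}\simeq (Q_0,L)$. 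Since $Q_0$ is an abelian variety, $x_0\in U_3$. Since $\tau$ is a \emph{characteristic} $\cG_H$-action (Definition~\ref{subdefn:chara GH action}), by Remark~\ref{subrem:chara GH action} it factors through translations of $Q_0$ by $K(Q_0,L)=\ker\lambda(L)$; therefore $x_0$ lies in the closed subscheme $U_4=A_{g,K}$, and it is visibly the class of $(Q_0,i,U_H)$.

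The step requiring the most care is this reverse direction: one must verify that a ``level-$\cG_H$ abelian variety'' really satisfies the three conditions baked into the definition of $U_4$ — the correct Hilbert polynomial (immediate from $L$ ample of the right degree, Lemma~\ref{lemma:cohomology}), $\cG_H$-fixedness of the corresponding Hilbert point (immediate from $\cG_H$-equivariance of $i$), and that the characteristic action is genuinely a translation action. The last is not extra work: it is exactly the content of Remark~\ref{subrem:chara GH action}. One should also double-check that the rigidity normalization $\rho(\phi,\tau)=U_H$ used in $SQ_{g,K}(k)=\{(Q_0,i,U_H)\}$ matches the statement of the Corollary, which holds by Lemma~\ref{sublemma:uniqueness of rigid PSQAS}. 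Apart from this bookkeeping, the Corollary is a formal consequence of Theorem~\ref{thm:SQgK} together with the very definition $A_{g,K}=U_4$.
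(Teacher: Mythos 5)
Your argument is correct and is exactly the intended one: the paper states this Corollary without proof as an immediate consequence of Theorem~\ref{thm:SQgK} together with the definition $A_{g,K}=U_4\subset U_3$ (fibers are abelian varieties, $\cG_H$ acts by translations), which is precisely the two-inclusion check you carry out. The only implicit assumption, shared with the paper, is that $U_3$ is the \emph{maximal} open locus of abelian fibers, so that the point you produce in the reverse inclusion automatically lands in it.
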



\section{Moduli for PSQASes}
\label{sec:moduli psqas}

Let $\cO=\cO_N$. In this section we prove 
\begin{enumerate}
\item[(i)] $A_{g,K}$ is the fine moduli scheme for the functor of 
$T$-smooth PSQASes over $\cO$-schemes.
\item[(ii)] $SQ_{g,K}$ is the fine moduli scheme for the functor of 
$T$-flat PSQASes over reduced $\cO$-schemes.
\end{enumerate}

\subsection{$T$-smooth PSQASes}
\label{subsec:T-PSQASes}
Let $T$ be any $\cO$-scheme. 
In this subsection we define level-$\cG_H$ $T$-smooth PSQASes. 
Since any smooth PSQAS over a field is an abelian variety, 
any level-$\cG_H$ $T$-smooth PSQAS is 
a $T$-smooth scheme, any of whose geometric fiber is an abelian variety. 
It may have no global (zero) section over $T$.  
\begin{subdefn}
\label{subdefn:Tsmooth PSQAS}
A 6-tuple
$(Q,\cL,\cV,\phi,\cG,\tau)$ (or a triple $(Q,\phi,\tau)$ for brevity) is 
called {\em a $T$-smooth projectively stable quasi-abelian scheme} 
(abbr. {\em a $T$-smooth PSQAS})
of relative dimension $g$ 
with level-$\cG_H$ structure 
if the conditions (i)-(vi) are
true:
\begin{enumerate}
\item[(i)] 
$Q$ is a projective  
$T$-scheme 
with the projection $\pi:Q\to T$ surjective smooth,
\item[(ii)]$\cL$ is a relatively very ample line bundle of $Q$,
\item[(iii)] $\cG$ is a $T$-flat 
group scheme, $\tau:\cG\times (Q,\cL)\to (Q,\cL)$ 
is an action of $\cG$ as bundle automorphisms over $Q$, 
\item[(iv)]
$\phi:Q\to \bP(V_H)_T$ is 
a $\cG$-equivariant closed $T$-immersion of $Q$,
\item[(v)] there exists $M\in\Pic(T)$ with  
trivial $\cG$-action such that 
$\cL\simeq\phi^*\bH\otimes \pi^*M$ as $\cG$-modules, and 
$\cV=V_H\otimes_{\cO}M$ is a locally free 
$\cG$-invariant $O_T$-submodule \footnote{ 
$\cV=\pi_*\cL$ for $T$-smooth PSQASes. } of $\pi_*\cL$ of rank $|H|$ 
via the natural homomorphism, 
(see Remark~\ref{subrem:cV sub of piL}) 
\item[(vi)] for any geometric point $t$ of $T$, the fiber at $t$
$(Q_t,\cL_t,\cV_t,\phi_t,\cG_t,\tau_t)$ is {\it a level-$\cG_H$ smooth PSQAS} 
of dimension $g$ over $k(t)$.
\end{enumerate}

We call $(\phi,\tau)$ a level-$\cG_H$ structure on $Q$ 
if no confusion is possible. We also call $(Q,\phi,\tau)$ 
a level-$\cG_H$ 
$T$-smooth PSQAS. 
\end{subdefn}

\begin{subrem}
\label{subrem:aut0(Q) torsor}Let $Q$ be a $T$-smooth TSQAS. 
Then $\Aut^0_S(Q)$ is an abelian scheme over $S$
with zero section $\id_Q$, hence any $T$-smooth TSQAS $Q$ 
is an $\Aut^0_S(Q)$-torsor. See Theorem~\ref{thm:dAut_T(P)} 
and \cite{Nakamura14}.
\end{subrem}

\begin{subrem}
\label{subrem:cV sub of piL}
As in Definition~\ref{subdefn:level GH PSQAS} 
and Remark~\ref{subrem:phi Phi}, 
$\phi$ in (iv) is a $\cG$-morphism with respect to $\tau$ 
in the sense that 
$$\phi\tau(g)=S(\rho(\phi,\tau)(g))\phi,$$
under the notation $S(\rho(\phi,\tau)(g))$ 
in Subsec.~\ref{subsec:Linearization of O(1)}. \par
The natural homomorphism $\iota:\cV=V_H\otimes_{\cO}M\to\pi_*(\cL)$ is given as follows. Let $\pi_{\bP}:\bP(V_H)_T\to T$ be the natural projection. 
By the relation $\pi_{\bP}\phi=\pi$ and the projection formula, we see 
\begin{align*}
\pi_*(\cL)&=\pi_*(\phi^*(\bH\otimes \pi_{\bP}^*M))=
\pi_*(\phi^*(\bH)\otimes \pi^*M)=(\pi_{\bP})_*\phi_*\phi^*\bH\otimes M,
\end{align*}while $V_H\otimes M=(\pi_{\bP})_*(\bH)\otimes M$. 
Hence $\iota$ is induced from the natural homomorphism 
$\bH\to \phi_*\phi^*\bH$. In what follows we omit $\iota$.
\end{subrem}
\begin{subdefn}\label{subdefn:rigid level GH PSQAS}
Let $(Q,\phi,\tau)$  be a level-$\cG_H$ $T$-smooth PSQAS. 
Then $(\phi,\tau)$ 
is called {\em a rigid level-$\cG_H$ structure} 
if $\rho(\phi,\tau)=U_H$, where $\rho(\phi,\tau)$ is defined by
\begin{equation}\label{eq:rho(phi,tau)}
\rho(\phi,\tau)(g)(v):=(\phi^*)^{-1}\rho_{\tau,L}(g)\phi^*(v)
\end{equation}for $v\in\cV=\phi^*V_H\otimes_{\cO} M$. 
\end{subdefn}
\begin{subdefn}
\label{subdefn:morp of T-PSQAS}
Let $\sigma_i:=(Q_i,\cV_i,\cL_i,\phi_i,\cG_i,\tau_i)$ be 
a level-$\cG_H$ $T$-smooth PSQAS and 
$\pi_i:Q_i\to T$ the projection. 
Then $f:\sigma_1\to \sigma_2$ is called 
{\em a morphism of level-$\cG_H$ $T$-smooth PSQASes}
if there exists  $M\in\Pic(T)$,  a $T$-morphism 
$f:Q_1\to Q_2$ and a group scheme $T$-morphism $h:\cG_1\to \cG_2$
such that 
\begin{enumerate}\item[(i)]
$\phi_1=\phi_2\circ f$,
\item[(ii)] the following diagram is commutative:
\begin{equation*}
\CD
\cG_1\times (Q_1,\cL_1) @>{\tau_1}>> 
(Q_1,\cL_1) \\
@VV{h\times f}V @VV{f}V \\
\cG_2\times (Q_2,\cL_2\otimes_{O_T}\pi_2^*(M)) @>>{\tau_2}> 
(Q_2,\cL_2\otimes_{O_T}\pi_2^*(M)).
\endCD
\end{equation*}
\end{enumerate}

The morphism $f:\sigma_1\to\sigma_2$ is 
an isomorphism if and only if $f:Q_1\to Q_2$ is an isomorphism as schemes.   
\end{subdefn}
\begin{subrem}
\label{subrem:Remark to subdefn:morp of T-PSQAS}
From Definition~\ref{subdefn:morp of T-PSQAS}, 
we infer that there exists some $M\in\Pic(T)$ such that 
\begin{enumerate} 
\item[(i)] $\cL_1\simeq f^*(\cL_2)\otimes \pi_1^*(M)$ 
and $\cV_1=\cV_2\otimes M$, 
\item[(ii)] $(f,F(f)):(Q_1,\cL_1)\to (Q_2,\cL_2\otimes \pi_2^*(M))$ 
is a $\cG_1$-morphism of bundles: that is,
\begin{equation*}
(f,F(f))\circ\tau_1(g)=\tau_2(g)\circ (f,F(f)),\quad g\in \cG_1,
\end{equation*}
\item[(iii)] $\rho(\phi_1,\tau_1)=\rho(\phi_2,\tau_2)$. 
See \cite[Lemma~5.5]{Nakamura10}.
\end{enumerate}

In particular, for any $M\in\Pic(T)$ with trivial $\cG$-action, 
$$(Q,\cV,\cL,\phi,\cG,\tau)\simeq 
(Q,\cV\otimes M,\cL\otimes \pi^*M,\phi,\cG,\tau).$$
\end{subrem}

\begin{subrem}\label{subrem:isom by translation}
Since any $a\in Q(T)$ (a global section of $Q$) 
acts on $Q$ by translation, we have 
$$(Q,\cV,\cL,\phi,\cG,\tau)\simeq 
(Q,T_a^*\cV,T_a^*\cL,T_a^*\phi,\cG,T_a^*\tau),$$
where $T_a^*\tau=\{T_a^*\phi_g\}$ for $\tau=\{\phi_g\}$ 
as $\cG_H$-linearization. \theoremtextend
\end{subrem}

\begin{sublemma}
\label{sublemma:uniqueness of rigid T PSQAS} Assume $e_{\min}(H)\geq 3$. 
For a level-$\cG_H$ $T$-smooth (resp. $T$-flat) PSQAS  $(Z,\phi,\tau)$, 
there exists a unique rigid level-$\cG_H$  $T$-smooth (resp. $T$-flat) PSQAS 
$(Z,\psi,\tau)$ isomorphic to $(Z,\phi,\tau)$. 
\end{sublemma}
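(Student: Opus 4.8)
The plan is to reduce the global statement to the fiberwise case already proved in Lemma~\ref{sublemma:uniqueness of rigid PSQAS}, using Schur's lemma (Lemma~\ref{lemma:lemma of Schur}) in the relative form over $T$. First I would observe that by Claim~\ref{claim:defn of rhoL} the assignment $g\mapsto\rho(\phi,\tau)(g)$ is a homomorphism $\cG_H\to\GL(\cV)$ making $\cV$ a $\cG_H$-$O_T$-module of weight one, locally free of rank $|H|$ by Definition~\ref{subdefn:Tsmooth PSQAS}~(v). Since $\cV_t$ is an irreducible $\cG_H$-module for every geometric point $t$ (it is $V_H\otimes k(t)$ up to the representation $\rho(\phi,\tau)$, and $V_H$ is irreducible by Lemma~\ref{lemma:irred repres UH}), the two $\cG_H$-$O_T$-modules $\cV$ (via $\rho(\phi,\tau)$) and $V_H\otimes_{\cO}M$ (via $U_H$) satisfy the hypotheses of the relative Schur's lemma. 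Hence, after replacing $M$ if needed and possibly passing to an open cover of $T$ over which $M$ trivializes, there is an $O_T$-linear isomorphism $A\colon V_H\otimes_{\cO}M\to\cV$ intertwining $U_H$ with $\rho(\phi,\tau)$, and $A$ is unique up to a unit in $\Gamma(O_T)^\times$.

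Next I would use $A$ to twist the embedding: define $\psi$ by $\psi^*=\phi^*\circ A$, i.e. the closed $T$-immersion $\psi\colon Z\to\bP(V_H)_T$ obtained by composing $\phi$ with the projective automorphism of $\bP(V_H)_T$ induced by $A$. By the computation in the proof of Lemma~\ref{sublemma:uniqueness of rigid PSQAS} (which is purely formal and carries over verbatim with $R$ replaced by an arbitrary $\cO$-algebra, since it only uses Claim~\ref{claim:defn of rhoL} and the definition of $\rho(\phi,\tau)$), one gets $\rho(\psi,\tau)=U_H$, so $(Z,\psi,\tau)$ is a rigid level-$\cG_H$ $T$-smooth (resp.\ $T$-flat) PSQAS, and the automorphism of $\bP(V_H)_T$ induced by $A$ gives an isomorphism $(Z,\phi,\tau)\simeq(Z,\psi,\tau)$ of level-$\cG_H$ PSQASes in the sense of Definition~\ref{subdefn:morp of T-PSQAS} (with the twist absorbed into $M$). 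For uniqueness: if $(Z,\psi,\tau)$ and $(Z,\psi',\tau)$ are both rigid and isomorphic via $f$, then by Remark~\ref{subrem:Remark to subdefn:morp of T-PSQAS}~(iii) one has $\rho(\psi,\tau)=\rho(\psi',\tau)=U_H$, so $\gamma:=(\psi^*)^{-1}(\psi'^*)$ commutes with $U_H$, hence is a unit scalar by Schur's lemma, hence $\psi=\psi'$ as maps to $\bP(V_H)_T$; combined with $e_{\min}(H)\geq 3$ (so that $\psi$, $\psi'$ are closed immersions with $\cV=\psi^*V_H\otimes M$, by Theorem~\ref{thm:embedding of (Q0,L0)}~(3),(4)), this forces $f$ to be the identity.

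The main obstacle I anticipate is the relative form of Schur's lemma when $\Pic(T)\neq 0$: the intertwiner $A$ between $\cV$ and $V_H\otimes_{\cO}M$ need not exist globally as an honest isomorphism without first knowing that the determinant line bundle obstruction vanishes, and one must be careful that the ``$M$'' in Definition~\ref{subdefn:Tsmooth PSQAS}~(v) is exactly the twist that makes this work — that is, the point is that $\cV\cong V_H\otimes_{\cO}M$ \emph{as $\cG_H$-modules} is already built into the definition of a $T$-smooth PSQAS, so the only genuine content is that \emph{the} intertwiner can be chosen to carry $\rho(\phi,\tau)$ to $U_H$ on the nose and is unique up to scalar; this is precisely Lemma~\ref{lemma:lemma of Schur} (stated there over a commutative $R$-algebra), applied with $R=\Gamma(O_T)$ when $T$ is affine and then glued. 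One also needs to check that the twisting construction is compatible with the $\cG$-linearization data $\tau$ and with the identification of $\cG$ with $\cG_H$ — but this is routine bookkeeping using Subsec.~\ref{subsec:Linearization of O(1)} and Claim~\ref{claim:G equiv morphism}, and the $T$-flat case requires no change since fiberwise irreducibility of $\cV_t$ still holds (every geometric fiber is a level-$\cG_H$ PSQAS and Lemma~\ref{lemma:irred repres UH} applies).
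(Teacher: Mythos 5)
Your proposal is correct and follows essentially the same route as the paper: trivialize $M$ on an affine cover, use irreducibility (Lemma~\ref{lemma:irred repres UH}) to produce a local intertwiner $A_i$ conjugating $\rho(\phi,\tau)$ to $U_H$, set $\psi_i^*=\phi_i^*A_i$, and glue via Schur's lemma (Lemma~\ref{lemma:lemma of Schur}), which makes the local $\psi_i$ agree on overlaps as maps to $\bP(V_H)$ because the intertwiner is unique up to a unit. Your uniqueness argument is likewise the same Schur-type argument the paper carries over from Lemma~\ref{sublemma:uniqueness of rigid PSQAS}.
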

\begin{proof}One can prove this in parallel to 
Lemma~\ref{sublemma:uniqueness of rigid PSQAS}.  \par
By Definition~\ref{subdefn:Tsmooth PSQAS}, 
we have a 6-tuple $(Z,L,\cV,\phi,\cG,\tau)$. 
Let $\cV=V_H\otimes M$ for some $M\in\Pic(T)$.
We choose an affine covering $U_i$ of $T$ such that 
$M\otimes O_{U_i}$ is trivial. Let $Z_i:=Z_{T}\times {U_i}$. 
Then $\phi_i:=\phi_{|Z_i}:(Z_i,L_{Z_i})\to \bP(V_H)$ 
is a closed $\cG_{U_i}$-immersion and $\rho_{\rho_i,\tau}$ is 
equivalent to $U_H$. Hence there exists 
$A_i\in \GL(V_H\otimes O_{U_i})$ such that 
$U_H=A_i^{-1}\rho_{\rho_i,\tau}A_i$
by Lemma~\ref{lemma:irred repres UH}. 
We define a closed $\cG_{U_i}$-immersion 
$$\psi_i:(Z_i,L_{Z_i})\to (\bP(V_H)_{U_i},\bH_{U_i})$$
by $\psi_i^*=\phi_i^*A_i$. Hence we have 
$\rho(\psi_i,\tau)=U_H.$
Over $U_i\cap U_j$ we have two $\cG_{U_i\cap U_j}$-isomorphisms
$$\psi^*_k:V_H\otimes O_{U_i\cap U_j}
\simeq\cV\otimes O_{U_i\cap U_j},\quad (k=i,j).
$$ 
By Lemma~\ref{lemma:lemma of Schur}, 
there exists a unit $f_{ij}\in O_{U_i\cap U_j}^{\times}$ such that 
$\psi^*_i=f_{ij}\psi_j^*$. Hence 
$\psi_i=\psi_j$ over $U_i\cap U_j$ as a morphism to $\bP(V_H)_{U_i\cap U_j}$. 
Thus we have a $T$-smooth (resp. $T$-flat) 
PSQAS $(Z,\psi,\tau)$ such that $\rho(\psi,\tau)=U_H$. \par
The same argument proves the Lemma for a $T$-flat PSQAS. 
See Subsec.~\ref{subsec:T-flat PSQASes} 
for $T$-flat PSQASes. 
This completes the proof. 
\end{proof}

\begin{subdefn}
\label{subdefn:functor AgK}
We define 
a contravariant functor ${\cA}_{g,K}$ from the category 
of $\cO$-schemes to the category of sets by
\begin{align*}
{\cA}_{g,K}(T)=\ &\text{the set of all 
level-$\cG_H$ $T$-smooth PSQASes $(Q,\phi,\tau)$}\\
&\text{of relative dimension $g$ modulo $T$-isomorphism}\\
=\ &\text{the set of all rigid 
level-$\cG_H$ $T$-smooth PSQASes}\\
&\text{of relative dimension $g$ modulo $T$-isomorphism}
\end{align*}
by Lemma~\ref{sublemma:uniqueness of rigid T PSQAS}. 
\end{subdefn}


\subsection{Pro-representability}
\label{subsec:pro-representability}
Let $k$ be an algebraically closed field, 
and $W=W(k)$ the Witt ring of $k$ 
where we set $W=k$ if the characteristic of $k$ is zero. 
Hence $W$ is a complete 
local noetherian ring 
with maximal ideal $m_W$ and with 
$W/m_W=k$.
Let $\cC=\cC_W$  be
the category of local Artinian $W$-algebra 
with an isomorphism $k=R/m_R$ making 
the following diagram commutative:
\begin{equation*}
\CD
W@>>> R \\
@VV{}V @VV{}V \\
k@>{\simeq}>> R/m_R .
\endCD
\end{equation*}
Let $\hat\cC_W$ be the category of all complete local noetherian 
$W$-algebras $R$ such that $R/m_R^n\in\cC_W$ for every $n$.  
The morphisms in $\hat\cC_W$ are local $W$-algebra homomorphisms. 
A functor $F:\cC_W\to (Sets)$ is called {\em pro-representable} if there exists an $A\in\hat\cC_W$ such that 
$$F(R)=\Hom_{W\text{-hom.}}(A,R).
$$

\subsection{Deformation theory of abelian schemes}
\label{subsec:deform of AV}
We briefly review \cite{Oort72}. Let $k$ be an algebraically closed field. 
Let $\cC=\cC_W$.  We caution that 
$R\in\cC$ is not always a $k$-algebra. \par
Let $A$ be an abelian variety over $k$, $L_0$ an ample line bundle on $A$,
and $\lambda(L_0):A\to A^{\vee}:=\Pic^0_A$ the polarization morphism. 
\par
By Grothendieck and Mumford \cite[Theorems~2.3.3, 2.4.1]{Oort72} 
the quasi-polarized moduli functor $P$ 
of $(A,\lambda(L_0))$
is formally smooth 
if $\lambda(L_0):A\to A^{\vee}$
is separable. We will explain this. \par 
The deformation functor $M:=M(A)$ of $A$ 
is defined  over $\cC$ by
\begin{align*}
M(R)&=\left\{
(X,\phi_0); 
\begin{tabular}{l}
$X$ is a proper $R$-scheme\\
$\phi_0:X\otimes_R k\simeq A$
\end{tabular}
\right\}/\text{$R$-isom.}
\end{align*}

By Grothendieck 
\cite[Theorem~2.2.1]{Oort72}, $M$ is pro-represented by 
$$W(k)[[t_{i,j}; 1\leq i,j\leq g]].$$

The quasi-polarized moduli 
functor $P:=P(A,\lambda_0)$
of $(A,\lambda(L_0))$ over $\cC$
is defined as follows \cite[pp.~240-242]{Oort72} : 
\begin{align*}
P(R)&=\left\{
(X,\lambda,\phi_0); 
\begin{tabular}{l}
$(X,\lambda)$ is an abelian $R$-scheme\\
$\lambda:X\to X^{\vee}$ is a homomorphism\\
such that $\lambda=\lambda(\cL)$ for some $\cL\in\Pic(X)$\\
$\phi_0:(X,\lambda)\otimes_R k\simeq (A,\lambda_0)$
\end{tabular}
\right\}/\text{$R$-isom.}
\end{align*}
where $\lambda_0:=\lambda(L_0)$ and $X^{\vee}:=\Pic^0_{X/R}$. \par
Thus any  
$(Y,\lambda,\phi_0)\in P(R)$ always has a line bundle $L$ 
such that $\lambda=\lambda(L)$. This fact is used in Subsec.~\ref
{subsec:deform of sep pol AV}.\par

By \cite[Theorem~2.3.3]{Oort72}, 
$P(A,\lambda_0)$ is a pro-representable subfunctor of $M(A)$, that is,
the functor $P(A,\lambda_0)$ is pro-represented by 
$$\cO_W:=W[[t_{i,j}; 1\leq i,j\leq g]]/{\frak a}$$
for some ideal ${\frak a}$ where  
${\frak a}$ is generated by $\frac{1}{2}g(g-1)$ elements.

\subsection{Deformations in the separably polarized case}
\label{subsec:deform of sep pol AV}
We call $\lambda(L_0)$ (or $L_0$) a separable polarization if 
$\lambda(L_0):A\to A^{\vee}$
is a separable morphism. For instance, 
$\lambda(L_0)$ is separable if $k\ni 1/N$ where 
$N=\sqrt{|\ker\lambda(L_0)|}$. \par
Suppose that the polarization $\lambda_0$ is separable.  
The ideal ${\frak a}$ is generated by $t_{ij}-t_{ji}$ for any pair $i\neq j$ 
\cite[Remark, p.~246]{Oort72}:
$${\frak a}=(t_{ij}-t_{ji}; 1\leq i< j\leq g)
$$

 Hence $P(A,\lambda_0)$ is formally smooth 
of dimension $\frac{1}{2}g(g+1)$
over $W$. In this case $(A,\lambda_0)$ can be lifted as a 
formal abelian scheme $(\cX_{\formal},\lambda(\cL_{\formal}))$ 
over $\cO_W$, that is, there 
exists a system $(X_n,\lambda_n)$ of polarized abelian schemes over 
$\cO_{W,n}:=\cO_W/{\frak m}^{n+1}$ such that 
$$(X_{n+1},\lambda_{n+1})\otimes\cO_n\simeq (X_n,\lambda_n),
$$
where $\frak m$ is the maximal ideal of $\cO_W$. Then by 
\cite[III, {\bf 11}, 5.4.5]{EGA},
the formal scheme $\cX$ is algebraizable, that is, 
there exists a polarized abelian scheme $(X,\cL)$ over $\Spec\cO_W$ such that 
$$(X,\lambda(\cL))\otimes \cO_{W,n}\simeq (X_n,\lambda_n).$$

Let $K_{\suniv}=\ker(\lambda(\cL))$, 
$\cG_{\suniv}:=\cG(X,\cL):=\cL^{\times}_{K_{\suniv}}$ and  
$\cV_{\suniv}:=\Gamma(X,\cL).$ 
By \cite[pp.~115-117, pp.204-211]{Mumford12}, 
$\cL$ is $\cG_{\suniv}$-linearizable. In other words, 
$\cG_{\suniv}$ acts on $(X,\cL)$ by bundle automorphisms. 
Let $\tau_{\suniv}$ be the action of $\cG_{\suniv}$ on $(X,\cL)$. 
Then $\cV_{\suniv}$ is an $\cO_W$-free 
$\cG_{\suniv}$-module of rank $N$ via $\rho_{\tau_{\suniv},\cL}$.  \par
By the assumption $k\ni 1/N$, $\lambda(\cL):X\to X^{\vee}$ 
is separable, and
$K_{\suniv}$ is a constant finite symplectic Abelian group of order $N^2$ 
isomorphic to $H\oplus H^{\vee}$
because $K_{\suniv}\otimes_{\cO_W}k$ is so. \par
If $e_{\min}(K_{\suniv})\geq 3$, then $\cL$ is very ample 
because $\cL_0=L_0$ is very ample by Theorem~\ref{thm:embedding of (Q0,L0)} 
(Lefschetz's theorem in this case). Let $\phi_{\suniv}:X\to \bP(\cV_{\suniv})\simeq \bP(V_H)_{\cO_W}$ 
be the embedding 
of  $X$ into $\bP(\cV_{\suniv})$ such that 
$\rho(\phi_{\suniv},\tau_{\suniv})=U_H$. Thus we have a level-$\cG_H$ 
$\cO_W$-smooth PSQAS
$$(X,\cL,\cV_{\suniv},\phi_{\suniv},\cG_{\suniv},\tau_{\suniv}).
$$


 \begin{thm}\label{thm:fine moduli AgK} 
Let $K=H\oplus H^{\vee}$ and $N:=|H|$.  
If $e_{\min}(H)\geq 3$, then
the functor ${\cA}_{g,K}$ 
of level-$\cG_H$ smooth PSQASes over $\cO$-schemes 
is represented by the quasi-projective 
$\cO$-formally smooth scheme 
$A_{g,K}$.
\end{thm}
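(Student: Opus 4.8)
The plan is to prove Theorem~\ref{thm:fine moduli AgK} by combining the pointwise description of $A_{g,K}(k)$ from Theorem~\ref{thm:SQgK} and its Corollary with the deformation theory recalled in Subsections~\ref{subsec:deform of AV}--\ref{subsec:deform of sep pol AV}, and then checking that the tautological family over $A_{g,K}=U_4\subset(\hilb^{\chi(n)})^{\cG_H\text{-inv}}$ has the universal property defining $\cA_{g,K}$. First I would fix the tautological data: by construction $A_{g,K}$ carries the pulled-back universal subscheme $(Z_{\univ},L_{\univ})$ of $\bP(V_H)$, which over $A_{g,K}$ is $\cO$-smooth with abelian-variety fibers (definition of $U_3$), admits a $\cG_H$-action by translations (definition of $U_4$), and the inclusion $i:Z_{\univ}\hookrightarrow\bP(V_H)_{A_{g,K}}$ is $\cG_H$-equivariant with $\rho(i,\tau)=U_H$. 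Setting $\cV_{\univ}:=i^*V_H\otimes\cO$ and $M=\cO_{A_{g,K}}$, the sextuple $(Z_{\univ},L_{\univ},\cV_{\univ},i,\cG_H,U_H)$ is a rigid level-$\cG_H$ $A_{g,K}$-smooth PSQAS in the sense of Definition~\ref{subdefn:Tsmooth PSQAS}; checking (i)--(vi) is routine given $e_{\min}(H)\geq 3$ (very ampleness from Theorem~\ref{thm:embedding of (Q0,L0)}~(3) applied fiberwise, and the rank-$|H|$ statement from Lemma~\ref{lemma:cohomology}~(2)). This family defines a natural transformation $h_{A_{g,K}}\to\cA_{g,K}$.

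Next I would prove this transformation is an isomorphism of functors. Injectivity on morphisms: given $T$ and two maps $f_1,f_2:T\to A_{g,K}$ inducing $T$-isomorphic rigid level-$\cG_H$ $T$-smooth PSQASes, the pulled-back closed subschemes $f_i^*Z_{\univ}\subset\bP(V_H)_T$ are $U_H$-invariant and $T$-isomorphic via an isomorphism compatible with the inclusions; a relative version of the uniqueness argument in Lemma~\ref{sublemma:uniqueness of rigid PSQAS}~(2) (equivalently Lemma~\ref{sublemma:uniqueness of rigid T PSQAS}, using Schur's lemma Lemma~\ref{lemma:lemma of Schur} over $O_T$ locally) forces $f_1^*Z_{\univ}=f_2^*Z_{\univ}$ as subschemes of $\bP(V_H)_T$, hence $f_1=f_2$ by the universal property of the Hilbert scheme. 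Surjectivity: given a rigid level-$\cG_H$ $T$-smooth PSQAS $(Q,\phi,\tau)$ over $T$, replacing $\cL$ by $\cL\otimes\pi^*M^{-1}$ (Remark~\ref{subrem:Remark to subdefn:morp of T-PSQAS}) we may assume $M=O_T$, so $\phi:Q\to\bP(V_H)_T$ is a closed $T$-immersion with Hilbert polynomial $\chi(n)=n^g|H|$ on each fiber; this yields a classifying map $T\to\hilb^{\chi(n)}$ whose image lands in $(\hilb^{\chi(n)})^{\cG_H\text{-inv}}$ because $\phi$ is $\cG_H$-equivariant with $\rho(\phi,\tau)=U_H$, then in $U_3$ because the fibers are abelian varieties, then in $U_4$ because $\tau$ is characteristic hence fiberwise by translations. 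Pulling back $(Z_{\univ},\dots)$ along this map recovers $(Q,\phi,\tau)$ up to $T$-isomorphism by rigidity. Together with Theorem~\ref{thm:SQgK}'s Corollary for the closed-point description, this shows $A_{g,K}$ represents $\cA_{g,K}$.

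Finally, formal smoothness: by the representability and a standard infinitesimal-lifting criterion, it suffices to show that for any small extension $R'\twoheadrightarrow R$ in $\cC_W$ (with $k=R/m_R$ arbitrary algebraically closed with $1/N\in k$) and any $\xi\in\cA_{g,K}(R)$, $\xi$ lifts to $\cA_{g,K}(R')$. Here is where Subsection~\ref{subsec:deform of sep pol AV} enters: the underlying polarized abelian scheme $(Q_R,\lambda(\cL_R))$ is a deformation of $(Q_k,\lambda(L_0))$ with $\lambda(L_0)$ separable (as $1/N\in k$), so by \cite[Theorem~2.3.3]{Oort72} and the explicit description of the ideal $\frak a$, the quasi-polarized functor $P(Q_k,\lambda_0)$ is formally smooth over $W$; thus $(Q_R,\lambda(\cL_R))$ lifts to $R'$. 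One then upgrades this to a lift of the full level-$\cG_H$ datum: choose a line bundle $\cL_{R'}$ inducing the lifted polarization (possible by the very definition of $P$), linearize it under the lifted Heisenberg group $\cG_{\suniv}\otimes R'$ via \cite[pp.~115-117, pp.204-211]{Mumford12}, use very ampleness ($e_{\min}(H)\geq 3$, Theorem~\ref{thm:embedding of (Q0,L0)}) to embed $Q_{R'}\hookrightarrow\bP(V_H)_{R'}$, and normalize by Schur's lemma to achieve $\rho(\phi_{R'},\tau_{R'})=U_H$. The hard part will be this last upgrade --- tracking that the Heisenberg linearization, the choice of $\cL$ representing the polarization, and the rigidification can all be carried out compatibly over the small extension, i.e. that no obstruction beyond the one already killed by separability of $\lambda_0$ appears --- and I expect it to follow from the $H^2$-vanishing on affine (formal torus) charts exactly as in the proof of the formal-torus Lemma~\ref{sublemma:theta Fourier expansion}, together with the fact that $\cG_H$ is linearly reductive over $\cO_N$ so that equivariant lifting obstructions vanish.
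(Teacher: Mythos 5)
Your representability argument is the same as the paper's: rigidify via Lemma~\ref{sublemma:uniqueness of rigid T PSQAS}, use very ampleness ($e_{\min}(H)\geq 3$) to get a $\cG_H$-equivariant closed immersion into $\bP(V_H)_T$ landing in $A_{g,K}$, and conclude by the universal property of the Hilbert scheme together with uniqueness of the rigid representative. That half is fine.

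For formal smoothness the paper takes a different and shorter route than your infinitesimal-lifting plan, and the difference matters exactly at the step you flag as ``the hard part.'' Rather than lifting $\xi\in\cA_{g,K}(R)$ across a small extension piece by piece, the paper fixes $\sigma=(A,L_0,\dots)$ over $k$, introduces the local functor $F(R)=\{\xi\in\cA_{g,K}(R);\ \xi\otimes_Rk\simeq\sigma\}$, and exhibits an isomorphism of functors $F\simeq P(A,\lambda(L_0))$. The mechanism is the algebraized semi-universal family $(X,\cL,\cV_{\suniv},\phi_{\suniv},\cG_{\suniv},\tau_{\suniv})$ over the pro-representing ring $\cO_W$ constructed in Subsec.~\ref{subsec:deform of sep pol AV}: every element of $F(R)$ is the pullback of this family along the classifying map $\cO_W\to R$ because the auxiliary data $\cV_{\suniv},\phi_{\suniv},\cG_{\suniv},\tau_{\suniv}$ are \emph{uniquely determined} by the quasi-polarized abelian scheme (surjectivity of $h:P\to F$), and $h$ is injective by triviality of automorphisms (Lemma~\ref{sublemma:autom trivial}). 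Formal smoothness of $P(A,\lambda_0)$ for a separable polarization then transfers verbatim to $F$. This completely sidesteps the compatibility questions you raise about carrying the line bundle, the Heisenberg linearization, and the rigidification across $R'\twoheadrightarrow R$.

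Your proposed tools for closing that step are not the right ones: the $H^2$-vanishing in Lemma~\ref{sublemma:theta Fourier expansion} lives on affine formal-torus charts in the \emph{degenerate} setting and has no bearing on deforming a smooth abelian fiber, and ``linear reductivity of $\cG_H$'' is asserted rather than used. The step can in fact be closed by elementary means --- a lift $\cL'$ of the polarization class restricts to $\cL_R$ only up to an element of $\Pic^0(Q_R/R)$, which lifts because the dual abelian scheme is smooth; the theta group $\cG(Q_{R'},\cL')$ and its identification with $\cG_H$ extend uniquely because $K(Q_{R'},\cL')$ is finite \'etale for $1/N\in k$; and the rigidification is then Schur's lemma --- but as written your proof leaves this as a hope rather than an argument. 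If you want to keep the lifting formulation, supply these three points explicitly; otherwise adopt the paper's functor identification $F=P(A,\lambda_0)$, which packages them all into the uniqueness of the semi-universal data.
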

\begin{proof}By Lemma~\ref{sublemma:uniqueness of rigid T PSQAS},
for a $T$-smooth PSQAS $(Q,\phi,\tau)$ there exists 
a unique rigid level-$\cG_H$ $T$-smooth PSQAS 
$(Q,\psi,\tau)$ such that 
$(Q,\psi,\tau)$ is $T$-isomorphic to $(Q,\phi,\tau)$. 
Since $\cL$ is very ample by the assumption $e_{\min}(H)\geq 3$, 
$(Q,\psi,\tau)$ is embedded $\cG$-equivariantly into $(\bP(V_H),\bH)$, 
whose image is contained in $A_{g,K}$, because $\rho(\psi,\tau)=U_H$. 
This implies that there exists a unique morphism $f:T\to A_{g,K}$ such that 
$(Q,\psi,\tau)$ is the pull back by $f$ of the universal 
subscheme 
$$(Z_{g,K}\times_{H_{g,K}}A_{g,K},i,U_H).$$ 
It follows that ${\cal{A}}_{g,K}$ is represented by the quasi-projective 
$\bZ[\zeta_N,1/N]$-scheme 
$A_{g,K}$. \par
It remains to prove $A_{g,K}$ is 
formally smooth over $\bZ[\zeta_N,1/N]$. 
Let $k$ be any algebraically closed field with $k\ni 1/N$, and 
we choose any level-$\cG_H$ abelian variety over $k$ 
$$\sigma:=(A,L_0,\Gamma(A,L_0),\phi_0,\cG(A,L_0),\tau_0)\in\cA_{g,K}(k).$$ 
By Subsec.~\ref{subsec:deform of sep pol AV}, the quasi-polarized 
moduli functor $P(A,\lambda(L_0))$ 
is formally smooth 
because $\lambda(L_0):A\to A^{\vee}$
is separable by $k\ni 1/N$.\par 
We define a functor $F$ over $\cC$ by
\begin{align*}
F(R)&=\left\{
\xi:=(Z,L,\cV,\phi,\cG,\tau)\in \cA_{g,K}(R);
\xi\otimes_R k\simeq \sigma
\right\}
\end{align*}
where we do not fix the isomorphism $\xi\otimes_R k\simeq \sigma$ in contrast with $P(A,\lambda(L_0))$. 
Subsec.~\ref{subsec:deform of sep pol AV} shows that 
the map $h:P(A,\lambda(L_0))\to F$ sending 
 $(Z,L)=(X,\cL)\otimes_{\cO_W} R$ to 
$$(X,\cL,\cV_{\suniv},\phi_{\suniv},\cG_{\suniv},\tau_{\suniv})\otimes_{\cO_W} R
$$
is surjective because $\cV_{\suniv}$, $\phi_{\suniv}$, $\cG_{\suniv}$ and 
$\tau_{\suniv}$ are uniquely determined, . It follows from 
Lemma~\ref{sublemma:autom trivial} that $h$ is injective.
Hence $F=P(A,\lambda(L_0))$. Hence $A_{g,K}$ is 
formally smooth at $\sigma$.
\end{proof}

\begin{cor}$SQ_{g,K}$ is reduced.
\end{cor}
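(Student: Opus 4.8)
The plan is to deduce the corollary from the theorem immediately preceding it, namely Theorem~\ref{thm:fine moduli AgK}, which asserts that $A_{g,K}$ is formally smooth over $\cO=\bZ[\zeta_N,1/N]$, together with the definition (\ref{eq:SQgK}) of $SQ_{g,K}$ as the scheme-theoretic closure $\overline{A_{g,K}}$ inside the Hilbert scheme. The key point is simply that a formally smooth scheme over a reduced (indeed regular) base is itself regular, hence reduced, and that the closure of a reduced subscheme carries the reduced induced structure.

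First I would observe that $\cO_N=\bZ[\zeta_N,1/N]$ is a regular ring (it is a localization of the ring of integers in a cyclotomic field, hence a Dedekind domain, and in particular reduced and normal). By Theorem~\ref{thm:fine moduli AgK}, $A_{g,K}$ is a quasi-projective $\cO$-scheme which is formally smooth over $\cO$; since it is also of finite type over $\cO$, it is smooth over $\cO$ in the usual sense. A scheme smooth over a regular base is regular, and a regular scheme is reduced. Hence $A_{g,K}$ is reduced. (Alternatively, one may argue fiberwise: each geometric fiber of $A_{g,K}\to\Spec\cO$ is smooth over a field, hence regular, and $A_{g,K}$ is flat over the reduced base $\Spec\cO$ with reduced fibers, so $A_{g,K}$ is reduced.)

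Next, by definition $SQ_{g,K}=\overline{A_{g,K}}$ is the smallest closed $\cO$-subscheme of $(\hilb^{\chi(n)})^{\cG_H\text{-inv}}$ containing $A_{g,K}$; that is, it is the scheme-theoretic closure of the open immersion $A_{g,K}\hookrightarrow SQ_{g,K}$ (one checks $A_{g,K}$ is open and dense in $SQ_{g,K}$ by construction, since $SQ_{g,K}$ is its closure). The scheme-theoretic closure of a reduced subscheme is reduced: locally, if $U=\Spec B$ and the open subscheme is $\Spec(B_f)$ with $B_f$ reduced, then the closure is cut out by the kernel of $B\to B_f$, and $B/\ker$ injects into the reduced ring $B_f$, hence is reduced. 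Applying this with $A_{g,K}$ reduced gives that $SQ_{g,K}$ is reduced.

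I do not expect any serious obstacle here; the corollary is a formal consequence of the regularity of the base $\cO_N$, the formal smoothness of $A_{g,K}$ established in the preceding theorem, and the elementary fact that scheme-theoretic closures of reduced subschemes are reduced. The only minor point to be careful about is to confirm that $A_{g,K}$ really is schematically dense in $SQ_{g,K}$ — but this is immediate from the defining equation (\ref{eq:SQgK}), which takes $SQ_{g,K}$ to be exactly the closure of $A_{g,K}$, so by construction the quasi-coherent ideal defining $SQ_{g,K}$ is the largest one with the same restriction to $A_{g,K}$, making the closure reduced precisely when $A_{g,K}$ is.
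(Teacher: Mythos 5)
Your proposal is correct and follows essentially the same route as the paper: deduce reducedness of $A_{g,K}$ from the formal smoothness established in Theorem~\ref{thm:fine moduli AgK}, and then observe that the scheme-theoretic closure of a reduced subscheme is reduced (the paper phrases this as the intersection of all closed subschemes containing $A_{g,K}$ coinciding with the intersection of all closed \emph{reduced} ones, while you give the equivalent local argument that $B/\ker(B\to B_f)$ injects into the reduced ring $B_f$). Your extra care about the regularity of $\cO_N$ and the smooth-implies-regular step only makes explicit what the paper leaves implicit.
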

\begin{proof}Since $A_{g,K}$ is $\cO$-formally smooth, 
it is reduced. Since $SQ_{g,K}$ is the intersection 
of all closed $\cO$-subschemes 
containing $A_{g,K}$, it is the intersection 
of all closed reduced $\cO$-subschemes 
containing $A_{g,K}$ because $A_{g,K}$ is reduced. 
Hence $SQ_{g,K}$ is reduced. 
\end{proof}
\subsection{$T$-flat PSQASes}
\label{subsec:T-flat PSQASes}

\begin{subdefn}
\label{subdefn:Tflat PSQAS}
Let $T$ be any {\em reduced} $\cO$-scheme. 
A 5-tuple
$(Q,\cL,\cV,\phi,\cG,\tau)$ (or a triple $(Q,\phi,\tau)$ for brevity) is 
called {\em a projectively stable quasi-abelian $T$-flat scheme} 
(or just {\em a $T$-flat PSQAS})
of relative dimension $g$ 
with level-$\cG_H$ structure 
if the conditions (ii)-(v) in Definition~\ref{subdefn:Tsmooth PSQAS} 
and (i${}^*$), (vi${}^*$) are
true:
\begin{enumerate}
\item[(i${}^*$)] 
$Q$ is a projective  
$T$-scheme 
with the projection $\pi:Q\to T$ surjective flat,
\item[(vi${}^*$)] for any geometric point $t$ of $T$, the fiber at $t$
$(Q_t,\cL_t,\phi_t,\tau_t)$ is {\em a PSQAS} 
of dimension $g$ over $k(t)$ with level-$\cG_H$ structure.
\end{enumerate}

We also call $(Q,\phi,\tau)$ a level-$\cG_H$ $T$-PSQAS. 
\end{subdefn}
\begin{subdefn}
Let $(Q,\phi,\tau)$  be a level-$\cG_H$ $T$-flat PSQAS. 
Then $(\phi,\tau)$ 
is called {\em a rigid level-$\cG_H$ structure} if 
$\rho(\phi,\tau)=U_H$.
\end{subdefn}
\begin{subdefn}\label{subdefn:morp of T-flat PSQAS}
Let $(Q_i,\cV_i,\cL_i,\phi_i,\cG_i,\tau_i)$ be level-$\cG_H$ $T$-PSQASes and 
$\pi_i:Q_i\to T$ a flat morphism (structure morphism) 
with $T$ {\em reduced}.
Then $f:Q_1\to Q_2$ is called 
{\em a morphism of level-$\cG_H$ $T$-PSQASes} 
 if the conditions   
in Definition~\ref{subdefn:morp of T-PSQAS} are true.
\end{subdefn}

\begin{subdefn}
\label{subdefn:sch red}
The category $Sch_{\red}$ of reduced schemes 
is a subcategory of the category $Sch$ of schemes with 
\begin{align*}
\Obj(Sch_{\red})&= \text{reduced schemes}, \\
\Mor(Sch_{\red})&=\text{morphisms in the category of schemes.}
\end{align*}
\end{subdefn}

\begin{subdefn}
\label{subdefn:functor SQgK}
We define 
a contravariant functor ${\cS}{\cQ}_{g,K}$ from the category $Sch_{\red}$
of {\em reduced} $\cO$-schemes to the category of sets by
\begin{align*}
{\cS}{\cQ}_{g,K}(T)=\ &\text{the set of all 
level-$\cG_H$ $T$-flat PSQASes $(Q,\phi,\tau)$}\\
&\text{of relative dimension $g$ modulo $T$-isomorphism}\\
=\ &\text{the set of all rigid 
level-$\cG_H$ $T$-flat PSQASes}\\
&\text{of relative dimension $g$ modulo $T$-isomorphism}
\end{align*}
by Lemma~\ref{sublemma:uniqueness of rigid T PSQAS}.
\end{subdefn}

\begin{thm}\label{thm:fine moduli SQgK}
Suppose $e_{\min}(K)\geq 3$. Let $N:=\sqrt{|K|}$.
The functor ${\cal{SQ}}_{g,K}$ 
of level-$G(K)$ PSQASes $(Q,\phi,\tau)$ over  
{\em reduced} schemes
is represented by 
the projective reduced 
$\cO_N$-scheme 
$SQ_{g,K}$.  
\end{thm}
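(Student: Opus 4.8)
The plan is to show that $SQ_{g,K}$ represents ${\cal{SQ}}_{g,K}$ by exhibiting a universal family and checking the two halves of the representability statement: that the universal family defines an element of ${\cal{SQ}}_{g,K}(SQ_{g,K})$, and that pullback along morphisms $T\to SQ_{g,K}$ induces a bijection with ${\cal{SQ}}_{g,K}(T)$ for every reduced $\cO_N$-scheme $T$. Since $SQ_{g,K}\subset(\hilb^{\chi(n)})^{\cG_H\text{-inv}}$ is closed, it carries the restriction of the universal closed subscheme $(Z_{\univ},L_{\univ})$ of $\bP(V_H)$; write $(Z_{g,K},L_{g,K})$ for this family over $SQ_{g,K}$, equipped with the inclusion $i$ into $\bP(V_H)_{SQ_{g,K}}$ and the $\cG_H$-action $U_H$ inherited from the Schr\"odinger representation on $\bP(V_H)$. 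First I would verify that $(Z_{g,K},i,U_H)$ is a level-$\cG_H$ $SQ_{g,K}$-flat PSQAS: flatness over $SQ_{g,K}$ holds because the Hilbert polynomial $\chi(n)=n^g|H|$ is constant, and each geometric fiber is a level-$\cG_H$ PSQAS with inclusion morphism by Theorem~\ref{thm:SQgK} (here is where $e_{\min}(K)\geq 3$ is essential, guaranteeing very ampleness of $V(Z,L)$ so that the fiber really sits in $\bP(V_H)$ as claimed, with $\rho(i,U_H)=U_H$, i.e. it is a \emph{rigid} level-$\cG_H$ PSQAS). Condition (v) of Definition~\ref{subdefn:Tflat PSQAS}, the compatibility $\cL\simeq\phi^*\bH\otimes\pi^*M$ and $\cV=V_H\otimes M$ with $\cV$ a $\cG_H$-invariant locally free subsheaf of $\pi_*\cL$ of rank $|H|$, follows by cohomology-and-base-change from Lemma~\ref{lemma:cohomology}: $\Gamma(Z_0,\cL_0)$ has constant rank $|H|$ fiberwise for $Z_0$ any fiber here (level one theta functions), so $\pi_*\cL$ is locally free of rank $|H|$ and equals the characteristic subspace fiberwise.

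Next I would construct the natural transformation. Given a reduced $\cO_N$-scheme $T$ and a rigid level-$\cG_H$ $T$-flat PSQAS $(Q,\phi,\tau)$, the embedding $\phi:Q\to\bP(V_H)_T$ is a $T$-flat closed subscheme with fiberwise Hilbert polynomial $\chi(n)=n^g|H|$ (each fiber is a $g$-dimensional PSQAS polarized by a very ample $\cV_t$ of rank $|H|$), hence by the universal property of $\hilb^{\chi(n)}$ it is classified by a unique morphism $f:T\to\hilb^{\chi(n)}$. Because $\tau$ is a characteristic $\cG_H$-action and $\rho(\phi,\tau)=U_H$, the embedded subscheme $\phi(Q)\subset\bP(V_H)_T$ is $U_H$-invariant fiberwise, hence $f$ factors through $(\hilb^{\chi(n)})^{\cG_H\text{-inv}}$; and since each geometric fiber of $Q$ is, by Theorem~\ref{thm:SQgK}, an element of $SQ_{g,K}(k(t))$, the set-theoretic image of $f$ lies in $SQ_{g,K}$. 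Here \emph{reducedness of $T$} is what lets me conclude scheme-theoretic factorization through the closed subscheme $SQ_{g,K}=\overline{A_{g,K}}$ from the set-theoretic statement on points, since $SQ_{g,K}$ is reduced (by the Corollary to Theorem~\ref{thm:fine moduli AgK}). Conversely, pulling back $(Z_{g,K},i,U_H)$ along any $g:T\to SQ_{g,K}$ gives a rigid level-$\cG_H$ $T$-flat PSQAS, and these two operations are mutually inverse: uniqueness of $f$ follows from the universal property of the Hilbert scheme, and the isomorphism $g^*(Z_{g,K},i,U_H)\simeq(Q,\phi,\tau)$ over $T$ is forced because both are rigid (so $\rho=U_H$ for both) and have the same image subscheme in $\bP(V_H)_T$, by Lemma~\ref{sublemma:uniqueness of rigid T PSQAS} applied fiberwise and spread out over $T$.

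Finally I would check that the identification of ${\cS}{\cQ}_{g,K}(T)$ as a set of \emph{isomorphism classes} (not rigidified data) is consistent: two rigid level-$\cG_H$ $T$-flat PSQASes with the same image in $\bP(V_H)_T$ are $T$-isomorphic, and Lemma~\ref{sublemma:uniqueness of rigid T PSQAS} guarantees every level-$\cG_H$ $T$-flat PSQAS is isomorphic to a unique rigid one, so passing to rigid representatives is harmless — this is already built into Definition~\ref{subdefn:functor SQgK}. The projectivity of $SQ_{g,K}$ is immediate since it is a closed subscheme of a Hilbert scheme over $\cO_N$, and reducedness is the Corollary just cited. The main obstacle I expect is the scheme-theoretic (as opposed to merely set-theoretic) factorization $f:T\to SQ_{g,K}$: one must argue that the Hilbert-scheme-classifying map of a $T$-flat family whose fibers all lie in $SQ_{g,K}$ actually factors through the \emph{closed subscheme} $SQ_{g,K}$ and not just through its underlying set; this is exactly where the hypothesis that $T$ is reduced (hence dominated by its generic points, which map into $A_{g,K}$) combined with the reducedness of $SQ_{g,K}=\overline{A_{g,K}}$ is indispensable, and it is the reason the functor is only defined on $Sch_{\red}$. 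A secondary technical point is verifying condition (v) uniformly in $T$ — that $\pi_*\cL=\cV$ and that $\cV$ is the fiberwise characteristic subspace — which reduces to Lemma~\ref{lemma:cohomology}~(4) plus standard cohomology-and-base-change, but requires knowing the fibers are PSQASes to apply that lemma.
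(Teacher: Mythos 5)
Your proposal is correct and follows essentially the same route as the paper, which simply says the theorem ``is proved in parallel to Theorem~\ref{thm:fine moduli AgK}'': rigidify via Lemma~\ref{sublemma:uniqueness of rigid T PSQAS}, embed $\cG_H$-equivariantly into $\bP(V_H)_T$, classify by the Hilbert scheme, factor through the closed reduced subscheme $SQ_{g,K}$ using reducedness of $T$, and invert by pulling back the universal subscheme, with projectivity coming from $SQ_{g,K}$ being closed in $\hilb^{\chi(n)}$ and reducedness from the Corollary to Theorem~\ref{thm:fine moduli AgK}. You have in fact made explicit two points the paper leaves implicit — the scheme-theoretic (versus set-theoretic) factorization through $SQ_{g,K}$ and the base-change verification of condition (v) via Lemma~\ref{lemma:cohomology} — which is a welcome sharpening rather than a deviation.
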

\begin{proof}This is proved 
in parallel to Theorem~\ref{thm:fine moduli AgK}. 
Properness of $SQ_{g,K}$ follows from 
Theorem~\ref{thm:refined stable reduction}.  
See \cite[Theorem~10.4]{Nakamura99} for a more precise statement.
Since $SQ_{g,K}$ is a proper subscheme of the projective scheme 
$\hilb^{\chi(n)}$ in Subsec.~\ref{subsec:Hilb}, it is projective.
\end{proof}

\section{The functor of TSQASes}
\label{sec:level GH structure of TSQAS}

\subsection{TSQASes over $k$}
We introduced 
two kinds of nice classes of degenerate abelian 
schemes, PSQASes and TSQASes 
in Theorem~\ref{thm:refined stable reduction}. 
\par
It is TSQASes that we discuss in this section.
They are nonsingular abelian varieties, 
or reduced even if singular, 
and therefore easier to handle than PSQASes.
However the very-ampleness criterion 
(Theorem~\ref{thm:refined stable reduction}~(4))
fails for $(P,\cL_P)$, and because of this defect, 
 we cannot expect the existence of the fine moduli scheme for TSQASes.

Let $H$ be any finite Abelian group, $N=|H|$,   
$k$ an algebraically closed field with $k\ni 1/N$, 
$K=H\oplus H^{\vee}$ and $\cO=\cO_N$.\par 
\begin{subrem}\label{subrem:chara GH action of P} 
Let $(Z,L)$ be any TSQAS over $k$. Hence there exist 
an Abelian group $H$ and
a flat family $(P,\cL)$ over a CDVR $R$ with $k=R/m$ given 
in Theorem~\ref{thm:refined stable reduction} 
such that $(Z,L)\simeq (P_0,\cL_0)$, $P_0$ is reduced,   
 and the generic fiber $(P_{\eta},\cL_{\eta})$ 
is an abelian variety with 
$\ker\lambda(\cL_{\eta}))\simeq K_H=H\oplus H^{\vee}$. 
Hence we have an action of $\cG_H$ on $(Z,L)$. See 
Remark~\ref{subrem:chara GH action}.  
We denote by $\cG(Z,L)$ the $\cG_H$-action on $(Z,L)$.  
This is determined by $(Z,L)$ 
uniquely up to an automorphism of $\cG_H$. 
In the totally degenerate case, 
the action of $\cG(Z,L)$ is explicitly written as $S_x$ and 
$T_a$ $(x\in X/Y, a\in X\times_{\bZ}\bG_m)$. 
See Definition~\ref{subdefn:Sz Tbeta}. 
\end{subrem}

\begin{subdefn}
\label{subdefn:chara GH action}
Let $(Z,L)$ be a TSQAS over $k$. 
We call  
$\tau:\cG_H\times (Z,L)\to (Z,L)$  
{\em a characteristic $\cG_H$-action,} 
or simply {\em characteristic}, if this action of $\cG_H$ 
induces the natural isomorphism in Remark~\ref{subrem:chara GH action of P}
$$\cG_H\overset{\cong}{\to} \cG(Z,L)\subset \Aut(L/Z).$$ 
\end{subdefn}

\begin{subdefn}Let $(Z,L)$ be a TSQAS over $k$. We define
$(Z,L,\phi^*,\cG_H,\tau)$ (denoted often $(Z,\phi^*,\tau)$ or 
$(Z,L,\phi^*,\tau)$) 
to be a level-$\cG_H$ TSQAS if 
if the conditions (i)-(iii) are 
true:
\begin{enumerate}
\item[(i)] 
$(Z,L)$ is a PSQAS $(P_0,\cL_0)$ over $k$ with $L$ ample,
\item[(ii)] $\tau:\cG_H\times (Z,L)\to (Z,L)$ is 
a characteristic $\cG_H$-action,
\item[(iii)] $\phi^*:V_H\otimes k\to H^0(Z,L)$ 
is a $\cG_H$-isomorphism.
\end{enumerate}
\end{subdefn}

\begin{subdefn}\label{subdefn:k-isom of TSQAS}
We define level-$\cG_H$ $k$-TSQASes 
$(Z_1,L_1,\phi_1^*,\tau_1)$ and $(Z_2,L_2,\phi_2^*,\tau_2)$ 
to be {\em isomorphic} if 
there exists a $\cG_H$-isomorphism 
$f:(Z_1,L_1)\to (Z_2,L_2)$
such that $f^*\phi_1^* =c \phi_2^*$ for some nonzero $c\in k$. \end{subdefn}

\subsection{$T$-smooth TSQASes}
\label{subsec:T-TSQASes}
Let $T$ be any $\cO$-scheme. 
In this subsection we define level-$\cG_H$ $T$-smooth TSQASes. 
The level-$\cG_H$ $T$-smooth TSQASes are essentially the same 
as level-$\cG_H$ $T$-smooth PSQASes in Subsec.~\ref{subsec:T-PSQASes}.
The only difference from Subsec.~\ref{subsec:T-PSQASes} is 
that we define them 
without any restriction on $e_{\min}(H)$. 
Since any smooth TSQAS over a field is an abelian variety, 
any level-$\cG_H$ $T$-smooth TSQAS is 
a level-$\cG_H$ abelian scheme over $T$ 
possibly with no zero section over $T$.

\begin{subdefn}
\label{subdefn:Tsmooth TSQAS}
A 5-tuple
$(P,\cL,\phi^*,\cG,\tau)$ (or a triple $(P,\phi^*,\tau)$ for brevity) is 
called {\em a $T$-smooth PSQAS}
of relative dimension $g$ 
with level-$\cG_H$ structure 
if the conditions (i)-(v) are
true:
\begin{enumerate}
\item[(i)] 
$P$ is a projective  
$T$-scheme 
with the projection $\pi:P\to T$ surjective smooth,
\item[(ii)]$\cL$ is a relatively ample line bundle of $P$,
\item[(iii)] $\cG$ is a $T$-flat 
group scheme, $\tau:\cG\times (P,\cL)\to (P,\cL)$ 
is an action of $\cG$ on $(P,\cL)$ as bundle automorphism, 
\item[(iv)]there exists a $\cG$-isomorphism 
$\phi^*:V_H\otimes_{\cO}M\overset{\cong}{\to}\pi_*\cL$ 
 for some $M\in\Pic(T)$ with  
trivial $\cG$-action, 
\item[(v)] for any geometric point $t$ of $T$, the fiber at $t$
$(P_t,\cL_t,\phi^*_t,\cG_t,\tau_t)$ 
is {\em a level-$\cG_H$ smooth TSQAS} 
of dimension $g$ over $k(t)$.
\end{enumerate}

We call $(\phi^*,\tau)$ a level-$\cG_H$ structure on $P$ 
if no confusion is possible. We also call $(P,\phi^*,\tau)$ 
a level-$\cG_H$ 
$T$-smooth TSQAS. 
\end{subdefn}
\begin{subdefn}\label{subdefn:rigid level GH TSQAS}
Let $(P,\phi^*,\tau)$  be a level-$\cG_H$ $T$-smooth TSQAS. 
Then $(\phi^*,\tau)$ 
is called {\em a rigid level-$\cG_H$ structure} 
if $\rho(\phi^*,\tau)=U_H$, 
where $\rho(\phi^*,\tau)$ is defined by
\begin{equation}\label{eq:rho(phi,tau)}
\rho(\phi^*,\tau)(g)(\theta):=(\phi^*)^{-1}\rho_{\tau,L}(g)(\theta)\phi^*
\end{equation}for $\theta\in\cV:=\phi^*V_H\otimes_{\cO} M$. 
If $\phi^*$ defines a morphism $\phi:Z\to\bP(V_H)_T$, then 
$\rho(\phi^*,\tau)=\rho(\phi,\tau)$ with the notation 
in Definition~\ref{subdefn:rigid level GH PSQAS}. 
\end{subdefn}
\begin{subdefn}
\label{subdefn:morp of T-TSQAS}
Let $(P_k,\cL_k,\phi^*_k,\cG_k,\tau_k)$ be 
a level-$\cG_H$ $T$-smooth TSQAS and 
$\pi_k:P_k\to T$ the projection (structure morphism). 
Then $f:P_1\to P_2$ is called 
{\em a morphism  of level-$\cG_H$ $T$-smooth TSQASes} 
if there exists  $M\in\Pic(T)$,  a $T$-morphism 
$f:P_1\to P_2$ and a group scheme $T$-morphism $h:\cG_1\to \cG_2$
such that  
\begin{enumerate}
\item[(i${}^{**}$)] $f^*\phi_2^*=c\phi_1^*$ 
for some unit $c\in H^0(O_T)^{\times}$, 
\item[(ii${}^{**}$)] the following diagram is commutative:
\begin{equation*}
\CD
\cG_1\times (P_1,\cL_1) @>{\tau_1}>> 
(P_1,\cL_1) \\
@VV{h\times f}V @VV{f}V \\
\cG_2\times (P_2,\cL_2\otimes_{O_T}\pi_2^*(M)) @>>{\tau_2}> 
(P_2,\cL_2\otimes_{O_T}\pi_2^*(M)).
\endCD
\end{equation*}\end{enumerate}

The same is true as in 
Remark~\ref{subrem:Remark to subdefn:morp of T-PSQAS} 
by replacing $\rho(\phi_k,\tau_k)$ by $\rho(\phi_k^*,\tau_k)$.
\end{subdefn}

\begin{sublemma}
\label{sublemma:uniqueness of rigid T TSQAS} 
For a level-$\cG_H$ $T$-smooth (resp. $T$-flat) TSQAS  $(Z,\phi^*,\tau)$, 
there exists a unique 
rigid level-$\cG_H$  $T$-smooth (resp. $T$-flat) TSQAS 
$(Z,\psi^*,\tau)$ such that 
\begin{enumerate}
\item $(Z,\psi^*,\tau)$ is isomorphic to $(Z,\phi^*,\tau)$,
\item $\psi^*$ is the $\cG_H$-isomorphism with $\rho(\psi^*,\tau)=U_H$, 
unique up to nonzero constant multiple.
\end{enumerate}
\end{sublemma}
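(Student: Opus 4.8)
The plan is to reduce the statement to the field-case result in Lemma~\ref{sublemma:uniqueness of rigid PSQAS}, applied fibrewise, but done uniformly over an affine cover of $T$ so that the local choices glue. First I would unwind Definition~\ref{subdefn:Tsmooth TSQAS}: a level-$\cG_H$ $T$-smooth (resp. $T$-flat) TSQAS comes equipped with a $\cG_H$-isomorphism $\phi^*:V_H\otimes_{\cO}M\overset{\cong}{\to}\pi_*\cL$ for some $M\in\Pic(T)$. By replacing $T$ by the members $U_i=\Spec R_i$ of an affine open cover on which $M$ is trivial, I may assume $M=O_T$, so that $\phi^*$ is a $\cG_H$-isomorphism $V_H\otimes_{\cO}R_i\to \pi_*\cL_{|U_i}$, and $\rho(\phi^*,\tau)$ is then an action of $\cG_H$ on the free rank-$N$ module $V_H\otimes R_i$ which is of weight one (since $\tau$ is characteristic, the centre acts by scalars). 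By Lemma~\ref{lemma:irred repres UH}, $V_H\otimes R_i$ is then a direct sum of copies of the Schr\"odinger module; here it has the right rank, so it is irreducible and equivalent to $U_H$: there is $A_i\in\GL(V_H\otimes R_i)$ with $U_H=A_i^{-1}\rho(\phi^*,\tau)A_i$. Setting $\psi_i^*:=\phi^*\circ A_i$ gives a $\cG_H$-isomorphism over $U_i$ with $\rho(\psi_i^*,\tau)=U_H$.

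Next I would check the gluing. Over $U_i\cap U_j$ we have two $\cG_H$-isomorphisms $\psi_i^*,\psi_j^*:V_H\otimes O_{U_i\cap U_j}\to \pi_*\cL$, each intertwining $U_H$ with $\rho(\cdot,\tau)$; hence $(\psi_j^*)^{-1}\psi_i^*$ is a $\cG_H$-automorphism of the irreducible $\cG_H$-module $V_H\otimes O_{U_i\cap U_j}$, so by Schur's lemma (Lemma~\ref{lemma:lemma of Schur}) it equals multiplication by a unit $c_{ij}\in H^0(O_{U_i\cap U_j})^{\times}$. Thus $\psi_i^*=c_{ij}\psi_j^*$, and since by Definition~\ref{subdefn:k-isom of TSQAS} and Definition~\ref{subdefn:morp of T-TSQAS}(i${}^{**}$) level structures differing by a nonzero scalar are identified, the local data $(Z_{|U_i},\psi_i^*,\tau)$ glue to a global rigid level-$\cG_H$ TSQAS $(Z,\psi^*,\tau)$; the identity map on $Z$ realises an isomorphism $(Z,\psi^*,\tau)\simeq(Z,\phi^*,\tau)$ locally, hence globally, giving existence and assertion (1). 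For the very-ampleness subtlety one does \emph{not} need $e_{\min}(H)\geq 3$ here because $\psi^*$ is required only to be an abstract $\cG_H$-isomorphism onto $\pi_*\cL$, not to define an embedding — this is precisely the point made in the remarks preceding Subsec.~\ref{subsec:T-TSQASes}.

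For uniqueness and assertion (2): suppose $(Z,\psi^*,\tau)$ and $(Z,\psi'^*,\tau)$ are both rigid and isomorphic to $(Z,\phi^*,\tau)$. An isomorphism between them is a $\cG_H$-automorphism $f$ of $(Z,\cL)$ (up to twist by some $M\in\Pic(T)$ with trivial $\cG_H$-action, which one absorbs) with $f^*\psi'^* = c\,\psi^*$ for a unit $c$. Passing again to an affine cover trivialising $M$, the composite $(\psi'^*)^{-1}\circ (f^*)^{-1}\circ\psi^*$ is a $\cG_H$-endomorphism of $V_H\otimes O_{U_i}$ intertwining $U_H$ with itself, hence a unit scalar by Schur's lemma; so $\psi^*$ and $\psi'^*$ differ by a unit times the same twist, i.e. they agree as level structures. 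This shows the rigid model is unique up to the identifications built into Definition~\ref{subdefn:k-isom of TSQAS}, and that $\psi^*$ with $\rho(\psi^*,\tau)=U_H$ is unique up to a nonzero constant multiple, which is (2).

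The main obstacle I anticipate is purely bookkeeping rather than conceptual: keeping track of the line bundle $M\in\Pic(T)$ and the unit-scalar ambiguities consistently, so that the locally constructed $\psi_i^*$ genuinely patch to a \emph{global} $\psi^*$ and so that ``unique up to nonzero constant'' is stated in a way compatible with Definition~\ref{subdefn:morp of T-TSQAS}(i${}^{**}$). One must also be slightly careful that Lemma~\ref{lemma:irred repres UH} and Lemma~\ref{lemma:lemma of Schur} are being applied over the (not necessarily local, possibly non-reduced in the $T$-flat case, but containing $1/N$ and $\zeta_N$) rings $R_i$; both lemmas are stated in exactly that generality, so no new input is needed. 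The $T$-flat case is formally identical — flatness of $\pi$ is used only to guarantee that $\pi_*\cL$ is locally free of rank $N$ so that the Schur/irreducibility arguments apply on each $\pi_*\cL_{|U_i}$ — so I would simply remark that the proof goes through verbatim, exactly as at the end of Lemma~\ref{sublemma:uniqueness of rigid T PSQAS}.
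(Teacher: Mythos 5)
Your proof is correct and follows essentially the same route as the paper, which proves this lemma by imitating Lemma~\ref{sublemma:uniqueness of rigid T PSQAS}: trivialize $M$ on an affine cover, use Lemma~\ref{lemma:irred repres UH} to produce the local conjugating matrices $A_i$, and glue via Schur's lemma (Lemma~\ref{lemma:lemma of Schur}), with Schur again giving uniqueness up to a unit. Your observation that no very-ampleness hypothesis is needed because $\psi^*$ is only an abstract $\cG_H$-isomorphism onto $\pi_*\cL$ is exactly the point of the TSQAS formulation.
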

\begin{proof}One can prove this in parallel to 
Lemma~\ref{sublemma:uniqueness of rigid PSQAS}. 
\end{proof}

\begin{subdefn}
\label{subdefn:functor AgK}
We define 
a contravariant functor ${\cA}_{g,K}$ 
(the functor of level-$\cG_H$ smooth TSQASes) 
from the category 
of $\cO$-schemes to the category of sets by
\begin{align*}
{\cA}^{\toric}_{g,K}(T)=\ &\text{the set of all 
level-$\cG_H$ $T$-smooth TSQASes $(Q,\phi^*,\tau)$}\\
&\text{of relative dimension $g$ modulo $T$-isomorphism}\\
=\ &\text{the set of all rigid 
level-$\cG_H$ $T$-smooth TSQASes}\\
&\text{of relative dimension $g$ modulo $T$-isomorphism}
\end{align*}
by Lemma~\ref{sublemma:uniqueness of rigid T TSQAS}. 
\end{subdefn}

\subsection{$T$-flat TSQASes}
\label{subsec:T-flat TSQASes}

\begin{subdefn}
\label{subdefn:Tflat PSQAS}
Let $T$ be any {\em reduced} $\cO$-scheme. 
A 5-tuple
$(P,\cL,\phi^*,\cG,\tau)$ (or a triple $(P,\phi^*,\tau)$ for brevity) is 
called {\em a $T$-flat TSQAS} 
of relative dimension $g$ 
with level-$\cG_H$ structure 
if the conditions (ii)-(iv) in Definition~\ref{subdefn:Tsmooth TSQAS} 
and (i${}^*$), (v${}^*$) are
true:
\begin{enumerate}
\item[(i${}^*$)] 
$P$ is a 
$T$-scheme 
with the projection $\pi:P\to T$ surjective flat,
\item[(v${}^*$)] for any geometric point $t$ of $T$, the fiber at $t$
$(P_t,\cL_t,\phi^*_t,\tau_t)$ is {\it a TSQAS} 
of dimension $g$ over $k(t)$ with level-$\cG_H$ structure.
\end{enumerate}

We also call $(P,\phi^*,\tau)$ a level-$\cG_H$ $T$-TSQAS. 
\end{subdefn}
\begin{subdefn}
Let $(P,\phi^*,\tau)$  be a level-$\cG_H$ $T$-flat TSQAS. 
Then $(\phi^*,\tau)$ 
is called {\em a rigid level-$\cG_H$ structure} if  $\rho(\phi^*,\tau)=U_H$.
\end{subdefn}
\begin{subdefn}\label{subdefn:morp of T-flat PSQAS}
Let $(P_i,\cL_i,\phi^*_i,\cG_i,\tau_i)$ be level-$\cG_H$ 
$T$-TSQASes and 
$\pi_i:P_i\to T$ a flat morphism (structure morphism) 
with $T$ {\em reduced}.
Then $f:P_1\to P_2$ is called 
{\em an isomorphism 
of level-$\cG_H$ $T$-TSQASes} if the conditions  
in Definition~\ref{subdefn:morp of T-TSQAS} are true.
\end{subdefn}

\begin{subdefn}
\label{subdefn:sch red}
The category $Space_{\red}$ of reduced algebraic spaces 
is a subcategory of the category $Space$ of algebraic spaces  with 
\begin{align*}
\Obj(Space_{\red})&= \text{reduced algebraic spaces}, \\
\Mor(Space_{\red})&=\text{morphisms in the category of algebraic spaces.}
\end{align*}
\end{subdefn}

\begin{subdefn}
\label{subdefn:functor SQgKtoric}
We define 
a contravariant functor ${\cS}{\cQ}^{\toric}_{g,K}$ from the category 
$Space_{\red}$
of {\em reduced} algebraic $\cO$-spaces to the category of sets by
\begin{align*}
{\cS}{\cQ}^{\toric}_{g,K}(T)=\ &\text{the set of all 
level-$\cG_H$ $T$-flat TSQASes $(P,\phi^*,\tau)$}\\
&\text{of relative dimension $g$ modulo $T$-isomorphism}\\
=\ &\text{the set of all rigid 
level-$\cG_H$ $T$-flat TSQASes}\\
&\text{of relative dimension $g$ modulo $T$-isomorphism}
\end{align*}
by Lemma~\ref{sublemma:uniqueness of rigid T TSQAS}.
\end{subdefn}

\section{The moduli spaces $A_{g,K}^{\toric}$ and $SQ_{g,K}^{\toric}$}
\label{sec:SQtoric}
Let $H$ be a finite Abelian group, $K=K_H:=H\oplus H^{\vee}$ 
and $N=|H|$, and let $\cO=\cO_N$.
In this section we recall from \cite[\S~9]{Nakamura10} 
how to construct the algebraic space 
$SQ_{g,K}^{\toric}$ parameterizing level-$\cG_H$ TSQASes. \par
The construction in 
Subsec.~\ref{subsec:HilbP(X/T)}--\ref{subsec:The scheme Udagger and U3} 
is carried out 
{\em without any change regardless of the value of} $e_{\min}(H)$. 
We do not assume $e_{\min}(H)\geq 3$ unless otherwise mentioned. 
\par
We summarize this section in Summary ~\ref{summary:summary} 
at the end.
\subsection{Preliminaries}
Let $k$ be any algebraically closed field  with $k\ni 1/N$.
In this subsection we list some basic properties of 
a level-$\cG_H$ TSQAS $(P_0,\cL_0)$ over $k$ that we use in what follows.

\begin{sublemma}
\label{sublemma:P,L}
Let $k$ be any algebraically closed field  with $k\ni 1/N$.
Let  $(P_0,\cL_0,\phi_0^*,\cG(P_0,\cL_0),\tau_0)$ 
be a level-$\cG_H$ TSQAS over $k$, and 
therefore a closed fiber of the TSQAS $(P,\cL)$ over a CDVR $R$ 
with the generic fiber $P_{\eta}$ an abelian variety. 
Then 
\begin{enumerate}
\item $P_0$ is nonsingular if and only if it is an abelian variety,
\item $P_0$ is reduced,
\item $\cL_0$ is ample, and $n\cL_0$ is very ample for $n\geq 2g+1$, 
\item $H^q(P_0,n\cL_0)=0$ for any $q>0, n>0$,
\item $\chi(P_0, n\cL_0)=n^g|H|$ for any $n>0$, 
\item the action $\cG(P_0,\cL_0)$ of $\cG_H$ on $(P_0,\cL_0)$ 
is characteristic, that is, 
it is induced from $\cG(P_{\eta},\cL_{\eta})$, where 
any of the latter induces a translation of an abelian variety $P_{\eta}$.
\end{enumerate}
\end{sublemma}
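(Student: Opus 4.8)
The plan is to read off the six assertions from Theorem~\ref{thm:refined stable reduction} together with the explicit construction of TSQASes in \cite{AN99}, reducing everything to the generic abelian fibre $(P_{\eta},\cL_{\eta})\simeq(G_{\eta},\cL_{\eta})$. Assertion (2) is contained in Theorem~\ref{thm:refined stable reduction}~(3). For (1): if $P_0$ is an abelian variety it is nonsingular, and conversely, if $P_0$ is nonsingular then $\pi:P\to\Spec R$ is flat with smooth fibres (the generic one being $G_{\eta}$), hence smooth; but a positive toric rank for the N\'eron closed fibre $G_0$ forces singular points on $P_0$, by the stratified toric description of a TSQAS (the TSQAS analogue of Theorem~\ref{thm:stratification of Z}), so the toric rank is $0$, $G$ is an abelian scheme, and $(P_0,\cL_0)=(G_0,\cL_0)$ is an abelian variety. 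For (6): by Theorem~\ref{thm:refined stable reduction}~(6), $\cG(K)\simeq\cG_H$ acts on $(P,\cL_P)$ extending the action of $\cG(K_{\eta})=\cG(G_{\eta},\cL_{\eta})$ on the generic fibre; restricting to $P_0$ gives a $\cG_H$-action on $(P_0,\cL_0)$ induced from $\cG(P_{\eta},\cL_{\eta})$, every element of which is a bundle automorphism inducing a translation of $P_{\eta}$, so the action is characteristic in the sense of Remark~\ref{subrem:chara GH action of P}, and its uniqueness up to an automorphism of $\cG_H$ is Lemma~\ref{lemma:chara GH action}.

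Assertion (5) is immediate: $\pi:P\to\Spec R$ is flat and $\Spec R$ is connected, so $\chi(P_0,n\cL_0)=\chi(P_{\eta},n\cL_{\eta})$, and on the abelian variety $P_{\eta}$ of dimension $g$ one has $\chi(n\cL_{\eta})=n^g\chi(\cL_{\eta})$ with $\chi(\cL_{\eta})=\sqrt{|\ker\lambda(\cL_{\eta})|}=|H|$ since $\ker\lambda(\cL_{\eta})\simeq K=H\oplus H^{\vee}$. This yields $\chi(P_0,n\cL_0)=n^g|H|$.

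It remains to prove (3) and (4). Ampleness of $\cL_0$ is clear, since $\cL_P$ is ample on $P$ (Theorem~\ref{thm:refined stable reduction}) and ampleness passes to closed fibres. For the rest I would use the structure of $P_0$ recalled in \cite{AN99}: $P_0$ is covered by the closures $\overline{O(\sigma)}$, each a projective toric variety on which $\cL_0$ restricts to the line bundle corresponding to a Delaunay cell; computing the cohomology of $n\cL_0$ via the \v{C}ech complex of this covering together with toric vanishing yields $H^q(P_0,n\cL_0)=0$ for all $q\ge1$ and all $n\ge1$, which is (4), while the corresponding polytopal estimates applied to the dilates $n\cL_0$ with $n\ge2g+1$ give base-point-freeness and separation of points and tangent vectors, hence the very-ampleness statement in (3). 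Alternatively, one reads off $h^0(P_0,n\cL_0)=n^g|H|$ from the explicit theta bases of \cite[Theorem~4.10]{Nakamura99}, notes $H^g(P_0,n\cL_0)\cong H^0(P_0,(n\cL_0)^{-1})^{\vee}=0$ (using $\omega_{P_0}\simeq O_{P_0}$ from Theorem~\ref{thm:refined stable reduction}~(3) and that $(n\cL_0)^{-1}$ is anti-ample on the reduced connected scheme $P_0$), and deduces the intermediate vanishing from a Kodaira-type statement for $P_0$ proved in \cite{AN99}.

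The main obstacle is precisely this last circle of facts: upgrading the Serre-type vanishing of Lemma~\ref{lemma:cohomology}, which holds only for $n\gg0$, to vanishing for every $n>0$, and establishing the uniform very-ampleness bound $n\ge2g+1$ on the singular TSQASes. Both genuinely rest on the combinatorial (Delaunay/toric) description of TSQASes from \cite{AN99} rather than on any soft semicontinuity or specialization argument, and this is the step I would spend the most effort on — or, in keeping with the philosophy of this article, simply import from \cite{AN99}.
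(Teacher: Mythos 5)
Your proposal is correct and follows essentially the same route as the paper: the paper's own proof simply derives (1) and (6) from Theorem~\ref{thm:refined stable reduction} and the construction of the characteristic action, and cites \cite{AN99} (or \cite[Theorem~2.11]{Nakamura10}) for (2)--(5), which is exactly where you end up after correctly identifying that the uniform vanishing for all $n>0$ and the very-ampleness bound $n\geq 2g+1$ rest on the Delaunay/toric description rather than on soft specialization arguments. Your added details for (1) via the toric rank and for (5) via flatness and Riemann--Roch on the generic fibre are sound and consistent with the paper's intent.
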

\begin{proof}(1) follows from 
Theorem~\ref{thm:refined stable reduction}.  
For (2)--(5), see \cite{AN99} or 
\cite[Theorem~2.11, p.~79]{Nakamura10}. 
(6) is proved (and defined) 
in a manner similar to Remark~\ref{subrem:chara GH action} and
Definition~\ref{subdefn:chara GH action}. 
\end{proof}

\begin{sublemma}
\label{sublemma:weight 1 mod N module}
Let $n$ be any positive integer, and $d=Nn+1$. 
 We define 
$U_{d,H}$ on the $\cO$-module $V_H$ in 
Definition~\ref{defn:Heisenberg group} by
\begin{equation}\label{eq:action UdH}
U_{d,H}(a,z,\alpha)v(\beta)=a^d\beta(z)^dv(\alpha+\beta).
\end{equation} 
We denote $V_H$ by $V_{d,H}$ if $\cG_H$ acts on $V_H$ via $U_{d,H}$. Then 
\begin{enumerate}
\item $V_{d,H}$ is an irreducible $\cG_H$-module of weight $d$, 
\item let $W$ be any $\cO$-free $\cG_H$-module of finite rank. 
If $\cG_H$ acts on $W$ with weight $d$: that is, 
the center $\bG_m$ of $\cG_H$ acts on $W$ by $a^{d}\id_W$, then 
$W$ is equivalent to $W_0\otimes_{\cO}V_{d,H}$ as $\cG_H$-module, where 
$W_0$ is an $\cO$-module with trivial $\cG_H$-action.
\end{enumerate}
\end{sublemma}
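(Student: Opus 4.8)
The plan is to prove this as the Stone--von Neumann theorem for the finite Heisenberg group, taking advantage of the hypothesis $d=Nn+1\equiv 1\pmod N$. The one point to keep in mind throughout is that, although the center $\bG_m$ genuinely acts by the $d$-th power $a\mapsto a^d$ (so the weight really is $d$), one has $\zeta^d=\zeta$ for every $N$-th root of unity $\zeta$; in particular $\beta(z)^d=\beta(z)$ and $\gamma(z)^d=\gamma(z)$ for all $z\in H$ and $\beta,\gamma\in H^\vee$, because $H$ and $H^\vee$ are killed by $N$. Thus on the ``finite directions'' $U_{d,H}$ behaves exactly like the Schr\"odinger representation $U_H$.

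For (1): that $V_{d,H}$ has weight $d$ is immediate from $U_{d,H}(a,0,0)v(\gamma)=a^dv(\gamma)$. For irreducibility I would restrict to the abelian subgroup $A=\{(1,z,0);z\in H\}\cong H$. Since $1/N,\zeta_N\in\cO$ and the exponent of $H$ divides $N$, the idempotents $e_\gamma=\tfrac1{|H|}\sum_{z\in H}\gamma(z)^{-1}(1,z,0)$ ($\gamma\in H^\vee$) split $\cO[A]$ into a product of copies of $\cO$, and $V_{d,H}=\bigoplus_{\gamma}\cO\,v(\gamma)$ is precisely the resulting decomposition into the lines $e_\gamma V_{d,H}=\cO v(\gamma)$ (here one uses $\gamma(z)^d=\gamma(z)$). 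Any $\cG_H$-submodule is $A$-stable, hence a sub-sum of the $\cO v(\gamma)$; but $\{(1,0,\alpha);\alpha\in H^\vee\}$ permutes the $v(\gamma)$ transitively via $U_{d,H}(1,0,\alpha)v(\gamma)=v(\alpha+\gamma)$, so the only nonzero $\cG_H$-submodule is $V_{d,H}$ itself. Uniqueness up to equivalence will follow from (2) applied to an arbitrary irreducible $W$ of weight $d$; alternatively one may quote Lemma~\ref{lemma:irred repres UH} and \cite[Lemma~4.4]{Nakamura10}.

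For (2): let $W$ be $\cO$-free of finite rank with a weight-$d$ action $\rho$. First I would record the commutation relation $\rho(1,z,0)\rho(1,0,\alpha)=\alpha(z)\,\rho(1,0,\alpha)\rho(1,z,0)$, which comes from the identity $(1,z,0)(1,0,\alpha)=(\alpha(z),0,0)(1,0,\alpha)(1,z,0)$ in $\cG_H$ together with $\alpha(z)^d=\alpha(z)$. Using the idempotents $e_\gamma$ again, write $W=\bigoplus_{\gamma\in H^\vee}W_\gamma$ with $W_\gamma=\{w;\rho(1,z,0)w=\gamma(z)w\ \forall z\in H\}$. The commutation relation shows $\rho(1,0,\alpha)$ carries $W_\gamma$ isomorphically onto $W_{\alpha+\gamma}$ (invertibly, with inverse $\rho(1,0,-\alpha)$), so each $W_\gamma\cong W_0:=W_{\mathbf 1}$ and $\rank W=N\cdot\rank W_0$. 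Now define $\Phi\colon W_0\otimes_{\cO}V_{d,H}\to W$ by $\Phi(w_0\otimes v(\gamma)):=\rho(1,0,\gamma)w_0$; since $V_{d,H}$ is $\cO$-free on $\{v(\gamma)\}$ and $W=\bigoplus_\gamma W_\gamma$, $\Phi$ is an $\cO$-isomorphism, where $\cG_H$ acts on $W_0\otimes_{\cO}V_{d,H}$ solely through the factor $V_{d,H}$ --- so $W_0$ is the asserted trivial $\cG_H$-module. Finally one checks $\Phi$ is $\cG_H$-equivariant: for $g=(a,z,\alpha)$ one has, on one hand, $\Phi(g\cdot(w_0\otimes v(\gamma)))=a^d\gamma(z)\,\rho(1,0,\alpha+\gamma)w_0$, and, on the other, using $(a,z,\alpha)(1,0,\gamma)=(a\,\alpha(z)^{-1},0,0)(1,z,0)(1,0,\alpha+\gamma)$ and reading off the scalars by which $(1,z,0)$ and the center act on $W_{\alpha+\gamma}$, one gets $\rho(g)\rho(1,0,\gamma)w_0=a^d\,\alpha(z)^{-1}\cdot(\alpha(z)\gamma(z))\cdot\rho(1,0,\alpha+\gamma)w_0=a^d\gamma(z)\,\rho(1,0,\alpha+\gamma)w_0$; the two agree.

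\textbf{The main point requiring care} is the bookkeeping of the weight-$d$ twist: one must consistently use the genuine $d$-th power on $\bG_m$ (so that $W_0\otimes V_{d,H}$ has weight $d$, not $1$) while simultaneously using $\zeta^d=\zeta$ for every $N$-th root of unity $\zeta$, which is exactly what lets the $A$-eigenspace decomposition and the final equivariance computation go through verbatim as in the $d=1$ (Schr\"odinger) case. The only external inputs are $1/N,\zeta_N\in\cO=\cO_N$ --- needed both to split $\cO[H]$ and to diagonalize the commuting operators $\rho(1,z,0)$.
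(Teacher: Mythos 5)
Your proof is correct and follows essentially the same route as the paper's: decompose $W$ into eigenspaces for the subgroup $\{(1,z,0)\}\cong H$, use $\rho(1,0,\alpha)$ to identify the eigenspaces, and define the equivariant isomorphism $W_0\otimes_{\cO}V_{d,H}\to W$ by $w_0\otimes v(\gamma)\mapsto\rho(1,0,\gamma)w_0$, with the relation $\zeta^d=\zeta$ for $N$-th roots of unity doing the bookkeeping. The only differences are that you make the idempotent decomposition of $\cO[H]$ explicit where the paper cites \cite[p.~89]{Nakamura10}, and you supply an explicit irreducibility argument for part (1), which the paper leaves to the references.
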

\begin{proof}
We denote the action of $g\in\cG_H$ on $W$ by $U(g)$, 
and we write $U(g)=U(a,z,\alpha)$ for $g=(a,z,\alpha)\in\cG_H$. 
Let $W(\chi)=\{w\in W; U(h)w =\chi(h) w\ \text{for any $h\in H$}\}$. 
By \cite[p.~89]{Nakamura10}, we have
\begin{equation}\label{eq:decomp of W}
W=\bigoplus_{\chi\in H^{\vee}} W(\chi),\quad  
W(\chi)=U(1,0,\chi)W(0).
\end{equation} Therefore $W(0)\neq 0$ if $W\neq 0$. \par
For any $w\in W(0)$, we define
 $v(\chi,w)=U(1,0,\chi)w$ for $\chi\in H^{\vee}$.  \par
By imitating \cite[p.~89]{Nakamura10}, we infer 
\begin{align*}
U(1,z,0)\cdot v(\chi,w)&=U(\chi(z)(1,0,\chi))w=\chi(z)^dv(\chi,w),\\
U(1,0,\alpha)\cdot v(\chi,w)&=U(1,0,\chi+\alpha)\cdot w=v(\chi+\alpha,w),
\end{align*}whence
\begin{equation}\label{eq:action UdH on W}
\begin{aligned}
U(a,z,\alpha)\cdot v(\chi,w)&
=U(a(1,0,\alpha)(1,z,0)(1,0,\chi))w\\
&=U(a\chi(z)(1,0,\chi+\alpha)(1,z,0))w\\
&=a^d\chi(z)^dv(\chi+\alpha,w).
\end{aligned}
\end{equation}

We define a homomorphism $F:W(0)\otimes V_{d,H}\to W$ by
\begin{equation}\label{eq:homom F}
F(w\otimes v(\chi))=v(\chi,w)
\end{equation}where $w\in W(0)$ and $v(\chi)\in V_{d,H}$. 
Here 
$W(0)$ in the left hand side of (\ref{eq:homom F})  
is regarded as a trivial $\cG_H$-module, while 
$W(0)$ in the right hand side of (\ref{eq:homom F}) 
is an $\cO$-submodule of $W$.
Then by (\ref{eq:action UdH}) and (\ref{eq:action UdH on W}),
$F$ is a $\cG_H$-homomorphism:
$$F(w\otimes U_{d,H}(g)(v(\chi)))=U(g)v(\chi,w).
$$ 

In view of (\ref{eq:decomp of W}), $W$ is spanned by $v(\chi,w)$  
for $w\in W(0)$ and $\chi\in H^{\vee}$. Hence $F$ is surjective.
By (\ref{eq:decomp of W}), $W$ and $W(0)\otimes V_{d,H}$ are 
$\cO$-modules of the same rank. Hence $F$ is an isomorphism. 
\end{proof}


\subsection{$\Hilb^{P}(X/T)$}
\label{subsec:HilbP(X/T)}
Let $(X,L)$ be a polarized 
$\cO$-scheme with $L$ very ample and  $P(n)$ an 
arbitrary polynomial.
Let $\Hilb^P(X)$ be the Hilbert scheme 
parameterizing all closed subschemes $Z$ of $X$ 
with $\chi(Z,nL_Z)=P(n)$. 
As is well known $\Hilb^P(X)$ is a projective 
$\cO$-scheme. \par
Let $T$ be a projective scheme, $(X,L)$
a flat projective $T$-scheme 
with $L$ an ample line bundle of $X$, and 
$\pi:X\to T$ the projection. 
Then for an arbitrary polynomial $P(n)$, 
let $\Hilb^{P}(X/T)$ be the scheme 
parameterizing all closed subschemes $Z$ of $X$ 
with $\chi(Z,nL_Z)=P(n)$ such that $Z$ 
is contained in fibers of $\pi$. 	
Then $\Hilb^{P}(X/T)$ 
is  a closed $\cO$-subscheme of $\Hilb^P(X)\times_{\cO} T$. 
See \cite[Chap.~9]{ACG11}.

\subsection{The scheme $H_1\times H_2$}
\label{subsec:The scheme H1xH2}
Choose and fix a coprime pair of natural 
integers $d_1$ and $d_2$ such that
$d_1>d_2\geq 2g+1$ and $d_{\nu}\equiv 1\mod N$.	
This pair does exist because it is enough to
choose prime numbers $d_1$ and $d_2$ large enough 
such that $d_{\nu}\equiv 1\mod N$ and $d_1>d_2$. 
We choose integers $q_{\nu}$ such that $q_1d_1+q_2d_2=1$. 
\par
We consider a $\cG_H$-module
 $$W_{\nu}(K):=W_{\nu}\otimes V_{d_{\nu},H}\simeq 
V_{d_{\nu},H}^{\oplus N_{\nu}}$$ 
where $N_{\nu}=d_{\nu}^g$ and 
$W_{\nu}$ is a free $\cO$-module of rank $N_{\nu}$ 
with trivial $\cG_H$-action.
Let $\sigma_{\nu}$ be 
the natural action of $\cG_H$ on $W_{\nu}(K)$.
In what follows we always consider $\sigma_{\nu}$.
\par
Let $H_{\nu}$ $(\nu=1,2)$ be the Hilbert scheme parameterizing all
closed polarized subschemes $(Z_{\nu},L_{\nu})$ 
of $\bP(W_{\nu}(K))$ such that
\begin{enumerate}
\item[(a)] $Z_{\nu}$ is $\cG_H$-stable,
\item[(b)]
$\chi(Z_{\nu},nL_{\nu})=n^gd_{\nu}^g|H|$, 
where $L_{\nu}=\bH(W_{\nu}(K))\otimes O_{Z_{\nu}}$ 
is the hyperplane bundle of $Z_{\nu}$ .
\end{enumerate}

Since (a) and (b) are closed conditions, 
$H_{\nu}$ is a closed (hence projective) subscheme 
of $\hilb^{\chi_{\nu}}(\bP(W_{\nu}(K))$ 
where $\chi_{\nu}(n)=n^gd_{\nu}^g|H|$. 

Let $\cO=\cO_N$. Let $X_{\nu}$ be the universal subscheme of $\bP(W_{\nu}(K))$ 
over $H_{\nu}$. Let $X=X_1\times_{\cO} X_2$ and 
$H_3=H_1\times_{\cO} H_2$. 
Let $p_{\nu}:X_1\times_{\cO} X_2\to X_{\nu}$ 
be the $\nu$-th projection, $\pi:X\to H_3$ the natural projection. 
Hence $X$ is a subscheme 
of $\bP(W_1(K))\times_{\cO}\bP(W_2(K))\times_{\cO}H_3$, flat	
over $H_3=H_1\times_{\cO} H_2$. \par
We note that $\bH(W_{\nu}(K))$
has a $\cG_H$-linearization $\{\psi^{(\nu)}_g\}$, 
which we fix once for all.  Since $\cG_H$ transforms 
any closed $\cG_H$-stable subscheme $Z$ of $\bP(W_{\nu}(K))$
onto itself, it follows that 
$\cG_H$ acts on $H_{\nu}$ trivially.  Hence,  
$\cG_H$ transforms any fiber $X_u$ of $\pi:X\to H_3$ onto $X_u$ itself.
\subsection{The scheme $U_1$}
\label{subsec:The scheme U1}
The aim of this and the subsequent subsections 
is to construct a new compactification 
of the	moduli space of abelian varieties as the quotient 
of a certain $\cO$-subscheme of $\Hilb^{P}(X/H_3)$ by 
$\PGL(W_1)\times\PGL(W_2)$.\par
Let $B$ be the pullback to $X$ 
of a very ample line bundle on $H_3$.
Let $M_{\nu}=p_{\nu}^*(\bH(W_i(K)))\otimes O_{X}$ and 
\begin{equation}
\label{eq:M}
M=d_2M_1+d_1M_2+B.
\end{equation}
Then $M$ is a very ample line bundle on $X$. 
Since $M_{\nu}$ is $\cG_H$-linearized 
and $B$ is trivially $\cG_H$-linearized, 
$M$ is $\cG_H$-linearized. 
\par
Let $P(n)=(2nd_1d_2)^g|H|$. 
Let $\Hilb^{P}(X/H_3)$ be the Hilbert scheme 
parameterizing all closed subschemes $Z$ of $X$ 
contained in the fibers of $\pi:X\to H_3$
with $\chi(Z,nM_Z)=P(n)$, and $Z^{P}$ be 
the universal subscheme of $X$ over it. We denote 
$\Hilb^{P}(X/H_3)$ by $H^{P}$ for brevity.
Now using the double polarization trick of Viehweg, 
we define $U_1$ to be the subset of $H^{P}$
consisting of all subschemes $(Z,M_Z)$ of $(X,M)$ 
with the properties	
\begin{enumerate}
\item[(i)] $Z$ is $\cG_H$-stable,
\item[(ii)] $d_2L_1=d_1L_2$, where $L_i=M_i\otimes O_Z$. 
\end{enumerate}

By Lemma~\ref{lemma:exists (A,L) with G(A,L)=G(K)}, 
 $U_1$ is a nonempty closed $\cO$-subscheme of 
$H^P$. See \cite[Subsec.~9.3]{Nakamura10}.

\subsection{The scheme $U_2$}
\label{subsec:The scheme U2}

Let $U_2$ be the open subscheme of $U_1$ 
consisting of all subschemes $(Z,M_Z)$ of $(X,M)$ such that 
besides (i)-(ii) the following are satisfied:
\begin{enumerate}
\item[(iii)] ${p_{\nu}}_{|Z}$ is an isomorphism  $(\nu=1,2)$,
\item[(iv)] $Z$ is reduced with $h^0(Z,O_Z)=1$, 
\item[(v)] $d_{\nu}L$ is very ample on $Z$, 
where $L=(q_1M_1+q_2M_2)\otimes O_Z$,
\item[(vi)] $\chi(Z,nL)=n^g|H|$ for $n>0$, 
\item[(vii)] $H^q(Z,nL)=0$ 
for $q>0$ and $n>0$,
\item[(viii)] 	
$H^0(p_{\nu}^*) : W_{\nu}(K)\otimes k(u)\to\Gamma(Z,d_{\nu}L)$ 
is surjective (hence an isomorphism by (vi) and (vii)) for $\nu =1,2$.
\end{enumerate}

Let $(Z,M_Z)\in H^{P}$.  
By (ii) and (v), we have $L=q_1L_1+q_2L_2$ for 
$L_i=M_i\otimes O_Z$. Since $d_1q_1+d_2q_2=1$, we have  
$L_{\nu}=d_{\nu}L$ by (ii). (iii) 
is an open condition by \cite[Chap.~9, Lemma~7.5]{ACG11}. 
It is clear that (iv)-(viii) are 
open conditions. 
It follows that $U_2$ is a nonempty open $\cO$-subscheme of $U_1$. 
 See \cite[Subsec.~9.5]{Nakamura10}.

\subsection{The schemes $U^{\dagger}_{g,K}$ and $U_3$}
\label{subsec:The scheme Udagger and U3} 
See \cite[Subsec.~9.7]{Nakamura10}.
First we note that
if $(Z,L)\in U_2$, then $L=q_1L_1+q_2L_2$. 
On each $L_{\nu}$ we have a $\cG_H$-action on 
$(Z,L_{\nu})$ induced 
from the $\cG_H$-action ($=\cG_H$-linearization) on $Z^P$ 
induced from those $\cG_H$-actions on $\bP(W_{\nu}(K))$. 
By Remark~\ref{subrem:linearization of tensor}, we have a 
$\cG_H$-linearization on $(Z,L)$.
In what follows, we mean this $\cG_H$-action on $Z$ or $(Z,L)$ 
by the (characteristic) $\cG_H$-action on 
$(Z,L)$ when $(Z,L)\in U_2$. \par
The locus $U_{g,K}$ 
of abelian varieties (with the zero not necessarily chosen) 
is an open subscheme of $U_2$. 
In fact,  $U_{g,K}$ is 
the largest open $\cO$-subscheme among all the open 
$\cO$-subschemes $H'$ of $U_2$ such that 
\begin{enumerate}
\item[($\alpha$)]
the projection $\pi_{H'}:Z^P\times_{H^P} H' \to H'$ 
is smooth over $H'$, 
\item[($\beta$)] 
at least one geometric fiber of $\pi_{H'}$ is an abelian variety 
for each irreducible component of $H'$. 
\end{enumerate}

In general, the subset $H''$ of $U_2$ 
over which the projection 
$\pi_{H''}:Z^P\times_{H^P} H'' \to H''$ is smooth 
is an open $\cO$-subscheme of $U_2$. 
By \cite[Theorem~6.14]{MFK94}, any geometric fiber of 
$\pi_{U_{g,K}}$ is a polarized abelian variety. 
See also \cite[p.~705]{Nakamura99} and \cite[p.~116]{Nakamura10}. 

Next we define $U^{\dagger}_{g,K}$ to be 
the subset of $U_{g,K}$ 
parameterizing all 
subschemes $(A,L)\in U_{g,K}$ such that 
\begin{enumerate}
\item[(ix)]  the $K$-action on $A$ induced 
from the $\cG_H$-action on $(A,L)$ 
is effective and contained in $\Aut^0(A)$.
\end{enumerate} 

We see that $U^{\dagger}_{g,K}$ is 
a nonempty open $\cO$-subscheme
of $U_{g,K}$.

Finally we define $U_3$ to be the closure 
of $U^{\dagger}_{g,K}$ in $U_2$. 
It is the smallest closed $\cO$-subscheme of $U_2$ 
containing $U^{\dagger}_{g,K}$. \par
We denote the pull back to $U^{\dagger}_{g,K}$ (resp. $U_3$) of the  
universal subscheme of $X$ over $H^P=\Hilb^P(X/H_3)$ by 
\begin{equation}
\label{eq:universal subschemes}
(A_{\univ},L_{{\univ}})\quad \text{resp.}\quad (Z_{{\univ}},L_{{\univ}}). 
\end{equation}

\theoremtextend

\begin{thm}\label{thm:geom fibers over U3}
Let $R$ be a CDVR, $S:=\Spec R$, and 
$\eta$ the generic point of $S$. Let 
$h$ be a morphism from $S$ into $U_3$.
Let $(Z,\cL)$ be the pullback by $h$ of the universal subscheme 
$(Z_{\univ},L_{\univ})$ (\ref{eq:universal subschemes}) such that 
$(Z_{\eta},\cL_{\eta})$ 
is a polarized abelian variety. 
Then after a finite base change
 if necessary, $(Z,\cL)$ is isomorphic to 
$(P,\cL_P)$ in Theorem~\ref{thm:refined stable reduction}.
In particular, $(Z_0,\cL_0)$ is a TSQAS over $k(0)$. 
\end{thm}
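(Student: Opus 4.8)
The plan is as follows. First I would use that $h$ factors through $U_3\subset U_2$ to read off the properties of the pulled-back family $(Z,\cL)$ over $S=\Spec R$. By the defining conditions (ii)--(viii) of $U_2$ in Subsec.~\ref{subsec:The scheme U2}: $Z$ is $R$-flat and projective, $Z$ is reduced with $h^0(Z,\cO_Z)=1$, $d_\nu\cL=M_\nu\otimes\cO_Z$ is very ample, $\chi(Z,n\cL)=n^g|H|$ and $H^q(Z,n\cL)=0$ for $q,n>0$, and $W_\nu(K)\otimes\cO_S\to\pi_*(d_\nu\cL)$ is a $\cG_H$-equivariant isomorphism; moreover the ambient action of $\cG_H$ on $\bP(W_\nu(K))$ is $\sigma_\nu$, which under $W_\nu(K)\cong W_\nu\otimes V_{d_\nu,H}$ (Lemma~\ref{sublemma:weight 1 mod N module}) is $\id_{W_\nu}\otimes U_{d_\nu,H}$, so the closed immersion $Z\hookrightarrow\bP(W_\nu(K))_S$ is already in rigid form. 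Since $Z$ is $R$-flat and reduced, $Z$ is the scheme-theoretic closure of its generic fibre $Z_\eta$, and by Claim~\ref{claim:normality} ($Z_0$ reduced, $Z_\eta$ nonsingular) $Z$ is normal. From $\chi(Z_\eta,\cL_\eta)=|H|=N$ together with ampleness one gets $|\ker\lambda(\cL_\eta)|=N^2$; since $Z_\eta$ is abelian and $h$ factors through $U_3=\overline{U^\dagger_{g,K}}$, the $\cG_H$-action on $(Z_\eta,\cL_\eta)$ is characteristic (translations by $\ker\lambda(\cL_\eta)$), whence $\ker\lambda(\cL_\eta)\cong K$ as symplectic groups.

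Next I would invoke Theorem~\ref{thm:refined stable reduction} applied to $(G_\eta,\cL_\eta):=(Z_\eta,\cL_\eta)$: after a finite base change of $R$ there is a flat projective TSQAS $(P,\cL_P)$ over $R$ with $(P_\eta,\cL_{P,\eta})\cong(G_\eta,\cL_\eta)$, with $P_0$ reduced, and with a $\cG(K)\cong\cG_H$-action extending that on the generic fibre. By Lemma~\ref{lemma:cohomology} (or directly, as $d_\nu\geq 2g+1$), $\Gamma(P,d_\nu\cL_P)$ is $R$-free of rank $d_\nu^gN$, has weight $d_\nu$ under $\cG_H$, and commutes with base change to $\eta$; hence by Lemma~\ref{sublemma:weight 1 mod N module} it is isomorphic over $R$ to $W_\nu(K)\otimes\cO_S$ as a $\cG_H$-module. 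Choosing such an isomorphism yields a $\cG_H$-equivariant closed immersion $P\hookrightarrow\bP(W_\nu(K))_S$ with $\rho=U_{d_\nu,H}$, and, $P$ being $R$-flat and reduced, $P$ is the scheme-theoretic closure of $P_\eta$ inside $\bP(W_\nu(K))_S$.

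Then I would compare the two families on the generic fibre. Both $Z_\eta$ and $P_\eta$ are, as subschemes of $\bP(W_\nu(K))_{k(\eta)}$, rigid level-$\cG_H$ embeddings of the same polarized abelian variety $(G_\eta,d_\nu\cL_\eta)$ carrying the same characteristic $\cG_H$-action (translations by $K=\ker\lambda(\cL_\eta)$), with $\rho=U_{d_\nu,H}$ in both cases. By the weight-$d_\nu$ analogue of the uniqueness statement in Lemma~\ref{sublemma:uniqueness of rigid PSQAS}(2) (Schur's lemma, Lemma~\ref{lemma:lemma of Schur}, forces the comparison isomorphism of the spaces of sections to be a scalar), the $U_{d_\nu,H}$-invariant subscheme of $\bP(W_\nu(K))_{k(\eta)}$ isomorphic to $(G_\eta,d_\nu\cL_\eta)$ is unique, so $Z_\eta=P_\eta$ as subschemes of $\bP(W_\nu(K))_{k(\eta)}$ for $\nu=1,2$, hence as subschemes of $X_{k(\eta)}=X_{1,k(\eta)}\times_{k(\eta)}X_{2,k(\eta)}$. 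Taking scheme-theoretic closures in $X_S$ and recalling $Z=\overline{Z_\eta}$, $P=\overline{P_\eta}$, I conclude $Z=P$ as subschemes of $X_S$; the polarizations agree because both $\cL$ and $\cL_P$ are recovered from $d_1\cdot(-)$ and $d_2\cdot(-)$ via $q_1d_1+q_2d_2=1$. Therefore $(Z,\cL)\cong(P,\cL_P)$, and in particular $(Z_0,\cL_0)\cong(P_0,\cL_{P,0})$ is a TSQAS over $k(0)$. Alternatively, once one knows $Z$ is normal with $Z_0$ reduced and $Z_\eta\cong G_\eta$, the identification $(Z,\cL)\cong(P,\cL_P)$ also follows from the uniqueness Theorem~\ref{thm:separatedness}.

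The main obstacle I expect is precisely this comparison step: the two a priori unrelated $\cG_H$-modules $\Gamma(Z,d_\nu\cL)$ and $\Gamma(P,d_\nu\cL_P)$ over $R$ must be matched compatibly with the canonical identification of their generic fibres. This is why one works with the weight-$d_\nu$ Heisenberg representation $V_{d_\nu,H}$ and the decomposition of Lemma~\ref{sublemma:weight 1 mod N module} rather than with $V_H$ itself, and with the rigid normalization $\rho=U_{d_\nu,H}$ throughout, so that the remaining ambiguity on $\eta$ collapses to a scalar by Schur. The other points --- that the scheme-theoretic closure of the generic fibre is the whole $R$-flat reduced total space, and that the two polarizations match --- are routine.
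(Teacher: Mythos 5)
Your overall architecture matches the paper's (pull back the family, observe the generic fibre is an abelian variety, produce the TSQAS $(P,\cL_P)$ by Theorem~\ref{thm:refined stable reduction}, and compare the two $R$-flat families through level-$\cG_H$ structures extending the one on the common generic fibre), but the key comparison step has a genuine gap. You claim that Schur's lemma forces the comparison isomorphism of $\Gamma(Z_\eta,d_\nu\cL_\eta)$ with $\Gamma(P_\eta,d_\nu\cL_{P,\eta})$ to be a scalar, so that the $\cG_H$-invariant copy of $(G_\eta,d_\nu\cL_\eta)$ in $\bP(W_\nu(K))_{k(\eta)}$ is unique. This is false: $W_\nu(K)=W_\nu\otimes V_{d_\nu,H}\simeq V_{d_\nu,H}^{\oplus d_\nu^g}$ is \emph{not} an irreducible $\cG_H$-module, and Schur's lemma (Lemma~\ref{lemma:lemma of Schur} applied to the isotypic decomposition of Lemma~\ref{sublemma:weight 1 mod N module}) only pins the intertwiner down to an element $A_\nu\otimes\id_{V_{d_\nu,H}}$ with $A_\nu\in\GL(W_\nu\otimes k(\eta))$. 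The uniqueness statement of Lemma~\ref{sublemma:uniqueness of rigid PSQAS}~(2) has no multiplicity-$d_\nu^g$ analogue. Consequently you only get that $Z_\eta$ and $P_\eta$ lie in the same $\PGL(W_1)\times\PGL(W_2)$-orbit over $k(\eta)$; since the group element moving one onto the other need not be defined over $R$, taking scheme-theoretic closures does not give $Z=P$, and the special fibres could a priori differ. Equivalently, the two $\cG_H$-stable $R$-lattices $\Gamma(Z,d_\nu\cL)=W_\nu(K)\otimes R$ and $\Gamma(P,d_\nu\cL_P)$ inside $\Gamma(G_\eta,d_\nu\cL_\eta)$ need not be conjugate under $\GL(W_\nu)(R)$ by any formal argument --- this is exactly the separatedness difficulty the whole construction is designed to overcome, and it is the actual content of \cite[Theorem~10.4]{Nakamura10}, which the paper's proof explicitly defers to (together with the \'etale-covering reduction of Subsec.~\ref{subsec:case emin leq 2} for $e_{\min}(H)\leq 2$).

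Your fallback via Theorem~\ref{thm:separatedness} is circular: that theorem asserts uniqueness of the TSQAS $(P,\cL_P)$ extending a given generic fibre \emph{among TSQASes}, whereas the point at issue is precisely whether the flat closure $(Z,\cL)$ (known so far only to be normal with reduced special fibre) is such a TSQAS. What is missing is the lattice comparison carried out in \cite[Theorem~10.4]{Nakamura10} via the degeneration data and Fourier expansions of theta functions; the rest of your argument (reading off the $U_2$-conditions, normality via Claim~\ref{claim:normality}, the identification $\ker\lambda(\cL_\eta)\simeq K$, and the construction of the rigid embeddings of $P$) is sound and consistent with the paper.
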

\begin{proof}
The outline of the proof of Theorem is as follows. The generic fiber 
$(Z_{\eta},\cL_{\eta})$ of $(Z,\cL)$ is an abelian variety. 
By Theorem~\ref{thm:refined stable reduction} 
there exists an $R^*$-TSQAS $(P,\cL_P)$ 
after a suitable base change $\Spec R^*$ of $\Spec R$.
So we have two flat families $(Z,\cL)_{R^*}$ 
and $(P,\cL_P)$ over $R^*$, which we can now compare. 
For each of $(Z,\cL)$ and $(P,\cL_P)$, 
we can find a natural level-$\cG_H$ structure extending 
a level-$\cG_H$ structure of 
$(Z_{\eta},\cL_{\eta})\ (=(P_{\eta},\cL_{P,\eta}))$.
Then we can prove they are isomorphic.  See \cite[Theorem~10.4]{Nakamura10} 
for the details when $e_{\min}(H)\geq 3$. The case $e_{\min}(H)\leq 2$ 
is proved by reducing to the case $e_{\min}(H)\geq 3$ 
by Claims in Subsec.~\ref{subsec:case emin leq 2}. 
See Claim~\ref{claim:final separatedness for e1 leq 2}.
\end{proof}

\begin{thm}\label{lemma:GL(Wk)orbit}Let $G=\PGL(W_1)\times\PGL(W_2)$ and 
$k$ an algebraically closed field with $k\ni 1/N$. Then 
\begin{enumerate}
\item 
$U_3(k)=\left\{(Z,L)\in U_2(k); 
\begin{matrix}\text{a level-$\cG_H$ TSQAS with}\\
\text{characteristic $\cG_H$ action}
\end{matrix}
\right\}$
\item  
let $(Z, L)\in U_3(k)$ and $(Z', L')\in U_3(k)$ where 
$L=M\otimes O_Z$ and $L'=M\otimes O_{Z'}$ with the notation 
of Subsec~\ref{subsec:The scheme U1} Eq.(\ref{eq:M}). 
Then the following are equivalent:
\begin{enumerate} 
\item
$(Z, L)$ is $\cG_H$-isomorphic to $(Z', L')$ with respect to their 
characteristic $\cG_H$-action in the sense of Remark~\ref{subrem:k-isom}, 
\item 
$(Z, L)$ and $(Z', L')$ have the same $G$-orbit.
\end{enumerate}
\end{enumerate}
\end{thm}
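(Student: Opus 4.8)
Looking at this theorem (Theorem~\ref{lemma:GL(Wk)orbit}), I need to prove two things: (1) a description of the $k$-points of $U_3$ as level-$\cG_H$ TSQASes with characteristic action, and (2) an equivalence between $\cG_H$-isomorphism and having the same $G$-orbit under $G=\PGL(W_1)\times\PGL(W_2)$.

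\begin{proof}[Proof proposal]
The plan for part (1) is to combine the description of $U_3$ as the closure of $U^{\dagger}_{g,K}$ (the abelian variety locus with effective $K$-action inside $\Aut^0$) with the stable reduction machinery. First I would take any $(Z,L)\in U_3(k)$. By Theorem~\ref{thm:geom fibers over U3}, lifting to a one-parameter family whose generic fiber lies in $U^{\dagger}_{g,K}$ (possible since $U_3$ is the closure of $U^{\dagger}_{g,K}$ and is $\cO$-flat, arguing component by component and using that $U_3$ is covered by traces of such arcs), the special fiber $(Z,L)$ is isomorphic to the TSQAS $(P_0,\cL_0)$ of Theorem~\ref{thm:refined stable reduction}; moreover the $\cG_H$-linearization on $Z^P$ restricts on each $L_\nu=M_\nu\otimes O_Z$ to the characteristic action, and by Remark~\ref{subrem:linearization of tensor} the induced linearization on $L=q_1L_1+q_2L_2$ is characteristic in the sense of Definition~\ref{subdefn:chara GH action}. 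Conversely, any level-$\cG_H$ TSQAS with characteristic action, once embedded via $\Gamma(Z,d_1 L)$ and $\Gamma(Z,d_2 L)$ (using Sublemma~\ref{sublemma:weight 1 mod N module} to identify these with $W_\nu(K)$ as $\cG_H$-modules) and arcs realized via stable reduction, defines a point of $U^{\dagger}_{g,K}$ if $Z$ is abelian and hence a point of its closure $U_3$ in general; the conditions (i)--(viii) defining $U_2$ are exactly Sublemma~\ref{sublemma:P,L}, and the $\cG_H$-stability is automatic.

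For part (2), the implication (b)$\Rightarrow$(a) is the routine direction: an element $(\gamma_1,\gamma_2)\in G$ carrying $Z$ to $Z'$ inside $\bP(W_1(K))\times\bP(W_2(K))$, commuting with the fixed $\cG_H$-linearizations on the $\bP(W_\nu(K))$ (which holds on $U_3$ since $\cG_H$ acts trivially on $H_\nu$, so the orbit map is $\cG_H$-equivariant only for $\gamma_\nu$ normalizing the $\cG_H$-image), induces a $\cG_H$-isomorphism $(Z,L)\to (Z',L')$; but one must be careful that the $G$-action need not a priori commute with $\cG_H$, so the correct statement is that $(Z,L)$ and $(Z',L')$ have the same $G$-orbit, and by unwinding the definition of $M$ in Eq.(\ref{eq:M}) together with $d_2L_1=d_1L_2$ we recover both polarizations $L_\nu$, hence $L$, hence the $\cG_H$-isomorphism via restriction of the linearized bundle morphism. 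The harder direction (a)$\Rightarrow$(b): given a $\cG_H$-isomorphism $f:(Z,L)\to (Z',L')$ in the sense of Remark~\ref{subrem:k-isom}, I would push it through $H^0(Z,d_\nu L)\simeq W_\nu(K)\otimes k$ (condition (viii)) to get $\cG_H$-isomorphisms $H^0(f^*):W_\nu(K)\to W_\nu(K)$; by Sublemma~\ref{sublemma:weight 1 mod N module}(2) such a map is of the form $\gamma_\nu\otimes\id$ with $\gamma_\nu\in\GL(W_\nu)$ (the $\cG_H$-equivariance forces the endomorphism to be $W_0$-linear tensor the identity on $V_{d_\nu,H}$), and the pair $(\bar\gamma_1,\bar\gamma_2)\in\PGL(W_1)\times\PGL(W_2)=G$ then carries the embedded $Z$ to $Z'$ because $f$ intertwines the two embeddings.

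The main obstacle I anticipate is the clean identification in part (2)(a)$\Rightarrow$(b) that a $\cG_H$-equivariant linear automorphism of $W_\nu(K)=W_\nu\otimes V_{d_\nu,H}$ is exactly $\GL(W_\nu)\otimes\id_{V_{d_\nu,H}}$ up to scalar --- this is precisely the multiplicity-one/Schur-type statement packaged in Sublemma~\ref{sublemma:weight 1 mod N module}, so the work is in applying it correctly: one writes the automorphism in block form relative to the isotypic decomposition, uses irreducibility of $V_{d_\nu,H}$ (Sublemma~\ref{sublemma:weight 1 mod N module}(1)) and Schur's lemma (Lemma~\ref{lemma:lemma of Schur}) to see each block is a scalar, and assembles the scalars into the matrix $\gamma_\nu\in\GL(W_\nu)$. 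A secondary subtlety is bookkeeping the ambiguity: $f$ is only a $\cG_H$-isomorphism of schemes (with $f^*\phi_1^*=c\phi_2^*$ allowing a scalar $c$), so one must track that this scalar is absorbed in passing to $\PGL$, which is exactly why the conclusion is stated with $G=\PGL(W_1)\times\PGL(W_2)$ rather than with $\GL$. Everything else --- openness/closedness of the strata, nonemptiness via Lemma~\ref{lemma:exists (A,L) with G(A,L)=G(K)}, and the arc-lifting --- is already in place from the earlier sections.
\end{proof}
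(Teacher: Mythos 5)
Your proposal is correct and follows the route the paper intends: the paper itself disposes of (1) as ``a corollary of Theorem~\ref{thm:geom fibers over U3}'' and defers (2) to \cite[Lemma~11.1]{Nakamura10}, and your argument for (2) --- pushing the $\cG_H$-isomorphism through $H^0(Z,d_\nu L)\simeq W_\nu\otimes V_{d_\nu,H}$ and applying Lemma~\ref{sublemma:weight 1 mod N module} together with Schur's lemma to land in $\GL(W_\nu)\otimes\id$ --- is exactly the content of that reference, using the machinery the paper sets up for this purpose (compare Step~3 of the proof of Theorem~\ref{thm:fine/coarse moduli AgK}). One remark on your (b)$\Rightarrow$(a) direction: the hedging about whether $G$ commutes with $\cG_H$ is unnecessary and the parenthetical about ``$\gamma_\nu$ normalizing the $\cG_H$-image'' is a red herring --- every element of $\PGL(W_1)\times\PGL(W_2)$ acts on $W_\nu(K)=W_\nu\otimes V_{d_\nu,H}$ as $\gamma_\nu\otimes\id$ while $\cG_H$ acts as $\id\otimes U_{d_\nu,H}$, so the two actions commute identically and any $\gamma$ carrying $Z$ to $Z'$ automatically intertwines the characteristic actions and the bundles $L_\nu=M_\nu\otimes O_Z$, hence $L=q_1L_1+q_2L_2$.
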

\begin{proof}(1) is a corollary of Theorem~\ref{thm:geom fibers over U3}. 
 By the first assertion, any $(Z, L)\in U_3(k)$ has a natural 
characteristic $\cG_H$-action. Thus (2) makes sense. 
See \cite[Lemma~11.1]{Nakamura10} for a proof of (2).
\end{proof}

\begin{thm}\label{thm:uniform geometric quotient}
Let $G=\PGL(W_1)\times\PGL(W_2)$. Then 
\begin{enumerate}
\item $U^{\dagger}_{g,K}$ and $U_3$ are $G$-invariant,
\item the action of $G$ on $U^{\dagger}_{g,K}$ is proper 
and free (resp. proper 
with finite stabilizer)  if $e_{\min}(H)\geq 3$ 
(resp. if $e_{\min}(H)\leq 2$), 
\item the action of $G$ on $U_3$ is proper 
with finite stabilizer.
\item the uniform geometric and 
uniform categorical quotient of $U_3$ 
(resp. $U^{\dagger}_{g,K}$) by $G$ 
exists as a separated algebraic $\cO$-space, 
which we denote by 
$SQ^{*\toric}_{g,K}$ (resp. 
$A^{\toric}_{g,K}$). 
\end{enumerate}
\end{thm}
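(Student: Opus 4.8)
The plan is to deduce the statement from the foundational GIT results already quoted, most crucially Theorem~\ref{thm:uniform geometric quotient}~(2)--(3), which establish that $G=\PGL(W_1)\times\PGL(W_2)$ acts properly on $U_3$ (with finite stabilizer) and properly and freely (resp.\ with finite stabilizer) on $U^{\dagger}_{g,K}$ according to whether $e_{\min}(H)\geq 3$ or not. The standard construction of quotients by proper group actions then applies almost verbatim. First I would recall that a proper free action of a smooth affine group scheme $G$ on an algebraic space $U$ admits a quotient $U/G$ that is again an algebraic space, and that $U\to U/G$ is a $G$-torsor; this is the algebraic-space form of the quotient theorem (Keel--Mori, or Deligne--Mumford in the stack language, or directly the statement that $[U/G]$ is an algebraic space when the action is free and proper). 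Applying this to $U^{\dagger}_{g,K}$ when $e_{\min}(H)\geq 3$ yields $A^{\toric}_{g,K}$ as a separated algebraic $\cO$-space, separatedness being exactly properness of the action (the image of $G\times U\to U\times U$ is closed).

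Second, for the case $e_{\min}(H)\leq 2$ and for $U_3$ in all cases, the action is only proper with finite (but possibly nontrivial) stabilizers. Here I would invoke the Keel--Mori theorem: a separated Deligne--Mumford stack that is of finite type over a Noetherian base admits a coarse moduli space which is a separated algebraic space, and the coarse space morphism is proper and a uniform categorical quotient. The quotient stack $[U_3/G]$ is Deligne--Mumford because the stabilizers are finite and reduced (one must check reducedness of stabilizers, i.e.\ that the action has unramified stabilizers; this follows since the stabilizer of a TSQAS modulo $G$ embeds into the automorphism group of the polarized scheme with its $\cG_H$-linearization, which is a finite group scheme, and in characteristic prime to $N$ it is \'etale --- use that $k\ni 1/N$). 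Separatedness of $[U_3/G]$ is again properness of the action. Hence $SQ^{*\toric}_{g,K}:=$ coarse space of $[U_3/G]$ exists as a separated algebraic $\cO$-space, and the coarse-space morphism is simultaneously a uniform geometric quotient (orbits map to points bijectively on geometric points, by Theorem~\ref{lemma:GL(Wk)orbit}~(2), which identifies $G$-orbits with $\cG_H$-isomorphism classes) and a uniform categorical quotient (a general property of coarse moduli spaces of DM stacks, stable under flat base change on the base $\cO$-scheme --- this is the ``uniform'' part).

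Third, I would assemble the pieces: part~(1) is Theorem~\ref{thm:uniform geometric quotient}~(1), already stated; part~(2) restated for emphasis; part~(3) restated; and part~(4) is the content just argued, treating $U^{\dagger}_{g,K}$ (free case, clean torsor quotient) and $U_3$ (DM-stack coarse-space case) separately, but noting that when $e_{\min}(H)\geq 3$ the same torsor argument upgrades the quotient of $U^{\dagger}_{g,K}$ and one must only check that $U_3$'s finite stabilizers do not obstruct the categorical-quotient property. Finally, the comparison morphism $A^{\toric}_{g,K}\hookrightarrow SQ^{*\toric}_{g,K}$ is induced by the open $G$-immersion $U^{\dagger}_{g,K}\subset U_3$ together with the universal property of the categorical quotient.

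\textbf{Main obstacle.} The delicate point is not the existence of the quotient but the \emph{geometric} quotient claim when $e_{\min}(H)\leq 2$: with nontrivial finite stabilizers, the coarse space identifies geometric points of $U_3$ with $G$-orbits, but one must verify that the stabilizers are the full source of non-freeness and that nothing else causes orbits to collide --- this is exactly where Theorem~\ref{lemma:GL(Wk)orbit}~(2) is essential, and where one reduces the $e_{\min}(H)\leq 2$ situation to the $e_{\min}(H)\geq 3$ case via the \'etale-covering trick of Theorem~\ref{thm:refined stable reduction}~(5) and the claims referenced in Subsec.~\ref{subsec:case emin leq 2}. Checking that passing to this \'etale cover is compatible with taking quotients (i.e.\ that $SQ^{*\toric}_{g,K}$ for $H$ is a quotient of the corresponding space for the auxiliary $H^*$ with $e_{\min}(H^*)\geq 3$ by a finite group) is the technical heart of the argument and the step I would expect to consume most of the work.
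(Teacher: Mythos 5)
Your proposal follows essentially the same route as the paper, which itself defers the substance to \cite[Sections~10--11]{Nakamura10} for $e_{\min}(H)\geq 3$ (properness of the $G$-action plus the Keel--Mori quotient theorem \cite{KM97}, with $G$-orbits identified with $\cG_H$-isomorphism classes via Theorem~\ref{lemma:GL(Wk)orbit}) and handles $e_{\min}(H)\leq 2$ by the \'etale-covering reduction of Subsec.~\ref{subsec:case emin leq 2}, exactly as you anticipate in your closing paragraph. The only caveat is that parts (2)--(3), which you merely restate, carry the real content --- properness of the action ultimately rests on the uniqueness statements of Theorem~\ref{thm:separatedness} and the Claims of Subsec.~\ref{subsec:case emin leq 2} --- but the paper is no more explicit on this point here than you are.
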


See \cite[Sec.~10-11]{Nakamura10} for 
Theorems~\ref{thm:geom fibers over U3}
-\ref{thm:uniform geometric quotient} 
when $e_{\min}(H)\geq 3$. 

\subsection{The case $e_{\min}(H)\leq 2$}
\label{subsec:case emin leq 2}
Theorems~\ref{thm:geom fibers over U3}-\ref{thm:uniform geometric quotient} 
for $e_{\min}(H)\leq 2$ 
are proved in the same manner as 
in the case $e_{\min}(H)\geq 3$
by using the following Claims. 

\begin{claim}\label{claim:etale Z/nZ covering}
Let $k$ be an algebraically closed field with 
$k\ni 1/N$, $K=H\oplus H^{\vee}$ and $N=|H|$. 
Let $(P,L)$ be a TSQAS over $k$ with $L$ 
$\cG_H$-linearized and $\cG(P,L)\simeq \cG_H$, and 
$n$ any positive integer $(\geq 3)$ 
prime to both $N$ and the characteristic of $k$.  Then 
there exists a TSQAS
$(P^{\dagger},L^{\dagger})$ over $k$ with 
the pull back $L^{\dagger}$ of $L$
$\cG_{H^{\dagger}}$-linearized
which is an  \'etale Galois 
covering of $(P,L)$ with Galois group 
$H^{\dagger}/H\simeq (\bZ/n\bZ)^g$, where 
$H$ (resp. $H^{\dagger}$) is a maximal isotropic subgroup of 
$K:=K(P,L)=H\oplus H^{\vee}$ 
(resp. of $K^{\dagger}:=K(P^{\dagger},L^{\dagger})=H^{\dagger}\oplus 
(H^{\dagger})^{\vee}=K\oplus(\bZ/n\bZ)^{2g}$). 
\end{claim}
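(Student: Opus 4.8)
\emph{The plan} is to realize the given level-$\cG_H$ TSQAS as the closed fibre of a one-parameter degeneration, to construct the desired covering first on the abelian generic fibre by a degree-$n^{g}$ \'etale isogeny, and then to carry it back down to the closed fibre using the explicit construction and the uniqueness of TSQASes (Theorems~\ref{thm:refined stable reduction} and \ref{thm:separatedness}). The hypothesis $n\geq 3$ will enter only through the identity $e_{\min}(H^{\dagger})=ne_{1}\geq 3$, which is exactly what makes this Claim usable to reduce the case $e_{\min}(H)\leq 2$ to the case $e_{\min}(H)\geq 3$.

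\emph{Step 1 (spreading out).} Since $(P,L)$ is a level-$\cG_H$ TSQAS over $k$, by Theorem~\ref{thm:refined stable reduction} there is, after a finite base change, a complete discrete valuation ring $R$ with residue field $k$ and a TSQAS $(\wP,\cL_{\wP})$ over $R$, normal with reduced special fibre $(P,L)$, whose generic fibre $(G_{\eta},\cL_{\eta})$ is an abelian variety with $\ker\lambda(\cL_{\eta})\simeq K$, and whose characteristic $\cG_H$-action extends the given one. Since $1/N\in k$ and $n$ is prime to the residue characteristic, $n$ is a unit in $R$, $G_{\eta}[n]\simeq(\bZ/n\bZ)^{2g}$ is \'etale, and $\gcd(n,N)=1$ forces $K[n]=0$, so $\lambda(\cL_{\eta})$ induces an isomorphism of $n$-torsion $G_{\eta}[n]\to G_{\eta}^{\vee}[n]$.

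\emph{Step 2 (the isogeny on the generic fibre).} I would pick a maximal isotropic subgroup $M\subset G_{\eta}[n]$ for the Weil pairing $e_{\Weil}^{\cL_{\eta}}$ (so $M\simeq(\bZ/n\bZ)^{g}$), let $\pi'_{\eta}\colon G_{\eta}\to G_{\eta}^{\dagger}:=G_{\eta}/M$ be the quotient by $M$, and let $\pi_{\eta}\colon G_{\eta}^{\dagger}\to G_{\eta}$ be the isogeny characterized by $\pi_{\eta}\pi'_{\eta}=n\,\id_{G_{\eta}}$; it is \'etale of degree $n^{g}$ with Galois group $\ker\pi_{\eta}=G_{\eta}[n]/M\simeq(\bZ/n\bZ)^{g}$. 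Put $\cL_{\eta}^{\dagger}:=\pi_{\eta}^{*}\cL_{\eta}$ and $K^{\dagger}:=\ker\lambda(\cL_{\eta}^{\dagger})$. A degree count gives $|K^{\dagger}|=n^{2g}|K|$; using $K[n]=0$ and the isotropy of $M$ one checks that the full $n$-torsion $G_{\eta}^{\dagger}[n]\simeq(\bZ/n\bZ)^{2g}$ lies in $K^{\dagger}$, that $\pi'_{\eta}$ embeds $K$ into $K^{\dagger}$, and — since a commutator of an element of order dividing $N$ with one of order dividing $n$ is trivial — that $K^{\dagger}=\pi'_{\eta}(K)\oplus G_{\eta}^{\dagger}[n]\simeq K\oplus(\bZ/n\bZ)^{2g}$ as symplectic groups. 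Setting $H^{\dagger}:=\pi'_{\eta}(H)\oplus\ker\pi_{\eta}$ one gets a maximal isotropic subgroup of $K^{\dagger}$ with $H^{\dagger}/H\simeq(\bZ/n\bZ)^{g}$ and $H^{\dagger}\simeq H\oplus(\bZ/n\bZ)^{g}\simeq\bigoplus_{i}\bZ/ne_{i}\bZ$ (using $\gcd(n,e_{i})=1$); in particular $(G_{\eta}^{\dagger},\cL_{\eta}^{\dagger})$ carries a $\cG_{H^{\dagger}}$-action.

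\emph{Step 3 (descent and conclusion).} The Faltings-Chai degeneration data of Subsec.~\ref{subsec:deg data of Faltings-Chai} attached to $G_{\eta}$ — a sublattice $Y\subset X$ in the toric directions, an abelian part, and functions $a(x),b(x,y)$ — can be refined to the data of $(G_{\eta}^{\dagger},\cL_{\eta}^{\dagger})$ (replace $Y$ by $nY$, and pull the abelian part back along a suitable \'etale isogeny) so that the construction of Subsec.~\ref{subsec:construction} yields simultaneously a TSQAS $(\wP^{\dagger},\cL_{\wP^{\dagger}})$ over $R$ and a finite \'etale $R$-morphism $\pi\colon\wP^{\dagger}\to\wP$ of degree $n^{g}$, with Galois group $\ker\pi_{\eta}$, with $\pi^{*}\cL_{\wP}=\cL_{\wP^{\dagger}}$, and with the $\cG_{H^{\dagger}}$-action on the generic fibre extending over $R$. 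Since $\wP$ is normal and $\pi$ is finite \'etale, $\wP^{\dagger}$ is normal; since $P$ is reduced, its \'etale cover $\wP^{\dagger}\otimes_{R}k$ is reduced; and $\wP^{\dagger}$ is $R$-flat, proper, with abelian generic fibre — so $(\wP^{\dagger},\cL_{\wP^{\dagger}})$ is a TSQAS, hence, by Theorem~\ref{thm:separatedness}, \emph{the} TSQAS attached to $(G_{\eta}^{\dagger},\cL_{\eta}^{\dagger})$. Taking closed fibres, $(P^{\dagger},L^{\dagger}):=(\wP^{\dagger},\cL_{\wP^{\dagger}})\otimes_{R}k$ is a TSQAS over $k$, the map $\pi_{0}\colon P^{\dagger}\to P$ is \'etale Galois with group $H^{\dagger}/H\simeq(\bZ/n\bZ)^{g}$, $L^{\dagger}=\pi_{0}^{*}L$ is $\cG_{H^{\dagger}}$-linearized with $\cG(P^{\dagger},L^{\dagger})\simeq\cG_{H^{\dagger}}$ and $K(P^{\dagger},L^{\dagger})=K^{\dagger}$, and $H^{\dagger}$ is maximal isotropic in $K^{\dagger}=K\oplus(\bZ/n\bZ)^{2g}$, as asserted. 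The main obstacle will be Step~3: showing rigorously that the \'etale isogeny of generic fibres extends to a finite \'etale morphism of the \emph{relative} TSQASes compatibly with the polarizations — one must either carry the degeneration data carefully through the isogeny (handling also the partially degenerate case, where the abelian quotient itself is isogenied) or argue that the a priori merely rational extension is actually a morphism and then close the argument by uniqueness; by contrast, the symplectic bookkeeping for $K^{\dagger}$ in Step~2 is routine once $\gcd(n,N)=1$ is used.
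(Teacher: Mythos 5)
Your route is genuinely different from the paper's, and the difference is exactly where the gap sits. The paper never extends an isogeny of generic fibres over $R$: it uses the $R$-family only to split off the toric directions via $K(P_0,\cL_0)=K(A_0,\cM_0)\oplus(X/Y)\oplus(X/Y)^{\vee}$, reduces to the case where $P_0$ is an abelian variety $A$ over $k$, and then builds the covering entirely on the closed fibre with Mumford's $K(A,L)$-formalism — the $n^{2}$-torsion trick $A^{\dagger}:=A/\Delta^{\dagger}$ with $\Delta^{\dagger}=(n\bZ/n^{2}\bZ)^{g}$ inside a maximal isotropic $G^{\dagger}\simeq(\bZ/n^{2}\bZ)^{g}$ of $A[n^{2}]$, and $\varpi:A^{\dagger}\to A/A[n]\simeq A$, $L^{\dagger}=\varpi^{*}L$. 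The passage from the closed fibre back to the family is done only in the \emph{next} Claim, by lifting the \'etale covering of $P_0$ to $P_{\formal}$ via SGA1 and algebraizing. You invert this logic: you build the isogeny on the generic fibre and try to push it down, which is precisely the step the paper's arrangement is designed to avoid.

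The concrete gap is your Step~3, which you yourself flag as the ``main obstacle'' but do not carry out, and which moreover cannot work as stated because of a choice made in Step~2. You take $M$ to be an \emph{arbitrary} maximal isotropic subgroup of $G_{\eta}[n]$; but whether $\pi_{\eta}:G_{\eta}^{\dagger}\to G_{\eta}$ extends to a finite \'etale morphism of the relative TSQASes depends entirely on how $\ker\pi_{\eta}=G_{\eta}[n]/M$ specializes. In the totally degenerate directions $G_{\eta}[n]$ is an extension of $\tfrac{1}{n}Y/Y$ by $\mu_{n}^{g}$, and only the choice $M=\mu_{n}^{g}$ (the multiplicative part) makes $G_{\eta}^{\dagger}$ correspond to the lattice $nY\subset Y$ and $\pi_{\eta}$ specialize to the \'etale covering $\cX_{0}/nY\to\cX_{0}/Y$. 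For the opposite isotropic the extension of $\pi_{\eta}$ is the $n$-th power map on the torus charts, which on $\Spec k[u,v]/(uv)$ is ramified along the boundary strata of $P_{0}$, so the resulting covering of closed fibres is not \'etale. Thus Step~2 must be constrained by the degeneration data before Step~3 can even be attempted, and Step~3 itself (carrying $a(x)$, $b(x,y)$, the abelian part, and the polarization through the isogeny, or proving that a rational extension is a morphism) remains to be done. If you want to keep your architecture, the cleaner fix is the paper's: construct the covering on $P_{0}$ itself and only afterwards, if needed, lift it to the family by the formal/algebraization argument of Claim~\ref{claim:final separatedness for e1 leq 2}.
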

\begin{proof}We denote the given TSQAS 
$(P,L)$ by $(P_0,\cL_0)$. Let $R$ be a CDVR, 
$(P,\cL)$ an $R$-flat family such that 
\begin{enumerate}
\item[(i)] the generic fiber $(P_{\eta},\cL_{\eta})$ 
is a level-$\cG_H$ abelian variety,
\item[(ii)] 
the closed fiber $(P_0,\cL_0)$ of $(P,\cL)$ is the given TSQAS with 
torus part $T_0$ and abelian part $(A_0,\cM_0)$.
\end{enumerate}
Since $P_0$ is a $k(0)$-TSQAS with $T_0=\Hom(X,\bG_m)$ 
for some lattice $X$ of rank $g''$, 
there exists a sublattice $Y$ of $X$ such that 
$K(P_0,\cL_0)=K(A_0,\cM_0)\oplus (X/Y)\oplus (X/Y)^{\vee}$. 
See \cite[5.14]{Nakamura99} and Definition~\ref{subdefn:Sz Tbeta}. 
Therefore it is enough to construct an \'etale 
$H^{\dagger}/H\simeq(\bZ/n\bZ)^g$-covering 
$(A_0^{\dagger},M_0^{\dagger})$ of $(A_0,M_0)$ as above. \par
Hence we may assume $P_0$ is an abelian variety. 
In what follows we denote $(P_0,\cL_0)$ by $(A,L)$. 
Let $A[m]=\ker(m\id_A)$ for any positive integer $m$. 
By the assumption, 
$A[n^2]\simeq(\bZ/n^2\bZ)^{2g}$ and $N^2=|K(A,L)|$.
Let $L'$ be the pull back of $L$ by $n\id_A$. Then 
by \cite[p.~56, Corollary~3; p.~71~(iv)]{Mumford12} there exists 
$M\in\Pic^0(A)$ such that $L'=L^{n^2}\otimes M$. 
For a line bundle $F$ on $A$, 
we denote by $\phi_{F}$ the homomorphism $A\to A^{\vee}$ defined by 
$x\mapsto T_x^*F\otimes F^{-1}$. 
Then by \cite[p.~57, Corollary~4]{Mumford12} 
$\phi_{L'}=\phi_{L^{n^2}}=n^2\phi_{L}$. 
Since $T_x^*M=M$, we have 
$$K(A,L'):=\ker(\phi_{L'})
=\ker n^2\phi_{L}=K(A,L^{n^2})\supset A[n^2].$$ 
Since $n$ is prime to $N$, 
we have $A[n^2]\cap K(A,L)=\{0\}$, hence 
$$K(A,L')=K(A,L^{n^2})=K(A,L)\oplus A[n^2].$$
 
For a maximal isotropic subgroup $G^{\dagger}$
$(\simeq (\bZ/n^2\bZ)^g)$ of $A[n^2]$, we define 
$\Delta^{\dagger}:=(n\bZ/n^2\bZ)^g$. It is  
the unique subgroup of $G^{\dagger}$ isomorphic to $(\bZ/n\bZ)^g$. 
We set $A^{\dagger}:=A/\Delta^{\dagger}$, and 
$\pi:A\to A^{\dagger}$ the projection. Now we have 
a diagram with $\varpi\pi=n\id_A$:
$$A\overset{\pi}{\to}  A^{\dagger}=A/\Delta^{\dagger} \overset{\varpi}{\to} 
A/A[n]\simeq A.
$$
As a subgroup of $K(A,L')$, we have 
\begin{gather*}
A[n^2]=\{0\}\oplus G^{\dagger}\oplus (G^{\dagger})^{\vee},\\ 
A[n]=\{0\}\oplus \Delta^{\dagger}\oplus (G^{\dagger}/\Delta^{\dagger})^{\vee},
\end{gather*}where in particular $A[n]$ 
is a totally isotropic subgroup of $A[n^2]$.

Let $L^{\dagger}:=\varpi^*(L)$. Then $L'=\pi^*(L^{\dagger})$. 
Let $(\Delta^{\dagger})^{\perp}$ be the orthogonal complement 
of $\Delta^{\dagger}$ in $K(A,L')$. Then by \cite[p.~291]{Mumford66}
 $$K^{\dagger}:=K(A^{\dagger},L^{\dagger})\simeq
(\Delta^{\dagger})^{\perp}/\Delta^{\dagger},$$ 
where we see  
$(\Delta^{\dagger})^{\perp}=
K(A,L)\oplus G^{\dagger}\oplus (G^{\dagger}/\Delta^{\dagger})^{\vee}
$, where 
$(G^{\dagger}/\Delta^{\dagger})^{\vee}\simeq (n\bZ/n^2\bZ)^g$.  
Let $H$ be a 
maximal isotropic subgroup of $K(A,L)$. 
Let $H^{\dagger}:=H\oplus \{0\}\oplus 
(G^{\dagger}/\Delta^{\dagger})^{\vee}
\subset K^{\dagger}$. 
Then $H^{\dagger}$ is  a 
maximal isotropic subgroup of $K^{\dagger}$ with 
$(H^{\dagger})^{\vee}=H^{\vee}
\oplus (G^{\dagger}/\Delta^{\dagger})\oplus\{0\}$.  
 It follows  
\begin{equation}\label{eq:Kdagger}
K^{\dagger}\simeq 
K(A,L)
\oplus (G^{\dagger}/\Delta^{\dagger})
\oplus(G^{\dagger}/\Delta^{\dagger})^{\vee}
\simeq H^{\dagger}\oplus (H^{\dagger})^{\vee}.
\end{equation}  Hence the covering $\varpi:A^{\dagger}\to A$ 
is \'etale with Galois group 
$$A[n]/\Delta^{\dagger}\simeq (G^{\dagger}/\Delta^{\dagger})^{\vee}\simeq H^{\dagger}/H\simeq 
(\bZ/n\bZ)^g,$$ and 
$\cL^{\dagger}$ is $\cG_{H^{\dagger}}$-linearized by (\ref{eq:Kdagger}). 
This proves Claim~\ref{claim:etale Z/nZ covering}.
\end{proof}

\begin{claim}\label{claim:separatedness for e1 leq 2}
\rm{(See also \cite[Lemma~6.7]{Nakamura10})}\  
Let $R$ be a complete discrete valuation ring,	
$k(\eta)$ the fraction field of $R$ and $S:=\Spec R$. 
Let $(Z_i,\phi^*_i,\tau_i)$\ $(i=1,2)$ be rigid-$\cG_H$ $S$-TSQASes 
whose generic fibers are abelian varieties. 
If $(Z_i,\phi^*_i,\tau_i)$ are 
$k(\eta)$-isomorphic, then 
they are $S$-isomorphic.
\end{claim}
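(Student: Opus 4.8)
The plan is to reduce the separatedness statement for $T$-flat TSQASes over a CDVR to the already-established separatedness result in the case $e_{\min}(H)\geq 3$, i.e.\ to Theorem~\ref{thm:separatedness} (equivalently, to the uniqueness statement in Theorem~\ref{thm:geom fibers over U3} and \cite[Theorem~10.4]{Nakamura10}), using the \'etale covering trick of Claim~\ref{claim:etale Z/nZ covering}. First I would replace $H$ by a larger group: choose an integer $n\geq 3$ prime to both $N=|H|$ and the residue characteristic of $R$, so that $n$ is invertible over $k(0)$, and apply Claim~\ref{claim:etale Z/nZ covering} fiberwise (or rather over $R$, after a finite base change) to each $(Z_i,\phi_i^*,\tau_i)$ to obtain TSQASes $(Z_i^{\dagger},\phi_i^{\dagger *},\tau_i^{\dagger})$ over $R$ that are \'etale $(\bZ/n\bZ)^g$-Galois coverings of $(Z_i,\cL_i)$, carrying a $\cG_{H^{\dagger}}$-linearization with $H^{\dagger}=H\oplus(\bZ/n\bZ)^g$ and hence $e_{\min}(H^{\dagger})\geq 3$ by construction. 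One must check that this covering construction is functorial enough to be performed in families over $S=\Spec R$: the construction in Claim~\ref{claim:etale Z/nZ covering} starts from an $R$-flat family, so this is exactly the situation at hand; the torus-part/abelian-part decomposition and the identification $K(Z_0,\cL_0)=K(A_0,\cM_0)\oplus(X/Y)\oplus(X/Y)^{\vee}$ pass through the base $R$ after a finite base change.

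Next I would observe that a $k(\eta)$-isomorphism $f_{\eta}:(Z_{1,\eta},\phi_{1,\eta}^*,\tau_{1,\eta})\to (Z_{2,\eta},\phi_{2,\eta}^*,\tau_{2,\eta})$ of rigid level-$\cG_H$ TSQASes over the generic point, which on generic fibers is an isomorphism of (level-$\cG_H$) abelian varieties, lifts to a $k(\eta)$-isomorphism $f_{\eta}^{\dagger}$ of the coverings. Indeed, the covering $Z_{i,\eta}^{\dagger}\to Z_{i,\eta}$ is the quotient by the canonical subgroup $\Delta^{\dagger}=(n\bZ/n^2\bZ)^g\subset Z_{i,\eta}[n^2]$ attached to a maximal isotropic subgroup of $Z_{i,\eta}[n^2]$; after a further finite base change of $R$ we may assume the relevant isotropic subgroups and the symplectic isomorphisms match up, so that $f_{\eta}$ is compatible with the covering data and descends/lifts to $f_{\eta}^{\dagger}$ respecting the rigid level-$\cG_{H^{\dagger}}$ structures. (Here one uses Schur's lemma, Lemma~\ref{lemma:lemma of Schur}, to pin down the lift up to the action of $\bG_m$, and the rigidity condition $\rho(\phi^{\dagger *},\tau^{\dagger})=U_{H^{\dagger}}$ to normalize it.)

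Then the case $e_{\min}(H^{\dagger})\geq 3$ of the theorem — which is Theorem~\ref{thm:separatedness}, proved in \cite[Theorem~10.4]{Nakamura10} — applies to $(Z_1^{\dagger},\phi_1^{\dagger *},\tau_1^{\dagger})$ and $(Z_2^{\dagger},\phi_2^{\dagger *},\tau_2^{\dagger})$: since their generic fibers are isomorphic as rigid level-$\cG_{H^{\dagger}}$ TSQASes, the families themselves are $S$-isomorphic via some $f^{\dagger}:Z_1^{\dagger}\to Z_2^{\dagger}$ extending $f_{\eta}^{\dagger}$. Finally I would descend $f^{\dagger}$ along the Galois coverings: $f^{\dagger}$ is automatically $H^{\dagger}/H\simeq(\bZ/n\bZ)^g$-equivariant because it is compatible with the characteristic $\cG_{H^{\dagger}}$-actions and the Galois group is the image in $\Aut$ of the subgroup $\{0\}\oplus\Delta^{\dagger}$-translations, so it passes to the quotients to give an $S$-morphism $f:Z_1\to Z_2$; it extends $f_{\eta}$ by construction, and it is an isomorphism because $f^{\dagger}$ is (both coverings being \'etale of the same degree). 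This yields the required $S$-isomorphism $(Z_1,\phi_1^*,\tau_1)\simeq (Z_2,\phi_2^*,\tau_2)$.

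The main obstacle, and the step requiring genuine care rather than routine bookkeeping, is the functoriality/compatibility in the second paragraph: ensuring that the \'etale coverings of the two families $Z_1,Z_2$ can be chosen compatibly with the generic-fiber isomorphism $f_{\eta}$ — i.e.\ that one can simultaneously match the maximal isotropic subgroups $G^{\dagger}\subset Z_{i,\eta}[n^2]$ and the induced rigid level structures — after only a finite base change of $R$, so that the lifted isomorphism $f_{\eta}^{\dagger}$ genuinely exists as a morphism of rigid level-$\cG_{H^{\dagger}}$ objects. This is essentially the content of \cite[Lemma~6.7]{Nakamura10} referenced in the statement of Claim~\ref{claim:separatedness for e1 leq 2}, combined with the uniqueness-of-rigidification Lemma~\ref{sublemma:uniqueness of rigid T TSQAS}; once it is in place, the descent of the isomorphism $f^{\dagger}$ to $f$ is formal.
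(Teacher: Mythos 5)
Your reduction — pass to an \'etale $(\bZ/n\bZ)^g$-covering with $e_{\min}(H^{\dagger})\geq 3$ via Claim~\ref{claim:etale Z/nZ covering} and invoke the uniqueness result of \cite[Theorem~10.4]{Nakamura10} — uses the same two ingredients as the paper, but you organize the argument differently, and the difference is not cosmetic. The paper never compares $Z_1$ and $Z_2$ directly. It deduces the claim from Claim~\ref{claim:final separatedness for e1 leq 2}, an intrinsic characterization of a \emph{single} family: $(Z,\cL)$ is the normalization of a modified Mumford family determined by its generic fiber. That is proved by constructing the covering $(P^{\dagger},\cL^{\dagger})$ over $S$ as the algebraization (via \cite[Corollaire~8.4]{SGA1}) of the lift of an \'etale covering of the \emph{closed} fiber $P_0$, applying the $e_{\min}\geq 3$ result to $(P^{\dagger},\cL^{\dagger})$, and taking the Galois quotient. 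Separatedness is then automatic: both $Z_1$ and $Z_2$ are normalizations of modified Mumford families attached to isomorphic generic fibers, and the Mumford construction depends only on the degeneration data of the generic fiber. What this buys is exactly the avoidance of the step you single out as the main obstacle.

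That step is a real gap in your version, not bookkeeping. In Claim~\ref{claim:etale Z/nZ covering} the covering is built from a \emph{choice} of maximal isotropic subgroup $G^{\dagger}\subset A[n^2]$ made on the closed fiber (after splitting off the torus part), so the coverings $Z_1^{\dagger}$ and $Z_2^{\dagger}$ are not a priori intertwined by $f_{\eta}$; moreover the generic fibers are torsors, so even identifying $Z_{i,\eta}[n^2]$ compatibly requires choosing base points. To lift $f_{\eta}$ to $f_{\eta}^{\dagger}$ respecting the rigid level-$\cG_{H^{\dagger}}$ structures you would have to prove this compatibility after finite base change, which is additional work not supplied by Schur's lemma or by Lemma~\ref{sublemma:uniqueness of rigid T TSQAS} alone. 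Either carry that out explicitly, or — more economically — replace the direct comparison by the paper's intrinsic characterization, after which the descent in your last paragraph becomes unnecessary.
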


Claim~\ref{claim:separatedness for e1 leq 2} 
follows from the following Claim~\ref{claim:final separatedness for e1 leq 2}.

\begin{claim}
\label{claim:final separatedness for e1 leq 2}
With the same notation as above,  
let $(P,\cL)$ be an $S$-TSQAS 
with generic fiber $(P_{\eta},\cL_{\eta})$ an abelian variety. 
Then $(P,\cL)$ is the normalization of a modified Mumford family 
with generic fiber $(P_{\eta},\cL_{\eta})$ 
by a finite base change if necessary.
\end{claim}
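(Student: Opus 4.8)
The plan is to bootstrap from the case $e_{\min}(H)\ge 3$. For that value of $e_{\min}$ the assertion is, in effect, part of the construction of Section~\ref{sec:PSQAS general case} together with Theorem~\ref{subthm:degeneration data}, Theorem~\ref{thm:refined stable reduction} and Theorem~\ref{thm:separatedness} (see \cite{Nakamura99}, \cite{Nakamura10}): after a finite base change the Faltings-Chai degeneration data $(X,Y,a(x))$ attached to $(P_\eta,\cL_\eta)$ exist, they produce a modified Mumford family, and the normalization of that family is the TSQAS $(P,\cL)$ of Theorem~\ref{thm:refined stable reduction}. So it suffices to reduce the general case to this one by an \'etale base change on the fibers.

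First I would apply Claim~\ref{claim:etale Z/nZ covering}. Fix an integer $n\ge 3$ prime to $N=|H|$ and to the residue characteristic of $R$. Since the given $(P,\cL)$ has abelian generic fiber, Claim~\ref{claim:etale Z/nZ covering} produces, after a finite base change of $R$, a TSQAS $(P^{\dagger},\cL^{\dagger})$ over $R$ with abelian generic fiber which is an \'etale Galois covering of $(P,\cL)$ with group $G:=H^{\dagger}/H\simeq(\bZ/n\bZ)^{g}$ and with $K^{\dagger}:=K(P^{\dagger},\cL^{\dagger})\simeq K\oplus(\bZ/n\bZ)^{2g}$. Because $\gcd(n,N)=1$ one has $e_{\min}(H^{\dagger})=n\,e_{\min}(H)\ge 3$, so the case already treated applies to $(P^{\dagger},\cL^{\dagger})$: after a further finite base change $(P^{\dagger},\cL^{\dagger})$ is the normalization of a modified Mumford family $(\cZ^{\dagger},\cL_{\cZ^{\dagger}})$ attached to degeneration data $(X^{\dagger},Y^{\dagger},a^{\dagger})$ in the sense of Subsec.~\ref{subsec:deg data of Faltings-Chai} and Subsec.~\ref{subsec:construction}.

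Next I would descend along $G$. The covering furnished by Claim~\ref{claim:etale Z/nZ covering} exhibits $(P,\cL)$ literally as the \'etale quotient $(P^{\dagger},\cL^{\dagger})/G$, so there is no circularity with the separatedness statements for $e_{\min}\le 2$. The $G$-action on $(P^{\dagger}_{\eta},\cL^{\dagger}_{\eta})$ extends uniquely to $(P^{\dagger},\cL^{\dagger})$ by Theorem~\ref{thm:separatedness} (available since $e_{\min}(H^{\dagger})\ge 3$), and on the Mumford side this $G$-action corresponds to a modification of the lattice data: an isogeny on the abelian part together with a passage from $Y^{\dagger}$ to a finite overlattice $Y$ (and, if needed, an enlargement of $X^{\dagger}$ to $X$), giving data $(X,Y,a)$ with $a=a^{\dagger}$. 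Then $G$ acts freely on $\cZ^{\dagger}$ (the free action on $P^{\dagger}$ propagates to $\cZ^{\dagger}$), the quotient $\cZ:=\cZ^{\dagger}/G$ is the modified Mumford family attached to $(X,Y,a)$, and since normalization commutes with the free \'etale quotient by $G$, $(P,\cL)=(P^{\dagger},\cL^{\dagger})/G$ is the normalization of $\cZ$, which is what we want. In the partially degenerate case one runs the same argument, covering only the toric and abelian directions that are not already sufficiently divisible, using the analogous degeneration data of Subsec.~\ref{subsec:deg data of Faltings-Chai}.

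The main obstacle is the descent step: verifying that Mumford's construction is compatible with the \'etale $G$-quotient — i.e. identifying $\cZ^{\dagger}/G$ with the modified Mumford family of the modified data $(X,Y,a)$ — and that taking normalization commutes with this quotient. This rests on the functoriality of the construction with respect to finite-index lattice changes and isogenies of the abelian part, and it is the point where one must check that $G$ acts freely on the (non-normal) scheme $\cZ^{\dagger}$ and that the isomorphism $K^{\dagger}\simeq K\oplus(\bZ/n\bZ)^{2g}$ of Claim~\ref{claim:etale Z/nZ covering} is compatible with the level-$\cG_{H^{\dagger}}$ structure used to pin down $(P^{\dagger},\cL^{\dagger})$ in the case $e_{\min}\ge 3$.
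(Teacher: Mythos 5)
Your proposal follows essentially the same route as the paper: pass to an \'etale $(\bZ/n\bZ)^g$-covering $(P^{\dagger},\cL^{\dagger})$ with $e_{\min}(H^{\dagger})=ne_1\geq 3$, apply the already-established case $e_{\min}\geq 3$, and descend along the Galois group. The one step you compress is that Claim~\ref{claim:etale Z/nZ covering} is stated over a field and so only furnishes the covering of the closed fiber $(P_0,\cL_0)$; the paper then lifts this to an \'etale covering of the whole family via the equivalence between \'etale coverings of $P_0$ and of $P_{\formal}$ (\cite[Corollaire~8.4]{SGA1}), algebraizes, and invokes Claim~\ref{claim:normality} for the normality of $P^{\dagger}$, and the descent $(P,\cL)=(P^{\dagger},\cL^{\dagger})/(\bZ/n\bZ)^g$ is likewise obtained from the same SGA1 equivalence applied to the closed fibers, rather than from the extension-of-group-action and lattice-modification argument you sketch.
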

\begin{proof}
Let $n$ be a positive integer $\geq 3$ prime to 
the characteristic of $k(0)$ and $|H|$.  
In view of Claim~\ref{claim:etale Z/nZ covering},
by a finite base change $S^{\dagger}$ 
of $S$ 
and then by taking 
the pull back of $(P,\cL)$ to $S^{\dagger}$, we have  
an \'etale $H^{\dagger}/H\simeq (\bZ/n\bZ)^{g}$-covering 
$(P^{\dagger}_0, \cL^{\dagger}_0)$ of $(P_0,\cL_0)$ such that 
$K(P^{\dagger}_0, \cL^{\dagger}_0)=H^{\dagger}\oplus (H^{\dagger})^{\vee}$. 
From now, we denote $S^{\dagger}$ by $S$, and $(P,\cL)\times_SS^{\dagger}$ by $(P,\cL)$. \par
Let $P_{\formal}$ be the formal completion of $P$ along $P_0$.  
By \cite[Corollaire~8.4]{SGA1}, there is a category equivalence between 
\'etale coverings of $P_0$ and \'etale coverings of $P_{\formal}$. 
Hence there exists a formal
scheme $(P^{\dagger}_{\formal}, \cL^{\dagger}_{\formal})$
 which is an \'etale $(\bZ/n\bZ)^{g}$-covering 
of $(P_{\formal},\cL_{\formal})$. Then there exists 
a projective $S$-scheme $(P^{\dagger}, \cL^{\dagger})$ 
algebraizing $(P^{\dagger}_{\formal}, \cL^{\dagger}_{\formal})$ which	
is an \'etale $(\bZ/n\bZ)^{g}$-covering 
of $(P,\cL)$ with $\cL^{\dagger}$ 
the pull back of $\cL$. 
It follows that the generic fiber
$(P^{\dagger}_{\eta}, \cL^{\dagger}_{\eta})$
is a polarized abelian variety, and 
$(P^{\dagger}_0,\cL^{\dagger}_0)$ is a reduced $k(0)$-TSQAS
and  $P^{\dagger}$ is normal 
by Claim~\ref{claim:normality}. \par
Since $n\geq 3$, by \cite[10.4]{Nakamura10}
$(P^{\dagger},\cL^{\dagger})$ is 
the normalization of a modified Mumford family 
with generic fiber $(P^{\dagger}_{\eta},\cL^{\dagger}_{\eta})$.
By \cite[Corollaire~8.4]{SGA1} $(P,\cL)$ is 
the quotient  of $(P^{\dagger},\cL^{\dagger})$ 
by $(\bZ/n\bZ)^{g}$, because $(P_0,\cL_0)$ is  
the quotient  of $(P^{\dagger}_0,\cL^{\dagger}_0)$ 
by $(\bZ/n\bZ)^{g}$. 
Hence $(P,\cL)$ is 
the normalization of a modified Mumford family 
with generic fiber $(P_{\eta},\cL_{\eta})$. This proves the Claim.
\end{proof}


\begin{summary}
\label{summary:summary}Let 
$k$ be an algebraically closed field with $k\ni 1/N$. 
Let $H^{P}:=\Hilb^{P}(X/H_3)$ 
be as in Subsec.~\ref{subsec:The scheme U1}. 
We define the schemes $U_k$, $U_{g,K}$ and $U^{\dagger}_{g,K}$ as follows:
\begin{align*}
U_1&=\{(Z,L_1,L_2)\in H^{P}; \text{(i)-(ii) are true}\},\\
U_2&=\{(Z,L)\in U_1; \text{(iii)-(viii) are true}\},\\
U_{g,K}(k)&=\{(Z,L)\in U_2(k); 
\text{$(Z,L)$ is an abelian variety over $k$}\},\\
U^{\dagger}_{g,K}(k)&=\{(Z,L)\in U_{g,K}(k);\text{(ix) is true}\},\\
U_3&=\text{the closure of $U^{\dagger}_{g,K}$ in $U_2$}.
\end{align*}
Then 
\begin{enumerate}
\item $U_1$ is a closed $\cO$-subscheme of $H^{P}$, while 
$U_2$, $U_{g,K}$ and $U^{\dagger}_{g,K}$ are 
nonempty $\cO$-subschemes of $U_1$ such that 
$U^{\dagger}_{g,K}\subset U_{g,K}\subset U_2$, and  
\begin{align*}
U^{\dagger}_{g,K}(k)&=\left\{
(A,L)\in U_2(k); \begin{matrix}
\text{an abelian variety over $k$ with}\\
\text{characteristic $\cG_H$-action\phantom{$ABCc$}}
\end{matrix}
\right\}
\\
U_3(k)&=\left\{(Z,L)\in U_2(k); 
\begin{matrix}\text{a level-$\cG_H$ TSQAS over $k$ with}\\
\text{characteristic $\cG_H$ action\phantom{$ABCc$}}
\end{matrix}
\right\},
\end{align*}
\item $(Z',L')\in U_3(k)$, $(Z,L)\in U_3(k)$ are $\cG_H$-isomorphic iff 
they are in the same $G$-orbit, where $G=\PGL(W_1)\times\PGL(W_2)$,
\item there exists a nice quotient $A^{\toric}_{g,K}$ 
of $U^{\dagger}_{g,K}$ by $G$,
\item there exists a nice quotient $SQ^{*\toric}_{g,K}$ of $U_3$ by $G$, 
\item let $SQ^{\toric}_{g,K}:=(SQ^{*\toric}_{g,K})_{\red}$.
\end{enumerate}
\end{summary}

See \cite[Corollaries~10.5, 10.6]{Nakamura10} for $U_3(k)$.  

\section{Moduli for TSQASes}
\label{sec:moduli of TSQAS}
Let $\cO=\cO_N$. In this section we prove
\begin{enumerate}
\item[(i)] 
$A^{\toric}_{g,K}$ is 
the coarse moduli algebraic $\cO$-space  
for the functor of level-$\cG_H$ smooth TSQASes 
over algebraic $\cO$-spaces for any $e_{\min}(K)$,
\item[(ii)] $A^{\toric}_{g,K}\simeq A_{g,K}$  
if $e_{\min}(K)\geq 3$, 
which is the fine moduli scheme.
\end{enumerate}

We also see 
\begin{enumerate}
\item[(iii)] 
$SQ_{g,K}^{\toric}$ is 
the coarse moduli algebraic $\cO$-space for the functor 
of level-$\cG_H$ flat TSQASes 
over reduced algebraic $\cO$-spaces,
\item[(iv)] if 
$e_{\min}(K)\geq 3$, there exists a natural morphism 
$\sq:SQ_{g,K}^{\toric}\to SQ_{g,K}$, 
which is surjective and bijective on $SQ_{g,K}^{\toric}$, 
and the identity on $A_{g,K}$, hence 
$SQ_{g,K}^{\toric}$ is a projective $\cO$-scheme.  
\end{enumerate}

 \begin{thm}\label{thm:fine/coarse moduli AgK} 
Let $K=H\oplus H^{\vee}$ and $N:=|H|$.  
\begin{enumerate}
\item If $e_{\min}(H)\geq 3$, then 
$A^{\toric}_{g,K}\simeq A_{g,K}$ and 
${\cA}^{\toric}_{g,K}$ is represented by the quasi-projective 
formally smooth $\cO$-scheme 
$A_{g,K}$,
\item if $e_{\min}(H)\leq 2$, then
${\cA}^{\toric}_{g,K}$ 
has a normal 
coarse moduli algebraic $\cO$-space 
$A^{\toric}_{g,K}$.
\end{enumerate}
\end{thm}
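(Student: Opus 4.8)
The statement to prove is Theorem~\ref{thm:fine/coarse moduli AgK}, which asserts that the functor $\cA^{\toric}_{g,K}$ of level-$\cG_H$ smooth TSQASes is (i) represented by the quasi-projective formally smooth $\cO$-scheme $A_{g,K}$ when $e_{\min}(H)\geq 3$, with $A^{\toric}_{g,K}\simeq A_{g,K}$, and (ii) has a normal coarse moduli algebraic $\cO$-space $A^{\toric}_{g,K}$ when $e_{\min}(H)\leq 2$. The plan is to produce a morphism of functors $\cA^{\toric}_{g,K}\to \Hom(-,A^{\toric}_{g,K})$ using the construction in Summary~\ref{summary:summary}, and then to verify the universal/coarse properties and the normality/smoothness statement. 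The two cases differ only in that the $G$-action on $U^{\dagger}_{g,K}$ is free (hence a geometric quotient represents the functor) when $e_{\min}(H)\geq 3$, whereas for $e_{\min}(H)\leq 2$ the stabilizers are finite nontrivial, so one only gets a coarse space.

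First I would reduce everything to the structures already built: by Theorem~\ref{thm:uniform geometric quotient}, $A^{\toric}_{g,K}$ exists as the uniform geometric and categorical quotient of $U^{\dagger}_{g,K}$ by $G=\PGL(W_1)\times\PGL(W_2)$, a separated algebraic $\cO$-space. Given any level-$\cG_H$ $T$-smooth TSQAS $(P,\phi^*,\tau)$, I would use Lemma~\ref{sublemma:uniqueness of rigid T TSQAS} to replace it by the unique rigid representative ($\rho(\psi^*,\tau)=U_H$), then use the two very ample line bundles $d_1L$ and $d_2L$ (with $L=q_1M_1+q_2M_2$ in the notation of Subsecs.~\ref{subsec:The scheme U1}--\ref{subsec:The scheme Udagger and U3}, the double-polarization trick of Viehweg) to embed $(P,\phi^*,\tau)$ into $X=X_1\times_{\cO}X_2$ over $H_3$; this produces a $T$-point of $U^{\dagger}_{g,K}$, canonical up to the $G$-action, hence a well-defined $T$-point of $A^{\toric}_{g,K}$. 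Conversely, the universal subscheme $(A_{\univ},L_{\univ})$ over $U^{\dagger}_{g,K}$ carries a natural characteristic $\cG_H$-action by Theorem~\ref{lemma:GL(Wk)orbit}(1), so it defines an element of $\cA^{\toric}_{g,K}(U^{\dagger}_{g,K})$; descent along the $G$-torsor (when $e_{\min}(H)\geq 3$) gives the universal family over $A_{g,K}$. The matching of $\cG_H$-isomorphism classes with $G$-orbits is exactly Theorem~\ref{lemma:GL(Wk)orbit}(2), which makes the functor map injective on points; surjectivity on points is Theorem~\ref{thm:geom fibers over U3} together with Lemma~\ref{sublemma:P,L}.

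For case (i), when $e_{\min}(H)\geq 3$, the $G$-action on $U^{\dagger}_{g,K}$ is free and proper (Theorem~\ref{thm:uniform geometric quotient}(2)), so $U^{\dagger}_{g,K}\to A^{\toric}_{g,K}$ is a $G$-torsor; fppf descent then upgrades the bijection-on-points to an isomorphism of functors, so $\cA^{\toric}_{g,K}$ is represented by $A^{\toric}_{g,K}$. To identify $A^{\toric}_{g,K}$ with $A_{g,K}$, I would note that both represent the same functor: $A_{g,K}$ represents $\cA_{g,K}$ by Theorem~\ref{thm:fine moduli AgK}, and since $e_{\min}(H)\geq 3$ forces $\cL$ very ample (Theorem~\ref{thm:embedding of (Q0,L0)}), a level-$\cG_H$ $T$-smooth TSQAS with its $\phi^*$ is the same datum as a level-$\cG_H$ $T$-smooth PSQAS with its embedding $\phi$ into $\bP(V_H)_T$ — so $\cA^{\toric}_{g,K}\cong\cA_{g,K}$ as functors, whence $A^{\toric}_{g,K}\cong A_{g,K}$ by Yoneda. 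Formal smoothness of $A_{g,K}$ over $\cO$ is already Theorem~\ref{thm:fine moduli AgK}, using the smoothness of the polarized deformation functor $P(A,\lambda(L_0))$ in the separable case (Subsec.~\ref{subsec:deform of sep pol AV}) and the triviality of automorphisms (Lemma~\ref{sublemma:autom trivial}).

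For case (ii), when $e_{\min}(H)\leq 2$, the $G$-action on $U^{\dagger}_{g,K}$ is proper with finite stabilizer but not free (Theorem~\ref{thm:uniform geometric quotient}), so one cannot expect a fine moduli scheme; instead I would check that the geometric quotient $A^{\toric}_{g,K}$ is a \emph{coarse} moduli space, i.e. the functor map $\cA^{\toric}_{g,K}(k)\to A^{\toric}_{g,K}(k)$ is a bijection on geometric points (again Theorems~\ref{lemma:GL(Wk)orbit} and \ref{thm:geom fibers over U3}, valid for $e_{\min}(H)\leq 2$ by the reduction of Subsec.~\ref{subsec:case emin leq 2}, especially Claims~\ref{claim:etale Z/nZ covering} and \ref{claim:final separatedness for e1 leq 2}) and that $A^{\toric}_{g,K}$ is initial among algebraic spaces receiving a map from the functor — this is the universal property of the categorical quotient in Theorem~\ref{thm:uniform geometric quotient}(4). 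Normality of $A^{\toric}_{g,K}$ I would deduce from normality (indeed smoothness) of $U^{\dagger}_{g,K}$: $U^{\dagger}_{g,K}$ is smooth over $\cO$ because, by the étale $(\bZ/n\bZ)^g$-covering trick of Claim~\ref{claim:etale Z/nZ covering} and Claim~\ref{claim:final separatedness for e1 leq 2}, each point of $U^{\dagger}_{g,K}$ is étale-locally a quotient of a smooth level-$\cG_{H^\dagger}$ situation with $e_{\min}(H^\dagger)\geq 3$ (which is smooth by case (i)); since the quotient of a smooth — a fortiori normal — space by a proper action with finite stabilizers is normal, $A^{\toric}_{g,K}$ is normal. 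The main obstacle I anticipate is the bookkeeping around the \emph{descent/quotient} step when $e_{\min}(H)\leq 2$: one must be careful that $\cG_H$-isomorphisms (allowing the scalar $c$ in Definition~\ref{subdefn:k-isom of TSQAS}, and the $M\in\Pic(T)$ twist in Definition~\ref{subdefn:morp of T-TSQAS}) correspond exactly to $G$-orbit equivalence even in families, not merely on geometric points, so that the coarse-space property is clean; this is where one leans hardest on Theorem~\ref{lemma:GL(Wk)orbit}(2) and the rigidification Lemma~\ref{sublemma:uniqueness of rigid T TSQAS}, and on the fact that finite stabilizers prevent any stronger (fine) statement.
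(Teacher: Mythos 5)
Your treatment of the functorial part follows the paper closely: Step 1 (the morphism of functors $\cA^{\toric}_{g,K}\to h_{A^{\toric}_{g,K}}$ via the double-polarization embedding into $U^{\dagger}_{g,K}$ up to the $G$-action), the identification $\cA^{\toric}_{g,K}\cong\cA_{g,K}$ when $e_{\min}(H)\geq 3$ and hence $A^{\toric}_{g,K}\simeq A_{g,K}$, and the verification of the two coarse-moduli axioms for $e_{\min}(H)\leq 2$ (bijectivity on geometric points via Theorem~\ref{lemma:GL(Wk)orbit}, the universal property via the categorical quotient of Theorem~\ref{thm:uniform geometric quotient}) are all essentially the paper's Steps 1--3, with your descent/Yoneda phrasing replacing the paper's explicit pair of mutually inverse morphisms; that part is sound.

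The gap is in the normality argument for case (ii). You deduce normality of $A^{\toric}_{g,K}$ from smoothness (or normality) of $U^{\dagger}_{g,K}$, and you justify the latter by saying each point of $U^{\dagger}_{g,K}$ is \'etale-locally a quotient of a smooth level-$\cG_{H^{\dagger}}$ situation with $e_{\min}(H^{\dagger})\geq 3$ via Claim~\ref{claim:etale Z/nZ covering}. But that claim operates on an individual TSQAS $(P,L)$: it produces an \'etale cover $(P^{\dagger},L^{\dagger})$ of a \emph{different} polarization type $K^{\dagger}=K\oplus(\bZ/n\bZ)^{2g}$, embedded in a different projective space and parameterized by a different Hilbert scheme; it yields no \'etale (or any) morphism between the parameter schemes for $K^{\dagger}$ and for $K$, so smoothness cannot be transferred this way. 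Moreover, even granting that $U^{\dagger}_{g,K}$ is normal, the assertion that a geometric quotient by a proper action with finite nontrivial stabilizers is normal requires knowing the quotient is \'etale-locally a finite-group quotient of the source --- which is precisely the local computation you are skipping. The paper closes this by working directly on the quotient: for a rigid $\sigma_0=(A,L_0,\phi_0^*,\cG_H,\tau_0)$ it identifies the deformation functor $F_{\sigma_0}$ of the corresponding point of $A^{\toric}_{g,K}$ with $P(A,\lambda(L_0))/G_0$, where $G_0=\Aut(\sigma_0)$ is a finite group scheme and $P(A,\lambda(L_0))$ is pro-represented by the formally smooth ring $\cO_W/{\frak a}$ of Subsec.~\ref{subsec:deform of sep pol AV}; hence $F_{\sigma_0}$ is pro-represented by $(\cO_W/{\frak a})^{G_0-\inv}$, which is normal as the finite-group invariants of a normal complete local ring, and Serre's criterion (Remark~\ref{subrem:Serre}) carries (R$_1$)+(S$_2$) from the completion back to the local ring. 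To repair your route you would need either to reproduce this deformation computation, or to prove normality of $U^{\dagger}_{g,K}$ independently \emph{and} supply a local slice statement for the $G$-action.
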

\begin{proof}We can prove this almost in parallel 
to Theorem~\ref{thm:fine moduli AgK}. \par

\medskip
Let $\cO=\cO_N$. Let $d_{\nu}$, $W_{\nu}$
and  $W_{\nu}(K)=W_{\nu}\otimes_{\cO} V_{d_{\nu},H}$ 
be the same as in Subsec.~\ref{subsec:The scheme H1xH2}. 
Similarly let $(X_{\nu},L_{\nu})$, 
$H_{\nu}$, $(X,L)$ and $H_3=H_1\times_{\cO} H_2$ be the same as in 
Subsections~\ref{subsec:The scheme U1}--\ref{subsec:The scheme U2}.
\par

\medskip
{\bf Step 1.}\quad 
Let $T$ be any $\cO$-scheme, 
and $(P,\cL,\phi^*,\cG,\tau)$ any level-$\cG_H$ $T$-smooth TSQAS with 
$\pi:P\to T$ the projection. Then we define a natural morphism 
$\bar\eta:T\to A^{\toric}_{g,K}$ as follows.\par
The sheaf $\pi_*(d_{\nu}\cL)$ is a vector bundle of rank $d_{\nu}^gN$ over $T$.
Let $U_i$ be an affine covering of $T$ 
which trivializes both $\pi_*(d_{\nu}\cL)$.
Then 
$$\Gamma(U_i,\pi_*(d_{\nu}\cL))=\Gamma(P_{U_i},d_{\nu}\cL)\simeq 
(\cW_{\nu})_{U_i}\otimes_{\cO} V_{d_{\nu},H}$$
for some  locally $O_T$-free module $\cW_{\nu}$ 
of rank $d_{\nu}^g$ with trivial $\cG$-action. 

Since 
$d_{\nu}\cL_t$ is very ample, we can choose 
closed $\cG$-immersions 
$$(\phi_{\nu})_{U_i}:P_{U_i}\to \bP(W_{\nu}(K))_{U_i}$$ by 
the linear system associated to $\pi_*(d_{\nu}\cL)_{U_i}$ 
such that 
\begin{equation}\label{eq:freedom of isom}
\rho((\phi_{\nu})_{U_i}^*,\tau_{U_i})=\id_{W_{\nu}}\otimes U_{d_{\nu},H}
\end{equation}
 We caution that $(\phi_{\nu})_{U_i}$ is 
not unique, there is freedom of isomorphisms by 
$\GL(W_{\nu},O_{U_i})$. 

By (\ref{eq:freedom of isom}) 
the image of $(\phi_{\nu})_{U_i}$ is $\cG$-invariant, so 
the image of $(\phi_{\nu})_t$ is $\cG_H$-invariant for any $t\in T$, 
 Since $\cL=q_1d_1\cL+q_2d_2\cL$, 
$\cL_{U_i}$ is $\cG_{U_i}$-linearized. Hence 
$(P_{U_i},\cL_{U_i})$ has a $\cG_{U_i}$-action, that is, fiberwise 
$(P_t,\cL_t)$ has a $\cG_H$-action. By the definition of 
level-$\cG_H$ TSQASes, this $\cG_H$-action on $(P_t,\cL_t)$ is characteristic. 
Hence the image of $(\phi_{\nu})_{U_i}$ is contained in $U^{\dagger}_{g,K}$ 
by Theorem~\ref{lemma:GL(Wk)orbit} or Summary~\ref{summary:summary}. 
It follows that  $(P_{U_i},\cL_{U_i})$ 
is the pull back by a morphism 
$U_i\to U^{\dagger}_{g,K}$ of the universal subscheme 
$(X,H_3)$ in Subsec~\ref{subsec:The scheme H1xH2}.\par
On $U_i\cap U_j$,  
$\Gamma(U_i,\pi_*(d_{\nu}\cL))$ and $\Gamma(U_j,\pi_*(d_{\nu}\cL))$ are 
identified by $\GL(W_{\nu}\otimes \Gamma(O_{U_i\cap U_j}))$.
Thus we have a morphism 
$$j: T\to U^{\dagger}_{g,K}/\PGL(W_1)\times \PGL(W_2)=A^{\toric}_{g,K},
$$where $G=\PGL(W_1)\times \PGL(W_2)$. 
This induces a morphism of functors
\begin{equation}
\label{eq:morphism of AgKtoric to AgKtoric}
f: \cA^{\toric}_{g,K}\to h_W,\quad W:=A^{\toric}_{g,K}.
\end{equation}

The argument so far is true regardless of the value of $e_{\min}(H)$.
\par

\medskip
{\bf Step 2.}
Now we assume $e_{\min}(H)\geq 3$. \par
\smallskip
{\em Step 2-1.}\quad
Any level-$\cG_H$ $T$-smooth TSQAS 
is a level-$\cG_H$ $T$-smooth PSQAS 
with $\cV=\pi_*(\cL)$, 
and vice versa. 
Hence the functors are the same : 
${\cA}^{\toric}_{g,K}={\cA}_{g,K}$.\par
\smallskip
{\em Step 2-2.}\quad 
Now we assume $e_{\min}(H)\geq 3$. 
There is the universal subscheme over $U^{\dagger}_{g,K}$
(\ref{eq:universal subschemes})
$$(A_{\univ},\cV_{\univ},L_{\univ},\phi_{\univ},\cG_{\univ},\tau_{\univ})$$ 
where $\cG_{\univ}=\cG_H\times U^{\dagger}_{g,K}$, $\tau_{\univ}=U_H$ 
(acting on $\bP(V_H)_{U^{\dagger}_{g,K}}$), 
$\cV_{\univ}=V_H\otimes O_{U^{\dagger}_{g,K}}$  and 
we choose a closed immersion 
$\phi_{\univ}:A_{\univ}\to \bP(V_H)_{U^{\dagger}_{g,K}}$, such that 
$\rho(\phi_{\univ},\tau_{\univ})=U_H$. This is a rigid 
level-$\cG_H$ $U^{\dagger}_{g,K}$-smooth PSQAS. 
Hence we have a morphism $\eta^{\dagger}:U^{\dagger}_{g,K}\to A_{g,K}$ 
because $A_{g,K}$ is the fine moduli scheme of $\cA_{g,K}$ 
by Theorem~\ref{thm:fine moduli AgK}. 
Since the morphism 
$\eta^{\dagger}$ is $G=\PGL(W_1)\times \PGL(W_2)$-invariant, 
we have a morphism
 $$\bar\eta:A^{\toric}_{g,K}\to A_{g,K}.$$\par
\smallskip{\em Step 2-3.}\quad 
Conversely since $A_{g,K}$ is the fine moduli scheme for $\cA_{g,K}$, 
there exists the universal level-$\cG_H$ PSQAS 
$$\pi_A :(Z_A,\cV_A,L_A,\phi_A,\cG_A,\tau_A)\to A_{g,K}.$$  Then 
we apply Step 1 to the universal level-$\cG_H$ PSQAS over $A_{g,K}$.
We have a morphism 
from $A_{g,K}$ to $A^{\toric}_{g,K}$, 
which is evidently the inverse of $\bar\eta$. 
This proves that $\bar\eta$ is an isomorphism. This proves 
the first assertion of Theorem~\ref{thm:fine/coarse moduli AgK} 
by Theorem~\ref{thm:fine moduli AgK}. See \cite[Lemma~11.5]{Nakamura10}.\par
\medskip
{\bf Step 3.}\quad 
We consider next the case $e_{\min}(H)\leq 2$. 
By Step~1 (\ref{eq:morphism of AgKtoric to AgKtoric}), we have a 
morphism of functors 
$f: \cA_{g,K}^{\toric}\to h_W$ where $W:=A_{g,K}^{\toric}$.  
To prove that $A^{\toric}_{g,K}$ is a 
coarse moduli algebraic $\cO$-space 
for ${\cA}^{\toric}_{g,K}$, 
it remains to prove 
\begin{enumerate}
\item[(a)]  $f(\Spec k) : \cA^{\toric}_{g,K}(\Spec k)\to A^{\toric}_{g,K}(\Spec k)$ is bijective 
for any algebraically closed field $k$ over $\cO$,
\item[(b)]  For any algebraic $\cO$-space $V$, and any 
morphism $g : \cA^{\toric}_{g,K}\to h_V$, 
there is a unique morphism $\chi:h_W\to h_V$ 
such that $g=\chi\circ f$,
\end{enumerate}where $W=A_{g,K}^{\toric}$, 
$h_V$ is the functor defined by $h_V(T)=\Hom(T,V)$.

The assertion (b) is proved similarly to Step~1 and Step~2-2.\par 
The assertion (a) follows  
from Theorem~\ref{lemma:GL(Wk)orbit}. In fact, let 
$$\sigma_j:=(Z_j,L_j,\phi_j^*,\cG_H,\tau_j)$$ be a level $\cG_H$ 
smooth $k$-TSQAS. Since $A^{\toric}_{g,K}$ is 
the orbit space of $U_{g,K}$ by 
$G:=\PGL(W_1)\times\PGL(W_2)$, $(Z_1,L_1)$ and $(Z_2,L_2)$ 
determine the same point of $A^{\toric}_{g,K}$ iff 
$(Z_1,L_1)$ and $(Z_2,L_2)$ have the same $G$-orbit. 
By Theorem~\ref{lemma:GL(Wk)orbit}, 
$(Z_1,L_1)$ and $(Z_2,L_2)$ have the same $G$-orbit iff 
$(Z_1,L_1)$ and $(Z_2,L_2)$ are $\cG_H$-isomorphic 
with respect to their characteristic $\cG_H$-action in the sense of 
Remark~\ref{subrem:k-isom}. 
Thus it suffices to prove 
that $\sigma_1\simeq\sigma_2$ iff $(Z_1,L_1)$ and $(Z_2,L_2)$ 
are $\cG_H$-isomorphic. \par
If $\sigma_1\simeq \sigma_2$, then 
by definition $(Z_1,L_1)\simeq (Z_2,L_2)$. \par
Conversely assume $(Z_1,L_1)\simeq (Z_2,L_2)$ $\cG_H$-isomorphic 
with respect to their characteristic $\cG_H$-action. 
Let $f:(Z_1,L_1)\to (Z_2,L_2)$ be the $\cG_H$-isomorphism. 
Hence $(f^*)^{-1}\rho_{\tau_1,L_1}(g)f^*=\rho_{\tau_2,L_2}(g)$. 
Meanwhile we can choose a  $\cG_H$-isomorphism 
$\phi_j^*:V_H\otimes k\to \Gamma(Z_j,L_j)$
such that $\rho(\phi_j^*,\tau_j)=U_H$. Let 
$h:=(\phi_1^*)^{-1}f^*\phi_2^*$. Then we see 
$U_Hh=hU_H$. 
Since $U_H$ is an irreducible representation of $\cG_H$, 
$h$ is a nonzero scalar. Hence $f^*\phi_2^*=c\phi_1^*$ for some unit $c$. 
It follows 
from Definition~\ref{subdefn:morp of T-TSQAS} that $\sigma_1\simeq\sigma_2$. 
This proves (a).  
Thus $A^{\toric}_{g,K}$ is a 
coarse moduli algebraic $\cO$-space 
for ${\cA}^{\toric}_{g,K}$. 
\par
\medskip
{\bf Step 4.}\quad Finally we prove  
that $A^{\toric}_{g,K}$ is reduced 
for $e_{\min}(H)\leq 2$. We use the same notation as in 
the proof of Theorem~\ref{thm:fine moduli AgK}. Let $k$ be any 
algebraically closed field with $k\ni 1/N$, $(A,L_0)$ 
be an abelian variety over $k$ with $L_0$ $\cG_H$-linearized, and 
$\tau_0$ be the $\cG_H$-action 
associated to the  $\cG_H$-linearization of $L_0$. 
Let $\sigma_0:=(A,L_0,\phi_0^*,\cG_H,\tau_0)$ be 
a rigid level-$\cG_H$ $k$-smooth TSQAS. \par
Let $\cC=\cC_W$  be
the category of local Artinian $W$-algebra with $k=R/m_R$. 
We define a subfunctor $F:=F_{\sigma_0}$ of $\cA_{g,K}^{\toric}$ by 
\begin{align*}
F(R)&=\left\{\sigma:=(Z,L,\phi^*,(\cG_H)_R,\tau)\in\cA_{g,K}^{\toric}(R);
\sigma\otimes k\simeq \sigma_0
\right\}
\end{align*}where $R\in\cC$ and the isomorphism 
$\sigma\otimes k\simeq \sigma_0$ is not fixed in $F$. \par
Let $(X,\cL)$, $K_{\suniv}=\ker(\lambda(\cL))$, 
$\cG_{\suniv}:=\cG(X,\cL):=\cL^{\times}_{K_{\suniv}}$,   
$\cV_{\suniv}:=\Gamma(X,\cL)$ and the action 
$\tau_{\suniv}$ of $\cG_{\suniv}$ on $(X,\cL)$ 
be the same as in Subsec~\ref{subsec:deform of sep pol AV}.
Since $\lambda(\cL):X\to X^{\vee}$ 
is separable, 
$K_{\suniv}$ is isomorphic to $(H\oplus H^{\vee})_{\cO_W}$, 
hence $\cG_{\suniv}\simeq (\cG_H)_{\cO_W}$. 
If $e_{\min}(K_{\suniv})\geq 3$, we choose the unique closed 
$\cG_H$-immersion $\phi_{\suniv}$ of $X$ into $\bP(\cV_{\suniv})
\simeq \bP(V_H)_{\cO_W}$ such that 
$\rho(\phi_{\suniv},\tau_{\suniv})=U_H$. If $e_{\min}(K_{\suniv})\leq 2$, 
then we choose the unique 
$\cG_H$-isomorphism $\phi_{\suniv}^*:(V_H)_{\cO_W}\to \Gamma(X,\cL)$ such that 
$\rho(\phi^*_{\suniv},\tau_{\suniv})=U_H$. 
   In any case
 we have a level-$\cG_H$ smooth TSQAS over $\cO_W$
$$(X,\cL,\cV_{\suniv},\phi^*_{\suniv},\cG_{\suniv},\tau_{\suniv}).
$$

Now we shall define a morphism of functors 
$h:P(A,\lambda(L_0))\to F$ over 
$\cC=\cC_W$. 
Let $R\in\cC$. 
By Subsec~\ref{subsec:deform of sep pol AV}, 
for $(Z,\lambda(L))\in P(A,\lambda(L_0))(R)$, $R\in\cC$, 
we have a unique morphism
$$\rho\in\Hom(\Spec R,\Spf \cO_W)=\Hom_{\hat{\cC}}(\cO_W,R)$$ such that 
$(Z,\lambda(L))=\rho^*(X,\lambda(\cL))$.
Then we define 
$$h(Z,\lambda(L))=\rho^*(X,\cL,\cV_{\suniv},\phi^*_{\suniv},\cG_{\suniv},\tau_{\suniv})\in F(R).
$$ 

One can check that this is well-defined.

Subsec.~\ref{subsec:deform of sep pol AV} shows that 
$h(R):P(A,\lambda(L_0))(R)\to F(R)$ is surjective for any $R\in\cC$.
In general, $h$ is not injective. 
Let 
\begin{equation*}
G_0:=\Aut(\sigma_0)=\{f\in\Aut(A); f(0)=0,\ f^*\sigma_0\simeq \sigma_0\},
\end{equation*}where $0$ is the zero of $A$. 
Since $f^*L_0\simeq L_0$ for any $f\in G_0$, we have 
$f^*(3L_0)\simeq 3L_0$. Since $3L_0$ is very ample, $G_0$ is 
an algebraic $k$-group. $G_0$
has trivial connected part 
because $f(0)=0$ for any $f\in G_0$.
Hence $G_0$ is a finite group scheme, acting
nontrivially on $P(A,\lambda(L_0))$.  
Then  
\begin{align*}
F(R)&=P(A,\lambda(L_0))(R)/G_0\\
&=\Hom(\cO_W/\frak a,R)/G_0\\
&=\Hom((\cO_W/\frak a)^{G_0-\inv},R)
\end{align*}
whence $F$ is pro-represented by 
$(\cO_W/\frak a)^{G_0-\inv}$, which is normal. 
This proves that the formal completion of any local ring of $A^{\toric}_{g,K}$ is normal. Hence it satisifies 
$(\text{R}_1)$ and $(\text{S}_2)$ by Serre's criterion. 
See Remark~\ref{subrem:Serre}. 
This implies 
that 
any local ring of $A^{\toric}_{g,K}$ satisfies 
$(\text{R}_1)$ and $(\text{S}_2)$. 
Hence $A^{\toric}_{g,K}$ is normal.  
\end{proof}

\begin{subrem}\label{subrem:Serre}
Let $A$ be a noetherian local ring.  Then 
$A$ is normal if and only if 
$(\text{R}_1)$ and $(\text{S}_2)$ are true for $A$, 
where 
\begin{enumerate}
\item  $(\text{S}_2)$ is true if and only if 
$\depth(A_p)\geq \inf(2,\text{ht}(p))$
for all $p\in\Spec(A)$,
\item $(\text{R}_1)$ is true if and only if $A$ is codimension one regular. 
\end{enumerate}

See \cite[Theorem~39]{Matsumura70} and	 
\cite[IV$_2$, 5.8.5 and 5.8.6]{EGA}.	
\end{subrem}

\begin{thm}\label{thm:coarse moduli}  
{\rm (\cite{Nakamura10})}
Let $N=|H|$ and $SQ^{\toric}_{g,K}=(SQ^{*\toric}_{g,K})_{\red}$. 
For any $K=H\oplus H^{\vee}$,
the functor ${\cal{SQ}}^{\toric}_{g,K}$ 
of level-$\cG_H$ TSQASes $(P,\phi^*,\tau)$ over reduced algebraic 
$\cO$-spaces is coarsely represented 
by a proper (hence separated) reduced algebraic $\cO$-space
$SQ^{\toric}_{g,K}$.  
\end{thm}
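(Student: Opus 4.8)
The plan is to deduce Theorem~\ref{thm:coarse moduli} by combining the three main ingredients that have already been put in place: the quotient construction of Theorem~\ref{thm:uniform geometric quotient} (yielding the separated algebraic $\cO$-space $SQ^{*\toric}_{g,K}$ together with its reduction $SQ^{\toric}_{g,K}=(SQ^{*\toric}_{g,K})_{\red}$), the orbit-vs-isomorphism dictionary of Theorem~\ref{lemma:GL(Wk)orbit}(2), and the stable reduction/uniqueness package of Theorems~\ref{thm:refined stable reduction} and \ref{thm:separatedness} (the latter supplied for $e_{\min}(H)\le 2$ by Claims~\ref{claim:separatedness for e1 leq 2} and \ref{claim:final separatedness for e1 leq 2}). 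The structure of the argument is parallel to Step~1 and Step~3 of the proof of Theorem~\ref{thm:fine/coarse moduli AgK}, now carried out over the bigger open set $U_3$ rather than $U^{\dagger}_{g,K}$.

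First I would construct the canonical morphism of functors
$$f:\cS\cQ^{\toric}_{g,K}\longrightarrow h_W,\qquad W:=SQ^{\toric}_{g,K}.$$
Given a reduced algebraic $\cO$-space $T$ and a level-$\cG_H$ $T$-flat TSQAS $(P,\cL,\phi^*,\cG,\tau)$, replace it by its rigid model (Lemma~\ref{sublemma:uniqueness of rigid T TSQAS}), form the rank-$d_\nu^gN$ bundles $\pi_*(d_\nu\cL)$ for $\nu=1,2$, trivialize them over an affine (in the \'etale topology) covering $U_i$ of $T$, and use the fiberwise very-ampleness of $d_\nu\cL_t$ (Lemma~\ref{sublemma:P,L}(3)) to get $\cG$-equivariant closed immersions $(\phi_\nu)_{U_i}:P_{U_i}\to\bP(W_\nu(K))_{U_i}$ normalized by $\rho((\phi_\nu)_{U_i}^*,\tau)=\id_{W_\nu}\otimes U_{d_\nu,H}$. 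These land in $U_3$ by Theorem~\ref{lemma:GL(Wk)orbit}(1) (the fiberwise $\cG_H$-action is characteristic by hypothesis (vi${}^*$) of Definition~\ref{subdefn:Tflat PSQAS}), the transition data on $U_i\cap U_j$ lies in $\PGL(W_1)\times\PGL(W_2)=G$, and hence the composite descends to a well-defined morphism $T\to U_3/G=SQ^{*\toric}_{g,K}$, then to $SQ^{\toric}_{g,K}$ since $T$ is reduced. Independence of the rigidification and of the covering follows from Schur's lemma (Lemma~\ref{lemma:lemma of Schur}) exactly as in Step~1 of Theorem~\ref{thm:fine/coarse moduli AgK}.

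Next I would verify the two defining properties of a coarse moduli space. For surjectivity and injectivity of $f(\Spec k)$ on geometric points: every $(Z,L)\in U_3(k)$ is, by Theorem~\ref{lemma:GL(Wk)orbit}(1), a level-$\cG_H$ $k$-TSQAS with characteristic action, so $f(\Spec k)$ is surjective; and two such have the same point of $SQ^{\toric}_{g,K}$ iff they are in the same $G$-orbit iff they are $\cG_H$-isomorphic, by Theorem~\ref{lemma:GL(Wk)orbit}(2), which together with Definition~\ref{subdefn:k-isom of TSQAS} and the Schur-lemma bookkeeping of Step~3 of Theorem~\ref{thm:fine/coarse moduli AgK} gives isomorphism of the full level-$\cG_H$ data. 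For the universal property: given any algebraic $\cO$-space $V$ and a morphism of functors $\cS\cQ^{\toric}_{g,K}\to h_V$, apply it to the restriction to $U_3$ (via reducedness of $U_3$ after passing to $(U_3)_{\red}=U_3$, which holds since $U_3$ is defined as a closure of a reduced scheme — here one uses that $SQ^{\toric}_{g,K}$ is taken reduced) of the universal family $(Z_{\univ},L_{\univ})$ of (\ref{eq:universal subschemes}); this is a $G$-invariant morphism $U_3\to V$, hence factors through the uniform categorical quotient $SQ^{*\toric}_{g,K}$ (Theorem~\ref{thm:uniform geometric quotient}(4)), hence through $SQ^{\toric}_{g,K}$, giving the required $\chi:h_W\to h_V$ with $g=\chi\circ f$, and uniqueness follows because the quotient is categorical.

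Finally, properness: by Theorem~\ref{thm:uniform geometric quotient}(4) the space $SQ^{*\toric}_{g,K}$ is separated, and $SQ^{\toric}_{g,K}=(SQ^{*\toric}_{g,K})_{\red}$ inherits separatedness. For the valuative criterion of properness one takes a CDVR $R$ and a map $\Spec k(\eta)\to SQ^{\toric}_{g,K}$; lifting $\eta$ into $U^{\dagger}_{g,K}$ after a finite base change and pulling back the universal family gives an abelian variety $(Z_\eta,\cL_\eta)$, to which Theorem~\ref{thm:refined stable reduction} associates a $T$-flat TSQAS $(P,\cL_P)$ over $R$ (again after finite base change); by Theorem~\ref{thm:geom fibers over U3} this $(P,\cL_P)$ is, up to the canonical level structure, the pullback of $(Z_{\univ},L_{\univ})$ along a morphism $\Spec R\to U_3$, hence defines the desired extension $\Spec R\to SQ^{\toric}_{g,K}$, and its uniqueness is precisely the uniqueness statement of Theorem~\ref{thm:separatedness} (in the form of Claim~\ref{claim:separatedness for e1 leq 2} when $e_{\min}(H)\le 2$, and \cite[Theorem~10.4]{Nakamura10} when $e_{\min}(H)\ge 3$). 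I expect the main obstacle to be the bookkeeping in the properness step: one must check that the two a priori different level-$\cG_H$ structures — the one coming from the $U_3$-family and the one coming from the stable-reduction family $(P,\cL_P)$ — agree after the rigidification, which is exactly the content that Theorem~\ref{thm:geom fibers over U3} is designed to supply, so the real work is in invoking it correctly and in handling the finite base changes so that the resulting extension is genuinely unique rather than unique up to the finite stabilizer appearing in Theorem~\ref{thm:uniform geometric quotient}(3).
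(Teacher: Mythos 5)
Your proposal is correct and follows essentially the same route as the paper: the paper's proof constructs $f$ exactly as in Step 1 of Theorem~\ref{thm:fine/coarse moduli AgK} (now landing in $U_3$ and descending through $SQ^{*\toric}_{g,K}$ to its reduction), and then verifies bijectivity on geometric points and the universal property exactly as in Step 3, using the universal family over $U_3$ and the uniform categorical quotient. The only difference is that you spell out the properness step via the valuative criterion (stable reduction, Theorem~\ref{thm:geom fibers over U3}, and the uniqueness claims for $e_{\min}(H)\leq 2$), which the paper leaves implicit behind its citation of Theorems~\ref{thm:refined stable reduction} and \ref{thm:separatedness}.
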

\begin{proof}We imitate the proof of 
Theorem~\ref{thm:fine/coarse moduli AgK}.   
Let $(P\overset{\pi}{\to}T,L,\phi^*,\cG,\tau)$ be a level-$\cG_H$ 
$T$-flat TSQAS with $T$ reduced. Then by Step 1 of 
Theorem~\ref{thm:fine/coarse moduli AgK}, we have a morphism 
$$j:T\to U_3/G=SQ_{g,K}^{*\toric},
$$where $G=\PGL(W_1)\times\PGL(W_2)$. Hence we have a morphism 
$$j_{\red}:T_{\red}=T\to (SQ_{g,K}^{*\toric})_{\red}=:SQ_{g,K}^{\toric}.
$$
 This induces a morphism of functors
\begin{equation}
\label{eq:morphism of SQgKtoric to SQgKtoric}
f: {\cS}{\cQ}^{\toric}_{g,K}\to h_W, \quad 
W=SQ_{g,K}^{\toric}.
\end{equation} 

As in Theorem~\ref{thm:fine/coarse moduli AgK}~Step~3, 
it remains to prove 
\begin{enumerate}
\item[(a)]  $f(\Spec k) : {\cS}{\cQ}^{\toric}_{g,K}(\Spec k)\to 
SQ^{\toric}_{g,K}(\Spec k)$ is bijective 
for any algebraically closed field $k$ over $\cO$,
\item[(b)]  For any algebraic $\cO$-space $V$, and any 
morphism $g : {\cS}{\cQ}^{\toric}_{g,K}\to h_V$, 
there is a unique morphism $\chi:h_W\to h_V$ 
such that $g=\chi\circ f$,
\end{enumerate}where $h_V$ is the functor defined by $h_V(T)=\Hom(T,V)$. 
For a reduced space $T$, $h_V(T)=h_{V_{\red}}(T)$, that is, 
$h_V=h_{V_{\red}}$ over $Space_{\red}$. Hence we may assume $V$ is reduced.

We shall prove (b). Let $g:{\cS}{\cQ}_{g,K}^{\toric}\to h_V$ 
be any morphism for a reduced algebraic $\cO$-space $V$. 
The universal subscheme $(Z_{\univ},L_{\univ})$ has a natural 
$\cG_H$-action which is characteristic for any fiber 
$(Z_{\univ,u},L_{\univ,u})$ $(u\in U_3)$. We choose 
$\phi^*_{\univ}=\id_{V_H\otimes O_{U_3}}$. Thus we have 
a rigid level-$\cG_H$ $U_3$-flat TSQAS 
$(Z_{\univ},L_{\univ},\phi^*_{\univ},\cG_H,\tau_{\univ})$ 
over $U_3$. Hence by $g:{\cS}{\cQ}_{g,K}^{\toric}\to h_V$ we have a 
morphism $\widetilde\chi:U_3\to V$, which turns out to be $G$-invariant. 
Hence we have a morphism 
${\bar \chi}: SQ_{g,K}^{*\toric}\to V$, 
hence $\chi:={\bar \chi}_{\red}: SQ_{g,K}^{\toric}\to V_{\red}=V$. 
It is clear that $g=\chi\circ f$.  
 \par
By the same argument as in the proof of 
Theorem~\ref{thm:fine/coarse moduli AgK}~Step~3~(a), we see 
${\cS}{\cQ}_{g,K}^{\toric}(\Spec k)
=SQ_{g,K}^{*\toric}(k)=SQ_{g,K}^{\toric}(k)$. This proves (a). 
This completes the proof. 
\end{proof}

\begin{thm} {\rm (\cite{Nakamura10})} Suppose $e_{\min}(K)\geq 3$. 
 Then 
\begin{enumerate}
\item both $SQ_{g,K}$ and 
$SQ^{\toric}_{g,K}$ are compactifications of $A_{g,K}$,
\item there exists a bijective $\cO$-morphism 
$$\sq:SQ^{\toric}_{g,K}\to SQ_{g,K}$$ 
extending the identity of $A_{g,K}$, 
\item their normalizations are isomorphic : 
$(SQ^{\toric}_{g,K})^{\norm}\simeq 
(SQ_{g,K})^{\norm}$.
\end{enumerate}
\end{thm}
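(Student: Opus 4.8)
The plan is to deduce all three assertions from results already established in the excerpt, principally Theorem~\ref{thm:fine moduli SQgK}, Theorem~\ref{thm:fine/coarse moduli AgK}, Theorem~\ref{thm:coarse moduli}, and Theorem~\ref{thm:geom fibers over U3}. Assertion~(1) is essentially a restatement: by Theorem~\ref{thm:fine moduli SQgK} the scheme $SQ_{g,K}$ is projective and reduced and contains $A_{g,K}$ as a Zariski open subset (it was defined in~(\ref{eq:SQgK}) as the closure $\overline{A_{g,K}}$), while by Theorem~\ref{thm:coarse moduli} the space $SQ^{\toric}_{g,K}$ is proper, reduced, and contains $A^{\toric}_{g,K}\simeq A_{g,K}$ (the isomorphism being Theorem~\ref{thm:fine/coarse moduli AgK}~(1), valid since $e_{\min}(K)\geq 3$) as an open subspace via the quotient construction of Summary~\ref{summary:summary}. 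So both are compactifications of $A_{g,K}$, and since $A^{\toric}_{g,K}$ is dense in $SQ^{\toric}_{g,K}$ by construction (it is the image of the dense open $U^{\dagger}_{g,K}\subset U_3$), the two open immersions are compatible with the identity on $A_{g,K}$.

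For assertion~(2), I would construct $\sq$ by using that $SQ_{g,K}$ is a fine moduli scheme. The universal rigid level-$\cG_H$ $U_3$-flat TSQAS $(Z_{\univ},L_{\univ},\id_{V_H\otimes O_{U_3}},\cG_H,\tau_{\univ})$ of Theorem~\ref{thm:coarse moduli}'s proof is, when $e_{\min}(K)\geq 3$, also a level-$\cG_H$ $U_3$-flat PSQAS (by Sublemma~\ref{sublemma:P,L}~(3) very-ampleness holds after replacing $\cL$ by a suitable power, but more relevantly $U_3$ parametrizes closed subschemes of the fixed projective space, so each fiber is already embedded); hence it defines, by the representability in Theorem~\ref{thm:fine moduli SQgK}, a morphism $U_3\to SQ_{g,K}$. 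This morphism is $G=\PGL(W_1)\times\PGL(W_2)$-invariant because two points of $U_3$ in the same $G$-orbit give $\cG_H$-isomorphic TSQASes (Theorem~\ref{lemma:GL(Wk)orbit}~(2)), hence define the same point of the moduli $SQ_{g,K}$; thus it descends through the uniform categorical quotient $U_3/G=SQ^{*\toric}_{g,K}$, and then through $(\cdot)_{\red}$ since $SQ_{g,K}$ is reduced, yielding $\sq:SQ^{\toric}_{g,K}\to SQ_{g,K}$. That $\sq$ extends $\id_{A_{g,K}}$ is immediate on the open loci. Bijectivity on $k$-points follows by comparing the two point-descriptions: $SQ^{\toric}_{g,K}(k)$ is the set of level-$\cG_H$ TSQASes over $k$ modulo $\cG_H$-isomorphism (Theorem~\ref{thm:coarse moduli}, via Theorem~\ref{lemma:GL(Wk)orbit}~(1)), $SQ_{g,K}(k)$ is the set of level-$\cG_H$ PSQASes $(Q_0,i,U_H)$ (Theorem~\ref{thm:SQgK}), and $\sq$ sends a TSQAS $(P_0,\cL_0)$ to the PSQAS that is its image under the embedding by the characteristic subspace $V(P_0,\cL_0)$; conversely, by Theorem~\ref{thm:refined stable reduction}~(2) every PSQAS $(Q_0,\cL_0)$ has a normalization $(P_0,\cL_0)$ which is a TSQAS, and by the uniqueness statements (Theorem~\ref{thm:separatedness}, \cite[Corollary~3.9]{Nakamura10}) this sets up mutually inverse maps on $k$-points.

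Assertion~(3) then follows from (2): $\sq$ is a bijective morphism of reduced, separated, proper (indeed $SQ_{g,K}$ is projective, and by Theorem~\ref{thm:fine moduli SQgK}'s proof $SQ^{\toric}_{g,K}$ becomes a projective $\cO$-scheme once $\sq$ is known) $\cO$-schemes, hence it is finite and birational (it is an isomorphism over the dense open $A_{g,K}$). A finite birational morphism onto a reduced scheme is dominated by the normalization, and in fact induces an isomorphism on normalizations: $\sq$ factors as $(SQ^{\toric}_{g,K})^{\norm}=(SQ_{g,K})^{\norm}\to SQ^{\toric}_{g,K}\xrightarrow{\ \sq\ }SQ_{g,K}$, and since the composite is the normalization of $SQ_{g,K}$, we get $(SQ^{\toric}_{g,K})^{\norm}\simeq(SQ_{g,K})^{\norm}$. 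One small point to verify here is that $\sq$ is indeed finite (not merely quasi-finite and proper): this is automatic since it is a proper morphism with finite fibers.

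\textbf{Main obstacle.} The delicate step is verifying that $\sq$ is well-defined and bijective on $k$-points, i.e.\ matching up the moduli-theoretic descriptions of $SQ^{\toric}_{g,K}$ and $SQ_{g,K}$ through the PSQAS/TSQAS normalization correspondence of Theorem~\ref{thm:refined stable reduction}~(2)--(3). In particular one must check that passing from a TSQAS fiberwise to its associated PSQAS (via the characteristic subspace) is functorial in families over $U_3$, so that it genuinely produces a morphism to the fine moduli scheme $SQ_{g,K}$, and that this is inverse on points to the normalization map --- the latter using the uniqueness results for PSQASes and TSQASes with the given level structure. Once the morphism $\sq$ is in hand and shown bijective, finiteness and the normalization statement are formal.
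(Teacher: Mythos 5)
The paper states this theorem without proof (it is cited to \cite{Nakamura10}), so your proposal can only be measured against the argument reconstructible from the surrounding results. Your overall architecture — build $\sq$ from the universal family over $U_3$ by $G$-invariance and descent, prove bijectivity on geometric points via the TSQAS/PSQAS normalization correspondence of Theorem~\ref{thm:refined stable reduction}, and deduce (3) from the fact that a finite, bijective, birational morphism of reduced separated proper spaces induces an isomorphism of normalizations — is the right one, and your treatments of (1) and (3) are sound.

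There is, however, a genuine gap in your construction of $\sq$ in (2). You assert that the universal rigid level-$\cG_H$ $U_3$-flat TSQAS is ``also a level-$\cG_H$ $U_3$-flat PSQAS'' because its fibers are already embedded. This is false: the fibers over $U_3$ are TSQASes $(P_0,\cL_0)$, which are always reduced (Theorem~\ref{thm:refined stable reduction}~(3)) and are embedded in $\bP(W_1(K))\times\bP(W_2(K))$ by $d_{\nu}\cL$, whereas the corresponding PSQAS $(Q_0,\cL_0)$ is the in general non-normal, possibly non-reduced image of $P_0$ in $\bP(V_H)$ under the characteristic linear system $\Gamma(P_0,\cL_0)$; the two coincide only in special cases such as $g=1$ or smooth fibers, as the examples in Sections~\ref{sec:PSQAS dim one}--\ref{sec:PSQAS general case} show. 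The parenthetical appeal to replacing $\cL$ by a power is also not available, since that changes $K$. What is actually required — and what you correctly flag as the ``main obstacle'' but never carry out, while the body of your proof contradicts it — is to form the relative image of $Z_{\univ}$ under the map to $\bP(\pi_*L_{\univ})$, identify $\pi_*L_{\univ}$ locally with $V_H\otimes M$ as a $\cG_H$-module via Lemma~\ref{sublemma:weight 1 mod N module}, and verify that this image family is flat over the reduced base $U_3$ (using constancy of the Hilbert polynomial $n^g|H|$ of the fibers and compatibility of the image with base change), with fibers the PSQASes of Theorem~\ref{thm:embedding of (Q0,L0)}. Only after this does Theorem~\ref{thm:fine moduli SQgK} yield the $G$-invariant classifying morphism $U_3\to SQ_{g,K}$ that descends to $\sq$. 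As written, the construction of $\sq$ does not go through, although your point-level bijectivity argument and the passage from (2) to (3) are correct.
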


\begin{cor}$SQ^{\toric}_{g,K}$ is a projective scheme if $e_{\min}(K)\geq 3$.
\end{cor}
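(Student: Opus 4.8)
The plan is to deduce the corollary from the preceding theorem together with the projectivity of $SQ_{g,K}$ established in Theorem~\ref{thm:fine moduli SQgK}. First I would record the starting data: $SQ_{g,K}$ is a projective reduced $\cO_N$-scheme, so it carries an $\cO_N$-ample invertible sheaf $\cA$; by Theorem~\ref{thm:coarse moduli}, $SQ^{\toric}_{g,K}$ is a proper reduced algebraic $\cO_N$-space; and by the preceding theorem there is a bijective $\cO_N$-morphism $\sq:SQ^{\toric}_{g,K}\to SQ_{g,K}$ restricting to the identity on $A_{g,K}$. The goal is to upgrade this bijective morphism into a finite one and then to transport ampleness along it.

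Next I would show that $\sq$ is a finite morphism. Since $SQ^{\toric}_{g,K}$ is proper over $\cO_N$ and $SQ_{g,K}$ is separated over $\cO_N$, the morphism $\sq$ is proper. It is of finite type, since both sides are of finite type over $\cO_N$, and it has singleton fibers because it is bijective; hence $\sq$ is quasi-finite, and a proper quasi-finite morphism is finite. Now I would invoke the standard representability fact that an algebraic space admitting a finite (in particular affine) morphism to a scheme is itself a scheme; applied to $\sq$ this shows $SQ^{\toric}_{g,K}$ is a scheme. Finally, since $\sq$ is finite and $\cA$ is $\cO_N$-ample on $SQ_{g,K}$, the pullback $\sq^*\cA$ is $\cO_N$-ample on $SQ^{\toric}_{g,K}$; combined with properness over $\cO_N$ this gives that $SQ^{\toric}_{g,K}$ is projective over $\cO_N$, which is the assertion. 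As a cross-check one may also use the isomorphism of normalizations $(SQ^{\toric}_{g,K})^{\norm}\simeq (SQ_{g,K})^{\norm}$: the right-hand side is finite over the projective scheme $SQ_{g,K}$, hence a projective scheme, which controls $SQ^{\toric}_{g,K}$ from above.

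The main obstacle I anticipate is the passage from \emph{bijective morphism of algebraic spaces} to \emph{finite morphism}: one must make sure that the bijectivity in the preceding theorem is bijectivity at the level of (geometric) points in the sense needed for $\sq$ to be genuinely quasi-finite as a morphism, and then apply correctly the theorem identifying algebraic spaces finite over a scheme with schemes. Once $\sq$ is known to be finite and $SQ^{\toric}_{g,K}$ to be a scheme, the remaining points — properness of $\sq$ and ampleness of $\sq^*\cA$ under the finite morphism $\sq$ — are routine.
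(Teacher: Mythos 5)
Your proposal is correct and follows essentially the same route as the paper: the paper's proof likewise rests on the finiteness of $\sq$ over the scheme $SQ_{g,K}$, invokes Knutson's theorem (\cite[Theorem~4.1, p.~169]{K71}) to conclude that $SQ^{\toric}_{g,K}$ is a scheme, and then deduces projectivity from that of $SQ_{g,K}$. The only difference is that you spell out the derivation of finiteness (proper plus quasi-finite from bijectivity) and the ampleness of the pulled-back sheaf, steps the paper leaves implicit.
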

\begin{proof}
Since $SQ^{\toric}_{g,K}$ is finite over $SQ_{g,K}$ and 
$SQ_{g,K}$ is a scheme, 
$SQ^{\toric}_{g,K}$ is a scheme by \cite[Theorem~4.1, p.~169]{K71}, 
hence it is a projective scheme because  $SQ_{g,K}$ 
is projective by (\ref{eq:SQgK}).
\end{proof}

\section{Morphisms to Alexeev's complete moduli spaces}
\label{sec:The morphisms to Alexeev's complete moduli space}

In this section 
\begin{enumerate}
\item[(i)] we briefly review 
Alexeev \cite{Alexeev02},
\item[(ii)] then report that 
\begin{enumerate}
\item[(a)] any $T$-flat TSQAS has a canonical semi-abelian action if any generic fiber is smooth,
\item[(b)] 
$SQ^{\toric}_{g,1}\simeq\barAP_{g,1}^{\main}$. 
\end{enumerate}\end{enumerate}
\begin{defn} \cite{Alexeev02}\ \ Let $k$ be an algebraically closed field. 
A $g$-dimensional 
semiabelic $k$-pair of degree $d$ is 
a quadruple $(G,P,\cL,\Theta)$ such that
\begin{enumerate}
\item[(i)] $P$ is a connected seminormal {\it complete} $k$-variety, 
and any irreducible component of $P$ is $g$-dimensional,
\item[(ii)]  $G$ is a semi-abelian $k$-scheme acting on $P$,
\item[(iii)] there are only finitely many $G$-orbits,
\item[(iv)] the stabilizer subgroup of every point of $P$ is connected, 
reduced and lies in the torus part of $G$,
\item[(v)] $\cL$ is an ample line bundle on $P$ with $h^0(P,\cL)=d$, 
\item[(vi)] 
$\Theta$ is an effective Cartier divisor of $P$ with $\cL=O_P(\Theta)$ 
which does not contain any $G$-orbits.
\end{enumerate}
\end{defn}

Recall that a variety $Z$ 
is said to be {\em seminormal} if any bijective morphism 
$f:W\to Z$ with $W$ reduced is an isomorphism. 

\begin{defn}Let $T$ be a scheme. 
A $g$-dimensional 
semiabelic $T$-pair of degree $d$ is 
a quadruple $(G,P\overset{\pi}{\to} T,\cL,\Theta)$ such that
\begin{enumerate}
\item[(i)] 
$G$ is a semi-abelian group $T$-scheme 
of relative dimension $g$, 
\item[(ii)] $P$ is a proper flat	
$T$-scheme, on which $G$ acts, 
\item[(iii)] $\cL$ is a $\pi$-ample line bundle on $P$ 
with $\pi_*(\cL)$ locally free, 
\item[(iv)]	
any geometric fiber $(G_t,P_t,\cL_t,\Theta_t)$ $(t\in T)$ is a stable 
semiabelic pair of degree d.
\end{enumerate}
\end{defn}

\begin{defn}We define two functors: 
for any scheme $T$
\begin{align*}
\barcAP_{g,d}(T)&=\left\{
\begin{matrix}
(G,P\overset{\pi}{\to} T,D);&\text{semi-abelic $T$-pair of degree $d$}
\end{matrix}
\right\}/\text{$T$-isom.},\\
{\cA}{\cP}_{g,d}(T)&=
\left\{
\begin{matrix}
(G,A\overset{\pi}{\to} T,D);&\text{semi-abelic $T$-pair of degree $d$}\\
&\text{$G$ is an abelian $T$-scheme\phantom{aaa}}
\end{matrix}
\right\}/\text{$T$-isom.}.
\end{align*}
\end{defn}

\begin{thm}{\rm (Alexeev \cite[5.10.1]{Alexeev02})}
\begin{enumerate}
\item 
The component $\barcAP_{g,d}$ of the moduli stack of semiabelic pairs 
containing the moduli stack ${\cA}{\cP}_{g,d}$ of abelian pairs as well as 
pairs of the same numerical type is a proper Artin stack 
with finite stabilizer,
\item It has a proper coarse moduli
algebraic space $\barAP_{g,d}$  over $\bZ$. 
\end{enumerate}
\end{thm}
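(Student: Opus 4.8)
The plan is to follow Alexeev's strategy: first realize $\barcAP_{g,d}$ as an Artin stack of finite type over $\bZ$ by embedding all stable semiabelic pairs uniformly into a fixed projective space, then prove separatedness and properness via the valuative criterion, deduce finiteness of the inertia from the rigidifying role of the theta divisor $\Theta$, and finally invoke the Keel--Mori theorem to obtain the coarse moduli space. For boundedness I would use the structure of such degenerations: for a stable semiabelic pair $(G,P,\cL,\Theta)$ of degree $d$ the variety $P$ is $g$-dimensional with $\cL$ ample and $h^0(P,\cL)=d$, each $P$ is glued from toric pieces fibred over an abelian quotient, and $\cL^{3}$ is very ample, so that $P\hookrightarrow\bP(H^0(P,\cL^{3}))$ has Hilbert polynomial and ambient dimension $N$ depending only on $g$ and $d$. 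Hence $\barcAP_{g,d}$ is the quotient by $\PGL_{N+1}$ of a locally closed subscheme of a product of Hilbert schemes of $\bP^{N}$ parametrizing the pair $(P,\Theta)$, the semi-abelian group scheme $G$ and its action; since the conditions (i)--(vi) are locally closed, the component containing ${\cA}{\cP}_{g,d}$ is well defined and is an algebraic stack of finite type over $\bZ$.

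The substance is the valuative criterion. Let $R$ be a discrete valuation ring with fraction field $K$ and let $(G_K,P_K,\cL_K,\Theta_K)$ be a stable semiabelic pair over $K$; since $\barcAP_{g,d}$ is by construction the component through the abelian locus, it suffices to treat $G_K$ abelian. After a finite extension of $R$ I would, using the semistable reduction theorem for abelian varieties (Theorem~\ref{thm:stable reduction} and Theorem~\ref{thm:refined stable reduction}) together with the degeneration data of Faltings--Chai, produce the N\'eron model $G/R$, a relatively complete model, and a projective flat $R$-scheme $P/R$ with generic fibre the given pair and with $P_0$ the stable semiabelic variety attached to the relevant Delaunay (second Voronoi) cell decomposition of $X\otimes_{\bZ}\bR$. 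The line bundle $\cL_K$ extends to an ample $\cL$ on $P$, and $\Theta_K$ extends to an effective Cartier divisor $\Theta$ with $\cL=O_P(\Theta)$; the combinatorics of the decomposition are exactly what guarantees that $P_0$ is seminormal, that $\Theta_0$ contains no $G_0$-orbit, and that all of (i)--(vi) hold on the special fibre. For uniqueness one compares two such extensions over $R$: both restrict to $(G_K,P_K,\cL_K,\Theta_K)$, the $G$-action extends uniquely, and $\Theta$ pins down the gluing, so the two limits are $R$-isomorphic. This yields both separatedness and, together with the construction just made, properness of the stack.

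Finiteness of the stabilizer comes from the theta divisor: an automorphism of $(G,P,\cL,\Theta)$ permutes the finitely many $G$-orbit closures and preserves $\Theta$, and the translations in $G$ preserving the ample class $\cL=O_P(\Theta)$ form a finite group scheme, so $\Aut(G,P,\cL,\Theta)$ is finite. Thus $\barcAP_{g,d}$ is a separated Artin stack of finite type over $\bZ$ with finite inertia, and the Keel--Mori theorem produces a coarse moduli algebraic space $\barAP_{g,d}$ with proper coarse-space morphism; properness of the stack then descends to properness of $\barAP_{g,d}$ over $\bZ$.

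The hardest step will be the existence half of the valuative criterion --- arranging the limiting pair so that \emph{all} of (i)--(vi) hold simultaneously, in particular seminormality of $P_0$, Cartierness of $\Theta_0$, and the requirement that $\Theta_0$ meet every $G_0$-orbit properly --- which is precisely where Mumford's degeneration theory, the Faltings--Chai degeneration data, and the combinatorics of Delaunay/Voronoi decompositions must all be combined; everything else (algebraicity, separatedness, finiteness of inertia, passage to the coarse space) is comparatively formal once this limit is in hand.
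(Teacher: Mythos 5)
This statement is not proved in the paper at all: it is imported verbatim from Alexeev \cite{Alexeev02} with the citation [5.10.1], so there is no in-paper proof to compare yours against. Judged on its own terms, your proposal is a faithful reconstruction of the architecture of Alexeev's actual argument — boundedness via uniform embedding into a fixed projective space, algebraicity of the stack as a quotient of a locally closed locus in Hilbert schemes, the valuative criterion via Mumford/Faltings--Chai degeneration data and the Delaunay--Voronoi combinatorics, finiteness of inertia from the condition that $\Theta$ is ample and contains no orbit, and Keel--Mori for the coarse space. You also correctly locate the existence half of the valuative criterion as the place where the degeneration theory does real work.

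The genuine gap is the uniqueness half, which you dispose of in one sentence ("the two limits are $R$-isomorphic"). This is precisely the deepest point of Alexeev's paper, and it is the point this very paper is organized around: Subsec.~\ref{subsec:nonseparatedness} exhibits a pair of $R$-families with isomorphic generic fibres and non-isomorphic special fibres, showing that for polarized degenerations without the right rigidifying structure the limit is \emph{not} unique. Asserting that "the $G$-action extends uniquely and $\Theta$ pins down the gluing" is exactly the claim that must be proved, not assumed: one has to show that the divisor $\Theta_K$ determines, via its piecewise-affine height function on $X\otimes_{\bZ}\bR$, a unique paving and hence a unique degeneration datum, so that two relatively complete models with the same generic pair are forced to coincide. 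Without that argument your sketch establishes at best that $\barcAP_{g,d}$ is a universally closed algebraic stack, not that it is separated with finite inertia, and Keel--Mori then does not apply. (A smaller quibble: the uniform very ampleness of a fixed power such as $\cL^{3}$ on an arbitrary stable semiabelic variety is itself a nontrivial theorem of Alexeev's and should be cited or proved rather than asserted.)
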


\begin{defn}
In order to compare $\barAP_{g,d}$ with $SQ^{\toric}_{g,K}$ 
we consider the pullback 
of $\barAP_{g,d}$ to $\cO_d$, which we denote $\barAP_{g,d}$ 
by abuse of notation.
Let $\barAP_{g,d}^{\main}$ be
the closure of $AP_{g,d}$ in $\barAP_{g,d}$. 
$\barAP_{g,d}^{\main}\neq\barAP_{g,d}$ in general.
\end{defn}

\subsection{The semi-abelian group action on a $T$-TSQAS}
\label{subsec:semiabelian action of TSQAS}
The purpose of this subsection to construct 
a semiabelian group action on any $T$-flat TSQAS. See \cite{Nakamura14}.

\begin{sublemma}
\label{sublemma:local structure of P0 along Z(tau)}Let $(P_0,\cL_0)$ be a 
totally degenerate TSQAS over $k$. Let 
$X$ be a lattice of rank $g$ associated to $P_0$, 
 $\Del_B$ the Delaunay decomposition 
of $X_{\bR}$ also associated to $P_0$, and 
$\Del_B^{(d)}$ the set of all $d$-dimensional 
Delaunay cells in $\Del_B$. 
Let $\tau\in\Del_B^{(g-1)}$ and $\sigma_i\in\Del_B^{(g)}$ 
$(i=1, 2)$ be Delaunay cells  
such that $\tau=\sigma_1\cap\sigma_2$. 
Let $Z(\sigma_i)=\overline{O(\sigma_i)}$ be
the irreducible component of $P_0$ corresponding to $\sigma_i$. Then 
$P_0$ is, along $O(\tau)$, 
isomorphic to the subscheme of $O(\tau)\times\bA^2_k$ given by 
$$\Spec \Gamma(O_{O(\tau)})[\zeta_1,\zeta_2]/(\zeta_1\zeta_2),$$
where $\bA^2_k=\Spec k[\zeta_1, \zeta_2]$: 
the two-dimensional affine space over $k$. Here 
$Z(\sigma_i)$ is given by $\zeta_i=0$, and $P_0$ is, along $O(\tau)$, the union of $Z(\sigma_1)$ and $Z(\sigma_2)$, while $O(\tau)\ (\simeq \bG_{m,k}^{g-1})$ is given by $\zeta_1=\zeta_2=0$, 
which is a Cartier divisor of each $Z(\sigma_i)$.
\end{sublemma}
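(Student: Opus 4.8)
The plan is to reduce the statement to the explicit local structure of the toric model used to construct a totally degenerate TSQAS, and then to extract its codimension-one part. Recall from \cite[p.~410]{AN99} (cf.\ Subsec.~\ref{subsec:construction} and Theorem~\ref{thm:stratification of Z}) that in the totally degenerate case $(P_0,\cL_0)$ is the special fibre of the formal quotient by $Y$ of a relatively complete model $\cX$ over $R$ whose closed fibre $\cX_0$ is an infinite union of projective toric varieties indexed by the $g$-dimensional Delaunay cells of $\Del_B$, the gluing being governed by $\Del_B$. Since $Y$ acts freely on $X_\bR$ and carries each Delaunay cell to a distinct one, a small neighbourhood in $\cX_0$ of a fixed lift of $O(\tau)$ maps isomorphically onto a neighbourhood of $O(\tau)$ in $P_0$; hence it suffices to describe $\cX_0$ near that lift.

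First I would record the combinatorial input. By Theorem~\ref{thm:stratification of Z}(2)--(4) an irreducible component $Z(\delta)=\overline{O(\delta)}$ contains $O(\tau)$ iff the $g$-cell $\delta$ has $\tau$ as a face; as $\Del_B$ tiles $X_\bR$ without boundary, the $(g-1)$-cell $\tau$ is a face of exactly the two $g$-cells $\sigma_1$ and $\sigma_2$. Thus near $O(\tau)$ one has $\cX_0=Z(\sigma_1)\cup Z(\sigma_2)$ (and likewise $P_0$), with $O(\tau)$ the codimension-one torus orbit of the projective toric variety $Z(\sigma_i)$ attached to the facet $\tau$ of $\sigma_i$. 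Because the local cone of $Z(\sigma_i)$ along that orbit is a single primitive ray, a Zariski neighbourhood of $O(\tau)$ in $Z(\sigma_i)$ is the affine toric variety $O(\tau)\times\bA^1_k=\Spec\Gamma(\cO_{O(\tau)})[\zeta_i]$, in which $O(\tau)$ is the Cartier divisor $\{\zeta_i=0\}$. This gives the assertions of the Sublemma for each branch separately, once the gluing is understood.

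It remains to glue the two branches along $O(\tau)$. Choosing a vertex $v_0\in X$ of $\tau$ and working in the affine chart $U_{v_0}\subset\cX$ with coordinate functions $z_x=(a(x)/a(v_0))w^{x-v_0}$ $(x\in X)$, I would localize $U_{v_0}$ by inverting the $z_x$ indexed by the remaining vertices of $\tau$; this removes every component of $\cX_0$ other than $Z(\sigma_1)$ and $Z(\sigma_2)$ and isolates $O(\tau)$. In the resulting chart the generators reduce to $g-1$ invertible functions, which freely generate $\Gamma(\cO_{O(\tau)})$, together with two further functions $\zeta_1,\zeta_2$ such that $\{\zeta_i=0\}$ is the branch $Z(\sigma_i)$; and the only relation binding them is, by the conditions (ii) and (iv)$^*$ on the degeneration data $a(x),b(x,y)$, of the shape $\zeta_1\zeta_2=u\cdot q^{m}$ with $u$ a unit and $m\geq 1$ — the integer $m$ being the lattice distance between $\sigma_1$ and $\sigma_2$ across $\tau$, positive precisely because $\tau$ separates them. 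Reducing modulo $q$ gives $\zeta_1\zeta_2=0$ in $\cX_0$; combined with reducedness of $P_0$ (Lemma~\ref{sublemma:P,L}(2)) this exhibits a neighbourhood of $O(\tau)$ in $P_0$ as $\Spec\Gamma(\cO_{O(\tau)})[\zeta_1,\zeta_2]/(\zeta_1\zeta_2)$, with $Z(\sigma_i)=\{\zeta_i=0\}$ and $O(\tau)=\{\zeta_1=\zeta_2=0\}$, as asserted.

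The step I expect to be the main obstacle is the one in the previous paragraph: checking from the Delaunay combinatorics that, after isolating $O(\tau)$, exactly two coordinates $\zeta_1,\zeta_2$ survive and that they are bound by precisely the relation $\zeta_1\zeta_2=u\,q^{m}$ and nothing thicker — equivalently, that the two branches meet transversally with reduced intersection. This is a genuine (if standard) toric computation with the data satisfying (i)--(iv)$^*$, amounting to the unimodularity at $v_0$ of the configuration formed by $\tau$ and the two outward primitive directions into $\sigma_1$ and $\sigma_2$; the most economical route is to invoke the explicit description of the local scheme structure of a totally degenerate TSQAS in terms of $B$ from \cite[p.~410]{AN99} and \cite[Theorem~4.10]{Nakamura99} (cf.\ Theorem~\ref{thm:stratification of Z}(5)) and simply restrict it to codimension one.
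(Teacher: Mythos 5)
The paper does not actually prove this Sublemma: the Remark immediately following it says explicitly ``Instead of proving Lemma~\ref{sublemma:local structure of P0 along Z(tau)} here, we revisit Case~\ref{subcase:first} to illustrate the situation,'' and then carries out, for the two-dimensional square Delaunay decomposition, precisely the computation you propose in general --- take the chart $U_0$, invert the coordinate $u_1$ corresponding to the second vertex of $\tau=[0,1]\times\{0\}$, and read off $\Spec k(0)[u_1^{\pm},u_2,v_2]/(u_2v_2)$. Your proposal is therefore a genuine general argument where the paper offers only an example plus a pointer to \cite{AN99} and \cite{Nakamura99}; the route (exactly two top cells share a facet, each branch is toric and locally $O(\tau)\times\bA^1$, and the two branches are bound by a single binomial $\zeta_1\zeta_2=uq^m$ which dies mod $q$) is the intended one, and the ``main obstacle'' you flag --- that after isolating $O(\tau)$ exactly two coordinates survive with exactly that relation --- is indeed where all the content sits; it is covered by the local description of the degenerate fibre in terms of $B$ in the cited references, which is also all the paper itself relies on.

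One point you should tighten: the chart $U_{v_0}=\Spec R[(a(x)/a(v_0))w^{x-v_0}]$ belongs to the $\Proj$ model of Subsec.~\ref{subsec:construction}, whose quotient is the PSQAS $(Q,\cL_Q)$, whereas the Sublemma concerns the TSQAS $(P_0,\cL_0)$, which is the normalization of $Q$ and in general has a different scheme structure ($Q_0$ can be nonreduced). In the paper's illustrative example this is harmless because there $P_0=Q_0$, but in general you need one more sentence: either work with the relatively complete model of \cite{AN99}, whose charts are indexed by Delaunay cells and describe $P$ directly, or observe that near $O(\tau)$ your local equation $\zeta_1\zeta_2=uq^m$ defines a normal ($A_{m-1}$-type) total space, so $P\to Q$ is an isomorphism over a neighbourhood of the codimension-one strata and the computation for $Q_0$ does give $P_0$ there. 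With that addendum the argument is complete modulo the deferred toric/Delaunay computation.
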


\begin{subrem}Instead of proving  
Lemma~\ref{sublemma:local structure of P0 along Z(tau)} here,  
we revisit Case~\ref{subcase:first} to illustrate the situation. 
In this case,  $P_0=Q_0$, and we recall 
the open affine subset $U_0(0)$ of 
$P_0$:
\begin{align*}
(U_0)_0&=\Spec R[qw_1, qw_2, qw_1^{-1}, qw_2^{-1}]\otimes k(0)\\
&\simeq\Spec k(0)[u_1, u_2, v_1, v_2]/(u_1v_1, u_2v_2),
\end{align*}where $(U_0)_0=U_0\otimes k(0)$. \par
Let $\tau=[0, 1]\times\{0\}\in\Del^{(1)}_B$. Then 
there are exactly two Delaunay cells $\sigma=\sigma_i$ $(i=1, 2)$
such that $\tau\subset\sigma$ and $\sigma\in\Del^{(2)}_B$, where
$$\sigma_1=[0, 1]\times [0, 1],\quad \sigma_2=[0, 1]\times[-1, 0].$$

We see 
\begin{align*}
O(\tau)
&\simeq\Spec k(0)[u_1^{\pm 1}, u_2, v_1, v_2]/(u_2, v_1, v_2)
\simeq \Spec k(0)[u_1^{\pm}].
\end{align*} 
Let $(U_0)_0(\tau)$ be the subset of $(U_0)_0$ where $u_1$ is invertible. 
Then we have 
\begin{align*}
(U_0)_0(\tau)&=\Spec k(0)[u_1^{\pm}, u_2, v_2]/(u_2v_2),\\
Z(\sigma_1)&=\Spec k(0)[u_1^{\pm 1}, u_2, v_2]/(u_2),\\
Z(\sigma_2)&=\Spec k(0)[u_1^{\pm 1}, u_2, v_2]/(v_2).
\end{align*} 

This is what is meant by ``along $O(\tau)$'' 
in Lemma~\ref{sublemma:local structure of P0 along Z(tau)}.
\end{subrem}

\begin{defn}\label{defn:log rational one forms}
Let $\Omega^1_{P_0}$ be the sheaf of 
germs of regular one-forms over $P_0$, and 
$\Theta_{P_0}:={\cH}{om}_{O_{P_0}}(\Omega^1_{P_0},O_{P_0})
={\cD}er(O_{P_0})$. Let $\Sing (P_0)$ be the singular locus of $P_0$,
and $Z$ the union of all 
semi-abelian orbits in $P_0$ of codimension at least two. 
Let $\pi:P\to S$ be a simplified Mumford family with $P_0$ the closed fiber.
Then we  define ${\wOmega}_{P\setminus Z}$
to be the sheaf of germs 
of relative rational one-forms $\phi$ over $P\setminus Z$ such that 
$\phi$ is 
regular outside $\Sing(P_0)$,
and the restriction $\phi_{|P_0}$ of $\phi$ to $P_0$ 
has log poles along every 
$(g-1)$-dimensional irreducible component of 
$\Sing(P_0\setminus Z)$.
We define 
\begin{align*}{\wOmega}_{(P\setminus Z)/S}:&
=({\wOmega}_{P\setminus Z})/O_{P\setminus Z}(dq/q),\\
{\wOmega}_{P/S}:&=i_*({\wOmega}_{(P\setminus Z)/S}),
\end{align*}where $i$ is the inclusion $i:P\setminus Z\hookrightarrow P$, 
and $q$ is a uniformizer of $R$.  
\end{defn}
\begin{defn}
\begin{gather*}
{\wOmega}_{P_0\setminus Z}:={\wOmega}_{(P\setminus Z)/S}\otimes k(0),\quad
{\wOmega}_{P_0}:=j_*({\wOmega}_{P_0\setminus Z}),\\
\dTheta_{P_0}:={\cH}om_{O_{P_0}}(\wOmega_{P_0},O_{P_0}),\quad
\dOmega_{P_0}:={\cH}om_{O_{P_0}}(\dTheta_{P_0},O_{P_0}).
\end{gather*}where
$j$ is the inclusion $j:P_0\setminus Z\hookrightarrow P_0$. 
\end{defn}

\begin{sublemma}\label{sublemma:dTheta}
Let $P_0$ be a (not necessarily totally degenerate) $k(0)$-TSQAS 
of dimension $g$. 
Then we have 
$\dTheta_{P_0}\simeq O^{\oplus g}_{P_0}$, 
$\dOmega_{P_0}\simeq 
O^{\oplus g}_{P_0}$.
\end{sublemma}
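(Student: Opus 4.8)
\textbf{Proof proposal for Lemma~\ref{sublemma:dTheta}.}
The plan is to reduce the general case to the totally degenerate case already illustrated, and then to the completely local computation near the worst stratum. First I would note that the statement is local on $P_0$, and that $P_0$ admits, by Theorem~\ref{thm:refined stable reduction} and Theorem~\ref{thm:stratification of Z} (together with the parallel description of TSQASes mentioned right after it), a stratification into torus orbits $O(\sigma)$ indexed by Delaunay cells, with the formal/\'etale local structure along each $O(\tau)$ controlled by the bilinear form $B$. Using the \'etale covering trick of Claim~\ref{claim:etale Z/nZ covering}, which does not change the local structure (it is \'etale) nor the sheaves $\wOmega$, $\dTheta$, $\dOmega$ (all defined \'etale-locally away from codimension-two orbits and then pushed forward), one may assume $e_{\min}(H)\geq 3$; and by the product/torus-bundle description of the partially degenerate case (the abelian part contributes a smooth factor on which $\Omega^1$ is already free), one reduces to the totally degenerate case, where $P_0=Q_0$ is the cycle of toric varieties of Subsec.~\ref{subsec:scheme limit} and Subsec.~\ref{subsec:totally degenerate dim two}.

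In the totally degenerate case I would argue as follows. Away from $Z$ (the union of orbits of codimension $\geq 2$) and away from $\Sing(P_0)$, i.e.\ on the smooth torus-orbit locus, $\wOmega_{P_0\setminus Z}$ is by definition the sheaf of relative rational one-forms with at worst log poles along the codimension-one singular strata; on each top-dimensional component $Z(\sigma)\simeq$ (a toric variety), the global sections $dw_i/w_i$ ($i=1,\dots,g$), coming from the coordinates $w^x$ ($x\in X$), give $g$ everywhere-defined logarithmic one-forms, and near a codimension-one double locus $O(\tau)=Z(\sigma_1)\cap Z(\sigma_2)$ — where by Lemma~\ref{sublemma:local structure of P0 along Z(tau)} the local model is $\Spec\Gamma(O_{O(\tau)})[\zeta_1,\zeta_2]/(\zeta_1\zeta_2)$ — the form $d\zeta_1/\zeta_1=-d\zeta_2/\zeta_2$ extends across as a log one-form. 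Thus the $g$ forms $dw_i/w_i$ generate a free rank-$g$ subsheaf of $\wOmega_{P_0}$; a local computation on the standard charts $(U_n)_0$ shows they generate all of it, so $\wOmega_{P_0}\simeq O_{P_0}^{\oplus g}$, and then $\dOmega_{P_0}=\mathcal{H}om(\mathcal{H}om(\wOmega_{P_0},O_{P_0}),O_{P_0})\simeq O_{P_0}^{\oplus g}$ is immediate. Dually, the vector fields $w_i\,\partial/\partial w_i$ are the corresponding free basis of $\dTheta_{P_0}=\mathcal{H}om(\wOmega_{P_0},O_{P_0})\simeq O_{P_0}^{\oplus g}$; one must check that the double-point relation $\zeta_1\zeta_2=0$ does not kill a derivation, which is exactly the statement that $\zeta_1\partial_{\zeta_1}$ and $\zeta_2\partial_{\zeta_2}$ both descend to $O_{P_0}$-derivations on the node, and similarly along every $O(\tau)$, $\tau\in\Del_B^{(g-1)}$.

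The main obstacle I expect is the bookkeeping at the deeper strata, i.e.\ showing that the freeness established along the codimension-one locus $P_0\setminus Z$ actually propagates: the sheaves $\wOmega_{P_0}$, $\dTheta_{P_0}$, $\dOmega_{P_0}$ are defined by pushing forward from $P_0\setminus Z$ along $j$, so one needs that the free modules generated by $\{dw_i/w_i\}$ and $\{w_i\partial_{w_i}\}$ on $P_0\setminus Z$ have reflexive (indeed free) pushforward with no extra sections appearing over the codimension-$\geq 2$ orbits. Here I would use that $P_0$ is Gorenstein with trivial dualizing sheaf and that the local models at all strata are explicit toric-type rings (the $U_0$ in Cases~\ref{subcase:first} and \ref{subcase:second}), so that the relevant depth/$S_2$ condition holds and $j_*$ introduces nothing new; a direct check on the $\bP^1\times\bP^1$-type and $\bP^2$-type charts of Subsec.~\ref{subsec:totally degenerate dim two} is the model computation, and the general case follows from the same combinatorial pattern of $\Del_B$. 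The remaining steps — that an \'etale base change and the smooth abelian factor are harmless — are routine.
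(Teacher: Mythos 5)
The paper does not actually prove this lemma: it sits in Section~\ref{sec:The morphisms to Alexeev's complete moduli space}, which the introduction and the section itself announce as a report of results \emph{without proofs}, with details deferred to \cite{Nakamura14}. So there is no in-paper argument to compare yours against; I can only assess your sketch on its own terms.

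Your overall strategy is sound and is the natural one: show that $\wOmega_{P_0}$ itself is free of rank $g$, generated by the logarithmic frame $dw_i/w_i$ on the open stratum, extended over the codimension-one double loci via the relation $d\zeta_1/\zeta_1=-d\zeta_2/\zeta_2$ (which is exactly the relative log-smoothness of the Mumford family, $xy=q^k$ giving $dx/x+dy/y\equiv 0$ modulo $dq/q$), and then observe that $j_*$ changes nothing because $P_0$ is Gorenstein (Theorem~\ref{thm:refined stable reduction}~(3)), hence $S_2$, and $Z$ has codimension $\geq 2$; freeness of $\dTheta_{P_0}$ and $\dOmega_{P_0}$ then follows by taking $\cH om(-,O_{P_0})$ twice. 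Two caveats. First, the reduction of the partially degenerate case to "a smooth abelian factor times a totally degenerate piece" is too quick: $P_0$ is a compactified torsor under a semi-abelian extension $1\to T\to G_0\to A_0\to 1$ which is in general nontrivial, so the fiber coordinates $w_i$ are not global and $dw_i/w_i$ is not a well-defined global form (it changes by $df/f$ under the transition functions of the $\bG_m$-bundle). The correct global frame is given by the invariant one-forms of $G_0$, which restrict to $dw_i/w_i$ on the torus fibers and extend with log poles to the toric compactification; this is fixable but is precisely the point where "routine" hides the work, and one must do it without invoking the semi-abelian action of Theorem~\ref{thm:dAut_T(P)}, since that theorem is downstream of this lemma. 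Second, the \'etale-covering reduction via Claim~\ref{claim:etale Z/nZ covering} to $e_{\min}(H)\geq 3$ is harmless but unnecessary: the local structure of a TSQAS and the definitions of $\wOmega_{P_0}$, $\dTheta_{P_0}$, $\dOmega_{P_0}$ do not depend on $e_{\min}(H)$.
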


We note that by \cite[p.~112]{Rim72},
the tangent space of the automorphism group $\Aut(P_0)$ 
is given by $H^0(P_0,\Theta_{P_0})$. 

\begin{thm}
\label{thm:dAut_T(P)}
Let $T$ be a reduced scheme, 
$(P\overset{\pi}{\to} T,\cL)$ a level-$\cG_H$ $T$-flat TSQAS 
with any generic fiber smooth. 
Let $T'$ be the maximal open subscheme of $T$ 
such that $\pi$ is smooth. Then
\begin{enumerate}
\item  the fiberwise identity component $\Aut^0_{T'}(P')$ of $\Aut_{T'}(P')$ is an abelian $T'$-scheme, and 
$P':=P\times_TT'$ is an abelian $T'$-torsor, 
\item let $\dAut_T(P)$ be the closure of $\Aut^0_{T'}(P')$ in $\Aut_T(P)$ 
with reduced structure. Then 
$\dAut_T(P)$ is a $T$-flat group scheme,
\item  let $\ddAut_T(P)$ be the minimal open subgroup scheme of $\dAut_T(P)$.
It is the fiberwise identity component of $\Aut_T(P)$, 
which is a semi-abelian $T$-scheme,
\item the tangent space of $(\dAut_T(P))_t$ at $\id_{P_t}$ is 
$H^0(P_t,\dTheta_{P_t})$ for  any point $t\in T$. 
\end{enumerate}
\end{thm}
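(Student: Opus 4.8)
The plan is to prove Theorem~\ref{thm:dAut_T(P)} by reducing everything to the closed-fibre analysis already prepared in Lemmas~\ref{sublemma:dTheta} and \ref{sublemma:local structure of P0 along Z(tau)}. First I would treat (1): over the open locus $T'$ where $\pi$ is smooth, each geometric fibre $P_t$ is an abelian variety (a smooth TSQAS), so $\Aut_{T'}(P')$ contains, as its fibrewise identity component, the translation group, which is the abelian scheme $\Aut^0_{T'}(P')$ over $T'$; the point is that for an abelian variety the automorphisms fixing the origin form a finite group scheme, hence the connected component is exactly the torsor of translations. This is the standard fact (quoted from \cite{Nakamura14}) that a smooth TSQAS is an $\Aut^0$-torsor, cf. Remark~\ref{subrem:aut0(Q) torsor}. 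One shows $\Aut^0_{T'}(P')$ is proper and smooth over $T'$ with connected fibres and a section $\id_{P'}$, hence an abelian scheme, and $P'$ is a torsor under it.

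Next, for (4), I would compute the Zariski tangent space of $\Aut_T(P)$ (equivalently of $\dAut_T(P)$, since the latter is locally closed of the same dimension at $\id$) fibrewise. By \cite[p.~112]{Rim72} the tangent space to $\Aut(P_t)$ at $\id_{P_t}$ is $H^0(P_t,\Theta_{P_t})$; but for a TSQAS the naive tangent sheaf $\Theta_{P_t}$ is not locally free, so the infinitesimal automorphisms that actually extend to a family are exactly the vector fields with logarithmic behaviour along the double locus, i.e. the sections of $\dTheta_{P_t}$. Lemma~\ref{sublemma:dTheta} gives $\dTheta_{P_t}\simeq O_{P_t}^{\oplus g}$, so $H^0(P_t,\dTheta_{P_t})$ is $g$-dimensional. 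The local model from Lemma~\ref{sublemma:local structure of P0 along Z(tau)}, namely $\Spec\Gamma(O_{O(\tau)})[\zeta_1,\zeta_2]/(\zeta_1\zeta_2)$, is precisely where one checks that a vector field of the form $\zeta_1\partial_{\zeta_1}$ (a log vector field along $O(\tau)$) deforms the two branches compatibly and integrates to a one-parameter group, whereas a vector field regular in the ordinary sense but not tangent to the strata does not preserve $P_0$; this is the computation that identifies the ``honest'' infinitesimal automorphisms with $\dTheta_{P_0}$.

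For (2) and (3), I would proceed as follows. Define $\dAut_T(P)$ as the scheme-theoretic closure (with reduced structure) of the abelian scheme $\Aut^0_{T'}(P')$ inside the separated group scheme $\Aut_T(P)$; it is a closed subscheme, a group scheme over $T$ by general nonsense about closures of subgroup schemes, and reduced by construction. Flatness over $T$ is the key claim: since $T$ is reduced and $\dAut_T(P)$ is the closure of something equidimensional of relative dimension $g$ over the dense open $T'$, and since part (4) shows every fibre has tangent dimension $g$ at the identity, the fibre dimension is constantly $g$; combined with reducedness and the standard criterion (a reduced scheme equidimensional over a reduced base with constant fibre dimension is flat over a normal, or here one uses that the fibres are group schemes hence have no embedded components) one concludes $T$-flatness. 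Then $\ddAut_T(P)$, the fibrewise identity component, is an open subgroup scheme; on $T'$ it is the abelian scheme, and on the degenerate fibres Lemma~\ref{sublemma:local structure of P0 along Z(tau)} together with the description of the torus orbits $O(\sigma)$ in Theorem~\ref{thm:stratification of Z} shows that the extra automorphisms picked up in the limit are the torus translations $G_0$ acting on $P_0$, so the identity component is an extension of an abelian variety by a torus, i.e. semi-abelian; this matches the known torus part of a degenerate TSQAS.

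The main obstacle will be establishing the $T$-flatness of $\dAut_T(P)$ and, relatedly, the semi-abelian structure of $\ddAut_T(P)$ over the boundary: one has to rule out jumps in the fibre dimension of the automorphism group at degenerate fibres and show that the ``log'' tangent directions of Lemma~\ref{sublemma:dTheta} actually integrate to a semi-abelian group action rather than merely an infinitesimal one. I expect this is handled by constructing the semi-abelian action directly in the totally degenerate case from the explicit charts $U_n$ of Subsec.~\ref{subsec:construction} (the torus $\Hom(X,\bG_{m})$ acts by the $T_\beta$, and $Y$-descent produces the torus part, while the abelian part comes from the abelian part of the N\'eron model), then spreading out over $T$ and invoking reducedness of $T$ plus properness of $\dAut_T(P)/\ddAut_T(P)$; the partially degenerate case is a fibre-product combination of the smooth and totally degenerate pictures. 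The details of this integration step are exactly what is deferred to \cite{Nakamura14}.
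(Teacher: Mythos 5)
You should first be aware that the paper does not actually prove this theorem: the introduction states that Section~\ref{sec:The morphisms to Alexeev's complete moduli space} only ``briefly report[s] our recent results without proofs,'' and the statement is explicitly deferred to \cite{Nakamura14}. So there is no proof in the paper to compare yours against; I can only judge your outline on its own terms and against the ingredients the paper does set up (Lemma~\ref{sublemma:local structure of P0 along Z(tau)}, Lemma~\ref{sublemma:dTheta}, and the explicit torus action $T_{\beta}$ of Subsec.~\ref{subsec:construction}). Your part (1) is fine and is exactly what Remark~\ref{subrem:aut0(Q) torsor} records; your closing paragraph also correctly identifies where the real work lies, namely integrating the log tangent directions to an actual semi-abelian action via the charts $U_n$ and the $S_z$, $T_{\beta}$.

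The outline nevertheless has two genuine gaps at precisely the points where the theorem has content. First, the flatness of $\dAut_T(P)$ in (2): the criterion you invoke (``a reduced scheme equidimensional over a reduced base with constant fibre dimension is flat'') is not a theorem --- miracle flatness requires a regular base and Cohen--Macaulay total space, and constancy of fibre dimension alone does not imply flatness even for reduced schemes over a reduced or normal base; the parenthetical about group fibres having no embedded components does not repair this. Over a DVR the scheme-theoretic closure of the generic subgroup is automatically flat (torsion-freeness), but globalizing over an arbitrary reduced $T$ is exactly what must be argued, presumably by producing the semi-abelian scheme directly and identifying it with the closure. Second, part (4): Rim's result gives that the tangent space of $\Aut(P_t)$ at $\id_{P_t}$ is $H^0(P_t,\Theta_{P_t})$, whereas you need the tangent space of the fibre of the \emph{closure} $\dAut_T(P)$, which is a priori only a subgroup of $\Aut(P_t)$. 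Your assertion that it equals $H^0(P_t,\dTheta_{P_t})$ because ``log vector fields are the ones that integrate'' is the statement to be proved, not an argument: one must exhibit a $g$-dimensional semi-abelian group acting on $P_t$ (torus part from the $T_{\beta}$ descended mod $Y$, abelian part from the abelian part of the degeneration), show it lies in the fibre of the closure and is open there, and compute its Lie algebra to be $H^0(P_t,\dTheta_{P_t})\simeq k^{\oplus g}$ using Lemma~\ref{sublemma:dTheta}. As written, the proposal assumes the conclusion at both of these steps, which is also why the paper itself defers the proof to \cite{Nakamura14}.
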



\begin{thm}
\label{cor:isom SQg1 and APg1main}
$SQ^{\toric}_{g,1}\simeq\barAP_{g,1}^{\main}$.
\end{thm}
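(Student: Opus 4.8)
\textbf{Proof proposal for Theorem~\ref{cor:isom SQg1 and APg1main}.}
The plan is to produce a morphism in each direction between $SQ^{\toric}_{g,1}$ and $\barAP_{g,1}^{\main}$ and show they are mutually inverse, using the semi-abelian action constructed in Theorem~\ref{thm:dAut_T(P)}. First I would construct the morphism $SQ^{\toric}_{g,1}\to\barAP_{g,1}^{\main}$. By Theorem~\ref{thm:coarse moduli}, $SQ^{\toric}_{g,1}$ coarsely represents the functor of level-$\cG_H$ $T$-flat TSQASes (with $K=K_H$ the trivial symplectic group when $d=1$, i.e. $H=\{0\}$), so it suffices to associate functorially to a $T$-flat TSQAS $(P\overset{\pi}{\to}T,\cL,\phi^*,\cG,\tau)$ with smooth generic fibers a semiabelic $T$-pair of degree $1$. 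The group $G:=\ddAut_T(P)$ from Theorem~\ref{thm:dAut_T(P)}(3) is the required semi-abelian $T$-scheme acting on $P$; since $d=\chi=1$, the line bundle $\cL$ has a one-dimensional space of sections, so $\Theta:=\op{div}(\theta)$ for the (unique up to scalar) section $\theta$ is a well-defined effective Cartier divisor, and one checks using Theorem~\ref{thm:stratification of Z}-type stratification that $\Theta$ contains no $G$-orbit, that there are finitely many orbits, and that stabilizers are connected, reduced, and toric — these are exactly conditions (i)--(vi) of a stable semiabelic pair, which hold on each geometric fiber by the structure of TSQASes, hence hold over $T$. Seminormality of fibers follows from reducedness plus the explicit local description in Lemma~\ref{sublemma:local structure of P0 along Z(tau)} (normal crossing-type singularities are seminormal). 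This gives a morphism of functors ${\cS}{\cQ}^{\toric}_{g,1}\to\barcAP_{g,1}$, and since a TSQAS degenerates from an abelian torsor, its image lands in $\barAP_{g,1}^{\main}$; coarse representability yields $\sqap_1:SQ^{\toric}_{g,1}\to\barAP_{g,1}^{\main}$.

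Next I would construct the morphism in the other direction. Given a semiabelic $T$-pair $(G,P\overset{\pi}{\to}T,\Theta)$ in $\barAP_{g,1}^{\main}$, one recovers $\cL=O_P(\Theta)$, which is relatively ample with $\pi_*\cL$ locally free of rank $1$. To build a level-$\cG_H$ TSQAS one must produce the $\cG_H$-action and the embedding data; but for $d=1$ we have $H=0$, so $\cG_H=\bG_m$ acting by scalars on $\cL$, and $\phi^*:V_H\otimes M\to\pi_*\cL$ is just the choice of a generator — this is automatic up to the $\Pic(T)$-twist built into Definition~\ref{subdefn:morp of T-TSQAS}. The content is then that $P$ is fiberwise a TSQAS: one must show a stable semiabelic pair in the main component, whose generic fiber is an abelian torsor with a principal polarization, is actually of the type $(P_0,\cL_0)$ produced by the stable reduction Theorem~\ref{thm:refined stable reduction}. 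This is essentially Alexeev's description of the main component combined with the fact that over a CDVR both theories run the stable reduction machine; I would invoke Theorem~\ref{thm:geom fibers over U3} over a DVR to identify the resulting family with $(P,\cL_P)$. Coarse representability of $SQ^{\toric}_{g,1}$ then gives the inverse morphism $\barAP_{g,1}^{\main}\to SQ^{\toric}_{g,1}$.

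Finally I would check the two morphisms are inverse to each other. Because both spaces are coarse moduli of their respective functors, it is enough to verify that the two functorial constructions above are inverse on objects: starting from a TSQAS, taking $\ddAut$, then taking the associated pair back to a scheme returns the same scheme with the same polarization (the divisor $\Theta$ reconstructs $\cL$, and $\dAut$ was built from $P$ itself), and conversely. On geometric points this is just the statement that a principally polarized TSQAS over $k$ and the associated stable semiabelic pair determine each other, which is immediate once the action is canonical. A $k$-point bijection plus the universal properties forces $\sqap_1$ to be an isomorphism.

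\textbf{Main obstacle.} The hard part will be step one's claim that $\ddAut_T(P)$ actually acts on $P$ making $(\ddAut_T(P),P,O_P(\Theta))$ satisfy \emph{all} of Alexeev's axioms uniformly over a non-reduced-in-general — here reduced, but still possibly nasty — base $T$; in particular the well-definedness of the morphism $\sqap$ is genuinely delicate precisely because, as the introduction warns, singular TSQASes have large continuous automorphism groups, so one must show the semi-abelian quotient $\ddAut_T(P)$ (as opposed to the full $\Aut_T(P)$) is the canonical choice and varies flatly — this is exactly the content imported from \cite{Nakamura14} via Theorem~\ref{thm:dAut_T(P)}, and verifying that $\Theta$ meets no $G$-orbit and that stabilizers stay reduced and toric in families is where the real work lies. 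The reverse direction, by contrast, is mostly a citation of Alexeev's structure theory for $\barAP_{g,1}^{\main}$ matched against Theorem~\ref{thm:geom fibers over U3}.
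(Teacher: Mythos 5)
Your overall strategy coincides with the paper's: the forward map is built exactly as you describe, by applying Theorem~\ref{thm:dAut_T(P)} to the universal family over the (reduced) scheme $U_3$ to get the semi-abelian action $\ddAut$ and the theta divisor, then descending through the categorical quotient; and the inverse is the forgetful map $(G,P,\cL,\Theta)\mapsto(P,\cL)$, just as in your second step. So the two constructions match, and your identification of Theorem~\ref{thm:dAut_T(P)} as the real imported content is exactly right.

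The one place where your closing argument is weaker than the paper's is the final identification. ``A $k$-point bijection plus the universal properties forces $\sqap_1$ to be an isomorphism'' is not a valid inference by itself: a bijective morphism of algebraic spaces need not be an isomorphism (the paper itself only gets a \emph{bijective} morphism $SQ^{\toric}_{g,K}\to SQ_{g,K}$ in general, not an isomorphism). The paper instead argues: $\sqap$ is surjective because $SQ^{\toric}_{g,1}$ is proper; the forgetful map is a \emph{left inverse}; and \emph{both spaces are reduced}, so the set-theoretic identity $\sqap\circ(\text{forgetful})=\id$ on points upgrades to an identity of morphisms. The reducedness of $\barAP_{g,1}^{\main}$ is not free --- the paper deduces it from $A_{g,1}\simeq AP_{g,1}$ being reduced (via formal smoothness, Theorem~\ref{thm:fine/coarse moduli AgK}) and the fact that the closure of a reduced subspace is reduced. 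You never address this, yet you need it twice: once to run Theorem~\ref{thm:dAut_T(P)} and the coarse-moduli property of $SQ^{\toric}_{g,1}$ (both of which are only stated over \emph{reduced} bases) when defining the backward morphism, and once to promote the pointwise inverse relation to an equality of morphisms. Supplying that reducedness statement, and replacing the bijection argument by the surjectivity-plus-left-inverse argument, closes the gap.
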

\begin{subrem}Assume Theorem~\ref{thm:dAut_T(P)}. Then 
Theorem~\ref{cor:isom SQg1 and APg1main} is proved 
as follows. 
The scheme $U_{g,1}^{\dagger}$ is reduced, as is shown 
in the same manner as in Theorem~\ref{thm:fine/coarse moduli AgK}, hence 
the closure $U_3$ of $U_{g,1}^{\dagger}$ is also reduced. 
Over $U_3$ we have a universal family 
$$(Z_{\univ},L_{\univ})_{U_3}:=(Z_{\univ},L_{\univ})\times_{H^P} U_3.$$
Since $U_3$ is reduced and any fiber of $(Z_{\univ},L_{\univ})_{U_3}$ is a TSQAS by Theorem~\ref{thm:uniform geometric quotient}, 
we can apply Theorem~\ref{thm:dAut_T(P)}.\par
Since $A_{g,1}\simeq AP_{g,1}$ by $d=1$, it is reduced 
by Theorem~\ref{thm:fine/coarse moduli AgK}.
Hence the closure $\barAP_{g,1}^{\main}$ 
of $AP_{g,1}$ in $\barAP_{g,1}$ is reduced because 
it is the intersection of all closed algebraic subspaces of 
$\barAP_{g,1}$ containing $AP_{g,1}=(AP_{g,1})_{\red}$, hence 
it is the intersection of all closed reduced algebraic subspaces 
of $\barAP_{g,1}$ containing $(AP_{g,1})_{\red}$. \par 
It follows from Theorem~\ref{thm:fine/coarse moduli AgK} 
that we have a $G$-morphism 
from $U_3$ to $\barAP_{g,1}^{\main}$ where $G=\PGL(W_1)\times\PGL(W_2)$. 
By the universality of the categorical quotient, we have a morphism
$\sqap: SQ^{\toric}_{g,1}\to \barAP_{g,1}^{\main},
$ which is an isomorphism over $A_{g,1}$. 
Since $SQ^{\toric}_{g,1}$ is proper, 
$\sqap$ is surjective. The forgetful map 
$$\barAP_{g,1}^{\main}\ni (G,P,\cL,\Theta)\mapsto (P,\cL)\in SQ^{\toric}_{g,1}$$is the left inverse of $\sqap$. This proves 
$SQ^{\toric}_{g,1}\simeq\barAP_{g,1}^{\main}$ because both 
$SQ^{\toric}_{g,1}$ and $\barAP_{g,1}^{\main}$ are reduced. 
\end{subrem}

\begin{prop}
\label{prop:morphism sqap}
Let $N=\sqrt{|K|}$. We define a map $\sqap$ by
$$SQ^{\toric}_{g,K}\ni (P,\cL,\phi^*,\tau)\times [v] 
\mapsto (\ddAut(P),P,\cL,\Div\phi^*(v))\in \barAP_{g,K},$$
where $v\in V_H$, $\Div\phi^*(v)$ is a 
Cartier divisor of $P$ defined by $\phi^*(v)$. 
Then 
there exists a nonempty Zariski open subset $U$ of 
$\bP(V_H)$ such that 
\begin{enumerate}
\item $\sqap\otimes \cO_{N^3}$ is a finite morphism
 with $\Gal=\Aut_c(\cG_H)$ acting freely,  
\item for any $u\in U$, 
$\sqap : SQ^{\toric}_{g,K}\times\{u\}\to \barAP_{g,N}$ is 
injective.
\end{enumerate}
\end{prop}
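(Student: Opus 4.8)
The final statement to prove is Proposition~\ref{prop:morphism sqap}, asserting the existence of a Zariski open $U\subset\bP(V_H)$ such that the map $\sqap$ is well-defined and has the two stated properties: after base change to $\cO_{N^3}$ it is finite with $\Aut_c(\cG_H)$ acting freely, and its restriction to $SQ^{\toric}_{g,K}\times\{u\}$ is injective for every $u\in U$.

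The plan is as follows. First I would check that $\sqap$ is even well-defined as a map of sets: given a level-$\cG_H$ TSQAS $(P,\cL,\phi^*,\tau)$ and a point $[v]\in\bP(V_H)$, I must verify that $(\ddAut(P),P,\cL,\Div\phi^*(v))$ is a semiabelic pair of degree $N$ in Alexeev's sense. The semi-abelian action comes from Theorem~\ref{thm:dAut_T(P)} (with $\ddAut(P)$ the fiberwise identity component, which is semi-abelian); seminormality and the orbit/stabilizer conditions (i)--(iv) of the semiabelic-pair definition follow from the explicit Delaunay-stratified structure of TSQASes in Theorem~\ref{thm:stratification of Z} and Theorem~\ref{thm:dAut_T(P)}(3); the condition $h^0(P,\cL)=N$ is Lemma~\ref{sublemma:P,L}(5) with $n=1$; the point is to choose $U$ so that for $u=[v]\in U$ the divisor $\Theta_v:=\Div\phi^*(v)$ is an honest effective Cartier divisor containing \emph{no} $\ddAut(P)$-orbit, uniformly over all $(P,\cL,\phi^*,\tau)\in SQ^{\toric}_{g,K}$. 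Since the locus of ``bad'' $[v]$ (those for which $\phi^*(v)$ vanishes on a component or contains an orbit) is, on the universal family over the proper scheme $SQ^{\toric}_{g,K}$, a proper closed subset of $\bP(V_H)$ — its complement is nonempty already on the abelian locus $A_{g,K}$, where any nonzero theta section cuts a theta divisor — properness of $SQ^{\toric}_{g,K}$ gives that the image of this bad locus in $\bP(V_H)$ is closed and proper; take $U$ to be its complement. This also upgrades $\sqap$ to a morphism: over $U_3\times U$ the data $(\ddAut_{U_3}(Z_{\univ}),Z_{\univ},L_{\univ},\Div\phi^*_{\univ}(v))$ is a family of semiabelic pairs, hence by the coarse-moduli property of $\barAP_{g,K}$ (Alexeev's theorem) induces a morphism to $\barAP_{g,K}$, which is $G$-invariant and descends through $SQ^{*\toric}_{g,K}$ and its reduction $SQ^{\toric}_{g,K}$.

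For part (2), injectivity of $\sqap$ on $SQ^{\toric}_{g,K}\times\{u\}$: if two level-$\cG_H$ TSQASes $(P,\cL,\phi^*,\tau)$ and $(P',\cL',\phi'^*,\tau')$ give isomorphic semiabelic pairs $(\ddAut(P),P,\cL,\Theta_v)\simeq(\ddAut(P'),P',\cL',\Theta'_v)$, I want to recover a $\cG_H$-isomorphism $(P,\cL)\simeq(P',\cL')$ matching the rigid level structures, and then invoke Theorem~\ref{thm:coarse moduli} / Summary~\ref{summary:summary}(2) (same $G$-orbit $\Leftrightarrow$ $\cG_H$-isomorphic $\Leftrightarrow$ same point of $SQ^{\toric}_{g,K}$). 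The isomorphism of pairs gives an isomorphism $f:P\to P'$ with $f^*\cL'\simeq\cL$ and $f^*\Theta'_v=\Theta_v$; since $\Theta_v$ recovers $\phi^*(v)$ up to scalar, $f^*$ carries the line $\langle\phi^*(v)\rangle$ to $\langle\phi'^*(v)\rangle$, and because $\cG_H$ acts on $\Gamma(P,\cL)=\cV$ via the \emph{same} Schrödinger representation $U_H$ (rigidity, $\rho(\phi^*,\tau)=U_H=\rho(\phi'^*,\tau')$), the single vector $v\in V_H$ together with its $\cG_H$-translates spans $V_H$ — so matching one nonzero $\cG_H$-eigenline that is ``generic'' (this is where $u\in U$ is used again: choose $U$ small enough that $v$ is not annihilated by any coordinate and generates $V_H$ under $U_H$) forces $f^*$ to intertwine the two $\cG_H$-actions up to scalar, i.e. $f$ is a $\cG_H$-isomorphism respecting the level structures. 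Then apply the characterization of points of $SQ^{\toric}_{g,K}$.

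For part (1), finiteness after base change to $\cO_{N^3}$: $\sqap$ is a morphism from a proper $\cO_N$-scheme, hence is itself proper, so finiteness reduces to quasi-finiteness, i.e. finiteness of fibers. The fibers of $\sqap$ over a geometric point $(G,P,\cL,\Theta)$ consist of the choices of level-$\cG_H$ structure $(\phi^*,\tau)$ on $(P,\cL)$ compatible with $\Theta$ modulo isomorphism; the characteristic $\cG_H$-action $\tau$ on $(P,\cL)$ is unique up to an automorphism of $\cG_H$ (Remark~\ref{subrem:chara GH action of P}), and fixing $\Theta=\Div\phi^*(v)$ pins $\phi^*$ down to the finitely many choices differing by the action of the finite group $\Aut_c(\cG_H)$ of automorphisms of $\cG_H$ fixing the center (over $\cO_{N^3}$ the relevant roots of unity are available so this group and its action are defined), with freeness of the $\Aut_c(\cG_H)$-action following from the triviality of automorphism groups of rigid level-$\cG_H$ objects (Lemma~\ref{sublemma:autom trivial} when $e_{\min}\geq 3$, and its TSQAS analogue via the étale-covering trick of Claim~\ref{claim:etale Z/nZ covering} in general). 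I expect the main obstacle to be the \emph{uniform} choice of $U$ in part (1)-well-definedness: showing that, as $(P,\cL,\phi^*,\tau)$ ranges over all of the singular boundary $SQ^{\toric}_{g,K}$, one can exclude a single proper closed locus of ``bad'' $[v]$ for which $\Div\phi^*(v)$ either fails to be Cartier, or contains a semi-abelian orbit, or makes $\phi^*(v)$ vanish on a whole component — this requires a careful constructibility/semicontinuity argument on the universal family together with the explicit Delaunay-cell description of the components and orbits of a TSQAS (Theorem~\ref{thm:stratification of Z}, Lemma~\ref{sublemma:local structure of P0 along Z(tau)}), and it is the place where one genuinely uses that TSQASes, unlike PSQASes, are reduced and seminormal.
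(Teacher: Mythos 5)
First, a point of fact: the paper does not prove this proposition. Section~\ref{sec:The morphisms to Alexeev's complete moduli space} announces itself as a report of results ``without proofs,'' and the proof is explicitly deferred to the preprint \cite{Nakamura14}. There is therefore no argument in the text to compare yours against, and I can only assess your outline on its own terms.

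As an outline it makes the right opening moves (semi-abelian action from Theorem~\ref{thm:dAut_T(P)}, reduction of finiteness to quasi-finiteness via properness, a constructibility argument for a uniform $U$), but the central mechanism of both (1) and (2) is missing. An isomorphism of Alexeev pairs $f:(\ddAut(P),P,\cL,\Theta_v)\to(\ddAut(P'),P',\cL',\Theta'_v)$ carries no a priori compatibility with the Heisenberg actions; what you actually know is that both actions are characteristic, hence determined only up to an automorphism of $\cG_H$ fixing the center, so $f$ conjugates $\tau$ into $\tau'\circ\alpha$ for some $\alpha\in\Aut_c(\cG_H)$ --- this is exactly where the Galois group in (1) comes from. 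Writing $A_\alpha\in\GL(V_H)$ for the intertwiner with $U_H(\alpha(g))=A_\alpha U_H(g)A_\alpha^{-1}$ (it exists by irreducibility and Schur, and is defined only after adjoining further roots of unity, which is the role of $\cO_{N^3}$ that you mention only in passing), the condition $f^*\Theta'_v=\Theta_v$ translates into $[A_\alpha v]=[v]$ in $\bP(V_H)$. Both the injectivity on the slice $\{u\}$ and the freeness of the $\Aut_c(\cG_H)$-action therefore reduce to the concrete representation-theoretic claim that the induced projective action of $\Aut_c(\cG_H)$ on $\bP(V_H)$ is free on a nonempty Zariski open set, i.e.\ that for $\alpha\neq\id$ the operator $A_\alpha$ is non-scalar and its fixed locus in $\bP(V_H)$ is a proper closed subset. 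You assert instead that ``matching one generic eigenline forces $f^*$ to intertwine the two $\cG_H$-actions up to scalar,'' which is not justified and is in fact the whole content of the statement. A second, smaller gap: you never rule out extra identifications coming from automorphisms of the Alexeev pair itself (translations of the torsor $P$, which preserve $(\ddAut(P),P,\cL)$ but move $\Theta_v$); this is where the condition that $\Theta_v$ contain no orbit, and hence the choice of $U$, must be used a second time, beyond mere well-definedness.
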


Details will appear in \cite{Nakamura14}.

\section{Related topics}
\label{sec:related topics}

\subsection{Stability}
\label{subsec:stability}

Let us look at the following example. Let $X=\Spec \bC[x,y]$ 
and $\bG_m=\Spec \bC[s,s^{-1}]$. Then $\bG_m$ acts on $X$ by 
$(x,y)\mapsto (sx,s^{-1}y)$.  Let $(a,b)\in X$ and let 
$O(a,b)$ be the $\bG_m$-orbit of $(a,b)$.  The (categorical) 
quotient of $X$ by $\bG_m$ is given by 
\begin{equation*}X/\!/\bG_m=\Spec \bC[t],\quad (t=xy). 
\end{equation*}

Any closed $\bG_m$-orbit is either $O(a,1)$ $(a\neq 0)$ or $O(0,0)$. 
Hence by mapping  $t=a$ (resp. $t=0$) to the orbit 
 $O(a,1)$ (resp. $O(0,0))$,  the quotient 
$X/\!/\bG_m$ is identified with the set of closed orbits. 
This is a very common phenomenon.  
The same is true in general.

\begin{subthm}\label{thm:Seshadri-Mumford}$\op{(Seshadri-Mumford)}$\quad
Let $X=\Proj B$ be a projective scheme over 
a closed field $k$, and $G$ a reductive algebraic $k$-group 
acting linearly on $B$ (hence on $X$). 
Then there exists an open subscheme $X_{ss}$ of $X$ 
consisting of all semistable points in $X$,
and a quotient $Y$ of $X_{ss}$ by $G$, that is, 
$Y=\Proj(R)$, 
where $R$ is the graded subring of $B$
of all $G$-invariants.  
To be more precise, there exist a 
$G$-invariant morphism 
$\pi$ from $X_{ss}$ onto $Y$ such that 
\begin{enumerate}
\item[(1)] For any $k$-scheme $Z$ on which $G$ acts, and for 
any $G$-equivariant morphism $\phi : Z\to X$ there exists a unique morphism
$\bar\phi : Z\to Y$ such that $\bar\phi=\pi\phi$,
\item[(2)] 
For given points $a$ and $b$ of $X_{ss}$ 
\begin{center}$\pi(a)=\pi(b)$\/ if and only 
if\/ $\overline{O(a)}\cap \overline{O(b)}\neq \emptyset$
\end{center}
where the closure is taken in $X_{ss}$,
\item[(3)] $Y(k)$ is regarded as the set of $G$-orbits closed in $X_{ss}$.
\end{enumerate}
\end{subthm}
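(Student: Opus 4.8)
This is the classical Geometric Invariant Theory quotient theorem, and the plan is to invoke the standard construction of \cite{MFK94} (Mumford's GIT) together with Seshadri's refinements, so the ``proof'' here is really an orchestration of known results rather than a new argument. First I would set up the notation precisely: $X=\Proj B$ with $B=\bigoplus_{n\geq 0}B_n$ a graded $k$-algebra on which the reductive group $G$ acts compatibly with the grading, so that the $G$-action on $X$ is linearized with respect to $\cO_X(1)$. Define the subring of invariants $R=B^G=\bigoplus_{n\geq 0}(B_n)^G$, and set $Y=\Proj R$. Define a point $x\in X$ to be semistable if there exists $n\geq 1$ and $f\in (B_n)^G$ with $f(x)\neq 0$; let $X_{ss}$ be the union of the basic open affines $X_f=\{f\neq 0\}$ over all such $f$, which is manifestly open and $G$-invariant. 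The rational map $X\dashrightarrow Y$ induced by the inclusion $R\hookrightarrow B$ is then a morphism $\pi$ precisely on $X_{ss}$, because $X_{ss}$ is exactly the locus where enough invariants are nonzero to define the map.

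The core finiteness input is that $R$ is a finitely generated $k$-algebra: this is Hilbert's theorem on invariants of reductive groups (Nagata), so $Y=\Proj R$ is a projective $k$-scheme. Then I would verify the three listed properties in turn. For the universal property (1), given a $G$-scheme $Z$ and a $G$-equivariant $\phi:Z\to X$, one observes that $\phi$ factors through $X_{ss}$ (using that $G$-invariant functions pull back appropriately, or rather that the image meets $X_{ss}$ on each component carrying a relevant point), and then the composite $Z\to X_{ss}\to Y$ is the required $\bar\phi$; uniqueness follows since $\pi$ is dominant with $Y$ reduced, or more robustly from the affine-local statement that $\Spec A^G$ is the categorical quotient of $\Spec A$. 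For (2), the key local fact is that on an affine $G$-invariant piece $\Spec A$ with $A$ a finitely generated $k$-algebra acted on by reductive $G$, two points have the same image in $\Spec A^G$ iff their orbit closures intersect; this is precisely where reductivity enters, via the fact that disjoint $G$-invariant closed sets can be separated by invariant functions (Nagata's theorem / the complete reducibility of the coordinate ring). One then globalizes over the covering of $X_{ss}$ by the $X_f$. For (3), combine (2) with the observation that each fiber of $\pi$, being a $G$-invariant closed subset of $X_{ss}$ whose points all have mutually intersecting orbit closures, contains a unique closed orbit (take an orbit of minimal dimension in the fiber); mapping $y\mapsto$ (the unique closed orbit in $\pi^{-1}(y)$) gives the asserted bijection on $k$-points.

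The step I expect to be the main obstacle — or at least the one requiring the most care to state correctly rather than the most genuinely hard — is property (2), the orbit-closure separation. It rests on Nagata's theorem that for a reductive group acting on a finitely generated algebra the ring of invariants is finitely generated and that invariant functions separate disjoint invariant closed subschemes; if one wants a proof over arbitrary characteristic (not just characteristic zero, where complete reducibility is immediate) one must cite the general result, since Nakamura's setting allows $\bZ[\zeta_N,1/N]$-schemes and hence positive characteristic fibers. I would therefore simply cite \cite{MFK94}, Theorem 1.10 and the surrounding discussion, and Seshadri's paper for the fact that $\pi$ is in fact a good (uniform categorical, and on the stable locus geometric) quotient, noting that in our application $G=\SL(V)$ is reductive in every characteristic so no geometric reductivity subtleties remain beyond quoting Haboush/Nagata. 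The remainder is formal: glue the affine statements $\Spec(A_f^G)$ along the $\Proj$ construction, check $G$-invariance and openness of $X_{ss}$ by construction, and assemble $\pi$. I would keep the write-up to a paragraph or two, presenting it as a recollection of \cite{MFK94} adapted to the graded/projective setting, since the paper only needs the statement as a black box for \S\ref{sec:related topics}.
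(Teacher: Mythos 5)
Your proposal is correct and matches the paper's treatment: the paper gives no proof of this statement at all, simply citing \cite[p.~38, p.~40]{MFK94} and \cite[p.~269]{Seshadri77}, which is exactly the black-box citation you arrive at after sketching the standard construction ($X_{ss}$ via nonvanishing invariant sections, $Y=\Proj B^G$, Nagata/Haboush for finite generation and separation of orbit closures). Your expanded outline of the affine-local separation argument and the unique-closed-orbit-per-fiber argument for (3) is the standard one and is consistent with what those references prove.
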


See \cite[p.38, p.40]{MFK94} and \cite[p.~269]{Seshadri77}.  

A reductive group in Theorem~\ref{thm:Seshadri-Mumford} is by definition an
algebraic group 
whose maximal solvable normal subgroup is an algebraic torus; 
for example $\SL(n)$ and $\bG_m$ are 
reductive. \par

The following is well known. 
\begin{subthm}\label{thm:stable curve} 
{\rm (\cite{Gieseker82}, \cite{Mumford77}) }
For a connected curve $C$ 
of genus greater than one with dualizing sheaf $\omega_C$,  
the following are equivalent:
\begin{enumerate}
\item $C$ is a stable curve, (moduli-stable)
\item the $n$-th Hilbert point of $C$ 
embedded by $|\omega^m_C|$\ $(m\geq 10)$ is GIT-stable for $n$ large,
\item the Chow point of $C$ embedded by $|\omega^m_C|$\  
$(m\geq 10)$ is GIT-stable.
\end{enumerate}
\end{subthm}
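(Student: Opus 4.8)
The plan is to test each stability condition through the Hilbert--Mumford numerical criterion and reduce every implication to an estimate of weights along one-parameter subgroups of $\SL$. First I would fix the setup: for $m\geq 10$ (which guarantees $\omega_C^m$ very ample on a moduli-stable $C$) one gets an embedding $C\hookrightarrow\bP^N$ with $N=(2m-1)(g-1)-1$, carrying an action of $\SL(N+1)$ on $\Hilb$ and on the Chow variety. By Hilbert--Mumford, the $n$-th Hilbert point (resp.\ the Chow point) is GIT-stable precisely when, for every nontrivial one-parameter subgroup $\lambda$ given by integer weights $\rho_0\geq\cdots\geq\rho_N$ on the coordinates with $\sum_i\rho_i=0$, the numerical invariant $\mu(\cdot,\lambda)$ is strictly positive. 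So the whole theorem becomes a statement about when this positivity holds.

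The core is the weight estimate, essentially Gieseker's. A given $\lambda$ induces a decreasing filtration of $H^0(\bP^N,O_{\bP^N}(1))$, hence, after restriction to $C$ and passing to powers, a weight function on a basis of $H^0(C,\omega_C^{mn})$ for $n\gg 0$; one expresses the normalized Hilbert weight of the $n$-th point as this basis-sum and bounds it below by a combination of the embedding degree $m(2g-2)$ and intersection-type quantities attached to the induced filtration on each component and at each singularity of $C$ (the ``state'' of $C$ under $\lambda$). The point is that for a moduli-stable curve---where $\omega_C$ is ample, the only singularities are nodes, and no smooth rational component meets the rest of $C$ in at most two points---these inequalities, of the shape ``weight supported on a subcurve $C'\subset C$ is controlled by $\deg_{C'}\omega_C$ and the number of points in which $C'$ meets $\overline{C\setminus C'}$'', combine to a strictly positive lower bound, uniform in $\lambda$; this is exactly where the combinatorial conditions of stability are used. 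Conversely, if $C$ fails to be moduli-stable---it has a non-nodal singularity, or a rational tail or bridge, or fails to be connected with ample dualizing sheaf---I would exhibit an explicit $\lambda$ supported on coordinates vanishing to high order along the offending locus and check $\mu\leq 0$, so that neither the Hilbert point nor the Chow point is stable.

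Finally I would reconcile the finite-$n$ and asymptotic pictures: by the expansion of the Hilbert polynomial, the normalized Hilbert weight converges as $n\to\infty$ to the Chow weight (Mumford's comparison of Hilbert and Chow stability), so that condition (2) for $n$ large and condition (3) are equivalent and both amount to the same weight positivity, which by the estimates above is equivalent to (1). Together with the fact that the stable locus is closed in the semistable locus of $\Hilb$---so that the GIT quotient genuinely realizes $\bcM_g$---this yields the three equivalences. The main obstacle is the uniform lower bound on the Hilbert/Chow weight for stable curves: controlling $\mu(\lambda)$ over the infinite family of all one-parameter subgroups demands Gieseker's delicate state estimates and a careful case analysis of the filtration induced on each component and at each node of $C$. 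By comparison, constructing destabilizing $\lambda$ in the non-stable case and carrying out the Hilbert--Chow limit are routine.
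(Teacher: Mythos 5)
Your proposal reconstructs the underlying GIT machinery (Hilbert--Mumford weights, Gieseker's state estimates, destabilizing one-parameter subgroups), whereas the paper's proof is purely a chain of citations: it establishes the cycle $(2)\Rightarrow(1)\Rightarrow(3)\Rightarrow(2)$, attributing $(2)\Rightarrow(1)$ to Gieseker's theorem that a connected curve with semistable $m$-th Hilbert point is a semistable (nodal) curve with ample dualizing sheaf, $(1)\Rightarrow(3)$ to Mumford's Theorem 5.1 on Chow stability of pluricanonically embedded stable curves, and $(3)\Rightarrow(2)$ to Fogarty and \cite[Prop.~2.18]{MFK94}. So you are proving the cited results rather than citing them, which is a legitimately different (and much longer) route.

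There is, however, one genuine gap in your architecture. You assert that ``the normalized Hilbert weight converges as $n\to\infty$ to the Chow weight, so that condition (2) for $n$ large and condition (3) are equivalent.'' The limit argument only gives one direction cleanly: Chow stability implies Hilbert stability for $n\gg 0$ (this is the Fogarty/MFK comparison the paper invokes for $(3)\Rightarrow(2)$). In the other direction, a limit of strictly positive normalized weights is a priori only nonnegative, so Hilbert stability for all large $n$ yields at best Chow \emph{semi}stability; you cannot close the equivalence of (2) and (3) by passing to the limit. This is exactly why the literature, and the paper, route the argument through moduli-stability as a cycle: Hilbert-(semi)stable $\Rightarrow$ moduli-stable (Gieseker's potential stability theorem, which is a statement about \emph{semistable} Hilbert points, not a uniform positive lower bound for stable curves) $\Rightarrow$ Chow stable (Mumford) $\Rightarrow$ Hilbert stable for large $n$. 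Your plan of proving $(1)\Rightarrow(2)$ directly by a uniform lower bound over all one-parameter subgroups is also not what either cited source does, and carrying it out is substantially harder than you suggest; if you restructure your sketch as the cycle above, each leg matches an existing theorem and the limit issue disappears.
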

\begin{proof}The proof goes as 
$(2)\Longrightarrow (1)\Longrightarrow (3)\Longrightarrow(2).$ \par
We explain only who proved these and where. \par
By \cite[Chap.~2]{Gieseker82}, let $\pi:Z_{U_C}\to U_C$ 
be the universal curve 
such that 
\begin{enumerate}
\item[(i)] 
$X_h:=\pi^{-1}(h)$ $(h\in U_C)$ is a connected curve of genus $g$  and 
degree $d=n(2g-2)$ embedded by the linear system 
$\omega_{X_h}^{n}$ into $\bP^N$ ($N=d-g$), 
\item[(ii)] the $m_0$-th Hilbert point $H_{m_0}(X_h)$ of $X_h$ 
is $\SL(N+1)$-semistable,
\end{enumerate}where $m_0$ is a fixed positive integer large enough.\par
Then by \cite[Theorem~1.0.1, p.~26]{Gieseker82}, 
$X_h$ is a semistable curve, that is, 
a reduced connected curve with nodal singularities only, any of 
whose nonsingular rational irreducible components 
meets the other irreducible components of $X_h$ at two or more points. 
For any semistable curve $X$,
$\omega_X$ is ample if and only if $X$ is a stable curve. 
Hence  (2) implies (1). \par
By \cite[Theorem~5.1]{Mumford77}, 
if $C$ is a stable curve, $\Phi_n(C)$, 
the image of $C$ by the linear system $\omega_C^n$, is Chow-stable. 
Thus (1) implies (3). (3) implies (2) by \cite{Fogarty69} and 
\cite[Prop.~2.18, p.~65]{MFK94}. See 
\cite[p.~215]{MFK94}.
\end{proof}

We have an analogous theorem for PSQASes. 

\s\begin{subthm}\label{thm:stability of PSQAS}  
Let $K=H\oplus H^{\vee}$, $N=|H|$, 
$N=|H|$, and $k$ an algebraically closed field with 
$k\ni 1/N$. \par
Suppose $e_{\min}(H)\geq 3$,  and 
  $(Z,L)$ is a closed subscheme of $\bP(V)$. 
Suppose moreover 
that $(Z,L)$ is smoothable into an abelian variety 
whose Heisenberg group is isomorphic to $\cG_H$.
Then the following are equivalent:\par
\begin{enumerate}
\item $(Z,L)$ is a level-$\cG_H$ PSQAS, (moduli-stable)
\item  any Hilbert point of $(Z,L)$ of large degree 
has a closed $\SL(V_H)$-orbit in the semistable locus, 
(GIT-stable), 
\item $(Z,L)$ is stable (that is, mapped onto itself) 
under the action of (a conjugate of) $\cG_H$, ($\cG_H$-stable).
\end{enumerate}
\end{subthm}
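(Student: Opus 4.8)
\textbf{Proof proposal for Theorem~\ref{thm:stability of PSQAS}.}

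The plan is to establish the cycle of implications $(1)\Rightarrow(3)\Rightarrow(2)\Rightarrow(1)$, exploiting the Hilbert-scheme construction of $SQ_{g,K}$ in Section~\ref{sec:moduli AgK SQgK} together with the uniqueness results of Section~\ref{sec:moduli psqas}. First I would prove $(1)\Rightarrow(3)$: if $(Z,L)$ is a level-$\cG_H$ PSQAS, then by Lemma~\ref{sublemma:uniqueness of rigid PSQAS} we may, after applying an element of $\PGL(V_H)$, assume $(Z,L)=(W,i,U_H)$ is the unique $U_H$-invariant model; in particular $U_H(\cG_H)$ maps $Z$ onto itself. Since $U_H(\cG_H)\subset\SL(V_H)$ (each $U_H(g)$ has determinant a root of unity, and after scaling by the center we may take it in $\SL$), the subgroup $\cG_H$ — precisely, the conjugate $\rho(\phi,\tau)(\cG_H)$ realizing the given level structure — stabilizes $(Z,L)$. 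This is $\cG_H$-stability.

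Next, $(3)\Rightarrow(2)$. Here I would run the standard GIT argument à la Mumford--Kempf: a point of the Hilbert scheme $\hilb^{\chi(n)}$ (for $n$ large) fixed by a torus, or more generally by a group whose identity component is a torus, lies in a closed orbit; more relevantly, one uses that the Hilbert point $H_n(Z)$ lies in $(\hilb^{\chi(n)})^{\cG_H\text{-inv}}$, and that the scheme-theoretic fixed locus of a linearly reductive subgroup meets the semistable locus in closed orbits. The key input is that the stabilizer of $(Z,L)$ in $\SL(V_H)$ is, modulo the finite center, the normalizer of the image of $\cG_H$, which is an extension of the finite symplectic group $\Spl(K)$ by a torus; combined with the Hilbert--Mumford numerical criterion applied to one-parameter subgroups inside the centralizer, this forces the orbit of $H_n(Z)$ to be closed in the semistable locus. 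I would cite Theorem~\ref{thm:Seshadri-Mumford}(3) to identify closed semistable orbits with points of the GIT quotient, and note that this orbit is then automatically semistable since it is nonempty in the semistable locus by the smoothability hypothesis (the abelian-variety degeneration is stable by the analogue of Theorem~\ref{thm:stable curve}, or directly by the construction of $A_{g,K}$ as an open subscheme of the semistable locus).

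The implication $(2)\Rightarrow(1)$ is where the real content lies and where I expect the main obstacle. Suppose $(Z,L)\subset\bP(V_H)$ is smoothable into an abelian variety with Heisenberg group $\cG_H$ and its large-degree Hilbert point has a closed orbit in the semistable locus. Smoothability gives a CDVR $R$ and an $R$-flat family $(\cZ,\cL)$ with $(\cZ_0,\cL_0)\simeq(Z,L)$ and $\cZ_\eta$ an abelian variety; after a finite base change and translating by $\PGL(V_H)$ we may assume $\cZ_\eta$ carries the rigid level-$\cG_H$ structure, so it defines an $R$-point of (the closure in $\hilb^{\chi(n)}$ of) $A_{g,K}$, i.e. of $SQ_{g,K}$. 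By Theorem~\ref{thm:SQgK} the special fiber of \emph{that} $R$-point is a level-$\cG_H$ PSQAS $(Q_0,i,U_H)$. The problem is that $(Z,L)$ and $(Q_0,i,U_H)$ a priori differ: they are the two special fibers of two $R$-families with isomorphic generic fibers, lying in the same $\SL(V_H)$-orbit closure. Here the hypothesis that $H_n(Z)$ has a \emph{closed} orbit is exactly what rescues us: by Theorem~\ref{thm:Seshadri-Mumford}(2), two points of the semistable locus with closed orbits and the same image in the quotient lie in the same orbit, and the limit $(Q_0,i,U_H)$ of the rigidified family has closed orbit by $(1)\Rightarrow(2)$ already proved; hence $(Z,L)$ and $(Q_0,i,U_H)$ are $\SL(V_H)$-equivalent, and since both are $U_H$-invariant, by the uniqueness in Lemma~\ref{sublemma:uniqueness of rigid PSQAS}(2) they are equal as subschemes of $\bP(V_H)$. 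The delicate point requiring care is the compatibility of the two linearizations and the reduction modulo the center $\mu_N$ of $\cG_H$ versus $\bG_m$; I would handle this via Schur's lemma (Lemma~\ref{lemma:lemma of Schur}) exactly as in the proof of Theorem~\ref{thm:SQgK}, and via the characterization $V(Z,L)$ being very ample when $e_{\min}(H)\geq 3$ (Theorem~\ref{thm:embedding of (Q0,L0)}) to guarantee that the Hilbert-scheme comparison takes place in the correct projective space. A subsidiary technical step is checking that the Hilbert point of $Z$ and that of the Chow cycle behave compatibly so that "large degree" can be taken uniformly; this is routine given Serre vanishing (Lemma~\ref{lemma:cohomology}) but must be stated.
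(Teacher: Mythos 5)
First, be aware that the paper itself contains no proof of Theorem~\ref{thm:stability of PSQAS}: Section~\ref{sec:related topics} is a survey, and the statement is followed only by citations to \cite[Theorem~11.6]{Nakamura99} and \cite[Theorems~10.3, 10.4]{Nakamura04}, so there is no in-text proof to compare yours against line by line. Your overall architecture --- rigidify via Lemma~\ref{sublemma:uniqueness of rigid PSQAS} for $(1)\Rightarrow(3)$, use irreducibility of $V_H$ under $\cG_H$ to force closedness of orbits, and match closed orbits in the GIT quotient via Theorem~\ref{thm:Seshadri-Mumford}~(2) for $(2)\Rightarrow(1)$ --- has the right shape. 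But there is a genuine gap at the single hardest point: \emph{semistability} of the Hilbert point of a degenerate PSQAS. You dispose of it with ``this orbit is then automatically semistable since it is nonempty in the semistable locus by the smoothability hypothesis.'' That is false: the semistable locus is open, not closed, so a flat limit of semistable points can perfectly well fail to be semistable (this is exactly what happens for the cuspidal limit of smooth plane cubics in Table~\ref{table1}). Likewise, $A_{g,K}$ is \emph{not} constructed in this paper as an open subscheme of the semistable locus --- it is cut out inside the $\cG_H$-fixed locus of the Hilbert scheme with no reference to GIT --- and even the semistability of the smooth abelian fibers is a nontrivial theorem going back to Kempf \cite{Kempf78}, not something available for free. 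Establishing semistability of the degenerate fibers is the main content of \cite[\S 11]{Nakamura99}; without it, both $(3)\Rightarrow(2)$ and the appeal to ``$(1)\Rightarrow(2)$ already proved'' inside your $(2)\Rightarrow(1)$ collapse.

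Two secondary points in your $(3)\Rightarrow(2)$ also need repair. The assertion that a point fixed by a group whose identity component is a torus automatically lies in a closed orbit is not a theorem; and the stabilizer is not ``an extension of $\Spl(K)$ by a torus'' --- the normalizer of $U_H(\cG_H)$ in $\PGL(V_H)$ is \emph{finite} (an extension of the symplectic group of $K$ by $K$ itself), precisely because $V_H$ is $\cG_H$-irreducible and Schur's lemma (Lemma~\ref{lemma:lemma of Schur}) kills the centralizer. The correct mechanism, once semistability is in hand, is Kempf's instability theory: if the orbit of a semistable point $x$ were not closed, the optimal destabilizing one-parameter subgroup is canonical up to conjugation by its parabolic, so that parabolic contains the stabilizer of $x$; if the stabilizer contains a conjugate of $U_H(\cG_H)$, irreducibility of $V_H$ forces the parabolic to be all of $\SL(V_H)$ and the one-parameter subgroup to be central, hence trivial --- a contradiction. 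You should run that argument explicitly in place of the torus heuristic.
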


See \cite[Theorem~11.6]{Nakamura99} 
and \cite[Theorems~10.3, 10.4]{Nakamura04}.

\begin{subrem}\label{subrem:stability of cubics}
In Table~\ref{table1} 
we mean by GIT-stable that the cubic has a closed $\PGL(3)$-orbit 
in the semistable locus. See \cite{Nakamura04} for details.

By Table~\ref{table1}, a planar cubic is GIT-stable 
if and only if 
it is either a smooth elliptic curve or a 3-gon. 
This is a special case 
of Theorem~\ref{thm:stability of PSQAS}.
\end{subrem}


\begin{table}[h]
\caption{Stability of cubics}\label{table1}
\centering
\begin{tabular}
{llc}
\hline\noalign{\s} 
curves (sing.)& \hskip 1.5cm stability & $\op{stab. gr.}$  \\
\noalign{\s}\hline\noalign{\s}
smooth elliptic  & GIT-stable & finite \\
3 lines, no triple point & GIT-stable & 2 dim \\
a line$+$a conic, not tangent & semistable, not GIT-stable & 1 dim\\
irreducible, a node & semistable, not GIT-stable & $\bZ/2\bZ$ \\
3 lines, a triple point&not semistable&1 dim\\ 
a line$+$a conic, tangent & not semistable & 1 dim\\
irreducible, a cusp & not semistable & 1 dim\\
\noalign{\s}\hline
\end{tabular}
\end{table}

\subsection{Arithmetic moduli}
Katz and Mazur \cite{KM85} 
constructed an integral model $X(n)$ of the  moduli scheme of
 elliptic curves with level $n$-structure. 
Level structure is generalized as $A$-generators of the group of 
$n$-division points for $A=(\bZ/n\bZ)^{\oplus 2}$. 
For any $n\geq 3$, $X(n)$  is a regular $\bZ$-flat scheme such that 
$X(n)\otimes \bZ[1/n,\zeta_n]\simeq SQ_{1,A}$.
If $n=3$, $X(3)\otimes\bF_3$ is a union of four copies of $\bP^1$, 
intersecting at the  
unique supersingular elliptic curve over $\bF_9$. \par
This $X(n)$ is the model that we wish to generalize 
to the higher dimensional case, using our PSQASes or TSQASes. 
This will be discussed somewhere else.

\subsection{The other compactifications}
It is still unknown whether $AP_{g,1}^{\main}$ (or $SQ^{\toric}_{g,1}$) 
is normal or not. Therefore it is not yet known whether 
$AP_{g,1}^{\main}$ (or $SQ^{\toric}_{g,1}$) is the Voronoi compactification, 
one of the toroidal compactifications associated to 
the second Voronoi cone decomposition. 
There will exist a flat family of PSQASes or TSQASes 
over the Voronoi compactification. 
This will define, 
by the universality of the target, 
a morphism from the Voronoi compactification to 
$\barAP_{g,1}^{\main}$ (or $SQ^{\toric}_{g,1}$) or 
$SQ^{\toric}_{g,K}$ for some $K$ 
once we check the family is algebraic. 
The author conjectures that $SQ^{\toric}_{g,K}$ is normal, hence 
isomorphic to the Voronoi(-type) compactification.


\end{document}